\theoremstyle{plain}
\newtheorem{lem}{Lemma}[section]
\newtheorem{prop}[lem]{Proposition}
\newtheorem{cor}[lem]{Corollary}
\newtheorem{algo}[lem]{Algorithm}
\theoremstyle{plain}
\theoremstyle{definition}
\theoremstyle{remark}
\theoremstyle{plain}
\newtheorem{thma}[lem]{Theorem}
\theoremstyle{plain}
\newcommand{\noi}{\noindent}
\numberwithin{equation}{section}
\newcommand{\te}{\mathcal{T}}
\newcommand{\n}{^{n}}
\newcommand{\llb}{\llbracket}
\newcommand{\rrb}{\rrbracket}
\newcommand{\bp}{\boldsymbol p}
\newcommand{\carrow}{\mathop{\longrightarrow}^{n\to\infty}_d}
\newcommand{\graft}[1]{\mathop{\circledast}_{#1}}
\newcommand{\ep}{\epsilon}
\newcommand{\dhau}{\operatorname{d_{H}}}
\newcommand{\dpr}{\operatorname{d_{P}}}
\newcommand{\dgh}{\operatorname{d_{GH}}}
\newcommand{\dgp}{\operatorname{d_{GP}}}
\newcommand{\dpgh}{\operatorname{d_{pGH}}}
\newcommand{\dghp}{\operatorname{d_{GHP}}}
\newcommand{\dprnk}{\operatorname{d_P^{n,k}}}
\newcommand{\dprk}{\operatorname{d_P^{k}}}
\newcommand{\gh}{\operatorname{GH}}
\newcommand{\gp}{\operatorname{GP}}
\newcommand{\ghp}{\operatorname{GHP}}
\newcommand{\btheta}{\boldsymbol \theta}
\newcommand{\bTheta}{\boldsymbol \Theta}
\newcommand{\Sk}{\operatorname{Sk}}
\newcommand{\Lf}{\operatorname{Lf}}
\newcommand{\Br}{\operatorname{Br}}
\newcommand{\Span}{\operatorname{Span}}
\newcommand{\cut}{\operatorname{cut}}
\newcommand{\shuff}{\operatorname{shuff}}
\newcommand{\Card}{\operatorname{Card}}
\newcommand{\me}{\operatorname{mg}}
\newcommand{\Ht}{\operatorname{Ht}}
\newcommand{\Sub}{\operatorname{Sub}}
\newcommand{\supp}{\operatorname{supp}}
\newcommand{\Kt}{\operatorname{Kt}}
\newcommand{\tc}{\cut(\te)}
\newcommand{\bv}{\mathbf v}
\newcommand{\bu}{\mathbf u}
\newcommand{\bx}{\mathbf x}
\newcommand{\by}{\mathbf y}
\newcommand{\ba}{\mathbf a}
\newcommand{\bY}{\mathbf Y}
\newcommand{\bU}{\mathbf U}
\newcommand{\bV}{\mathbf V}
\newcommand{\bX}{\mathbf X}
\newcommand{\bB}{\mathbf B}
\newcommand{\bT}{T}
\newcommand{\bmu}{\boldsymbol{\mu}}
\newcommand{\cA}{\mathcal A}
\newcommand{\cB}{\mathcal B}
\newcommand{\cC}{\mathcal C}
\newcommand{\cE}{\mathcal E}
\newcommand{\cF}{\mathcal F}
\newcommand{\cG}{\mathcal G}
\newcommand{\cH}{\mathcal H}
\newcommand{\cL}{\mathcal L}
\newcommand{\cM}{\mathcal M}
\newcommand{\cP}{\mathcal P}
\newcommand{\cT}{\mathcal T}
\newcommand{\cU}{\mathcal U}
\newcommand{\cX}{\mathcal X}
\newcommand{\rR}{\mathrm R}
\newcommand{\rP}{\mathrm P}
\newcommand{\sM}{\mathscr M}
\newcommand{\sN}{\mathscr N}
\newcommand{\sI}{\mathscr I}
\newcommand{\fB}{\mathfrak B}
\newcommand{\fR}{\mathfrak R}
\newcommand{\fv}{\mathfrak v}
\newcommand{\bbD}{\mathbb D}
\newcommand{\bbR}{\mathbb R}
\newcommand{\bbT}{\mathbb T}
\newcommand{\bbU}{\mathbb U}
\newcommand{\bbP}{\mathbb P}
\newcommand{\bbE}{\mathbb E}
\newcommand{\bbM}{\mathbb M}
\newcommand{\bbN}{\mathbb N}
\newcommand{\eqd}{\stackrel{d}=}
\newcommand{\R}{{\mathbb R}}
\newcommand{\p}[1]{{\mathbb P}\left(#1\right)}
\newcommand{\pc}[1]{{\mathbb P}(#1)}
\newcommand{\cdist}[1]{\xrightarrow[#1]{\tiny d}}
\newcommand{\edge}[1]{\langle #1 \rangle}
\newcommand{\E}[1]{\mathbb E \left[ #1\right]}
\newcommand{\Egg}[1]{\mathbb E \Bigg[ #1\Bigg]}
\newcommand{\Ec}[1]{\mathbb E [#1]}
\newcommand{\I}[1]{{\mathbf 1}_{\{ #1\}}}
\newcommand{\Ic}[1]{{\mathbf 1}_{#1}}
\title{\bf Cutting down $\bp$-trees and \\
inhomogeneous continuum random trees}
\author{Nicolas Broutin
\thanks{Inria Paris--Rocquencourt, Domaine de Voluceau, 78153 Le Chesnay - France.
Email: nicolas.broutin@inria.fr}
\and Minmin Wang
\thanks{Universit\'e Pierre et Marie Curie, 4 place Jussieu, 75005 Paris - France.
Email: wangminmin03@gmail.com }
}
\date{}
\begin{document} 

\maketitle

\begin{abstract}
We study a fragmentation of the $\bp$-trees of Camarri and Pitman [\emph{Elect.\ J.\ Probab.}, 
vol.\ 5, pp.\ 1--18, 2000]. 
We give exact correspondences between the $\bp$-trees and trees which encode the 
fragmentation. We then use these results to study the fragmentation of the ICRTs  
(scaling limits of $\bp$-trees) and give distributional 
correspondences between the ICRT and the tree encoding the fragmentation. The theorems for 
the ICRT extend the ones by Bertoin and Miermont [\emph{Ann.\ Appl.\ Probab.}, vol. 23(4), 
pp.\ 1469--1493, 2013] about the cut tree of the Brownian continuum random tree. 
\end{abstract}




\section{Introduction}\label{sec:intro}

The study of random cutting of trees has been initiated by \citet{meir69} in the following 
form: Given a (graph theoretic) tree, one can proceed to chop the tree into pieces by 
iterating the following process: choose a uniformly random edge;
removing it disconnects the tree into two pieces; discard the part which does not contain 
the root and keep chopping the portion containing the root until it is reduced to a 
single node. In the present document, we consider the related version where the vertices 
are chosen at random and removed (until one is left with an empty tree); 
each such pick is referred to as a \emph{cut}. 
We will see that this version is actually much more adapted than the edge cutting procedure 
to the problems we consider here. 

The main focus in \cite{meir69} and in most of the subsequential papers has been put 
on the study of some parameters of this cutting down process, and 
in particular on how many cuts are necessary for the process to finish. This has been 
studied for a number of different models of deterministic and random trees such as 
complete binary trees of a given height, random trees arising from the divide-and-conquer 
paradigm \cite{Holmgren2010a,Holmgren2011c,IkMo2007,DrIkMoRo2009}
and the family trees of finite-variance critical Galton--Watson processes conditioned 
on the total progeny \cite{Janson06,Panholzer2006,FiKaPa2006}. 
The latter model of random trees turns out to be far more interesting, and it provides an 
\emph{a posteriori} motivation for the cutting down process. 
As we will see shortly, the cutting down process provides an interesting way to 
investigate some of the structural properties of random trees by partial destruction and 
re-combination, or equivalently as partially resampling the tree.

Let us now be more specific: if $L_n$ denotes the number of cuts required to completely cut down 
a uniformly labelled rooted tree (random Cayley tree, or equivalently condition Galton--Watson tree with Poisson offspring distribution) on $n$ nodes, 
then $n^{-1/2}L_n$ converges in distribution to a Rayleigh distribution which has 
density $xe^{-x^2/2}$ on $\R_+$. \citet{Janson06} proved that a similar result holds for 
any Galton--Watson tree with a finite-variance offspring distribution conditioned on the total progeny to be $n$. This is the parameter 
point of view. \citet*{ABH10} have shown that for the random Cayley trees, 
$L_n$ actually has the same distribution as the number of nodes on the path between two 
uniformly random nodes. Their method relies on an ``objective'' argument based 
on a coupling that associates with the cutting procedure
a partial resampling of the Cayley tree of the kind mentioned earlier: 
if one considers the (ordered) sequence of subtrees which 
are discarded as the cutting process goes on, and adds a path linking their roots, then 
the resulting tree is a uniformly random Cayley tree, and the two extremities of the path 
are independent uniform random nodes. So the properties of the parameter $L_n$ follow from a 
stronger correspondence between the combinatorial objects themselves.

This strong connection between the discrete objects can be carried to the level of their 
scaling limit, namely Aldous' Brownian continuum random tree (CRT) \cite{aldcrt3}. 
Without being too precise for now, the natural cutting procedure on the Brownian CRT
involves a Poisson rain of cuts sampled according to the length measure.
However, not all the cuts contribute to the isolation of the 
root. As in the partial resampling of the discrete setting, we 
glue the sequence of discarded subtrees  along an interval, thereby obtaining a new CRT.
If the length of the interval is well-chosen (as a function of the cutting process), 
the tree obtained is distributed 
like the Brownian CRT and the two ends of the interval are independently random leaves. 
This identifies the distribution of the discarded subtrees from the cutting procedure as the distribution 
of the forest one obtains from a spinal decomposition of the Brownian CRT. 
The distribution of the latter is intimately related to Bismut's \cite{Bismut1985a} decomposition of a Brownian excursion. See also \cite{Duquesne05} for the generalization to the L\'evy case.
Note that a similar identity has been proved by \citet{AbDe12} for general L\'evy trees 
without using a discrete approximation. 
A related example is that of the subtree prune 
and re-graft dynamics of \citet{EPW} \cite[See also][]{evans05}, which is even closer to the 
cutting procedure and truly resamples the object rather than giving a ``recursive'' decomposition.

The aim of this paper is two-fold. First we prove exact identities and give reversible transformations 
of $\bp$-trees similar to the ones for Cayley trees in \cite{ABH10}. 
The model of $\bp$-trees introduced by \citet{pit00} generalizes Cayley trees in allowing ``weights'' on 
the vertices. In particular, this additional structure of weights introduces some inhomogeneity. 
We then lift these results to the scaling limits, the inhomogeneous continuum random trees (ICRT) of
\citet{ald00}, which are closely related to the general additive coalescent \cite{ald00,
Bertoin2000a, Bertoin2001a}. 
Unlike the Brownian CRT or the stable trees (special cases of L\'evy trees), a general ICRT is not self-similar. 
Nor does it enjoy a ``branching property" as the L\'evy trees do \cite{legalllejan}.
This lack of ``recursivity'' ruins the natural approaches such as the one used in 
\cite{AbDe11,AbDe12} or the ones which would argue by comparing two fragmentations with the same 
dislocation measure but different indices of self-similarity \cite{Bertoin2002}. 
This is one of the reasons why we believe these path transformations at the level of the ICRT are interesting. 
Furthermore, 
a conjecture of \citet*[][p.~185]{ald04a} suggests that the path transformations for ICRTs
actually explain the result of \citet{AbDe12} for L\'evy trees by providing a result 
``conditional on the degree distribution''.

Second, rather than only focusing on the isolation of the root we also consider the genealogy of the 
entire fragmentation as in the recent work of \citet{Bert12} and \citet{Dieuleveut2013a} 
(who examine the case of Galton--Watson trees). 
In some sense, this consists in obtaining transformations corresponding 
to tracking the effect of the cutting down procedure on the isolation of all the 
points simultaneously. Tracking finitely many points is a simple generalization of the one-point 
results, but the ``complete'' result requires additional insight. 
The results of the present document are used in a companion paper \cite{BrWa2014a} to 
prove that the ``complete'' cutting procedure in which one tries to isolate 
every point yields a construction of the genealogy of the fragmentation on ICRTs which is 
reversible in the case of the Brownian CRT. More precisely, the genealogy of 
Aldous--Pitman's fragmentation of a Brownian CRT is another Brownian CRT, say $\cG$, 
and there exists a random transformation of $\cG$ into a real tree $\cT$ such that in the 
pair $(\cT,\cG)$ the tree $\cG$ is indeed distributed as the genealogy of the fragmentation 
on $\cT$, conditional on $\cT$. 
The proof there relies crucially on the ``bijective'' approach that we develop here.

\medskip
\noindent\textbf{Plan of the paper.}\ 
In the next section, we introduce the necessary notation and relevant background. We then present 
more formally the discrete and continuous models we are considering, and in which sense the 
inhomogeneous continuum random trees are the scaling limit of $\bp$-trees. In Section~\ref{sec:results} 
we introduce the cutting down procedures and state our main results. 
The study of cutting down procedure for $\bp$-trees is the topic of Section~\ref{sec:ptree-cutting}. 
The results are lifted to the level of the scaling limits in Section~\ref{sec:cont_cutting}.

\section{Notation, models and preliminaries}\label{sec:prelem}

Although we would like to introduce our results earlier, a fair bit of notation 
and background is in order before we can do so properly. This section may safely be skipped 
by the impatient reader and referred to later on. 



\subsection{Aldous--Broder Algorithm and  $\bp$-trees}

Let $A$ be a finite set and $\bp=(p_u, u\in A)$ be a probability measure on $A$
such that $\min_{u\in A}p_u>0$; this ensures that $A$ is indeed the support of $\bp$.
Let $\bbT_A$ denote the set of rooted trees labelled with (all the) elements
of~$A$ (connected acyclic graphs on $A$, with a distinguished vertex).
For $t\in \bbT_A$, we let $r=r(t)$ denote its root vertex.
For $u,v\in A$, we write $\{u,v\}$ to mean that $u$ and $v$ are adjacent in $t$.
We sometimes write $\edge{u,v}$ to mean that $\{u,v\}$ is an edge of $t$,
and that $u$ is on the path between $r$ and $v$ (we think of the
edges as pointing towards the root). 
For a tree $t\in \bbT_A$ (rooted at $r$, say) and a node $v\in A$, we let $t^v$ denote the tree 
re-rooted at $v$.

We usually abuse notation, but we believe it does not affect the clarity or precision of
our statements. For instance, we refer to a node $u$ in the vertex set $\fv(t)$ of a tree $t$ 
using $u\in t$. Depending on the context, we sometimes write $t\setminus \{u\}$ to denote the
forest induced by $t$ on the vertex set $\fv(t)\setminus \{u\}$.
The (in-)degree $C_u(t)$ of a vertex $u\in A$ is the number of edges of the form
$\edge{u, v}$ with $v\in A$. For a rooted tree $t$, and a node $u$ of $t$, 
we write $\Sub(t,u)$ for the subtree of $t$ rooted at $u$ (above $u$).
For $t\in \bbT_A$ and $\bV\subseteq A$, we write $\Span(t; \bV)$ for the subtree of $t$ 
spanning $\bV$ and the root of $r(t)$. So $\Span(t; V)$ is the subtree induced by $t$ 
on the set 
$$\bigcup_{u\in \bV} \llb r(t), u\rrb,$$
where $\llb u,v \rrb$ denotes collection of nodes on the (unique) path between $u$ and $v$ in $t$.
When $\bV=\{v_1,v_2,\dots, v_k\}$, we usually write $\Span(t; v_1,\dots, v_k)$ instead of
$\Span(t; \{v_1,\dots, v_k\})$.
We also write 
$$\Span^*(t;\bV):=\Span(t;\bV)\setminus \{r(t)\}.$$

As noticed by \citet{ald90} and \citet{bro89}, one can generate random trees on $A$ by 
extracting a tree from the trace of a random walk on $A$, where the sequence of steps
is given by a sequence of i.i.d.\ vertices distributed according to $\bp$.

\begin{algo}[Weighted version of Aldous--Broder Algorithm]\label{alg:ab}
Let $\bY=(Y_j, j\ge 0)$ be a sequence of independent variables with common
distribution $\bp$; further on, we say that $Y_j$ are i.i.d.\ $\bp$-nodes.
Let $\cT(\bY)$ be the graph rooted at $Y_0$ with the set of edges
\begin{equation}\label{ab}
\{\edge{ Y_{j-1}, Y_{j}}: Y_j\notin\{Y_0, \cdots, Y_{j-1}\}, j\ge 1\}.
\end{equation}
\end{algo}

The sequence $\bY$ defines a random walk on $A$, which
eventually visits every element of $A$ with probability one,
since $A$ is the support of $\bp$.
So the trace $\{\edge{Y_{j-1},Y_j}: j\ge 1\}$ of the random walk on $A$ is a
connected graph on $A$, rooted at $Y_0$.
Algorithm~\ref{alg:ab} extracts the tree $\cT(\bY)$ from the trace of the random
walk.
To see that $\cT(\bY)$ is a tree, observe that the edge $\edge{Y_{j-1},Y_j}$ is
added only if $Y_j$ has never appeared
before in the sequence. It follows easily that $\cT(\bY)$ is a connected graph
without cycles, hence a tree on $A$.
Let $\pi$ denote the distribution of $\cT(\bY)$.

\begin{lem}[\cite{ald90,bro89,evans-pitman}]\label{lem:dist}
For $t\in \bbT_A$, we have
\begin{equation}\label{eq:p-tree}
\pi(t):=\pi^{(\bp)}(t)=\prod_{u\in A} p_u^{C_u (t)}.
\end{equation}
\end{lem}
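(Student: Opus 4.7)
Plan.
I would compute $\bbP(\cT(\bY) = t)$ directly for a fixed rooted tree $t \in \bbT_A$ with root $r$, and identify it with $\prod_u p_u^{C_u(t)}$. The factor $\bbP(Y_0 = r) = p_r$ is immediate from the i.i.d.\ definition of $\bY$ and accounts for the root's contribution.

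The main computation decomposes $\bY$ at its successive first-visit times $\tau_1 < \tau_2 < \cdots < \tau_{n-1}$: write $V_0 = r, V_1, \ldots, V_{n-1}$ for the distinct vertices of $A$ in order of first discovery, set $S_k = \{V_0, \ldots, V_{k-1}\}$, and $q_k = \sum_{u \in S_k} p_u$. The event $\{\cT(\bY) = t\}$ is exactly the intersection of (i) the ordering $(V_0, \ldots, V_{n-1})$ being a linear extension of the partial order induced by $t$, and (ii) $Y_{\tau_k - 1}$ being the parent of $V_k$ in $t$ for each $k \ge 1$. Using the i.i.d.\ structure of $\bY$ and the strong Markov property at each $\tau_{k-1}$, the joint conditional law of $(V_k, Y_{\tau_k - 1})$ given the history is explicit: it is a mixture of (a) an ``immediate jump'' from $V_{k-1}$ directly to $V_k$ in a single step (with probability proportional to $p_{V_k}$, automatically forcing $Y_{\tau_k - 1} = V_{k-1}$), and (b) a ``bounce then jump'' where the walker first visits at least one intermediate vertex in $S_k$, the last of which is distributed like $\bp$ restricted to $S_k$. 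Multiplying these factors over $k$ after substituting the correct parent expresses $\bbP(\cT(\bY) = t)$ as a sum over linear extensions $\sigma$ of $t$ of fully explicit products.

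The main obstacle is to collapse this sum into the clean product $\prod_u p_u^{C_u(t)}$. I would proceed by induction on $n = |A|$. Conditioning on the identity of the last-discovered vertex $V_{n-1} = \ell$ (necessarily a leaf of $t$), the key observation is that the subsequence $\tilde{\bY}$ of $\bY$ obtained by deleting every $\ell$-entry is i.i.d.\ on $A \setminus \{\ell\}$ with the renormalised law $\tilde{\bp} = (p_v/(1 - p_\ell))_{v \ne \ell}$, and is independent of the first-hit time of $\ell$ in $\bY$, which is $\mathrm{Geom}(p_\ell)$-distributed. Applying the inductive hypothesis to the smaller walk $\tilde{\bY}$, the boundary factor $p_{\mathrm{parent}_t(\ell)}$ for the step into $\ell$ and the factor $p_\ell$ from the hit of $\ell$ combine with the summation of the geometric interleaving so that the renormalising powers of $(1 - p_\ell)$ cancel exactly, summing over $\ell$ to produce the advertised formula $\prod_u p_u^{C_u(t)}$.
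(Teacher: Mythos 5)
The paper does not actually prove Lemma~\ref{lem:dist}: it quotes the result from Aldous, Broder and Evans--Pitman, so your proposal can only be judged on its own terms. Your preliminary analysis is sound: the first-visit decomposition, the fact that given the history the next new vertex $V_k$ has law $\bp$ restricted to $A\setminus S_k$ independently of the predecessor $Y_{\tau_k-1}$, and the description of the predecessor as a mixture of ``immediate jump'' and ``bounce then jump'' (last bounce $\sim\bp|_{S_k}$) are all correct, and they do reduce the problem to collapsing a sum over discovery orders. (A minor bookkeeping point: the root contributes $p_r^{C_r(t)}$ to the product, not just the factor $p_r$ coming from $Y_0$.)

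The genuine gap is in the inductive step, precisely where you claim the powers of $(1-p_\ell)$ ``cancel exactly''. Fix a leaf $\ell$ of $t$ with parent $w$, let $G$ be the first hitting time of $\ell$ in $\bY$ (geometric, independent of the deleted walk $\tilde{\bY}$, as you say), and let $\tilde R$ be the time at which $\tilde{\bY}$ has discovered all of $A\setminus\{\ell\}$, with $\tilde Y_{\tilde R}$ its last-discovered vertex. The event that $\ell$ is discovered last is $\{G>\tilde R\}$, and performing the geometric summation gives
\begin{equation*}
\bbP\big(\cT(\bY)=t,\ \ell\text{ last}\big)
=\bbE\Big[\mathbf{1}_{\{\cT(\tilde{\bY})=t\setminus\ell\}}\,(1-p_\ell)^{\tilde R+1}\big(p_w+p_\ell\,\mathbf{1}_{\{\tilde Y_{\tilde R}=w\}}\big)\Big].
\end{equation*}
The factor $(1-p_\ell)^{\tilde R+1}$ does not come out of the expectation: it is correlated with the event $\{\cT(\tilde{\bY})=t\setminus\ell\}$, and there is an additional boundary term involving which vertex $\tilde{\bY}$ discovers last. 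The inductive hypothesis only provides $\bbP(\cT(\tilde{\bY})=t\setminus\ell)=\prod_u\tilde p_u^{\,C_u(t\setminus\ell)}$, which is not enough to evaluate this expression; you would need the joint law of the reduced tree, its cover time and its last-discovered vertex, i.e.\ essentially the Camarri--Pitman ``birthday'' description, a strictly stronger statement than the lemma. A three-vertex check confirms the per-leaf cancellation really fails: for the star $t$ rooted at $1$ on $\{1,2,3\}$ one computes $\bbP(\cT(\bY)=t,\ 3\text{ last})=p_1^2p_2/(p_2+p_3)$, which is not the inductive value for $t\setminus 3$ times boundary factors with the $(1-p_3)$ powers cancelled; only the sum over the two leaves collapses to $p_1^2$, and your sketch gives no mechanism for that collapse. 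So either strengthen the induction hypothesis to the full joint law just described, or drop the induction and finish your direct computation another way, e.g.\ by the time-reversal/Markov-chain-tree-theorem argument of Aldous and Broder, or by collapsing the sum over linear extensions algebraically.
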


Note that $\pi$ is indeed a probability distribution on $\bbT_A$, since by Cayley's 
multinomial formula (\cite{cayley89,renyi70}), we have
\begin{equation}\label{eq:cayley}
\sum_{t\in \bbT_A} \pi(t)=\sum_{t\in \bbT_A}\prod_{u\in A} p_u^{C_u (t)}
=\left (\sum_{u\in A} p_u\right )^{|A|-1}=1.
\end{equation}
A random tree on $A$ distributed according to $\pi$ as specified by \eqref{eq:p-tree} 
is called a \emph{$\bp$-tree}. It is also called the birthday tree in the literature, for its 
connection with the general birthday problem (see \cite{pit00}). 
Observe that
when $\bp$ is the uniform distribution on $[n]:=\{1,2,\dots, n\}$, 
a $\bp$-tree is a uniformly random rooted tree on $[n]$ (a Cayley tree).
So the results we are about to present generalize the
exact distributional results in \cite{ABH10}. However, we believe that the point of
view we adopt here is a little cleaner, since it permits to make the
transformation \emph{exactly} reversible without any extra anchoring nodes
(which prevent any kind duality at the discrete level).

From now on, we consider $n\ge 1$ and let $[n]$
denote the set $\{1,2, \cdots, n\}$. We write $\bbT_n$ as a shorthand for
$\bbT_{[n]}$, the set of the rooted trees on $[n]$.
Let also $\bp=(p_i, 1\le i\le n)$ be a probability measure on~$[n]$ satisfying
$\min_{i\in [n]}p_i>0$. For a subset $A\subseteq [n]$ such that $\bp(A)>0$, we let
$\bp|_A(\,\cdot\,)=\bp(\,\cdot\,\cap A)/\bp(A)$ denote the restriction of $\bp$ on $A$, and 
write $\pi|_A:=\pi^{(\bp|_A)}$.
The following lemma says that the distribution of $\bp$-trees is invariant by
re-rooting at an independent  $\bp$-node  and ``recursive'' in a certain sense. 
These two properties are one of the keys to our results on the discrete objects.
(For a probability distribution $\mu$, we write $X\sim \mu$ to mean that 
$\mu$ is the distribution of the random variable $X$.)

\begin{lem}\label{lem:re-root}Let $T$ be a $\bp$-tree on $[n]$.
\begin{compactenum}[i)]
\item If $V$ is an independent $\bp$-node. Then, $T^V\sim \pi$.
\item Let $N$ be set of neighbors of the root in $T$. Then, for $u\in N$,
conditional on $\fv(\Sub(T, u))=\bV$, $\Sub(T, u)\sim \pi|_\bV$ independent of
$\{\Sub(T, w): w\in N, w\ne u\}$.
\end{compactenum}
\end{lem}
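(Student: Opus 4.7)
\medskip
\noindent\textbf{Proof plan.}\
I would derive both parts by direct manipulation of the explicit product formula~\eqref{eq:p-tree}. The argument is essentially routine; the only step requiring a little care is the bookkeeping of in-degrees under re-rooting needed for~(i).

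For part~(i), the plan is first to establish the identity
$$\pi(t^v) \;=\; \pi(t)\cdot \frac{p_v}{p_{r(t)}}\qquad (t\in \bbT_n,\, v\in[n]).$$
Indeed $t$ and $t^v$ share the same underlying edge set, and only the edges along $\llb r(t), v\rrb$ reverse orientation; walking along this path, each interior vertex both loses and gains a child, so its in-degree is unchanged, whereas $C_{r(t)}$ drops by one and $C_v$ increases by one. The identity then falls out of~\eqref{eq:p-tree}. I would then expand
$$\p{T^V = s} \;=\; \sum_{t\in\bbT_n}\pi(t)\sum_{v\in[n]} p_v\,\I{t^v = s},$$
note that for fixed $s$ the admissible pairs $(t,v)$ are exactly $\{(s^w, r(s)) : w\in[n]\}$, apply the re-rooting identity to each $\pi(s^w)$, and collapse the resulting sum by means of $\sum_{w\in [n]} p_w = 1$ to obtain $\p{T^V=s}=\pi(s)$.

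For part~(ii), I would factorise \eqref{eq:p-tree} along the root. If $r=r(t)$ has neighbours $u_1,\ldots,u_k$ and $t_i:=\Sub(t,u_i)$, $\bV_i:=\fv(t_i)$, then $C_r(t)=k$, $C_v(t)=C_v(t_i)$ for every $v\in\bV_i$, and $\sum_{v\in\bV_i} C_v(t_i) = |\bV_i|-1$; substituting $p_v = \bp(\bV_i)\,\bp|_{\bV_i}(v)$ gives
$$\pi(t)\;=\; p_r^{k}\prod_{i=1}^{k}\bp(\bV_i)^{|\bV_i|-1}\,\pi^{(\bp|_{\bV_i})}(t_i).$$
Since each $\pi^{(\bp|_{\bV_i})}$ is already a probability measure on $\bbT_{\bV_i}$ by~\eqref{eq:cayley}, the prefactor $p_r^{k}\prod_i \bp(\bV_i)^{|\bV_i|-1}$ depends only on the partition of $[n]\setminus\{r\}$ induced by the children of $r$. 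Consequently, conditional on that partition, the subtrees $(t_1,\ldots,t_k)$ are independent with $t_i \sim \pi|_{\bV_i}$, which is the content of~(ii).
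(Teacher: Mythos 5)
Your proof is correct and follows exactly the route the paper itself indicates: the authors dispose of this lemma in one line (``verified from \eqref{eq:p-tree}\dots clear from the product form of $\pi$''), and your re-rooting identity $\pi(t^v)=\pi(t)\,p_v/p_{r(t)}$ together with the factorisation of $\pi$ at the root are precisely the computations being alluded to. The only remark worth making concerns~(ii): what you prove --- that conditional on $r(T)$ and the induced partition of $[n]\setminus\{r(T)\}$ into blocks, the subtree on a block $\bV$, rooted at the (random) neighbour of the root inside $\bV$, is $\pi|_\bV$-distributed and independent of the other blocks --- is the intended reading of the lemma's slightly informal phrasing, and is exactly the form in which the property is used later (e.g.\ in the proof of Lemma~\ref{lem:multi-cut}).
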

The first claim can be verified from \eqref{eq:p-tree}, the 
second is clear from the product form of $\pi$.

\subsection{Measured metric spaces and the Gromov--Prokhorov topology}
If $(X, d)$ is a metric space endowed with the Borel $\sigma$-algebra, we denote by $\cM_f(X)$ 
the set of finite measures on $X$ and by $\cM_1(X)$ the subset of probability measures on $X$. 
If $m\in \cM_f(X)$, we denote by $\supp(m)$ the support of $m$ on $X$, that is the smallest closed 
set $A$ such that $m(A^c)=0$.
If $f: X\to Y$ is a measurable map between two metric spaces, and if $m\in\cM_f(X)$, 
then the push-forward of $m$ is an element of $\cM_f(Y)$, denoted by $f_*m\in \cM_f(Y)$, 
and is defined by $(f_*m)(A)=m(f^{-1}(A))$ for each Borel set $A$ of $Y$. 
If $m\in \cM_f(X)$ and $A\subseteq X$, we denote by $m\!\!\upharpoonright_A$ the restriction 
of $m$ to $A$: $m\!\!\upharpoonright_A\!\!(B)=m(A\cap B)$ for any Borel set $B$. 
This should not be confused with the restriction of a probability measure, which remains a 
probability measure and is denoted by $m|_A$.

We say a triple $(X, d, \mu)$ is a \emph{measured metric space} (or sometimes a 
\emph{metric measure space}) if $(X, d)$ is a Polish space (separable and complete) 
and $\mu\in \cM_1(X)$. Two measured metric spaces $(X, d, \mu)$ and 
$(X', d', \mu')$ are said to be \emph{weakly isometric} if there exists an isometry 
$\phi$ between the supports of $\mu$ on $X$ and of $\mu'$ on $X'$ such that $(\phi)_*\mu=\mu'$. 
This defines an equivalence relation between the measured metric spaces, and 
we denote by $\bbM$ the set of equivalence classes. 
Note that if $(X,d,\mu)$ and $(X',d',\mu')$ are weakly isometric, the 
metric spaces $(X,d)$ and $(X',d')$ may not be isometric. 

We can define a metric on $\bbM$ by adapting Prokhorov's distance. 
Consider a metric space $(X,d)$ and for $\epsilon>0$, let $A^\epsilon:=\{x\in X: d(x,A)<\epsilon\}$.
Then, given two (Borel) probability measures $\mu, \nu\in \cM_1(X)$, 
the Prokhorov distance $\dpr$ between $\mu$ and $\nu$ is defined by 
\begin{equation}\label{eq: defdpr}
\dpr(\mu,\nu):=\inf\{\epsilon>0: \mu(A)\le \nu(A^\epsilon)+\epsilon
\text{~and~} \nu(A)\le \mu(A^\epsilon)+\epsilon, \text{~for all~Borel~sets~}A\}.
\end{equation}
Note that the definition of the Prokhorov distance \eqref{eq: defdpr} can be easily extended to a 
pair of finite (Borel) measures on~$X$. Then, for two measured metric spaces $(X,d,\mu)$ and 
$(X',d',\mu')$ the Gromov--Prokhorov (GP) distance between them is defined to be
$$
\dgp((X,d,\mu),(X',d',\mu')) = \inf_{Z,\phi,\psi} \dpr(\phi_\ast \mu, \psi_\ast \mu'),
$$
where the infimum is taken over all metric spaces $Z$ and isometric embeddings $\phi:\supp(\mu)\to Z$ 
and $\psi:\supp(\mu')\to Z$. It is clear that $\dgp$ depends only on the equivalence classes 
containing $(X, d,\mu)$ and $(X', d', \mu')$. 
Moreover, the Gromov--Prokhorov distance turns $\bbM$ in a Polish space.

There is another more convenient characterization 
of the GP topology (the topology induced by $\dgp$) that relies on convergence of distance matrices between random points. 
Let $\cX=(X, d, \mu)$ be a measured metric space and let $(\xi_i, i\ge 1)$ be a sequence of i.i.d.\ points of 
common distribution $\mu$. In the following, we will often refer to such a sequence as 
$(\xi_i, i\ge 1)$ as an i.i.d.\ $\mu$-sequence. We write 
$\rho^{\cX}=(d(\xi_i, \xi_j), 1\le i, j<\infty)$ 
for the distance matrix associated with this sequence.
One easily verifies that the distribution of $\rho^{\cX}$ does not depend on the particular 
element of an equivalent class of $\mathbb M$. Moreover, by Gromov's reconstruction theorem 
\cite[$3\frac{1}{2}$]{Gromov}, the distribution of $\rho^{\cX}$ characterizes $\cX$ as an 
element of $\bbM$. 

\begin{prop}[Corollary 8 of \cite{Lohr}]
If $\cX$ is some random element taking values in $\bbM$ and for each $n\ge 1$, 
$\cX_n$ is a random element taking values in $\bbM$, then $\cX_n$ converges 
to $\cX$ in distribution as $n\to\infty$ if and only if $\rho^{\cX_n}$ converges to $\rho^{\cX}$ 
in the sense of finite-dimensional distributions.
\end{prop}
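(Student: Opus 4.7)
The plan is to reduce the statement to the deterministic fact that the map $\Phi:\bbM\to\cM_1(\R_+^{\bbN\times\bbN})$ sending $\cX$ to the law of its distance matrix $\rho^{\cX}$ is a topological embedding, where the target carries the weak topology (equivalently, the product/f.d.d.\ topology on the Polish space $\R_+^{\bbN\times\bbN}$). Combined with Gromov's reconstruction theorem, which guarantees that $\Phi$ is injective, this reduces the statement to a continuous mapping argument in one direction and a tightness plus uniqueness argument in the other.

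For the forward direction, I would first establish continuity of $\Phi$ at the deterministic level. Given $\dgp(\cX_n,\cX)\to 0$, embed everything isometrically into a common Polish space $Z$ so that the push-forwards satisfy $\phi_n{}_\ast\mu_n\to\psi_\ast\mu$ in Prokhorov distance on $Z$; then Skorokhod's representation allows one to couple i.i.d.\ $\mu_n$-sequences with an i.i.d.\ $\mu$-sequence so that the coordinates converge almost surely in $Z$, yielding convergence in distribution of each $k\times k$ truncation of $\rho^{\cX_n}$ to that of $\rho^{\cX}$. Once $\Phi$ is known to be continuous, the continuous mapping theorem gives $\Phi(\cX_n)\to\Phi(\cX)$ in distribution in $\cM_1(\R_+^{\bbN\times\bbN})$; integrating a bounded continuous test function $g(\mu)=\int f\,d\mu$ with $f\in C_b(\R_+^{k\times k})$ then turns this into convergence of the annealed laws $\bbE[\Phi(\cX_n)]\to\bbE[\Phi(\cX)]$, which is precisely the asserted f.d.d.\ convergence of $\rho^{\cX_n}$.

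For the reverse direction, assume $\rho^{\cX_n}\to\rho^{\cX}$ in f.d.d. The key step is to establish tightness of the laws of $(\cX_n)$ on the Polish space $(\bbM,\dgp)$. Given tightness, Prokhorov's theorem provides a subsequential limit $\cY$, and by the forward direction $\rho^{\cY}\overset{d}{=}\rho^{\cX}$; Gromov's reconstruction theorem then forces $\cY\overset{d}{=}\cX$, so every subsequence has a further subsequence converging to the same limit, hence $\cX_n\to\cX$ in distribution. To obtain tightness, I would use the sample approximation $\cX_n^{[k]}$, the random measured metric space formed by the first $k$ $\mu_n$-samples with their empirical measure. Its law depends on $\cX_n$ only through the $k\times k$ block of $\rho^{\cX_n}$, so by hypothesis $\cX_n^{[k]}\to\cX^{[k]}$ in distribution in $\bbM$ for each fixed $k$. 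It remains to control $\dgp(\cX_n^{[k]},\cX_n)$ uniformly in $n$: this quantity is bounded by a Prokhorov-type distance between the empirical and true sampling measure and its tail is a function of the law of $\rho^{\cX_n}$ alone, so the assumed f.d.d.\ convergence yields the required uniform-in-$n$ estimate as $k\to\infty$.

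The main obstacle is precisely this uniform-in-$n$ control of the sample approximation error $\dgp(\cX_n^{[k]},\cX_n)$. The continuity of $\Phi$, the Skorokhod coupling step and Gromov's reconstruction theorem are essentially standard ingredients once one is willing to quote them; the nontrivial content is packaging the empirical measured metric space estimate in a form that depends only on the finite-dimensional distributions, which is what ultimately allows one to transfer the tightness from the distance matrices back to $\bbM$.
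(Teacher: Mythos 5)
The paper offers no proof of this proposition---it is quoted from L\"ohr (Corollary 8), whose argument, like yours, runs through continuity of the distance-matrix map for the forward implication and a finite-sample approximation plus tightness for the converse. Your forward direction is fine, and the overall architecture of your converse (approximate $\cX_n$ by the sampled space $\cX_n^{[k]}$, transfer its convergence from the $k\times k$ blocks, control the approximation error, then use Prokhorov and reconstruction to identify the limit) is the right one. But the decisive step is exactly the one you do not carry out, and the justification you give for it is not valid as stated. You bound $\dgp(\cX_n^{[k]},\cX_n)$ by the Prokhorov distance $\dpr$ between the empirical measure of $k$ samples and the true measure $\mu_n$, and then argue that since the tail of this quantity ``is a function of the law of $\rho^{\cX_n}$ alone'', the assumed finite-dimensional convergence gives the required bound uniformly in $n$. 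That inference is a non sequitur: the tail probability is a functional of the law of the \emph{entire infinite} matrix (the true measure $\mu_n$ only appears as an almost-sure limit of empirical measures), and finite-dimensional convergence controls only bounded continuous functionals of finitely many entries. Without an additional argument, nothing prevents the spaces $\cX_n$ from having, say, a vanishing amount of mass spread over points at macroscopic distance from everything else, in which case the empirical approximation error need not go to $0$ uniformly in $n$ as $k\to\infty$; ruling this out is precisely the content of the ``automatic tightness'' that makes the proposition nontrivial.

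To close the gap one must dominate the approximation error by a functional of a \emph{fixed finite block}, uniformly over all measured metric spaces, and only then invoke the hypothesis. Two standard ways to do this: (i) replace $\mu_n$ by an independent second empirical sample, using convexity of an integral probability metric (e.g.\ bounded-Lipschitz, conditionally on $\cX_n$ one has $d_{BL}(\mathrm{emp}_k,\mu_n)\le \bbE'\big[d_{BL}(\mathrm{emp}_k,\mathrm{emp}_k')\big]$), so that the bound depends continuously on the $2k\times 2k$ block and its limit as $n\to\infty$ is the corresponding quantity for $\cX$, which vanishes as $k\to\infty$ by Varadarajan's theorem because $\cX$ is a genuine random element of $\bbM$; or (ii) a conditional Chebyshev/concentration bound on empirical ball masses, which reduces the Greven--Pfaffelhuber--Winter tightness criterion (control of $\bbP[\mu_n(B_\ep(\xi_1))\le\delta]$) to finite-block probabilities, again with an error term uniform in $n$. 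Either route is a genuine argument that your sketch replaces by an assertion, and you acknowledge as much when you call it ``the main obstacle''. A secondary point: to identify the subsequential limit you invoke Gromov's reconstruction theorem, which as stated in the paper concerns a \emph{deterministic} element of $\bbM$; for random elements you need the measurable strengthening (a map that reconstructs $\cX$ almost surely from $\rho^{\cX}$, via completion of the sampled points together with the almost-sure weak limit of empirical measures), so that equality of the annealed matrix laws forces equality of the laws on $\bbM$. This is standard but should be said, since it is the only thing that turns ``$\rho^{\cY}$ and $\rho^{\cX}$ have the same law'' into ``$\cY$ and $\cX$ have the same law''.
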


\medskip
\noindent\textbf{Pointed Gromov--Prokhorov topology.}\ 
The above characterization by matrix of distances turns out to be quite handy when we want to keep track of marked points. 
Let $k\in \bbN$. If $(X, d, \mu)$ is a measured metric space and $\bx=(x_1, x_2, \cdots, x_k)\in X^k$ 
is a $k$-tuple, then we say $(X, d, \mu, \bx)$ is a \emph{$k$-pointed measured metric space}, 
or simply a pointed measured metric space. 
Two pointed metric measure spaces $(X, d, \mu, \bx)$ and $(X', d', \mu', \bx')$ are said to be 
\emph{weakly isometric} if there exists an isometric bijection 
$$\phi: \supp(\mu)\cup\{x_1, x_2, \cdots, x_k\}\to \supp(\mu')\cup\{x'_1, x'_2, \cdots, x'_k\}$$ 
such that $(\phi)_*\mu=\mu'$ and $\phi(x_i)=x'_i$, $1\le i\le k$, where $\bx=(x_1, x_2, \cdots, x_k)$ 
and $\bx'=(x'_1, x'_2, \cdots, x'_k)$.
We denote by $\bbM^*_k$ the space of weak isometry-equivalence classes of 
$k$-pointed measured metric spaces. Again, we emphasize the fact that the underlying metric spaces
$(X,d)$ and $(X',d')$ do not have to be isometric. 
The space $\bbM^*_k$ equipped with the following pointed Gromov--Prokhorov topology is a Polish space.

A sequence $(X_n, d_n, \mu_n, \bx_n)_{n\ge 1}$ of $k$-pointed measured metric spaces is said to converge to 
some pointed measured metric space $(X, d, \mu, \bx)$ in the $k$-pointed Gromov--Prokhorov topology if for any $m\ge 1$,
$$
\big(d_n(\xi^*_{n, i}, \xi^*_{n, j}), 1\le i, j\le m\big)
\carrow 
\big(d(\xi^*_{i}, \xi^*_{j}), 1\le i, j\le m\big),
$$
where for each $n\ge 1$ and $1\le i \le k$, $\xi^*_{n, i}=x_{n, i}$ if 
$\bx_n=(x_{n, 1}, x_{n, 2}, \cdots, x_{n, k})$ and $(\xi^*_{n, i}, i\ge k+1)$ is a sequence of 
i.i.d.\ $\mu_n$-points in $X_n$. 
Similarly, $\xi^*_i=x_i$ for $1\le i\le k$ and $(\xi_i^*, i\ge k+1)$ is a sequence of 
i.i.d.\ $\mu$-points in $X$. This induces the $k$-pointed Gromov--Prokhorov topology on $\bbM^*_k$.

\subsection{Compact metric spaces and the Gromov--Hausdorff metric}

\noindent\textbf{Gromov--Hausdorff metric.}\ 
Two compact subsets $A$ and $B$ of a given
metric space $(X,d)$ are compared using the Hausdorff distance $\dhau$. 
$$\dhau(A,B):=
\inf\{\epsilon>0: A \subseteq B^\epsilon \text{~and~} B\subseteq A^\epsilon\}.
$$
To compare two compact metric spaces $(X,d)$ and $(X',d')$, we first embed them
into a single metric space $(Z,\delta)$ via isometries $\phi:X\to Z$ and $\psi:X'\to Z$,
and then compare the images $\phi(X)$ and $\psi(X')$ using the Hausdorff distance
on $Z$. One then defines the Gromov--Hausdoff (GH) distance $\dgh$ by
$$
\dgh((X,d),(X',d')):=\inf_{Z,\phi, \psi} \dhau(\phi(X),\psi(X')),
$$
where the infimum ranges over all choices of metric spaces $Z$ and isometric embeddings $\phi: X\to Z$
and $\psi: X'\to Z$. Note that, as opposed to the case of the GP topology, two compact 
metric spaces that are at GH distance zero are isometric. 

\medskip
\noindent\textbf{Gromov--Hausdorff--Prokhorov metric.}\ 
Now if $(X, d)$ and $(X', d')$ are two compact metric spaces and if $\mu\in  \cM_f(X)$ and 
$\mu'\in \cM_f(X')$, one way to compare simultaneously the metric spaces and the measures is to 
define
$$
\dghp\big((X, d, \mu), (X', d', \mu')\big)
:=\inf_{Z,\phi, \psi}\Big\{\dhau\big(\phi(X), \psi(X')\big)\vee \dpr(\phi_\ast \mu, \psi_\ast \mu')\Big\},
$$
where the infimum ranges over all choices of metric spaces $Z$ and isometric embeddings $\phi: X\to Z$ 
and $\psi: X'\to Z$. If we denote by $\bbM_c$ the set of equivalence classes of compact measured 
metric spaces under measure-preserving isometries, then $\bbM_c$ is Polish when endowed with $\dghp$.

\medskip
\noindent\textbf{Pointed Gromov--Hausdorff metric.}\ 
We fix some $k\in\bbN$. Given two compact metric spaces $(X, d_X)$ and $(Y, d_Y)$, let 
$\bx=(x_1, x_2, \cdots, x_k)\in X^k$ and $\by=(y_1, y_2, \cdots, y_k)\in Y^k$. Then the pointed 
Gromov--Hausdorff metric between $(X, d_X, \bx)$ and $(Y, d_Y, \by)$ is defined to be
$$
\dpgh\big((X, d_X, \bx), (Y, d_Y, \by)\big)
:=\inf_{Z, \phi, \psi}\Big\{\dhau\big(\phi(X), \psi(Y)\big)
\vee \max_{1\le i\le k}d_Z\big(\phi(x_i), \psi(y_i)\big)\Big\},
$$
where the infimum ranges over all choices of metric spaces $Z$ and isometric embeddings 
$\phi: X\to Z$ and $\psi: X'\to Z$.
Let $\bbM_c^k$ denote the isometry-equivalence classes of those compact metric spaces with $k$ 
marked points. It is a Polish space when endowed with $\dpgh$.

\subsection{Real trees}

A \emph{real tree} is a geodesic metric space without loops.
More precisely, a metric space $(X,d,r)$ is called a (rooted)
real tree if $r\in X$ and
\begin{itemize}
    \item for any two points $x,y\in X$, there exists a continuous
    injective map $\phi_{xy}: [0, d(x,y)]\to X$ such that
    $\phi_{xy}(0)=x$ and $\phi_{xy}(d(x,y))=y$.
    The image of $\phi_{xy}$ is denoted by $\llb x,y\rrb$;
    \item if $q:[0,1]\to X$ is a continuous injective map such that
    $q(0)=x$ and $q(1)=y$, then $q([0,1])=\llb x,y\rrb$.
\end{itemize}
As for discrete trees, when it is clear from context which metric we are talking about, 
we refering to metric spaces by the sets. For instance $(\cT,d)$ is often referred to as $\cT$.

A \emph{measured (rooted) real tree} is a real tree $(X, d, r)$ equipped with a finite (Borel) 
measure $\mu\in\cM(X)$. We always assume that the metric space $(X, d)$ is complete and separable. 
We denote by $\bbT_w$ the set of the weak isometry equivalence classes of 
measured rooted real trees, equipped with the pointed Gromov--Prokhorov topology.
Also, let $\bbT^c_w$ be the set of the measure-preserving isometry equivalence classes of those  
measured rooted real trees $(X, d, r, \mu)$ such that $(X, d)$ is compact. 
We endow $\bbT^c_w$ with the pointed Gromov--Hausdorff--Prokhorov 
distance. Then both $\bbT_w$ and $\bbT^c_w$ are Polish spaces. 
However in our proofs, we do not always distinguish an equivalence class and the elements in it.

Let $(\bT, d, r)$ be a rooted real tree. For $u\in \bT$, the degree of $u$ in $\bT$, 
denoted by $\deg(u, \bT)$, is the number of connected components of $\bT\setminus \{u\}$.
We also denote by
$$
\Lf(\bT)=\{u\in \bT: \deg(u, \bT)=1\}
\quad \text{and}\quad
\Br(\bT)=\{u\in \bT: \deg(u, \bT)\ge 3\}
$$
the set of the \emph{leaves} and the set of \emph{branch points} of $\bT$,
respectively. The skeleton of $\bT$ is the complementary set of $\Lf(\bT)$ in $\bT$,
denoted by $\Sk(\bT)$. 
For two points $u, v\in \bT$, we denote by $u\wedge v$ the closest common ancestor of $u$ and $v$, 
that is, the unique point $w$ of $\llb r,u\rrb \cap \llb r,v\rrb$ such that $d(u, v)=d(u, w)+d(w, v)$. 

For a rooted real tree $(\bT, r)$, if $x\in \bT$ then the subtree of $\bT$ above $x$, denoted by 
$\Sub(\bT, x)$, is defined to be 
$$
\Sub(\bT, x):=\{u\in \bT: x\in \llb r, u\rrb\}.
$$

\medskip
\noindent\textbf{Spanning subtree.}\ Let $(\bT, d, r)$ be a rooted real tree and let 
$\bx=(x_1, \cdots, x_k)$ be $k$ points of $\bT$ for some $k\ge 1$. 
We denote by $\Span(\bT; \bx)$ the smallest connected set of $\bT$ which contains the root 
$r$ and $\bx$, that is, $\Span(\bT; \bx)=\cup_{1\le i\le k}\llb r, x_i\rrb$. 
We consider $\Span(\bT; \bx)$ as a real tree rooted at $r$ and refer to it as 
a \emph{spanning subtree} or a \emph{reduced tree} of $\bT$. 

If $(\bT, d, r)$ is a real tree and there exists some $\bx=(x_1, x_2, \cdots, x_k)\in \bT^k$ 
for some $k\ge 1$ such that $\bT=\Span(\bT; \bx)$, then the metric aspect of $\bT$ is rather 
simple to visualize. More precisely, if we write $x_0=r$ and let 
$\rho^{\bx}=(d(x_i, x_j), 0\le i, j\le k)$, then $\rho^{\bx}$ determines $(\bT, d, r)$ 
under an isometry.

\medskip
\noindent\textbf{Gluing.}\ 
If $(\bT_i, d_i), i=1, 2$ are two real trees with some distinguished points 
$x_i\in \bT_i$, $i=1, 2$, the result of the \emph{gluing} of $T_1$ and $T_2$ at $(x_1,x_2)$ 
is the metric space $(\bT_1\cup \bT_2,\delta)$, where the distance $\delta$ is defined by
$$
\delta(u, v)=\left\{
\begin{array}{ll}
d_i(u, v), & \text{ if }(u, v)\in \bT_i^2, i=1, 2;\\
d_1(u, x_1)+d_2(v, x_2), & \text{ if } u\in \bT_1, v\in \bT_2.
\end{array}\right.
$$
It is easy to verify that $(\bT_1\cup\bT_2, \delta)$ is a real tree with $x_1$ and $x_2$ 
identified as one point, which we denote by $\bT_1\graft{x_1=x_2}\bT_2$ in the following. 
Moreover, if $\bT_1$ is rooted at some point $r$, we make the convention that 
$\bT_1\graft{x_1=x_2}\bT_2$ is also rooted at $r$.

\subsection{Inhomogeneous continuum random trees}\label{subset: icrt}

The inhomogeneous continuum random tree (abbreviated as ICRT in the following) has been
introduced in \cite{pit00} and \cite{ald00}. See also \cite{ald99,ald05,ald04a} 
for studies of ICRT and related problems.

Let $\boldsymbol{\Theta}$ (the \emph{parameter space}) be the set of sequences 
$\btheta=(\theta_0,\theta_1, \theta_2, \cdots)\in \bbR_+^\infty$ such that 
$\theta_1\ge\theta_2\ge\theta_3\cdots\ge 0 $, $\theta_0\ge 0$, $\sum_{i\ge 0}\theta_i^2=1$, and 
either $\theta_0>0$ or $\sum_{i\ge 1}\theta_i=\infty$.

\medskip
\noindent\textbf{Poisson point process construction.}\ 
For each $\btheta \in \bTheta$, 
we can define a real tree $\te$ in the following way.
\begin{itemize}
\item If $\theta_0>0$, let $\rP_0=\{(u_j,v_j), j\ge 1\}$ be a Poisson point process on the 
first octant $\{(x,y): 0 \le y\le x\}$ of intensity measure $\theta_0^2dxdy$, 
ordered in such a way that $u_1<u_2<u_3<\cdots$. 
\item For every $i\ge 1$ such that $\theta_i>0$, 
let $\rP_i=\{\xi_{i,j}, j\ge 1\}$ be a homogeneous Poisson process on $\bbR_+$ of intensity 
$\theta_i$ under $\bbP$, such that $\xi_{i, 1}<\xi_{i, 2}<\xi_{i, 3}<\cdots$. 
\end{itemize}
All these Poisson processes are supposed to be mutually independent and defined on some common probability space
$(\Omega, \cF, \bbP)$.
We consider the points of all these processes 
as marks on the half line $\bbR_+$, among which we distinguish two kinds: the \emph{cutpoints} and 
the \emph{joinpoints}. A cutpoint is either $u_j$ for some $j\ge 1$ or $\xi_{i, j}$ for some 
$i\ge 1$ and $j\ge 2$. For each cutpoint $x$, we associate a joinpoint $x^*$ as follows: 
$x^*=v_j$ if $x=u_j$ for some $j\ge 1$ and $x^*=\xi_{i, 1}$ if $x=\xi_{i, j}$ for some $i\ge 1$ and $j\ge 2$.
One easily verifies that the hypotheses on $\btheta$ imply that the set of cutpoints is 
a.s.\ finite on each compact set of $\bbR_+$, while the joinpoints are dense a.s.\ everywhere. 
(See for example \cite{ald00} for a proof.) In particular,
we can arrange the cutpoints in increasing order as $0<\eta_1<\eta_2<\eta_3<\cdots$. 
This splits $\bbR_+$ into countaly intervals that we now reassemble into a tree.
We write $\eta_k^*$ for the joinpoint associated to the $k$-th cutpoint $\eta_k$. 
We define $R_1$ to be the metric space $[0, \eta_1]$ rooted at $0$. For $k\ge 1$, we let
$$
R_{k+1}:=R_k\mathop{\circledast}_{\eta_k^*=\eta_k}[\eta_k, \eta_{k+1}].
$$
In words, we graft the intervals $[\eta_k, \eta_{k+1}]$ by gluing the left end at the joinpoint 
$\eta_k^*$. Note that we have $\eta_k^*<\eta_k$ a.s., thus $\eta_k^*\in R_k$ and the 
above grafting operation is well defined almost surely. It follows from this Poisson construction 
that $(R_k)_{k\ge 1}$ is a consistent family of ``discrete'' trees which also verifies 
the ``leaf-tight" condition in \citet{aldcrt3}. 
Therefore by \cite[Theorem 3]{aldcrt3}, the complete metric 
space $\cT:=\overline{\cup_{k\ge 1}R_k}$ is a real tree and almost surely there exists a probability 
measure $\mu$, called the \emph{mass measure}, which is concentrated on the leaf set of $\cT$. 
Moreover, if conditional on $\te$, $(V_k, k\ge 1)$ is a sequence of i.i.d.\ points sampled according 
to $\mu$, then for each $k\ge 1$, the spanning tree
$\Span(T; V_1, V_2, \cdots, V_k)$
has the same unconditional distribution as $R_k$. 
The distribution of the weak isometry equivalence class of $(\cT, \mu)$ is said to be the distribution 
of an  \emph{ICRT of parameter $\btheta$}, which is a probability distribution on $\bbT_w$.
The push-forward of the Lebesgue measure 
on $\bbR_+$ defines a $\sigma$-finite measure $\ell$ on $\te$, which is concentrated on $\Sk(T)$ and 
called the \emph{length measure} of $\te$. 
Furthermore, it is not difficult to deduce the 
distribution of $\ell(R_1)$ from the above construction of $\te$:
\begin{equation}\label{eq: distD}
\p{\ell(R_1)>r}
=\p{\eta_1>r}
=e^{-\frac{1}{2}\theta_0^2r^2}\prod_{i\ge1}(1+\theta_i r)e^{-\theta_i r}, \quad r>0.
\end{equation}

In the important special case when $\btheta=(1, 0, 0, \cdots)$, the above construction
coincides with the line-breaking construction of the Brownian CRT in \cite[Algorithm 3]{aldcrt1}, 
that is, $\cT$ is the Brownian CRT. This case will be referred as the Brownian case in the sequel.
We notice that whenever there is an index $i\ge 1$ such that $\theta_i>0$, the point, denoted by 
$\beta_i$, which  corresponds to the joinpoint $\xi_{i, 1}$ is a branch point of infinite degree. 
According to \cite[Theorem 2]{ald04a}), $\theta_i$ is a measurable function of $(\cT, \beta_i)$, and 
we refer to it as the local time of $\beta_i$ in what follows.

\medskip
\noindent\textbf{ICRTs as scaling limits of $\bp$-trees.}\ 
Let $\bp_n=(p_{n1}, p_{n2}, \cdots, p_{nn})$ be a probability measure on $[n]$ such that 
$p_{n1}\ge p_{n2}\ge \cdots\ge p_{nn}>0$, $n\ge 1$. 
Define $\sigma_n\ge 0$ by $\sigma_n^2=\sum_{i=1}^n p_{ni}^2$ and denote by $\bT^n$ the 
corresponding $\bp_n$-tree, which we view as a metric space on $[n]$ with graph distance $d_{T_n}$. 
Suppose that the sequence $(\bp_n, n\ge 1)$ verifies the following 
hypothesis: there exists some parameter $\btheta=(\theta_i, i\ge 0)$ such that
\begin{equation}\tag{H}\label{H}
\lim_{n\to\infty}\sigma_n=0,
\qquad\text{and} \qquad
\lim_{n\to\infty}\frac{p_{ni}}{\sigma_n}=\theta_i, \quad \text{ for every }i\ge 1.
\end{equation}
Then, writing $\sigma_n \bT^n$ for the rescaled metric space $([n], \sigma_n d_{T^n})$, 
\citet{pit00} have shown that
\begin{equation}\label{eq: CP}
(\sigma_n \bT^n, \bp_n) \mathop{\longrightarrow}^{n\to\infty}_{d, \gp} (\te, \mu),
\end{equation}
where $\to_{d,\gp}$ denotes the convergence in distribution with respect to the Gromov--Prokhorov 
topology. 


\section{Main results}\label{sec:results}

\subsection{Cutting down procedures for $\bp$-trees and ICRT}

Consider a $\bp$-tree $\bT$. 
We perform a cutting procedure on $\bT$ by picking each time a vertex according to the 
restriction of $\bp$ to the remaining part; however, it is more convenient for us to 
retain the portion of the tree that contains a random node $V$ sampled according to $\bp$ rather 
than the root. We denote by $L(\bT)$ the number of cuts necessary until $V$ 
is finally picked, and let $X_i$, $1\le i\le L(\bT)$, be the sequence of nodes chosen.
The following identity in distribution has been already shown 
in \cite{ABH10} in the special case of the uniform Cayley tree:
\begin{equation}\label{eq: in_idl}
L(\bT)\eqd \Card \{\text{vertices on the path from the root to }V\}.
\end{equation}
In fact, \eqref{eq: in_idl} is an immediate consequence of the following result. In the above 
cutting procedure, we connect the rejected parts, which are subtrees above $X_i$ just before 
the cutting, by drawing an edge between $X_i$ and $X_{i+1}$, $i=1, 2, \cdots, L(\bT)-1$ 
(see Figure~\ref{fig:one-cutting} in Section~\ref{sec:ptree-cutting}). 
We obtain another tree on the same vertex set, which contains a path from the first cut 
$X_1$ to the random node $V$ that we were trying to isolate. 
We denote by $\cut(T,V)$ this tree which (partially) encodes the isolating process of $V$. 
We prove in Section~\ref{sec:ptree-cutting} that we have
\begin{equation}\label{eq: in_idt}
(\cut(T, V), V)\eqd (\bT, V).
\end{equation}
This identity between the pairs of trees contains a lot of information about the 
distributional structure of the $\bp$-trees, and our aim is to obtain results similar 
to \eqref{eq: in_idt} for ICRTs. The method we use relies on the discrete approximation of 
ICRT by $\bp$-trees, and a first step consists in defining the appropriate cutting 
procedure for ICRT. 

In the case of $\bp$-trees, one may pick the nodes of $T$ in the order in which they appear 
in a Poisson random measure. We do not develop it here but one should keep in mind 
that the cutting procedure may be obtained using a Poisson point process on $\R_+\times T$ 
with intensity measure $dt \otimes \bp$. In particular, this measure has a natural 
counterpart in the case of ICRTs, and it is according to this measure that the points 
should be sampled in the continuous case. 

So consider now an ICRT $\cT$. Recall that for $\btheta\ne (1,0,\dots)$, for each $\theta_i>0$ 
with $i\ge 1$, there exists a unique point, denoted by $\beta_i$, which has infinite degree. 
Let $\cL$ be the measure on $\te$ defined by
\begin{equation}\label{eq: defcL}
\cL(dx):=\theta_0^2 \ell(dx)+\sum_{i\ge 1} \theta_i\delta_{\beta_i}(dx),
\end{equation}
which is almost surely $\sigma$-finite (Lemma \ref{lem: cL}).
Proving that $\cL$ is indeed the relevant cutting measure (in a sense made precise in 
Proposition~\ref{prop: cv-Ln}) is the topic of Section~\ref{sec:cv-Ln}. 
Conditional on $\te$, let $\cP$ be a Poisson point process on $\bbR_+\times \te$ of 
intensity measure $dt\otimes \cL(dx)$ and let $V$ be a $\mu$-point on $\cT$.
We consider the elements of $\cP$ as the successive cuts on $\te$ which try to isolate 
the random point $V$. For each $t\ge 0$, define 
$$\cP_t=\{x\in \te: \exists\, s\le t \text{ such that }\, (s,x)\in \cP\},$$
and let $\cT_t$ be the part of $\cT$ still connected to $V$ at time $t$, that is 
the collection of points $u\in \cT$ for which the unique path in $\te$ from 
$V$ to $u$ does not contain any element of $\cP_t$. Clearly, $\cT_{t'}\subset\cT_t$ if $t'\ge t$. 
We set $\cC:=\{t>0 : \mu(\cT_{t-}) > \mu(\cT_t) \}.$
Those are the cuts which contribute to the isolation of $V$. 

\subsection{Tracking one node and the one-node cut tree}

We construct a tree which encodes this cutting process in a similar way that the tree 
$H=\cut(T,V)$ encodes the cutting procedure for discrete trees. 
First we construct the ``backbone'', which is the equivalent of the path we add in the discrete case. 
For $t\ge 0$, we define
$$
L_t:=\int_0^t \mu(\cT_s)ds,
$$
and $L_\infty$ the limit as $t\to\infty$ (which might be infinite).
Now consider the interval $[0, L_\infty]$, together with its Euclidean metric, that we think of as 
rooted at $0$. Then, for each $t\in \cC$ we graft $\cT_{t-}\setminus\cT_t$, the portion of the 
tree discarded at time $t$, at the point $L_t\in [0, L_\infty]$ (in the sense of the gluing 
introduced in Section~\ref{subset: icrt}). 
This creates a rooted real tree and we denote by $\cut(\cT,V)$ its completion. 
Moreover, we can endow $\cut(\cT, V)$ with a (possibly defective probability) measure $\hat{\mu}$ by 
taking the push-forward of $\mu$ under the canonical injection $\phi$ from 
$\cup_{t\in \cC}(\cT_{t-}\setminus\cT_t)$ to $\cut(\cT, V)$. 
We denote by $U$ the endpoint  $L_\infty$ of the interval $[0,L_\infty]$. 
We show in Section~\ref{sec:cont_cutting} that
\begin{thma}\label{thm:conv_cutv}
We have $L_\infty<\infty$ almost surely. Moreover, under \eqref{H} we have
$$
(\sigma_n \cut(T^n,V^n), \bp_n, V^n)
\mathop{\longrightarrow}^{n\to\infty}_{d,\gp} 
(\cut(\cT,V), \hat{\mu}, U),
$$
jointly with the convergence in \eqref{eq: CP}. 
\end{thma}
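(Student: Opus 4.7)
My plan is to establish the joint convergence by coupling the cutting procedures on $\sigma_n T^n$ and on $\cT$, and then checking that the $\cut$-tree construction is continuous under this coupling. The discrete identity $(\cut(T^n,V^n),V^n)\eqd(T^n,V^n)$ gives the correct \emph{marginal} distribution of $\sigma_n\cut(T^n,V^n)$, but does not by itself identify the limit as the specific object $\cut(\cT,V)$ built from the Poisson cuts on $\cT$; for that one has to transport the entire cutting mechanism to the limit.

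For the finiteness of $L_\infty$, the key observation is that the discrete identity implies $\sigma_n L(T^n,V^n)\eqd\sigma_n d_{T^n}(r(T^n),V^n)$, which converges in law to the finite random variable $d_\cT(r,V)$ by \eqref{eq: CP}. Together with the convergence of cutting procedures described in the next paragraph, Fatou's lemma applied to $L_t=\int_0^t\mu(\cT_s)\,ds$ then forces $L_\infty<\infty$ almost surely. Alternatively, one can argue directly from the Poisson construction of $\cT$ that the decay of $\mu(\cT_t)$ is governed by cuts landing on an (evolving) spine issuing from $V$, whose intensity is $\cL$-driven and finite on compact sets.

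For the coupling, encode the discrete cutting as a Poisson point process $\cP^n$ on $\bbR_+\times[n]$ with intensity $dt\otimes\bp_n$ and rescale time by $\sigma_n$. Under \eqref{H}, the rescaled rate measure $\sigma_n^{-1}\bp_n$ on $\sigma_n T^n$ converges to $\cL=\theta_0^2\ell+\sum_{i\ge 1}\theta_i\delta_{\beta_i}$ on $\cT$: the atoms $p_{ni}/\sigma_n\to\theta_i$ produce the Dirac masses at the branch points $\beta_i$, while the diffuse part, driven by $\sigma_n^2=\sum p_{ni}^2$, produces the length-measure component $\theta_0^2\ell$. This is essentially the content of the announced Proposition~\ref{prop: cv-Ln}, and combined with Skorokhod representation it yields a coupling in which $\sigma_n T^n\to\cT$, $\cP^n\to\cP$, and $V^n\to V$ almost surely. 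Since the pointed Gromov--Prokhorov topology is characterized by distance matrices of i.i.d.\ samples from $\hat{\mu}$ (respectively $\bp_n$), and $\hat{\mu}$ is the push-forward of $\mu$, it then suffices to show that for i.i.d.\ $\mu$-points $W_1,\dots,W_m$ the pairwise distances in $\cut(\cT,V)$ (including those to $U$) are continuous functions of $(\cT,V,\cP,W_1,\dots,W_m)$ at typical configurations, with the corresponding distances on the discrete side given by the same recipe.

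The main obstacle will be this final continuity statement. The cut that ``captures'' a given sample $W_j$---the unique $t\in\cC$ with $W_j\in\cT_{t-}\setminus\cT_t$---depends discontinuously on the configuration: a cut just on the ``wrong side'' of the path $\llb V,W_j\rrb$ flips the membership of $W_j$. One must argue that such ambiguous configurations have probability zero in the limit, using $\sigma$-finiteness of $\cL$ (Lemma~\ref{lem: cL}), the fact that $\mu$ assigns no mass to $\Sk(\cT)$, and that the $W_j$ are a.s.\ distinct from the cut points. Granting this, the coupled almost-sure convergence propagates to the distance matrices of the $\cut$-trees and yields the claimed joint pointed Gromov--Prokhorov convergence with \eqref{eq: CP}.
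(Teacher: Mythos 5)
There is a genuine gap, and it sits exactly where the paper has to work hardest: the convergence of the \emph{total} quantities $\sigma_n L^n_\infty$ to $L_\infty$, which govern all distances to the marked point $U$ (recall $d_{\cut(\cT,V)}(\hat\xi_1,\hat\xi_j)=L_\infty-L_{\tau(\xi_j)}+\dots$ and $d(r,U)=L_\infty$). Your coupling-plus-continuity plan, together with a law-of-large-numbers/martingale step, can deliver $\sigma_n L^n_t\to L_t$ for each fixed $t$ (this is the content of Lemmas~\ref{lem: sp1} and~\ref{lem: sp2}), and your Fatou argument does give $L_\infty<\infty$ a.s. But neither the a.s.\ coupling nor "continuity at typical configurations" rules out that a non-negligible rescaled number of cuts occurs at large times, when the component of $V^n$ is tiny; a priori the limit of $\sigma_n L^n_\infty$ along subsequences could be $L_\infty$ plus a nontrivial extra term, and the distributional identity $\sigma_n L^n_\infty\eqd \sigma_n(d_{T^n}(r(T^n),V^n)+1)$ only identifies the marginal law of that limit, not that it equals $\int_0^\infty\mu(\cT_s)\,ds$ jointly with the rest of the distance matrix. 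The paper's own remark at the end of Section~\ref{sec: pfcv-hatt} (the record problem) shows this failure mode is real: one can have $L^n_t\to L_t$ for every fixed $t$ while $L^n_\infty$ diverges, so no soft continuity or tightness-of-marginals argument can close this step.

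What the paper does instead is prove the uniform tail bound $\lim_{t_0\to\infty}\limsup_n\bbP(\sigma_n(L^n_\infty-L^n_{t_0})\ge\delta)=0$ (Lemma~\ref{lem: tg}), and this is where the discrete duality enters quantitatively, not just through the marginal law: by Lemma~\ref{lem:joint_dist-tree-node}, the mass of the component of $V^n$ indexed by the number of cuts already performed is distributed as the mass of the subtree above the point at the corresponding height on the spine $\llb r(\bT^n),V^n\rrb$ (identity \eqref{eq: idmuTn}); combining this with the spinal limit (Lemma~\ref{lem: sl}) and a time-reversal/hitting-time argument in the Skorokhod topology shows that once the remaining mass is small, only $o(1/\sigma_n)$ further cuts can hit the component of $V^n$. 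If you want to keep your structure, you must add an argument of this kind (or an equivalent uniform control of the late cuts); with it, Proposition~\ref{prop: cutnb} follows and your distance-matrix assembly, including the separation events you correctly flag (the paper's events $\cA_n(i,j)$, handled by the Poisson convergence on reduced trees), goes through as in the paper. As a smaller point, your "continuity" step should also make explicit the concentration argument converting cut counts $L^n_{\tau_n(\cdot)}$ into the integrals $\int_0^{\cdot}\bp_n(\bT^n_s)\,ds$; this is routine (Doob's inequality, Lemma~\ref{lem: sp2}) but is not a consequence of the coupling alone.
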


Combining this with \eqref{eq: in_idt}, we show in Section~\ref{sec:cont_cutting} that
\begin{thma}\label{thm:id_cutv}
Conditional on $\te$, $U$ has distribution $\hat{\mu}$, 
and the unconditional distribution of $(\cut(\cT,V), \hat{\mu})$ is the same as that of $(\te, \mu)$.
\end{thma}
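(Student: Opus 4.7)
The plan is to obtain Theorem~\ref{thm:id_cutv} by passing to the scaling limit in the discrete identity \eqref{eq: in_idt}. Fix a sequence $(\bp_n, n \ge 1)$ satisfying \eqref{H} and converging to the parameter $\btheta$ of $\cT$, let $\bT^n$ be the corresponding $\bp_n$-tree, and let $V^n$ be a $\bp_n$-node sampled independently of $\bT^n$. The identity \eqref{eq: in_idt} asserts
\[
(\cut(\bT^n, V^n), V^n) \eqd (\bT^n, V^n),
\]
and I would first rephrase this as an equality in distribution of two random elements of $\bbM^*_1$, by equipping $[n]$ with the graph distance rescaled by $\sigma_n$, with the measure $\bp_n$, and with the marked vertex $V^n$.

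Next, I would pass to the limit on each side in the pointed Gromov--Prokhorov topology. For the left-hand side, Theorem~\ref{thm:conv_cutv} yields
\[
(\sigma_n\cut(\bT^n, V^n), \bp_n, V^n) \mathop{\longrightarrow}^{n\to\infty}_{d, \gp} (\cut(\cT, V), \hat\mu, U).
\]
For the right-hand side, \eqref{eq: CP} provides the unpointed convergence $(\sigma_n\bT^n, \bp_n) \to (\cT, \mu)$; since $V^n$ is an independent $\bp_n$-sample, we may take it as the first entry of the i.i.d.\ sample characterising pointed GP convergence, and thereby upgrade this to
\[
(\sigma_n\bT^n, \bp_n, V^n) \mathop{\longrightarrow}^{n\to\infty}_{d, \gp} (\cT, \mu, V),
\]
with $V$ an independent $\mu$-sample of $\cT$.

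Uniqueness of the weak limit applied to the discrete identity then delivers
\[
(\cut(\cT, V), \hat\mu, U) \eqd (\cT, \mu, V)
\]
in $\bbM^*_1$. Marginalising over the marked point gives the unconditional statement $(\cut(\cT, V), \hat\mu) \eqd (\cT, \mu)$. For the conditional one, note that $V$ is a $\mu$-sample given $(\cT, \mu)$ by construction; disintegrating both sides with respect to the unpointed coordinate, the marked point on the left is, conditional on $(\cut(\cT, V), \hat\mu)$, distributed as $\hat\mu$.

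I expect the main technical subtlety to be the pointed upgrade of \eqref{eq: CP} and the correct reading of the conditional assertion (which really refers to the $1$-pointed weak-isometry class rather than to $\te$ in a naive sense). Both are handled cleanly by the joint convergence clause (``jointly with the convergence in \eqref{eq: CP}'') in Theorem~\ref{thm:conv_cutv}, which ensures that both sides of the rescaled identity converge on the same sequence of couplings, and by the fact that the marked point is a measurable functional of the distance matrix under the i.i.d.-sample characterisation of pointed GP convergence recalled in Section~\ref{sec:prelem}.
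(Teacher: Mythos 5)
Your proposal is correct and follows essentially the same route as the paper: the paper likewise passes the discrete identity of Lemma~\ref{lem:joint_dist-tree-node} (i.e.\ \eqref{eq: in_idt}) to the limit using Theorem~\ref{thm:conv_cutv} jointly with \eqref{eq: CP}, via the distance-matrix characterization of (pointed) Gromov--Prokhorov convergence. The only cosmetic difference is that the paper compares the distance matrices on $\cut(\bT^n,V^n)$ with and without the marked point $V^n$ and lets $n\to\infty$, whereas you take limits of the two sides of the pointed identity and then disintegrate; the ingredients and conclusions are identical.
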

Theorems~\ref{thm:conv_cutv} and \ref{thm:id_cutv} immediately entail that
\begin{cor}\label{cor: id_cutv}
Suppose that \eqref{H} holds. Then 
$$
\sigma_n L(\bT^n)\carrow L_\infty,
$$
jointly with the convergence in \eqref{eq: CP}. Moreover, the unconditional distribution of 
$L_\infty$ is the same as that of the distance in $\te$ between the root and a random point 
$V$ chosen according to $\mu$, given in \eqref{eq: distD}.
\end{cor}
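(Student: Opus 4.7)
The plan is to identify $L(\bT^n)$ as a specific distance in the discrete cut tree, transfer it to its continuous limit via Theorem~\ref{thm:conv_cutv}, and then use Theorem~\ref{thm:id_cutv} to compute its distribution.

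First, by the very construction of $\cut(\bT^n, V^n)$ recalled in Section~\ref{sec:intro}, the added path $X_1^n \to X_2^n \to \cdots \to X_{L(\bT^n)}^n \to V^n$ linking the successive cuts to the target has length exactly $L(\bT^n)$, and joins the root $X_1^n$ of $\cut(\bT^n, V^n)$ to the marked vertex $V^n$. Hence $L(\bT^n) = d_{\cut(\bT^n, V^n)}(X_1^n, V^n)$. Analogously, in the continuous case, the interval $[0, L_\infty]$ is isometrically embedded in $\cut(\cT, V)$ with root $0$ and other endpoint $U = L_\infty$, so $L_\infty = d_{\cut(\cT, V)}(0, U)$. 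Note that $L_\infty < \infty$ almost surely by Theorem~\ref{thm:conv_cutv}.

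Second, I would extend Theorem~\ref{thm:conv_cutv} to a $2$-pointed setting by treating the root as an additional marked point on each side; this is natural since both $\cut(\bT^n, V^n)$ and $\cut(\cT, V)$ are rooted objects. The resulting pointed Gromov--Prokhorov convergence yields joint convergence of all pairwise distances between marked points, in particular
$$
\sigma_n L(\bT^n) = \sigma_n\, d_{\cut(\bT^n, V^n)}(X_1^n, V^n) \carrow d_{\cut(\cT, V)}(0, U) = L_\infty,
$$
jointly with the convergence in \eqref{eq: CP}. This establishes the first assertion of the corollary.

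For the distributional identity, I would invoke Theorem~\ref{thm:id_cutv}: unconditionally $(\cut(\cT, V), \hat{\mu}) \eqd (\cT, \mu)$ as rooted measured real trees, and conditional on $\cut(\cT, V)$ the point $U$ is $\hat{\mu}$-distributed. Hence the pair $(\cut(\cT, V), U)$ has the same unconditional distribution as $(\cT, V_1)$ where, conditional on $\cT$, $V_1$ is a $\mu$-random leaf. Consequently $L_\infty = d_{\cut(\cT, V)}(0, U) \eqd d_\cT(r, V_1)$. By the line-breaking Poisson construction of Section~\ref{subset: icrt}, $R_1 = \Span(\cT; V_1) = [0, \eta_1]$, so $d_\cT(r, V_1) = \eta_1$, whose distribution is given by \eqref{eq: distD}.

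The main obstacle is the technical extension of Theorem~\ref{thm:conv_cutv} to include the root $X_1^n$ as an extra marked point. Since $X_1^n$ is not an i.i.d.\ $\bp_n$-sample independent of the cut tree but rather a specific vertex arising inside the cutting construction, one must check in the proof of Theorem~\ref{thm:conv_cutv} that the convergence is compatible with this supplementary marking. This should follow from the Poisson representation of the cutting procedure: on the discrete side, $X_1^n$ is the first atom of the $\bp_n$-driven Poisson point process used to build $\cut(\bT^n, V^n)$, and the analogous first atom on the continuous side is precisely the root $0$ of the interval $[0, L_\infty]$ in $\cut(\cT, V)$, so the identification of marked roots is forced by the construction.
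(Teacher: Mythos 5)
Your derivation is correct and follows the same route the paper intends, since the corollary is stated there as an immediate consequence of Theorems~\ref{thm:conv_cutv} and~\ref{thm:id_cutv}: identify $\sigma_n L(\bT^n)$ with the rescaled root-to-$V^n$ distance in the cut tree (up to the harmless off-by-one, $L(\bT^n)=d_{\cut(\bT^n,V^n)}(X_1^n,V^n)+1$, since the path $X_1^n,\dots,X_{L}^n=V^n$ has $L$ vertices and $L-1$ edges), pass to the limit, and read off the law of $L_\infty$ from Theorem~\ref{thm:id_cutv} together with \eqref{eq: distD}. The ``extension'' you flag as the main obstacle is not an extra step: the paper's proof of Theorem~\ref{thm:conv_cutv} establishes exactly the root-included distance-matrix convergence \eqref{eq: cvhW}, whose $(0,1)$ entry is $\sigma_n(L^n_\infty-1)\to L_\infty$ (see Proposition~\ref{prop: cutnb} and \eqref{eq: linfty}), so the convergence of $\sigma_n L(\bT^n)$, jointly with \eqref{eq: CP}, is already part of what is proved there.
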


\subsection{The complete cutting procedure}

In the procedure of the previous section, the fragmentation only takes place on the portions of the tree 
which contain the random point $V$. 
Following \citet{Bert12}, we consider a more general cutting procedure which 
keeps splitting all the connected components. 
The aim here is to describe the genealogy of the fragmentation that this cutting procedure produces.
For each $t\ge 0$, $\cP_t$ induces an equivalence relation $\sim_t$
on $\te$: for $x,y\in \cT$ we write $x\sim_t y$ if $\llb x, y\rrb \cap \cP_t=\emptyset$.
We denote by $\cT_x(t)$ the equivalence class containing $x$. In particular, we have 
$\cT_V(t)=\cT_t$.
Let $(V_i)_{i\ge 1}$ be a sequence of i.i.d.\ $\mu$-points in $\cT$.
For each $t\ge 0$, define $\mu_i(t)=\mu(\cT_{V_i}(t))$. 
We write $\bmu^\downarrow(t)$ for the sequence $(\mu_i(t), i\ge 1)$ rearranged in decreasing order. 
In the case where $\te$ is the Brownian CRT, the process $(\bmu^\downarrow(t))_{t\ge 0}$ is the 
fragmentation dual to the standard additive coalescent \cite{ald00}. In the other cases, however, 
it is not even Markov because of the presence of those branch points $\beta_i$ 
with fixed local times $\theta_i$.

As in \cite{Bert12}, we can define a genealogical tree for this fragmentation process. 
For each $i\ge 1$ and $t\ge 0$, let
$$
L_t^i:=\int_0^t \mu_i(s)ds,
$$
and let $L_\infty^i\in [0,\infty]$ be the limit as $t\to\infty$. 
For each pair $(i,j)\in \mathbb{N}^2$, let $\tau(i, j)=\tau(j, i)$ be the first moment when 
$\llb V_i,V_j\rrb$ contains an element of $\cP$ (or more precisely, its projection onto $\te$), 
which is almost surely finite by the properties of $\te$ and $\cP$.
It is not difficult to construct a sequence of increasing real trees $S_1\subset S_2\subset \cdots$ such that $S_k$ has the form of a discrete tree rooted at a point denoted $\rho_*$ with exactly $k$ leaves $\{U_1, U_2, \cdots, U_{k}\}$ satisfying
\begin{equation}\label{eq: skd}
d(\rho_*, U_i)=L^i_\infty, \quad
d(U_i, U_j)=L^i_\infty+L^j_\infty-2L^i_{\tau(i,j)}, \quad 1\le i<j \le k;
\end{equation}
where $d$ denotes the distance of $S_k$, for each $k\ge 1$.
Then we define 
$$\tc:=\overline{\cup_{k\ge 1}S_k},$$ 
the completion of the metric space $(\cup_k S_k, d)$, which is still a real tree. In the case where $\te$ is the Brownian CRT, the above definition of $\tc$ 
coincides with the tree defined by \citet{Bert12}.

Similarly, for each $\bp_n$-tree $\bT^n$, we can define a complete cutting procedure on $\bT^n$ by 
first generating a random permutation $(X_{n1}, X_{n2}, \cdots, X_{nn})$ on the vertex set $[n]$ 
and then removing $X_{ni}$ one by one. 
Here the permutation $(X_{n1}, X_{n2}, \dots, X_{nn})$ is constructed 
by sampling, for $i\ge 1$, $X_{ni}$ according to the restriction of $\bp_n$ to 
$[n]\setminus \{X_{nj}, j<i\}$.
We define a new genealogy on $[n]$ by making $X_{ni}$ an ancestor of $X_{nj}$ if $i<j$ 
and $X_{nj}$ and $X_{ni}$ are in the same connected component when $X_{ni}$ is removed. If we denote by 
$\cut(\bT^n)$ the corresponding 
genealogical tree, then the number of vertices in the path of $\cut(T^n)$ between the root 
$X_{n1}$ and an arbitrary vertex $v$ is precisely equal to the number of cuts necessary to 
isolate this vertex~$v$. We have
\begin{thma}\label{thm:conv_cut}
Suppose that \eqref{H} holds. Then, we have
$$
\big(\sigma_n\cut(\bT^n), \bp_n\big)
\mathop{\longrightarrow}^{n\to\infty}_{d,\gp}
\big(\tc, \nu\big),
$$
jointly with the convergence in \eqref{eq: CP}. 
Here, $\nu$ is the weak limit of the empirical measures $\frac{1}{k}\sum_{i=0}^{k-1}\delta_{U_i}$, 
which exists almost surely conditional on $\te$.
\end{thma}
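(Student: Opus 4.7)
\bigskip
\noindent\textbf{Proof proposal.}\ The plan is to use the characterization of Gromov--Prokhorov convergence via convergence of finite-dimensional distance matrices between i.i.d.\ samples from the measures, which is recalled in Corollary 8 of L\"ohr cited earlier in the paper. Thus one needs to show that if $V^n_1,V^n_2,\ldots$ is an i.i.d.\ $\bp_n$-sequence on $\bT^n$, then the rescaled distance matrix $(\sigma_n d_{\cut(\bT^n)}(V^n_i,V^n_j))_{i,j\ge 1}$ converges in finite-dimensional distribution to $(d_{\tc}(U_i,U_j))_{i,j\ge 1}$, jointly with the convergence~\eqref{eq: CP}.

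First, I would express the distances in $\cut(\bT^n)$ in terms of the cutting dynamics. By definition of the genealogy, the height of $V^n_i$ in $\cut(\bT^n)$ equals the number of cuts $L(\bT^n,V^n_i)$ needed to isolate $V^n_i$ in the cutting procedure. More generally, the most recent common ancestor of $V^n_i$ and $V^n_j$ in $\cut(\bT^n)$ is the last cut $X_{n,\tau_n(i,j)}$ that lies in the connected component still containing both $V^n_i$ and $V^n_j$, so that
$$
d_{\cut(\bT^n)}(V^n_i,V^n_j)=L(\bT^n,V^n_i)+L(\bT^n,V^n_j)-2\tau_n(i,j),
$$
where $\tau_n(i,j)$ is the first time $V^n_i$ and $V^n_j$ are separated. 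This is the discrete analogue of~\eqref{eq: skd}. Theorem~\ref{thm:conv_cutv} already provides $\sigma_n L(\bT^n,V^n_i)\to L^i_\infty$ jointly with~\eqref{eq: CP}, and I would upgrade its proof to obtain the joint convergence $\sigma_n\tau_n(i,j)\to\tau(i,j)$ and $\sigma_n L(\bT^n,V^n_i\wedge V^n_j)\to L^i_{\tau(i,j)}=L^j_{\tau(i,j)}$. This joint statement follows from the same coupling that underlies Theorem~\ref{thm:conv_cutv}, once one checks that the discrete Poisson-type cutting measure on $\bT^n$ (intensity $dt\otimes\bp_n$) converges, under the rescaling $t\mapsto\sigma_n t$, to the continuum cutting measure $dt\otimes\cL$ on $\te$; here the convergence of $\cL$ is exactly the statement in Proposition~\ref{prop: cv-Ln} cited earlier.

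Next, I would handle the measure $\nu$. The sequence $(U_i)_{i\ge 1}$ is exchangeable conditional on the Poissonian data used to build $\tc$, since the roles of the i.i.d.\ $\mu$-samples $(V_i)$ are symmetric. Standard exchangeability arguments (de Finetti plus a separability/Glivenko--Cantelli reasoning on balls) then yield the almost sure existence of a random probability measure $\nu$ on $\tc$ such that $k^{-1}\sum_{i=1}^k\delta_{U_i}\to\nu$ weakly, conditional on $\te$. Its defining property is that conditional on $\tc$, an i.i.d.\ $\nu$-sequence is distributed like $(U_i)$, so that the distance matrix $(d_{\tc}(U_i,U_j))$ is precisely the distance matrix of an i.i.d.\ $\nu$-sequence. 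The discrete counterpart, $\bp_n$ on $\bT^n$, directly generates the samples $V^n_i$ used in the matching matrix, so the finite-dimensional convergence of distance matrices transfers via L\"ohr's characterization to the desired joint convergence
$$
(\sigma_n\cut(\bT^n),\bp_n)\mathop{\longrightarrow}^{n\to\infty}_{d,\gp}(\tc,\nu),
$$
jointly with~\eqref{eq: CP}.

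The main obstacle, and the only place where genuinely new work is required beyond Theorem~\ref{thm:conv_cutv}, is to establish the joint convergence of the separation times $\tau_n(i,j)$ and the partial heights $L(\bT^n,V^n_i\wedge V^n_j)$ to their continuum counterparts, uniformly over finitely many marked points. The delicate point is that the fragmentation of an ICRT is \emph{not} Markov in general (because of the fixed-local-time branch points $\beta_i$), so one cannot invoke branching or self-similarity; instead one must argue directly from the Poisson construction, showing that the discrete cutting Poisson processes restricted to the spanning subtrees $\Span(\bT^n;V^n_1,\ldots,V^n_k)$ converge jointly to the corresponding restriction of $\cP$ to $\Span(\te;V_1,\ldots,V_k)$, and that the contribution of cuts outside these spanning subtrees vanishes in the limit. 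Once this joint convergence is in hand, the finite-dimensional distance matrices converge and the theorem follows.
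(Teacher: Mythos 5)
Your treatment of the distance matrices follows essentially the paper's own route: Lemma~\ref{lem: cv_tc} is exactly the joint convergence of the separation times $\tau_n(i,j)$ and of the cut-counting processes $L^{n,i}$ (proved via the multi-point analogue of Lemma~\ref{lem: mun}, Proposition~\ref{prop: cv-Ln}, and the martingale and tightness arguments recycled from the proof of Theorem~\ref{thm:conv_cutv}), followed by a comparison of \eqref{eq: sknd} with \eqref{eq: skd}. One correction there: the depth of the most recent common ancestor of $V^n_i$ and $V^n_j$ in $\cut(\bT^n)$ is the number of cuts that fell in the component containing \emph{both} points before their separation, i.e.\ $L^{n,i}_{\tau_n(i,j)}$, not the separation index itself (many cuts before that index land in other components); your later sentence about partial heights suggests you intend the right quantity, but the displayed formula should be $L^{n,i}_\infty+L^{n,j}_\infty-2L^{n,i}_{\tau_n(i,j)}$, and this quantity is not the number of cuts needed to isolate $V^n_i\wedge V^n_j$ either.

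The genuine gap is in the construction of $\nu$. You assert that exchangeability of $(U_i)$ plus de Finetti and a Glivenko--Cantelli argument yields the almost sure weak convergence of $\frac{1}{k}\sum_{i\le k}\delta_{U_i}$ and the conditional i.i.d.\ property. Exchangeability and consistency alone do not suffice: the $U_i$ live in the random, possibly non-compact space $\tc$ built from themselves, and an exchangeable consistent family of reduced trees can fail to have convergent empirical leaf measures --- take $U_i$ pairwise at distance $1$ (an infinite star with unit edge lengths): the family is consistent and exchangeable, yet the empirical measures have no weak limit on the completion because tightness fails. What is needed is Aldous' leaf-tightness condition, and this is precisely where the paper uses the discrete duality: by Theorem~\ref{thm:complete_forward}, $(\cut(\bT^n),\bp_n)\eqd(\bT^n,\bp_n)$, hence $S^n_k\eqd R^n_k$, and comparing \eqref{eq: cv_tc} with \eqref{eq: cvRnk} gives $S_k\eqd R_k$; consistency and leaf-tightness then hold because $(R_k)$ arises from sampling an ICRT, and Theorem~3 and Lemma~9 of Aldous' CRT theory deliver both the existence of $\nu$ and the fact that $(U_i)$ is conditionally i.i.d.\ with law $\nu$ --- which is also what justifies reading the limiting distance matrix as that of an i.i.d.\ $\nu$-sample in the final appeal to the Gromov--Prokhorov characterization. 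Your proposal never invokes this identity, and without it (or a direct proof of leaf-tightness for the limit family) the existence of $\nu$, which is part of the statement, remains unproved.
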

From this, we show that
\begin{thma}\label{thm:id_cut}
Conditionally on $\te$, $(U_i, i\ge 0)$ has the distribution as a sequence of i.i.d. points of 
common law $\nu$.
Furthermore, the unconditioned distribution of the pair $(\tc, \nu)$ is the same as $(\te, \mu)$.
\end{thma}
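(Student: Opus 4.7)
The plan is to mirror the strategy used for Theorem~\ref{thm:id_cutv} in the one-node case: establish a bijective identity at the discrete level for $\bp_n$-trees, then transport it to the continuum via Theorem~\ref{thm:conv_cut} together with \eqref{eq: CP}. Throughout, write $(V'_{n,i})_{1\le i\le n}$ for a random permutation of $[n]$ generated by sampling successively without replacement according to $\bp_n$, independently of $T^n$.

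\textbf{Step 1 (discrete identity).} The first and main task is to prove the full-cut analogue of \eqref{eq: in_idt},
\begin{equation*}
\big(\cut(T^n),\, (X_{n,i})_{1\le i\le n}\big) \eqd \big(T^n,\, (V'_{n,i})_{1\le i\le n}\big),
\end{equation*}
which should be carried out in Section~\ref{sec:ptree-cutting}. The idea is to iterate the reversible one-node transformation: after removing the first cut $X_{n,1}$, the tree decomposes into the subtrees above the neighbours of $X_{n,1}$, and by Lemma~\ref{lem:re-root}(ii) each such subtree is, conditionally on its vertex set $\bV$, an independent $\bp_n|_\bV$-tree. Inducting on $n$ while using the one-node identity inside each subtree, then regluing through the prescribed genealogy, yields the claim.

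\textbf{Step 2 (asymptotic independence of the cuts).} For every fixed $k\ge 1$, the probability that $(V'_{n,1},\dots,V'_{n,k})$ has a repeated coordinate is at most $\binom{k}{2}\sigma_n^2$, which vanishes under \eqref{H}; hence on finite horizons a $\bp_n$-permutation is asymptotically an i.i.d.\ $\bp_n$-sample. Combined with \eqref{eq: CP}, this yields the joint convergence
\begin{equation*}
\big(\sigma_n T^n,\, \bp_n,\, (V'_{n,i})_{1\le i \le k}\big)
\mathop{\longrightarrow}^{n\to\infty}_{d} \big(\te,\, \mu,\, (\xi_i)_{1\le i\le k}\big)
\end{equation*}
in the $k$-pointed Gromov--Prokhorov topology, where conditionally on $\te$ the points $\xi_i$ are i.i.d.\ of common law $\mu$.

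\textbf{Step 3 (passage to the limit and conclusion).} By Step~1 the left-hand side of the previous display has the same distribution as $(\sigma_n\cut(T^n),\bp_n,(X_{n,i})_{1\le i\le k})$, which by Theorem~\ref{thm:conv_cut} converges jointly to $(\tc,\nu,(U_0,\dots,U_{k-1}))$; here one must verify that the first $k$ discrete cuts converge to the $k$ distinguished leaves $U_0,\dots,U_{k-1}$ of $\tc$, which follows from the compatibility between the discrete genealogy and the construction \eqref{eq: skd}. Identifying the two limits gives, for every $k\ge 1$,
\begin{equation*}
\big(\tc,\,\nu,\,(U_0,\dots,U_{k-1})\big) \eqd \big(\te,\,\mu,\,(\xi_1,\dots,\xi_k)\big).
\end{equation*}
The marginal in the first two coordinates reads $(\tc,\nu)\eqd (\te,\mu)$, proving the second assertion; and the consistency of the identity in $k$, combined with the fact that conditionally on $\te$ the sequence $(\xi_i)$ is i.i.d.\ of law $\mu$, implies that conditionally on $\tc$ the sequence $(U_i)_{i\ge 0}$ is i.i.d.\ of law $\nu$, which is the first assertion.

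\textbf{Main obstacle.} The crux is Step~1: producing the bijective identity for the \emph{full} cut-genealogy, rather than for a single marked vertex, requires careful bookkeeping of the independent $\bp_n$-subtrees produced at each cut and of the way they are reassembled into $\cut(T^n)$. A secondary subtlety, in Step~3, is the identification of the discrete cuts with the leaves $U_i$ at the limit: since the $U_i$ are defined only through the inductive construction \eqref{eq: skd}, one must check that the positions $L^i_\infty, L^i_{\tau(i,j)}$ along the backbone are indeed the scaling limits of the corresponding discrete quantities in $\cut(T^n)$, and that the root $\rho_\ast$ of $\tc$ is the limit of $X_{n,1}$.
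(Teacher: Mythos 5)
There is a genuine gap, and it sits exactly where you locate the crux. The discrete identity you propose in Step~1 is false: in $\cut(T^n)$ the first cut $X_{n,1}$ is by construction the root of the cut tree, and more generally each $X_{n,i}$ lies at depth at most $i-1$, the sequence $(X_{n,i})$ being (given $\cut(T^n)$) a random linear extension of its genealogical order. Hence the pair $\big(\cut(T^n),(X_{n,i})_i\big)$ cannot have the law of $\big(T^n,(V'_{n,i})_i\big)$ with $(V'_{n,i})$ independent of $T^n$: under your left-hand law the first marked vertex is a.s.\ the root of the tree, while under the right-hand law this happens only with probability $\sum_i p_{ni}^2=\sigma_n^2<1$. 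The error propagates to Step~3: since $d_{\cut(T^n)}(X_{n,1},X_{n,i})\le i-1$, after rescaling by $\sigma_n\to 0$ the first $k$ cuts all collapse onto the root $\rho_*$ of $\tc$; they certainly do not converge to the leaves $U_0,\dots,U_{k-1}$, which sit at the positive distances $L^i_\infty$ from $\rho_*$ in \eqref{eq: skd}. So both the discrete input and the limit identification fail, and the argument does not yield either assertion of the theorem (note also that the first assertion is a statement conditional on $\te$, which an unconditional distributional identity in $k$ would not deliver by itself).

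The fix is to mark the \emph{target} points rather than the cuts, which is what the paper does. The points $U_i$ are the limits of the i.i.d.\ $\bp_n$-vertices $V^n_i$ viewed inside $\cut(\bT^n)$, and the correct discrete identity is the one already available: by Theorem~\ref{thm:complete_forward} (obtained from Lemma~\ref{lem:multi-cut} and the coupling of Section~\ref{sec:isol_mult}), $(\cut(\bT^n),\bp_n)\eqd(\bT^n,\bp_n)$, hence $S^n_k=\Span(\cut(\bT^n);V^n_1,\dots,V^n_k)\eqd R^n_k$. Lemma~\ref{lem: cv_tc} gives $\sigma_n S^n_k\to S_k$ while $\sigma_n R^n_k\to R_k$, so $S_k\eqd R_k$ for every $k$. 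One then invokes Aldous' theory (Theorem~3 and Lemma~9 of \cite{aldcrt3}), applied conditionally on $\te$: consistency and leaf-tightness of $(S_k)_{k\ge1}$ give the existence of $\nu$ with $(U_i)$ i.i.d.\ $\nu$ conditionally on $\te$ (first assertion), and the characterization of $(\tc,\nu)$ by the family $(S_k)_{k\ge1}$ together with $S_k\eqd R_k$ gives $(\tc,\nu)\eqd(\te,\mu)$ (second assertion). Your Step~2 observation about sampling with versus without replacement is correct but not needed once the marking is done on the $V^n_i$.
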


In general, the convergence of the $\bp_n$-trees to the ICRT in \eqref{eq: CP} cannot be improved 
to Gromov--Hausdorff (GH) topology, see for instance \cite[][Example 28]{aldpitconex}. 
However, when the sequence $(\bp_n)_{n\ge 1}$ is suitably well-behaved, one does have this 
stronger convergence. (This is the case for example with $\bp_n$ the 
uniform distribution on $[n]$, which gives rise to the Brownian CRT, see also 
\cite[][Section 4.2]{ald04a}.) 
In such cases, we can reinforce accordingly the above convergences of the cut trees in the 
Gromov--Hausdorff topology. Note however that a "reasonable" condition on $\bp$ ensuring 
the Gromov--Hausdorff convergence seems hard to find. 
Let us mention a related open question in \cite[Section 7]{ald04a}, which is to determine a practical 
criterion for the compactness of a general ICRT.
 Writing $\to_{d,\ghp}$ for the convergence in distribution with
respect to the Gromov--Hausdorff--Prokhorov topology (see Section~\ref{sec:prelem}), we have
\begin{thma}\label{thm:cv_ghp}
Suppose that $\te$ is almost surely compact and suppose also as $n\to\infty$,
\begin{equation}\label{eq: introGHP}
\big(\sigma_n\bT^n, \bp_n\big)
\mathop{\longrightarrow}^{n\to\infty}_{d, \ghp} 
\big(\te, \mu\big).
\end{equation}
Then, jointly with the convergence in \eqref{eq: introGHP}, we have 
\begin{align*}
\big(\sigma_n \cut(\bT^n,V^n), \bp_n\big)
&\mathop{\longrightarrow}^{n\to\infty}_{d, \ghp} 
\big(\cut(\cT,V), \hat{\mu}\big), \\
\big(\sigma_n\cut(\bT^n), \bp_n\big)
&\mathop{\longrightarrow}^{n\to\infty}_{d,\ghp}
\big(\tc, \nu\big).
\end{align*} 
\end{thma}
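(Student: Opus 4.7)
The plan is to upgrade the joint Gromov--Prokhorov convergences already provided by Theorems~\ref{thm:conv_cutv} and~\ref{thm:conv_cut} to Gromov--Hausdorff--Prokhorov convergences by exploiting the distributional identities of Theorems~\ref{thm:id_cutv} and~\ref{thm:id_cut} together with their discrete counterparts: namely~\eqref{eq: in_idt}, and the analogous pathwise statement $(\cut(T^n), \bp_n)\eqd (T^n, \bp_n)$ for the complete procedure, which is established in Section~\ref{sec:ptree-cutting}. The key auxiliary fact I would invoke is a standard tightness criterion: a sequence of random compact measured metric spaces $(X_n, d_n, \mu_n)$ that converges in GP to a compact limit $(X, d, \mu)$ converges in fact in GHP as soon as, for every $\ep > 0$, there exists $k\ge 1$ such that for $n$ large enough,
$$
\bbP\bigl\{\dgh\bigl(X_n,\Span(X_n;\xi_{n,1},\dots,\xi_{n,k})\bigr) > \ep\bigr\} < \ep,
$$
where $(\xi_{n,i})_{i\ge 1}$ is an i.i.d.\ $\mu_n$-sample. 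The criterion extends to joint GHP convergence of tuples: joint GP convergence plus the individual GH-tightness for each component suffices, since GP convergence of spanning subtrees with a fixed finite number of marked points coincides with their GH convergence.

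First I verify the tightness condition for $\sigma_n T^n$. Compactness of $\cT$ and the fact that $\mu$ is concentrated on the leaves yield $\dgh(\cT,\Span(\cT;V_1,\dots,V_k))\to 0$ in probability as $k\to\infty$, and the hypothesis~\eqref{eq: introGHP} then transfers this tightness to $\sigma_n T^n$ for $n$ large. Next, for $\sigma_n\cut(T^n, V^n)$: by~\eqref{eq: in_idt}, this tree is distributed as $T^n$, and since $\cut(T^n, V^n)$ shares its vertex set $[n]$ with $T^n$, the measure $\hat\mu^n$ coincides with $\bp_n$, so sampling i.i.d.\ $\hat\mu^n$-points is the same as sampling i.i.d.\ $\bp_n$-points. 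Hence the tightness condition for $\sigma_n \cut(T^n, V^n)$ follows from that of $\sigma_n T^n$. Combined with the joint GP convergence of Theorem~\ref{thm:conv_cutv}, this yields the first joint GHP statement. The complete cut tree $\sigma_n\cut(T^n)$ is handled identically, using its discrete distributional identity, the equality $\nu^n=\bp_n$ on the common vertex set, and the joint GP convergence of Theorem~\ref{thm:conv_cut}.

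The main obstacle I expect is to rigorously justify the passage from \emph{joint} GP convergence plus marginal GH-tightness to joint GHP convergence. An optimal GP coupling for the pair $\bigl((\sigma_n T^n, \bp_n),(\sigma_n\cut(T^n,V^n),\hat\mu^n)\bigr)$ and its limit does not automatically witness a small Hausdorff distance on each component; one must combine such a coupling with the spanning-tree approximations supplied by the tightness. Concretely, one embeds both pairs into a common ambient space realizing the GP optimum on a set of i.i.d.\ sample points, then extends using the fact that each discrete tree is at small Hausdorff distance from the spanning subtree over those points (and similarly for the limit). The composite coupling controls the Prokhorov distance via the GP optimum and the Hausdorff distance on the full spaces via the tightness estimates. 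Once this standard-but-careful argument is in place, the conclusion of the theorem follows.
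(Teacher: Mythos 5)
Your proposal is correct and is essentially the paper's argument: spanning-subtree tightness for $\sigma_n \bT^n$ is deduced from compactness of $\te$ together with the GHP hypothesis \eqref{eq: introGHP}, transferred to the cut trees through the exact discrete identities $(\cut(\bT^n,V^n),\bp_n)\eqd(\bT^n,\bp_n)$ and $\cut(\bT^n)\eqd\bT^n$ (so that the cut-tree measures are again $\bp_n$), and then combined with the joint GP convergences of Theorems~\ref{thm:conv_cutv} and~\ref{thm:conv_cut}, exactly as in Section 5.3 of the paper. The one caveat is that the clause ``the hypothesis transfers this tightness to $\sigma_n \bT^n$'' is precisely where the paper's proof does its real work --- closedness of the set $A(k,\ep)$ in $\bbM^k_c$, upper semicontinuity of $(T,\mu)\mapsto m_k(T,A(k,\ep))$ (Miermont's Lemma 13), and the Portmanteau theorem for upper-semicontinuous maps --- since the functional $\dgh\big(T,\Span(T;\bx)\big)$ integrated over sampled points is not GHP-continuous; your Skorokhod/coupling device would have to be carried out at that step too, not only for the final joint statement.
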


\subsection{Reversing the cutting procedure}

We also consider the transformation that ``reverses'' the construction of the trees 
$\cut(\cT,V)$ defined above. Here, by reversing we mean to obtain 
a tree distributed as the primal tree $\cT$, conditioned on the cut tree being the one we need 
to transform. So for an ICRT $(\cH, d_{\cH}, \hat\mu)$ and a random point $U$ sampled according 
to its mass measure $\hat\mu$, we should construct a tree $\shuff(\cH,U)$ 
such that 
\begin{equation}\label{eq: idshuff}
(\cT,\cut(\cT,V))\eqd (\shuff(\cH,U), \cH).
\end{equation}
This reverse transformation is the one described in \cite{ABH10} for the Brownian CRT. 
For $\cH$ rooted at $r(\cH)$, 
the path between $\llb r(\cH),U\rrb$ that joins $r(\cH)$ to $U$ in $\cH$ decomposes the tree into 
countably many subtrees of positive mass
$$F_x=\{y\in \cH: U\wedge y = x\},$$
where $U\wedge y$ denotes the closest common ancestor of $U$ and $y$, that is the unique point 
$a$ such that $\llb r(\cH),U\rrb\cap \llb r(\cH), y\rrb=\llb r(\cH),a\rrb$. Informally, the tree $\shuff(\cH,U)$ 
is the metric space one obtains from $\cH$ by attaching 
each $F_x$ of positive mass at a random point $A_x$, which is sampled 
proportionally to $\hat \mu$ in the union of the $F_y$ for which $d_{\cH}(U,y)<d_{\cH}(U,x)$.
We postpone the precise definition of $\shuff(\cH, U)$ until Section~\ref{sec:shuff-one-path}.

The question of reversing the complete cut tree $\cut(\cT)$ is more delicate and is 
the subject of the companion paper \cite{BrWa2014a}. There we restrict ourselves to the 
case of a Brownian CRT: for $\cT$ and $\cG$ Brownian CRT we construct a tree $\shuff(\cG)$ 
such that 
$$
(\cT, \cut(\cT)) \eqd (\shuff(\cG), \cG).
$$
We believe that the construction there is also valid for more general ICRTs, but the 
arguments we use there strongly rely on the self-similarity of the Brownian CRT.

\medskip
\noindent\textbf{Remarks.}
\noi{\bf i.}\ 
Theorem~\ref{thm:id_cutv} generalizes Theorem~1.5 in \cite{ABH10}, which is about the Brownian CRT. 
The special case of Theorem~\ref{thm:conv_cutv} concerning the convergence of uniform Cayley trees 
to the Brownian CRT is also found there.

{\bf ii.}\
When $\te$ is the Brownian CRT, Theorem~\ref{thm:id_cut} has been proven by \citet{Bert12}. 
Their proof relies on the self-similar property of the Aldous--Pitman's fragmentation. They 
also proved a convergence similar to the one in Theorem~\ref{thm:conv_cut} for the conditioned 
Galton--Watson trees with finite-variance offspring distributions. 
Let us point out that their definition of the discrete 
cut trees is distinct from ours, and there is no ``duality'' at the discrete level for their 
definitions. Very recently, a result related to Theorem~\ref{thm:conv_cut} has been proved for 
the case of stable trees \cite{Dieuleveut2013a} (with a different notion of discrete cut tree).
Note also that the convergence of the cut trees proved in \cite{Bert12} and \cite{Dieuleveut2013a} 
is with respect to the Gromov--Prokhorov topology, so is weaker than the convergence of the 
corresponding conditioned Galton--Watson trees, which holds in the Gromov--Hausdorff--Prokhorov 
sense. In our case, the identities imply that the convergence of the cut trees is as 
strong as that of the $\bp_n$-trees (Theorem~\ref{thm:cv_ghp}).

{\bf iii.}\
\citet{AbDe12} have shown an analog of Theorem~\ref{thm:id_cutv} for the L\'evy tree, 
introduced in \cite{legalllejan}. In passing \citet{ald04a} have conjured that a L\'evy tree is a 
mixture of the ICRTs where the parameters $\btheta$ are chosen according to the distribution of 
the jumps in the bridge process of the associated L\'evy process. Then the similarity between 
Theorem~\ref{thm:id_cutv} and the result of Abraham and Delmas may be seen as a piece of evidence 
supporting this conjecture. 

\section{Cutting down and rearranging a $\bp$-tree}\label{sec:ptree-cutting}

As we have mentioned in the introduction, our approach to the theorems about
continuum random trees involves taking limits in the discrete world.
In this section, we prove the discrete results about the decomposition and the 
rearrangement of $\bp$-trees that will enable us to obtain similar decomposition and 
rearrangement procedures for inhomogeneous continuum random trees.

\subsection{Isolating one vertex}\label{sec:ptree-one-cutting}

As a warm up, and in order to present many of the important ideas, we start by
isolating a single node.
Let $T$ be a $\bp$-tree and let $V$ be an independent $\bp$-node.
We isolate the
vertex $V$ by removing each time a random vertex of $T$ and preserving only the
component containing $V$ until the time when $V$ is picked.

\medskip
\noi \textsc{The 1-cutting procedure and the $1$-cut tree.}\
Initially, we have $T_0=T$, and an independent vertex $V$.
Then, for $i\ge 1$, we choose a node $X_i$ according
to the restriction of $\bp$ to the vertex set $\fv(T_{i-1})$ of $T_{i-1}$.
We define $T_{i}$ to be the connected component of the forest induced by $T_{i-1}$ on
$\fv(T_{i-1})\setminus \{X_i\}$ which contains $V$. If $T_i=\varnothing$, or equivalently
$X_i=V$, the process stops and we set $L=L(T)=i$.
Since at least one vertex is removed at every step, the process stops in time $L\le n$.

As we destruct the tree $T$ to isolate $V$ by iteratively pruning random nodes,
we construct a tree which records the history of the destruction, that we call
the $1$-cut tree. This $1$-cut tree will, in particular, give some information
about the number of cuts which were needed to isolate $V$.
However, we remind the reader that this number of cuts is not our main
objective, and that we are after a more detailed correspondence between the initial
tree and its $1$-cut tree. We will prove that these two trees are \emph{dual} in a
sense that we will make precise shortly.

By construction, $(T_i, 0\le i< L)$ is a decreasing sequence of nonempty trees
which all contain $V$, and $(X_i, 0\le i<L)$ is a sequence of distinct vertices
of~$T=T_0$.
Removing $X_i$ from $T_{i-1}$ disconnects it into a number of connected components.
Then $T_i$ is the one of these connected components which contains $V$, or
$T_i=\varnothing$ if $X_i=V$.
If $X_i=V$ we set $F_i=T_{i-1}$, which we see as a tree rooted at $X_i=V$.
Otherwise $X_i\ne V$ and there is a neighbor $U_i$ of $X_i$ on the path
between $X_i$ and $V$ in $T_{i-1}$. Then $U_i\in T_i$ and we see $T_i$ as rooted
at $U_i$;
furthermore, we let $F_i$ be the subtree of $T_{i-1}\setminus \{U_i\}$
which contains $X_i$, seen as rooted at $X_i$. 
In other words, $T_i$ is the subtree with vertex set $\{u\in T_{i-1}: X_i\notin \llb u, V\rrb\}$ 
rooted at $U_i$ and $F_i$ is the subtree with vertex set $\{u\in T_{i-1}: X_i\in \llb u, V\rrb\}$ 
rooted at $X_i$.

When the procedure stops, we have a vector $(F_i, 1\le i \le L)$ of subtrees of $T$
which together span all of $[n]$. We may re-arrange them into a new tree, the $1$-cut
tree corresponding to the isolation of $V$ in $T$. We do this by connecting their
roots $X_1, X_2,\dots, X_L$ into a path (in this order). 
The resulting tree, denoted by $H$ is seen as rooted at
$X_1$, and carries a distinguished path or backbone $\llb X_1, V\rrb$, which we denote
by $S$, and distinguished points $U_1,\dots, U_{L-1}$.

Note that for $i=1,\dots, L-1$, we have $U_i\in T_i$. Equivalently, $U_i$ lies in the
subtree of $H$ rooted at $X_{i+1}$. In general, for a tree $t\in \bbT_n$ and $v\in [n]$,
let $x_1,\dots, x_\ell=v$ be the nodes of $\Span(t; v)$.
We define $\bbU(t,v)$ as the collection of vectors $(u_1,\dots, u_{\ell-1})$
of nodes of $[n]$ such that $u_i\in \Sub(t, x_{i+1})$, for $1\le i<\ell$.
Then by construction, for a $h\in \bbT_n$, conditional on $H=h$ and $V=v$, we 
have $L$ equal to the number of the nodes in $\Span(h; v)$ and 
$(U_1,\dots, U_{L-1})\in \bbU(h,v)$ with probability one.
For $A\subseteq [n]$, we write $\bp(A):=\sum_{i\in A}p_i$.

\begin{lem}\label{lem:joint_dist-tree-node}
Let $T$ be a $\bp$-tree on $[n]$, and $V$ be an independent $\bp$-node.
Let $h\in \bbT_n$, and $v\in [n]$ for which $\Span(h; v)$ is the path
made of the nodes $x_1,x_2,\dots, x_{\ell-1},x_\ell=v$.
Let $(u_1,\dots, u_{\ell-1})\in \bbU(h, v)$ and $w\in [n]$. Then we have
$$
\p{H=h; V=v; r(T)=w; U_i=u_i, 1\le i<\ell}
= \pi(h)\cdot \prod_{1\le i<\ell}\frac{p_{u_i}}{\bp(\Sub(h,x_{i+1}))}\cdot p_v \cdot p_w.
$$
In particular, $(H, V)\sim \pi\otimes\bp$.
\end{lem}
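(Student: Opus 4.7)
The plan is to exhibit a bijection between the data on the left-hand side and the primal tuple $(T, V, (X_i))$ of the cutting procedure, compute the probability directly on the primal side, and then relate $\pi(t)$ to $\pi(h)$ via an edge-product reformulation.

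Given $(h, v, w, (u_i))$ satisfying the hypotheses, I would define $t \in \bbT_n$ to be the tree on $[n]$ rooted at $w$ whose edge set is obtained from that of $h$ by removing each backbone edge $\{x_i, x_{i+1}\}$ ($1 \le i \le \ell - 1$) and replacing it with $\{x_i, u_i\}$. The constraint $u_i \in \Sub(h, x_{i+1})$ ensures that each replacement edge joins the subtree $F_i$ of $h$ to some $F_j$ with $j > i$; hence, contracting each $F_i$ to a single node yields a graph on $\{1, \ldots, \ell\}$ with $\ell - 1$ edges each going from a smaller to a strictly larger index, and such a graph is easily checked to be a spanning tree (every vertex has an increasing escape sequence to $\ell$). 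Thus $t$ is a tree on $[n]$. A short induction then confirms that running the $1$-cutting procedure on $t$ with $V = v$ and cut sequence $X_1 = x_1, \ldots, X_\ell = x_\ell$ reproduces both $h$ and the neighbors $u_i$; conversely $(H, V, r(T), (U_i))$ is a deterministic function of $(T, V, (X_i))$. Under this bijection, the event on the left-hand side of the lemma coincides with $\{T = t, V = v, X_i = x_i\ \forall i\}$, and by the independence of $T \sim \pi$ and $V \sim \bp$ together with the sampling rule for the cuts (using $\fv(T_{i-1}) = \fv(\Sub(h, x_i))$), its probability equals
\[
\pi(t) \cdot p_v \cdot \prod_{i=1}^{\ell} \frac{p_{x_i}}{\bp(\Sub(h, x_i))}.
\]

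To recover the form stated in the lemma, I would rewrite $\pi(t)$ in terms of $\pi(h)$ using the alternative expression $\pi(\tau) = p_{r(\tau)} \prod_{u} p_u^{d_\tau(u) - 1}$, where $d_\tau(u)$ denotes the degree of $u$ in the unrooted graph underlying $\tau$; this follows directly from the definition of $\pi$, since $C_u(\tau) = d_\tau(u) - \I{u \ne r(\tau)}$. Since $\prod_u p_u^{d_\tau(u)} = \prod_{\{a,b\} \in E(\tau)} p_a p_b$, and the edges of $t$ and $h$ agree except for the swap $\{x_i, x_{i+1}\} \leftrightarrow \{x_i, u_i\}$ along the backbone,
\[
\frac{\pi(t)}{\pi(h)}
= \frac{p_w}{p_{x_1}} \cdot \frac{\prod_{i=1}^{\ell-1} p_{u_i}}{\prod_{i=1}^{\ell-1} p_{x_{i+1}}}
= \frac{p_w \prod_{i=1}^{\ell - 1} p_{u_i}}{p_v \prod_{i=1}^{\ell-1} p_{x_i}}.
\]
Plugging this back and using $\bp(\Sub(h, x_1)) = 1$ together with $x_\ell = v$, the expression telescopes into $\pi(h) \cdot \prod_{i=1}^{\ell-1} p_{u_i}/\bp(\Sub(h, x_{i+1})) \cdot p_v \cdot p_w$, which is exactly the claimed formula.

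The ``in particular'' statement follows from a direct summation: $\sum_{w \in [n]} p_w = 1$ eliminates the root factor, and for each $i$ the sum $\sum_{u_i \in \Sub(h, x_{i+1})} p_{u_i}$ cancels the corresponding denominator $\bp(\Sub(h, x_{i+1}))$, leaving $\p{H = h, V = v} = \pi(h) \cdot p_v$. The main subtle step is the degree bookkeeping behind the identification of the ratio $\pi(t)/\pi(h)$; once it is organised cleanly, the rest of the proof is algebraic manipulation with the $\bp$-masses.
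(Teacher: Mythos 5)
Your proof is correct and takes essentially the same route as the paper's: you identify the event with the primal event $\{T=\tau(h,v;\bu)^{w},\,V=v,\,X_i=x_i,\,1\le i\le\ell\}$, compute its probability from the sequential sampling rule (using $\fv(T_{i-1})=\fv(\Sub(h,x_i))$), and then express the $\pi$-weight of the rearranged tree in terms of $\pi(h)$. Your edge-product rewriting of $\pi$ and the contraction argument showing the rearranged graph is a tree are only cosmetic variants of the paper's in-degree/re-rooting bookkeeping (and of its separate lemma on $\tau$ being a tree), so the two proofs are essentially identical.
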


As a direct consequence of our construction of $H$, $L$ is the number of nodes of the 
subtree $\Span(H,V)$, which we write $\# \Span(H, V)$.
So Lemma~\ref{lem:joint_dist-tree-node} entails immediately that
\begin{prop}\label{prop:number_cuts}
Let $T$ be a $\bp$-tree and $V$ be an independent $\bp$-node. Then
$$ L\eqd \# \Span(T, V). $$
\end{prop}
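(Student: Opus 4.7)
The plan is to read this proposition as an immediate corollary of Lemma~\ref{lem:joint_dist-tree-node}, by exploiting the observation that $L$ can be recovered as a deterministic functional of $(H,V)$.

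First, I would reinterpret $L$ purely in terms of the output $(H,V)$ of the 1-cutting procedure. By construction, the cuts $X_1, X_2, \ldots, X_L$ are, in this order, the vertices of the distinguished backbone $S = \llb X_1, V\rrb$ of $H$: indeed, $X_1$ is the root of $H$, $X_{i+1}$ is the child of $X_i$ along the backbone, and the backbone terminates at $X_L = V$. Hence
\[
L \;=\; \#\,\llb X_1, V\rrb_H \;=\; \#\Span(H;V),
\]
so $L$ is a deterministic function of the pair $(H,V)$.

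Next, I invoke the ``in particular'' clause of Lemma~\ref{lem:joint_dist-tree-node}, which asserts $(H,V) \sim \pi\otimes \bp$. Since $(T,V)$ has exactly the same joint law $\pi\otimes \bp$ (because $V$ is an independent $\bp$-node and $T$ is a $\bp$-tree), applying the functional $(\,\cdot\,, \cdot\,)\mapsto \#\Span(\cdot\,;\cdot)$ to both pairs gives
\[
L \;=\; \#\Span(H;V) \;\stackrel{d}{=}\; \#\Span(T;V),
\]
which is the claimed identity.

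So modulo Lemma~\ref{lem:joint_dist-tree-node}, the proposition is a one-line consequence. The only genuine work is therefore in that lemma; the expected difficulty there is to verify the displayed joint density by unwinding the step-by-step construction of $H$ from $T$. The natural route is to compute, for each fixed configuration $(h, v, w, u_1,\ldots,u_{\ell-1})$, the probability of the event $\{H=h, V=v, r(T)=w, U_i=u_i\ \forall i\}$ by summing $\pi(t)\,p_v\,p_w\,\prod p_{x_i}/\bp(\fv(T_{i-1}))$ over all primal trees $t$ and all pick sequences $(x_1,\ldots,x_\ell)$ that produce the given output; the product form of $\pi$ together with Lemma~\ref{lem:re-root} (re-rooting invariance and the recursive description of $\bp$-trees) should cause the sum to telescope into the stated formula. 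With the lemma in hand, the proof of Proposition~\ref{prop:number_cuts} is exactly the three lines above.
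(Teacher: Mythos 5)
Your proof is correct and is essentially identical to the paper's: the paper likewise observes that, by construction, $L$ equals the number of nodes of $\Span(H;V)$ and then applies the ``in particular'' clause $(H,V)\sim\pi\otimes\bp$ of Lemma~\ref{lem:joint_dist-tree-node} to conclude $L\eqd\#\Span(T;V)$. No gaps to report.
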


\begin{proof}[Proof of Lemma~\ref{lem:joint_dist-tree-node}]
By construction, we have
$$
\{H=h; V=v\}\subset\{X_1=x_1, \cdots, X_{\ell-1}=x_{\ell-1}, X_\ell=v; L=\ell\},
$$
and the sequence $(F_i, 1\le i\le\ell)$ is precisely the sequence of subtrees $f_i$,
of $h$ rooted at $x_i$, $1\le i\le \ell$, that are obtained when one removes
the edges
$\{x_i,x_{i+1}\}$, $1\le i< \ell$ (the edges of the subgraph $\Span(h;v)$).
Furthermore, given that $L=\ell$ and the sequence of cut vertices $X_i=x_i$,
$1\le i<\ell$, in order to recover the initial tree $T$ it suffices to identify the
vertices $U_i$, $1\le i< \ell$, for which there used to be an edge $\{X_i,U_i\}$ 
(which yields the correct adjacencies) and the root of $T$.
Note that $U_i$ is a node of $T_i$, $1\le i< \ell$.
However, by construction, given that $H=h$ and $V=v$, the set of nodes
of $T_{i}$ is precisely the set of nodes of $\Sub(h,x_{i+1})$, the subtree of $h$
rooted at $x_{i+1}$.

\begin{figure}[t]
\centering
\includegraphics[scale=.7]{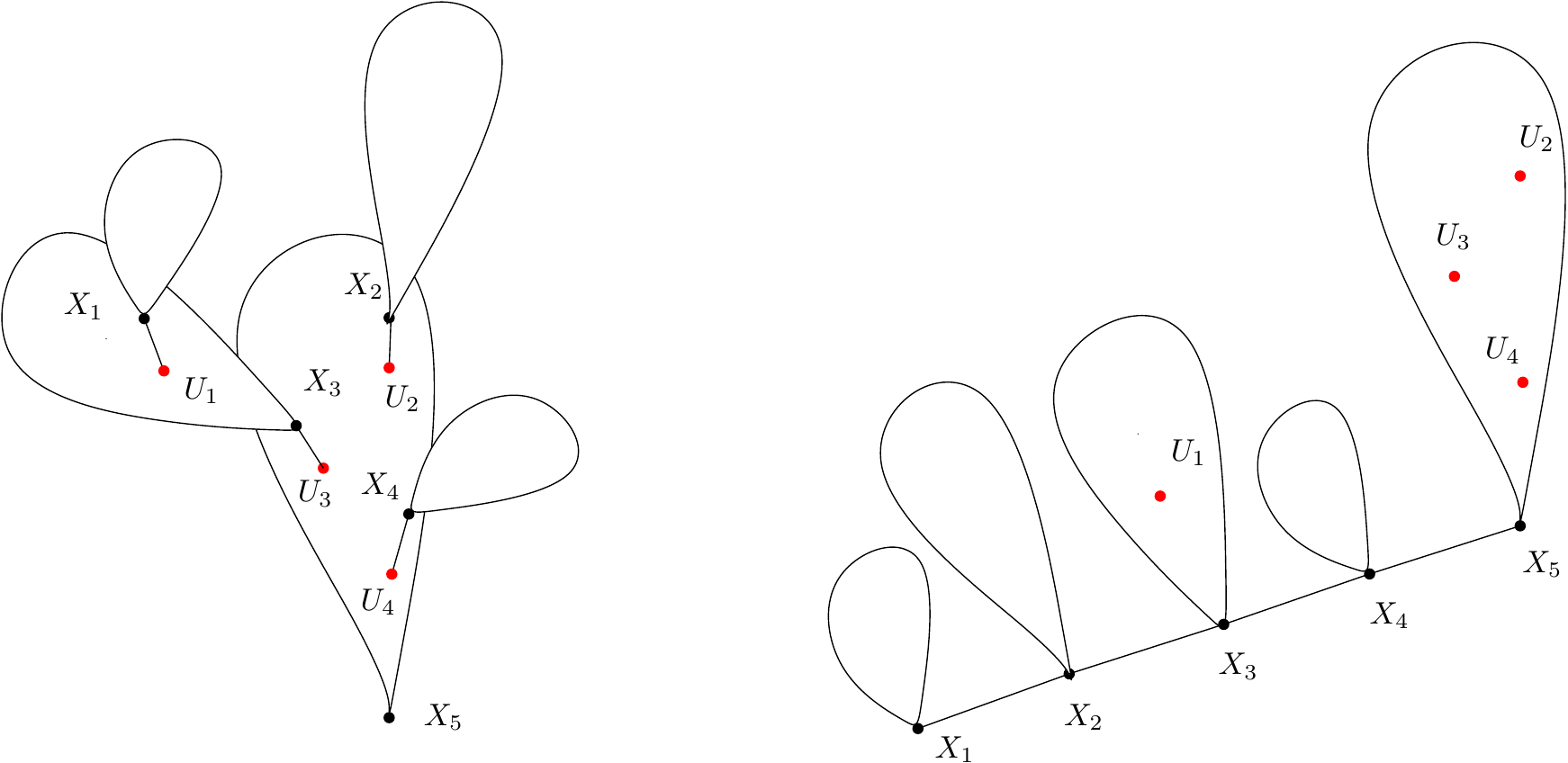}
\caption{\label{fig:one-cutting}The re-organization of the tree in the one-cutting procedure: on the 
left the initial tree $T$, on the right $H$ and the marked nodes $U_1,\dots, U_4$ where to reattach 
$X_1,\dots, X_4$ in order to recover $T$.}
\end{figure}

For $\bu=(u_1, \cdots, u_{\ell-1})\in \bbU(h, v)$, define $\tau(h, v; \bu)$
as the tree obtained from $h$ by removing the edges of $\Span(h;v)$, and reconnecting the 
pieces by adding the edges
$\{x_i, u_i\}$, for all the edges $\edge{x_i, x_{i+1}}$ in $\Span(h, v)$. (In particular, the number of 
edges is unchanged.)
We regard $\tau(h, v; \bu)$ as a tree rooted at~$r=x_1$, the root of $h$.
The the tree $T$ may be recovered by characterizing $T^{r}$, 
the tree $T$ rerooted at $r$, and the initial root $r(T)$.
We have:
$$
\{H=h; V=v; r(T)=w; U_i=u_i, 1\le i< \ell\}
=\{T^{r}=\tau(h, v; \bu); r(T)=w; X_i=x_i, 1\le i\le \ell\}.
$$
It follows that, for any nodes $u_1,u_2,\dots, u_{\ell-1}$ as above, we have
\begin{align*}
\p{H=h; V=v; r(T)=w; U_i=u_i, 1\le i<\ell}
& = \p{T=\tau(h,v; \bu)^w; V=v; X_i=x_i; 1\le i\le \ell}\\
& = \pi(\tau(h, v; \bu)^w) \cdot p_v \cdot
    \prod_{1\le i\le \ell}\frac{p_{x_i}}{\bp(\Sub(h,x_i))}.
\end{align*}
Now, by definition, the only nodes that get their (in-)degree modified in the transformation 
from $h$ to $\tau(h,v;\bu)$ are $u_i$, $x_{i+1}$, $1\le i< \ell$: every such 
$x_{i+1}$ gets one less in-edge while $u_i$ gets one more. The re-rooting at $w$ then only 
modifies the in-degrees of the extremities of the path that is reversed, namely 
$x_1=r$ and $w$.
It follows that
$$\pi(\tau(h,v;\bu)^w) 
= \pi(h) \cdot \prod_{1\le i<\ell} \frac{p_{u_i}}{p_{x_{i+1}}}\cdot \frac{p_w}{p_{x_1}}.$$
Since $\bp(\Sub(h,x_1))=1$, we have
$$
\p{H=h; V=v; r(T)=w; U_i=u_i, 1\le i<\ell}
= \pi(h)\cdot
    \prod_{1\le i< \ell}\frac{p_{u_i}}{\bp(\Sub(h,x_{i+1}))}\cdot p_v \cdot p_w,
$$
which proves the first claim.
Summing over all the choices for $\bu=(u_1,u_2,\dots, u_{\ell-1})\in \bbU(h, v)$, 
and $w\in [n]$, we obtain
\begin{align*}
\p{H=h; V=v}
=&\sum_{w\in [n]} \sum_{\bu\in \bbU(h, v)} \pi(h)\cdot
    \prod_{1\le i< \ell}\frac{p_{u_i}}{\bp(\Sub(h,x_{i+1}))}\cdot p_v \cdot p_w\\
=&\,\pi(h)\cdot p_v\cdot \sum_{\begin{subarray}{c}
        \bu=(u_1, \cdots, u_{\ell-1}):\\  u_i\in \Sub(h,x_{i+1}),1\le i<\ell
      \end{subarray}} 
    \frac{p_{u_1}}{\bp(\Sub(h,x_{2}))}\cdots\frac{p_{u_{\ell-1}}}{\bp(\Sub(h,x_{\ell}))}\\
=&\,\pi(h)\cdot p_v,
\end{align*}
which completes the proof.
\end{proof}

\medskip
\noi\textsc{The reverse 1-cutting procedure.}\
We have transformed the tree $T$ into the tree $H$, by somewhat
``knitting'' a path between the first picked random $\bp$-node $X_1$ and
the distinguished node $V$. This transform is reversible. 
Indeed, it is possible to
``unknit'' the path between $V$ and the root of $H$, and reshuffle the subtrees
thereby created in order to obtain a new tree $\tilde T$, distributed as $T$ and
in which $V$ is an independent $\bp$-node.
Knowing the $U_i$, one could do this exactly, and recover the adjacencies of $T$
(recovering $T$ also requires the information about the root $r(T)$ which has been lost).
Defining a reverse transformation reduces to finding the joint distribution of $(U_i)$ and $r(T)$,
which is precisely the statement of Lemma~\ref{lem:joint_dist-tree-node},
so that the following reverse construction is now straightforward.

Let $h\in \bbT_n$, rooted at $r$ and let $v$ be a node in $[n]$.
We think of $h$ as the tree that was obtained by the $1$-cutting procedure $\cut(T,v)$,
for some initial tree $T$.
Suppose that $\Span(h, v)$ consists of the vertices $r=x_1,x_2,\dots, x_\ell=v$.
Removing the edges of $\Span(h, v)$ from $h$ disconnects it into $\ell$
connected components which we see as rooted at $x_i$, $1\le i\le \ell$.
For $w\in \Span^*(h, v)=\Span(h,v)\setminus \{r\}$, sample a node $U_w$ according to the 
restriction of $\bp$ to $\Sub(h,w)$. 
Let $\bU=(U_w, w\in \Span^*(h, v))$ be the obtained vector. Then $\bU\in \bbU(h, v)$. 
We then define $\shuff(h, v)$ to be the rooted tree which has the adjacencies
of $\tau(h, v; \bU)$, but that is re-rooted at an independent $\bp$-node.

It should now be clear that the $1$-cutting procedure and the reshuffling operation
we have just defined are dual in the following sense.

\begin{prop}[1-cutting duality]\label{pro:one-duality}
Let $T$ be $\bp$-tree on $[n]$ and $V$ be an independent $\bp$-node.
Then,
$$(\shuff(T,V), T, V)\overset{d}{=}(T, \cut(T,V), V).$$
In particular, $(\shuff(T, V), V)\sim \pi\otimes \bp$.
\end{prop}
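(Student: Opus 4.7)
The entire content of the statement is already encoded in Lemma~\ref{lem:joint_dist-tree-node}: the operation $\shuff$ was designed so that, conditional on the output, the auxiliary variables used to build $\shuff(h,v)$ have exactly the same conditional law as $(r(T), U_1, \ldots, U_{L-1})$ given $(H,V)=(h,v)$ derived from Lemma~\ref{lem:joint_dist-tree-node}. The plan is therefore to augment each side of the claimed identity by these ``internal'' variables and check that the two enlarged joint laws coincide term by term; the claim then follows by projecting out the auxiliary randomness.

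More concretely, fix $(t, h, v)\in\bbT_n\times\bbT_n\times[n]$, let $r=r(h)$ and write the spine as $\Span(h, v)=\{r=x_1,x_2,\dots, x_\ell=v\}$. On the $\shuff$ side, by construction the auxiliary variables are an independent $\bp$-node $W$ together with, for each $i=1,\dots,\ell-1$, a node $U'_{x_{i+1}}$ sampled from $\bp|_{\Sub(h, x_{i+1})}$, all independent given $(T,V)$, and the output equals $\tau(h,v;\bU')^{W}$. Starting from $T\sim\pi$ and $V$ an independent $\bp$-node, a direct computation therefore gives
$$
\p{\shuff(T,V)=t,\ T=h,\ V=v,\ W=w,\ U'_{x_{i+1}}=u_i,\ 1\le i<\ell}
= \pi(h)\cdot\!\!\prod_{1\le i<\ell}\!\!\frac{p_{u_i}}{\bp(\Sub(h,x_{i+1}))}\cdot p_v\cdot p_w\cdot \mathbf{1}_{\{\tau(h,v;\bu)^{w}=t\}}.
$$
On the cutting side, Lemma~\ref{lem:joint_dist-tree-node} together with the identity $T=\tau(h,v;\bu)^{w}$ (which holds on the event in that lemma, as explained in its proof) yields
$$
\p{T=t,\ H=h,\ V=v,\ r(T)=w,\ U_i=u_i,\ 1\le i<\ell}
= \pi(h)\cdot\!\!\prod_{1\le i<\ell}\!\!\frac{p_{u_i}}{\bp(\Sub(h,x_{i+1}))}\cdot p_v\cdot p_w\cdot \mathbf{1}_{\{\tau(h,v;\bu)^{w}=t\}}.
$$
The two expressions are identical, so the enlarged joint laws agree.

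Summing over the auxiliary variables $(w,\bu)$ on both sides gives $\p{\shuff(T,V)=t,\ T=h,\ V=v}=\p{T=t,\ \cut(T,V)=h,\ V=v}$, which is the first claim. For the ``in particular'' part, marginalize the identity over the middle coordinate: the right-hand side projects to the distribution of $(T,V)$, which is $\pi\otimes\bp$ by assumption, so $(\shuff(T,V),V)\sim\pi\otimes\bp$ as well. I do not expect a genuine obstacle here: the only care needed is bookkeeping, namely checking that the map $(\bu,w)\mapsto\tau(h,v;\bu)^{w}$ used to index the sum is the same on both sides and that the indicator $\mathbf{1}_{\{\tau(h,v;\bu)^{w}=t\}}$ arises symmetrically, so that no residual Jacobian appears when the sums are collapsed.
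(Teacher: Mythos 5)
Your proof is correct and is essentially the paper's argument in integrated form: both hinge on Lemma~\ref{lem:joint_dist-tree-node} together with the reconstruction identity $T=\tau(H,V;\bU)^{r(T)}$, the only difference being that the paper packages the computation as an almost-sure coupling (showing the conditional law of $(r(T),\bU)$ given $(H,V)$ is exactly the sampling used in $\shuff$), while you verify equality of the two augmented joint laws term by term and then sum out the auxiliary variables. This is a presentational difference only; the distributional identity you obtain is exactly the statement of the proposition.
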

Note that for the joint distribution in Proposition~\ref{pro:one-duality}, 
it is necessary to re-root at
another independent $\bp$-node in order to have the claimed equality.
Indeed, $T$ and $\tau(T,V;\bU)$ have the same root almost surely,
while $T$ and $\cut(T,V)$ do not
(they only have the same root with probability $\sum_{i\ge 1} p_i^2<1$).

\begin{proof}[Proof of Proposition~\ref{pro:one-duality}]
Let $H=\cut(T,V)$ be the tree resulting from the cutting procedure.
Let $L=\#\Span(H; V)$. For $1\le i<L$, we defined nodes $U_i$,
which used to be the neighbors of $X_i$ in $T$.
For $w\in \Span^*(H;V)$, we let $U_w=U_i$ if $w=X_{i+1}$, and let $\bU$
be the corresponding vector. 
Then writing $\hat r=r(T)$, with probability one, we have
$$T=\tau(H, V; \bU)^{\hat r}.$$
By Lemma~\ref{lem:joint_dist-tree-node}, $\bU\in \bbU(H, V)$ and conditional on
$H$ and $V$, $U_w$, $w\in \Span^*(H,V)$ and $\hat r=r(T)$ are independent and distributed 
according to the restriction of $\bp$ to $\Sub(H,w)$ and $\bp$, respectively. 
So this coupling indeed gives that $T=\tau(H,V;\bU)^{\hat r}$ is distributed as $\shuff(H,V)$, 
conditional on $H$. 
Since in this coupling $(\shuff(H,V), T, V)$ is almost surely equal to $(T,H,V)$, 
the proof is complete.
\end{proof}

\medskip
\noindent\textbf{Remark.}\ \label{rem:as-coupling}
Note that the shuffle procedure would permit to obtain 
the original tree $T$ exactly if we were to use some information that might be 
gathered as the cutting procedure goes on. In this discrete case, this is rather 
clear that one could do this, since the shuffle construction only consists in 
replacing some edges with others but the vertex set remains the same. 
This observation will be used in Section~\ref{sec:reverse} to prove a similar
statement for the ICRT. There it is much less clear and the result is
slightly weaker: it is possible to couple the shuffle in such a way 
that the tree obtained is measure-isometric to the original one. 

\subsection{Isolating multiple vertices}\label{sec:isol_mult}

We define a cutting procedure analogous to the one described in
Section~\ref{sec:ptree-one-cutting}, but which continues until multiple nodes have been
isolated. Again, we let $T$ be a $\bp$-tree and, for some
$k\ge 1$, let $V_1, V_2, \cdots, V_k$ be $k$ independent vertices chosen according to $\bp$ 
(so not necessarily distinct).

\medskip
\noi \textsc{The $k$-cutting procedure and the $k$-cut tree.}\
We start with $\Gamma_0=T$. Later on, $\Gamma_i$ is meant to be the forest induced
by $T$ on the nodes that are left.
For each time $i\ge 1$, we pick a random vertex $X_i$ according to $\bp$
restricted to $\fv(\Gamma_{i-1})$, the set of the remaining vertices, and remove it.
Then among the connected components of $T\setminus\{X_1, \cdots, X_i\}$,
we only keep those containing at least one of $V_1, \cdots, V_k$.
We stop  at the first time when all $k$ vertices
$V_1,\dots, V_k$ have been chosen, that is at time
$$L^k:=\inf\{i\ge 1: \{V_1,\dots, V_k\}\subseteq \{X_1,\dots, X_i\} \}.$$
For $1\le \ell\le k$ and for $i\ge 0$, we denote by $T^\ell_i$ the connected component of
$T\setminus\{X_1, X_2, \cdots, X_i\}$ containing $V_\ell$ at time $i$, or
$T^\ell_i=\varnothing$ if $V_\ell\in \{X_1,\dots, X_i\}$.
Then $\Gamma_i$ is the graph consisting of the connected components
$T_i^\ell$, $\ell=1,\dots, k$.

Fix some $\ell\in \{1,2,\dots, k\}$, and suppose that at time $i\ge 1$,
we have $X_i\in T_{i-1}^\ell$.
If $X_i=V_\ell$, then $T_i^\ell=\varnothing$ and we define $F_i=T_{i-1}^\ell$,
re-rooted at $X_i=V_\ell$.
Otherwise, $X_i\ne V_\ell$ and there is a first node $U_i^\ell$ on the path between
$X_i$ and $V_\ell$ in $T_{i-1}^\ell$. Then $U_i^\ell\in T_i^\ell$, and we see $T_i^\ell$
as rooted at $U_i^\ell$.
Note that it is possible that $T_{i-1}^j=T_{i-1}^\ell$, for $j\ne \ell$, and that
removing $X_i$ may separate $V_\ell$ from $V_j$.
Removing from $\Gamma_{i-1}$ the edges $\{X_i,U_i^\ell\}$, 
for $1\le \ell\le k$ such that $T_i^\ell \ni X_i$,
isolates $X_i$ from the nodes $V_1,\dots, V_k$, and we define $F_i$ as the
subtree of $T$ induced on the nodes in $\Gamma_{i-1} \setminus \Gamma_i,$
so that $F_i$ is the portion of the forest $\Gamma_{i-1}$
which gets discarded at time $i$, which we see as rooted at $X_i$.

\begin{figure}[t]
\centering
\includegraphics[scale=.7]{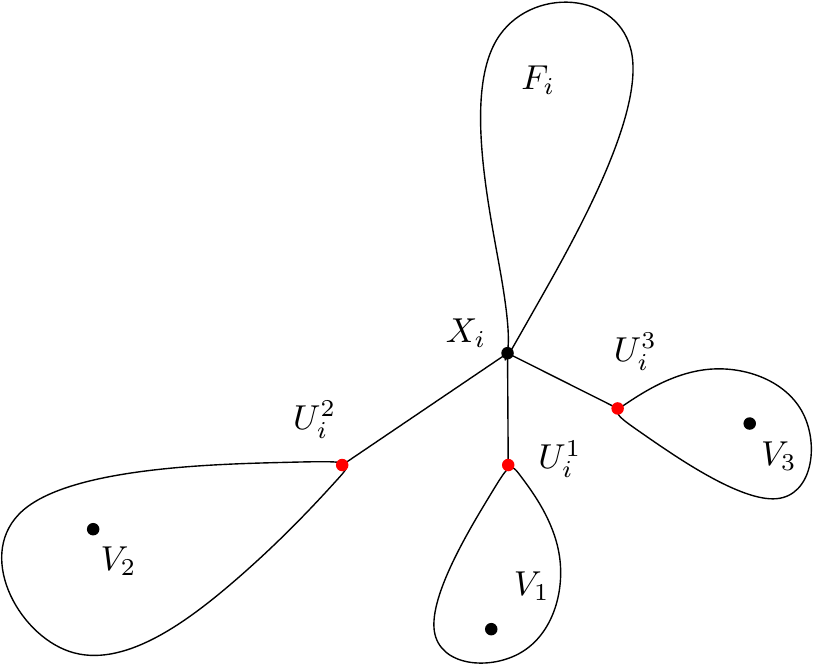}
\caption{\label{fig:cutting_Fi}The decomposition of the tree when removing the point $X_i$ from
the connected component of $\Gamma_i$ which contains $V_1,V_2$ and $V_3$.}
\end{figure}

Consider the set of effective cuts which affect
the size of $T^\ell_i$:
$$
\cE^k_\ell=\{x\in [n]: \text{there exists }i\ge 1, \text{ such that } X_i=x\in T^\ell_{i-1}\},
$$
and note that $\cE^k_1 \cup \cE^k_2 \cup \dots \cup \cE^k_k=\{X_i: 1\le i\le L^k\}$.
Let $S_k$, the \emph{$k$-cutting skeleton}, 
be a tree on $\cE^k_1 \cup \dots \cup \cE^k_k$ that is rooted at $X_1$,
and such that the vertices on the path from $X_1$ to $V_\ell$ in $S_k$
are precisely the nodes of $\cE^k_\ell$, in the order given by the indices of the cuts.
So if we view $S_k$ as a genealogical tree, then in particular, for $1\le j,\ell\le k$, 
the common ancestors
of $V_j$ and $V_\ell$ are exactly the ones in $\cE^k_j \cap \cE^k_\ell$.
The tree $S_k$ constitutes the \emph{backbone} of a tree on $[n]$ which we now
define.
For every $x\in S_k$, there is a unique $i=i(x)\ge 1$ such that $x=X_i$.
For that integer $i$ we have defined a subtree $F_i$ which contains $X_i=x$.
We append  $F_i$ to $S_k$ at $x$.
Formally, we consider the tree on $[n]$ whose
edge set consists of the edges of $S_k$ together with the edges of all $F_i$,
$1\le i\le L^k$. Furthermore, the tree is considered as rooted at $X_1$.
Then this tree is completely determined by $T$, $V_1,\dots, V_k$, and the sequence
$\bX:=(X_i, i=1, \dots, L^k)$, and
we denote this tree by $\kappa(T; V_1,\dots, V_k; \bX)$ when we want to
emphasize the dependence in $\bX$, or more simply $\cut(T, V_1,\dots, V_k)$
(in which it is implicit that the cutting sequence used in the transformation
is such that for every $i\ge 1$, $X_i$ is a $\bp$-node in $\Gamma_{i-1}$).
Clearly, if $H_k=\cut(T, V_1,\dots, V_k)$ then
$S_k=\Span(H_k; V_1,\dots, V_k)$.

It is convenient to define a \emph{canonical (total) order} $\preceq$ on the vertices of $S_k$.
\label{p:canon-order}
It will be needed later on in order to define the reverse procedure. 
For two nodes $u,v$ in $S_k$, we say that $u\preceq v$ if either
$u\in \llb X_1, v\rrb$, or
if there exists $\ell\in \{1,\dots, k\}$ 
such that $u\in \Span(S_k; V_1,\dots, V_\ell)$ but
$v\not\in \Span(S_k; V_1,\dots, V_\ell)$.

\medskip
\noi\textsc{A useful coupling.}\ \label{p:coupling}
It is useful to see all the trees
$\cut(T; V_1,\dots, V_k)$ on the same probability space, and provide
a natural but crucial coupling for which the sequence $(S_k)$ is increasing in $k$.
Let $Y_i$, $i\ge 1$, be a sequence of i.i.d.\ $\bp$-nodes.
For $k\ge 1$, we define an increasing sequence $\sigma_k$ as follows.
Let $\sigma_k(1)=1$.
Suppose that we have already defined $X^k_1,\dots, X^k_{i-1}$.
Let $\Gamma_{i-1}^k$ be the collection of
connected components of $T\setminus \{X_1^k,\dots, X_{i-1}^k\}$ which
contain at least one of $V_1,\dots, V_k$. Let
$$\sigma_k(i)=\inf\{j>\sigma_k(i-1): Y_j\in \Gamma_{i-1}^k\},$$
and define $X_i^k=Y_{\sigma_k(i)}$.
Then, for every $k$, $X^k_i$, $i\ge 1$, is a sequence of nodes sampled according
to the restriction of $\bp$ to $\Gamma_{i-1}^k$, so that $\bX^k:=(X^k_i, i\ge 1)$ can be
used to define $\cut(T, V_1,\dots, V_k)$, $k\ge 1$, in a consistent way by setting
$$\cut(T, V_1,\dots, V_k)=\kappa(T, V_1,\dots, V_k; \bX^k).$$
Suppose that the trees $H_k:=\cut(T;V_1,\dots, V_k)$, $k\ge 1$, are
constructed using the coupling we have just described.
By convention let $H_0=T$ and $\Span(T; \varnothing)=\varnothing$.

\begin{lem}\label{lem:coupl-incr}
Let $S_k=\Span(H_k; V_1,\dots, V_k)$.
Then, $S_k\subseteq S_{k+1}$ and
$$S_k=\Span(S_{k+1}; V_1,\dots, V_k).$$
\end{lem}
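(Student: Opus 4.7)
The idea is to synchronize the two cutting procedures by an invariant that tracks their effect on the portion of $T$ visible to $V_1,\dots,V_k$. For $j\ge 0$, let $E_j^k$ denote the set of vertices of $T$ cut in the $k$-procedure among $Y_1,\dots,Y_j$. I will prove by induction on $j$ that
(a) $E_j^k \subseteq E_j^{k+1}$, and
(b) for every $\ell\le k$, the connected component of $V_\ell$ in $T\setminus E_j^k$ coincides with that of $V_\ell$ in $T\setminus E_j^{k+1}$.

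The base case $j=0$ is trivial. For the inductive step, consider $Y_j$. If $Y_j$ is accepted in the $k$-procedure, it lies in the component $C_\ell$ of some $V_\ell$ ($\ell\le k$) in $T\setminus E_{j-1}^k$, which by (b) at step $j-1$ equals the $V_\ell$-component in $T\setminus E_{j-1}^{k+1}$; in particular $Y_j\notin E_{j-1}^{k+1}$, so $Y_j$ is eligible and hence accepted in the $(k+1)$-procedure as well, and (a), (b) both receive the same update. Conversely, if $Y_j$ is accepted in the $(k+1)$-procedure but not in the $k$-procedure, then $Y_j\notin E_{j-1}^k$ (since $E_{j-1}^k\subseteq E_{j-1}^{k+1}$ by induction), so the reason for its $k$-rejection must be that $Y_j$ lies in no $V_\ell$-component ($\ell\le k$) in $T\setminus E_{j-1}^k$; by (b), the same holds in $T\setminus E_{j-1}^{k+1}$, so $Y_j\notin C_\ell$ for every $\ell\le k$, and removing $Y_j$ from $T\setminus E_{j-1}^{k+1}$ leaves each such $C_\ell$ untouched. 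Both (a) and (b) are thus preserved; the remaining cases (accepted in both, or in neither) are immediate. Note that the ``rogue'' case where $Y_j$ would be accepted only in the $k$-procedure is ruled out by (b) applied at $j-1$.

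From the invariant, $\cE_\ell^k = \cE_\ell^{k+1}$ for every $\ell\le k$: membership depends only on being accepted while in the $V_\ell$-component, and by (a)--(b), these two conditions are equivalent in the two procedures. Since the vertices of each set $\cE_\ell^{\cdot}$ are ordered by their index in $(Y_j)$ and the two procedures share the common root $X_1^k = X_1^{k+1} = Y_1$, the path from the root to $V_\ell$ in $S_k$ coincides exactly with the corresponding path in $S_{k+1}$ for every $\ell\le k$. Since both $S_k$ and $\Span(S_{k+1}; V_1,\dots,V_k)$ are obtained as the union of these paths over $\ell\le k$, the inclusion $S_k\subseteq S_{k+1}$ and the identification $S_k = \Span(S_{k+1}; V_1,\dots,V_k)$ follow at once.

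The delicate point will be the inductive step, especially checking that (b) is preserved when only the $(k+1)$-procedure acts on $Y_j$: this requires that any such ``extra'' cut lie outside every component $C_\ell$ for $\ell\le k$, which is precisely what (b) at the previous step, combined with the analysis of rejection in the $k$-procedure, lets us deduce.
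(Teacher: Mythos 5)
Your proof is correct and rests on the same key observation as the paper's: a sample $Y_j$ that is rejected lies outside every component containing one of $V_1,\dots,V_k$, so its presence or removal never affects those components, and hence the effective cuts $\cE_\ell^k$ and their order are determined by the sequence $(Y_j)$ alone. The paper expresses this in one stroke via the $k$-free identity $\cE_\ell^k=\{y:\exists j\ge 1,\ y=Y_j\in\hat T^\ell_{j-1}\}$, where $\hat T^\ell_{j-1}$ is the component of $V_\ell$ in $T\setminus\{Y_1,\dots,Y_{j-1}\}$, whereas you reach the same conclusion by an inductive coupling of the $k$- and $(k+1)$-procedures; the two arguments are essentially equivalent.
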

\begin{proof}
Let $T_i^{\ell}$ be the connected component of $\Gamma_i^k$ which contains $V_\ell$.
Let $\hat T_j^\ell$ be the connected component of
$T\setminus\{Y_1,Y_2,\dots, Y_j\}$ which contains $V_\ell$.
Then, for $\ell\le k$, we have
$$\cE_\ell^k=\{x: \exists i\ge 1, x=X_i^k\in T_{i-1}^\ell\}
=\{y: \exists j\ge 1, y=Y_j\in \hat T_{j-1}^\ell\},$$
so that $\cE_\ell^k$ does not depend on $k$.
Then $S_k$ is the tree on $\cE_1^k\cup \dots \cup \cE_k^k$ such that the nodes
on the path $\Span(S_k; V_\ell)$ are precisely the nodes of $\cE_\ell^k$, in the
order given by the cut sequence $\bX^k$.
It follows that $S_k\subseteq S_{k+1}$ and more precisely that
$S_k=\Span(S_{k+1}; V_1,\dots, V_k)$.
\end{proof}

\medskip
\noi\textbf{Remark.}\
The coupling we have just defined justifies an
\emph{ordered cutting procedure} which is very similar to the one defined in
\cite{ABH10}.
Suppose that, for some
$j,\ell\in \{1,\dots, k\}$ we have $x\in \cE^k_j\setminus \cE^k_\ell$ and
$y\in \cE^k_\ell\setminus \cE^k_j$. Write $(\tilde X_i, i\ge 1)$ for the
sequence in which we have exchanged the positions of $x$ and $y$.
Then the trees $T_i^k$, $i\ge \max\{m: X_m=x\text{ or }y\}$
are unaffected if we replace $(X_i, i\ge 1)$ by $(\tilde X_i, i\ge 1)$
in the cutting procedure. In particular, if we are only interested in the final tree
$H_k$, we can always suppose that there exist numbers
$0=m_0< m_1< m_2< \cdots < m_k\le n$ such that, for $1\le \ell\le k$, and if
$V_\ell\not\in\{V_1,\dots, V_j\}$, we have
$$
\cE^k_\ell\setminus \bigcup_{1\le j< \ell}\cE^k_j=\{X_i: m_{\ell-1}< i\le m_{\ell}\}.
$$
However, we prefer the coupling over the reordering of the sequence
since it does not involve any modification of the distribution of the cutting sequences.

\medskip
Let $\tilde T_k$ be the subtree of $H_{k-1}\setminus \Span(H_{k-1}; V_1,\dots, V_{k-1})=H_{k-1}
\setminus S_{k-1}$ 
which contains $V_k$; we agree that $\tilde T_k=\varnothing$ if
$V_k\in \Span(H_{k-1}; V_1,\dots, V_{k-1})$.

\begin{lem}\label{lem:multi-cut}
Let $T$ be a $\bp$-tree and let $V_k$, $k\ge 1$, be a sequence of i.i.d.\ $\bp$-nodes.
Then, for each $k\ge 1$:
\begin{enumerate}[i.]
\item\label{lmii}
Let $\bV\subseteq [n]$ with $\bV \ne \varnothing$, 
then conditional on $V_\ell\in \fv(\tilde T_k)=\bV$, 
the pair $(\tilde T_k, V_\ell)$
is distributed as $\pi|_\bV\otimes \bp|_\bV$,
and is independent of $(H_{k-1}\setminus \bV, V_1, \cdots, V_{k-1})$.
\item\label{lmi}
The joint distribution of $(H_k, V_1, \cdots, V_k)$ is given by $\pi\otimes \bp^{\otimes k}$.
\end{enumerate}
\end{lem}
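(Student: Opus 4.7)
The plan is to prove both parts jointly by induction on $k$, with the one-vertex duality of Lemma~\ref{lem:joint_dist-tree-node} as the engine. The base case $k=1$ is direct: since $S_0=\varnothing$ one has $\tilde T_1 = H_0 = T$, so part (\ref{lmii}) is tautological, while part (\ref{lmi}) is exactly the content of Lemma~\ref{lem:joint_dist-tree-node}. For the inductive step, assume both statements hold at level $k-1$, so in particular $(H_{k-1}, V_1, \ldots, V_{k-1})\sim \pi\otimes \bp^{\otimes (k-1)}$, and since $V_k$ is an independent $\bp$-node one has $(H_{k-1}, V_1, \ldots, V_k)\sim \pi\otimes \bp^{\otimes k}$.

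For part (\ref{lmii}), I would use the product form $\pi(h)=\prod_u p_u^{C_u(h)}$ to decompose $H_{k-1}$ with respect to the partition of its vertex set induced by $S_{k-1}=\Span(H_{k-1}; V_1, \ldots, V_{k-1})$. Conditionally on the restriction $H_{k-1}\setminus \bV$ and on the attachment vertex $u^\ast\in S_{k-1}$ of $\tilde T_k$, the weight of a configuration factorizes into a contribution from $[n]\setminus \bV$ and a contribution $p_{u^\ast}\prod_{u\in \bV}p_u^{C_u(\tilde T_k)}$ coming from $\tilde T_k$ (with root the neighbor of $u^\ast$). Dividing by the marginal, which by Cayley's multinomial identity \eqref{eq:cayley} equals $p_{u^\ast}\bp(\bV)^{|\bV|-1}$, one obtains that the conditional law of $\tilde T_k$ as a rooted tree on $\bV$ is exactly $\pi^{(\bp|_\bV)}$. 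Since $V_k$ is an independent $\bp$-node, conditioning on $V_k\in \bV$ gives $V_k\sim \bp|_\bV$, independently of $\tilde T_k$ and of the exterior data. This yields part (\ref{lmii}).

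The core of the argument is part (\ref{lmi}). The key structural observation, extracted from the coupling on p.~\pageref{p:coupling}, is that when $V_k\notin S_{k-1}$ the sequences $\bX^{k-1}$ and $\bX^k$ agree on all cuts affecting components containing $\{V_1,\dots,V_{k-1}\}$, and differ only by the ``extra'' cuts $Y_j$ falling in components containing $V_k$ but none of $V_1,\dots,V_{k-1}$. These extra cuts begin exactly at the moment $V_k$ gets severed from $\{V_1,\dots,V_{k-1}\}$; at that moment the component of $V_k$ in $T$ coincides, as a vertex subset of $[n]$, with $\fv(\tilde T_k)$, and subsequent extra cuts are sampled from $\bp$ restricted to the current component of $V_k$, independently of $H_{k-1}$. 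Hence the extra cuts perform a genuine one-cutting of $\tilde T_k$ targeting $V_k$, and $H_k$ is obtained from $H_{k-1}$ by deleting the edge joining $u^\ast$ to the root $w$ of $\tilde T_k$, substituting $\cut(\tilde T_k,V_k)$ for $\tilde T_k$, and reattaching the result at $u^\ast$ through an edge to the new root, the first extra cut $X^\prime$. By part (\ref{lmii}), conditional on $\fv(\tilde T_k)=\bV$ and on the exterior, $(\tilde T_k,V_k)\sim \pi|_\bV\otimes \bp|_\bV$; by Lemma~\ref{lem:joint_dist-tree-node} applied inside $\tilde T_k$, the pair $(\cut(\tilde T_k,V_k),V_k)$ has the very same conditional law $\pi|_\bV\otimes \bp|_\bV$. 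The swap therefore preserves the joint law, so $(H_k,V_1,\dots,V_k)\stackrel{d}{=}(H_{k-1},V_1,\dots,V_k)\sim\pi\otimes\bp^{\otimes k}$. The complementary case $V_k\in S_{k-1}$ is handled directly: then $\Gamma^k_i=\Gamma^{k-1}_i$ at every time, so $\bX^k=\bX^{k-1}$ and $H_k=H_{k-1}$ identically.

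The main obstacle is justifying the structural identification of $H_k$ with $H_{k-1}$ where $\tilde T_k$ has been replaced by $\cut(\tilde T_k,V_k)$. One has to verify that (a) the extra cuts form a bona fide one-cutting sequence on $\tilde T_k$ with the correct $\bp|_\bV$-sampling, (b) the first extra cut $X^\prime$ is grafted onto $S_{k-1}$ precisely at the old attachment point $u^\ast$, and (c) the $F_i$'s in $H_k$ arising from inside $\tilde T_k$ are exactly those produced by the one-cutting of $\tilde T_k$. The ordered-cutting remark following Lemma~\ref{lem:coupl-incr} is what licenses us to postpone the extra cuts to the very end of the procedure, making this identification clean and reducing everything to the already-proven one-vertex statement.
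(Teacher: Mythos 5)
Your proof is correct and follows essentially the same route as the paper: induction on $k$, with part~(i) obtained by factorizing the $\pi$-weight of $H_{k-1}$ at the attachment point of $\tilde T_k$ and applying Cayley's multinomial formula, and part~(ii) by splitting on whether $V_k\in S_{k-1}$ and, in the nontrivial case, replacing $\tilde T_k$ by $\cut(\tilde T_k, V_k)$, which preserves the law by part~(i) together with Lemma~\ref{lem:joint_dist-tree-node}. The only difference is that you make explicit, via the coupling and the ordered-cutting remark, the structural identification of $H_k$ with the modified $H_{k-1}$, a step the paper treats as immediate from the construction.
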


\begin{proof}
We proceed by induction on $k\ge 1$.
Let $\tilde R_k$ denote the tree induced by $H_k$ on the vertex set
$[n]\setminus \fv(\tilde T_k)$.
For the base case $k=1$, the first claim is trivial since $\tilde T_1=T$,
and the second is exactly the statement of Lemma~\ref{lem:joint_dist-tree-node}.

Given the two subtrees $\tilde T_k$ and $\tilde R_k$, it suffices to identify where
the tree $\tilde T_k$ is grafted on $\tilde R_k$ in order to recover the tree $H_{k-1}$.
By construction, the edge connecting $\tilde T_k$ and $\tilde R_k$ in $H_{k-1}$
binds the root of $\tilde T_k$ to a node of $\Span(\tilde R_k; V_1,\dots, V_{k-1})$.
Let $t\in \bbT_\bV$, $r\in \bbT_{[n]\setminus \bV}$, $v_k\in \bV$ and
$v_i\in [n]\setminus \bV$ for $1\le i<k$. Write $\bv_{k-1}=\{v_1,\dots, v_{k-1}\}$.
For a given node $x\in \Span(r; \bv_{k-1})$, let $j_x(r,t)$
(the joint of $r$ and $t$ at $x$) be the tree obtained from
$t$ and $r$ by adding an edge between $x$ and the root of $t$.
By the induction
hypothesis, $(H_{k-1}, V_1, \cdots, V_{k-1})$ is  distributed
like a $\bp$-tree together with $k-1$ independent $\bp$-nodes.
Furthermore $V_k$ is independent of $(H_{k-1}, V_1, \cdots, V_{k-1})$.
It follows that
\begin{align*}
\pc{\tilde T_k=t; \tilde R_k=r; V_i=v_i, 1\le i\le k}
& = \sum_{x\in \Span(r; \bv_{k-1}) }\pc{H_{k-1}= j_x(r,t); V_i=v_i, 1\le i\le k} \notag \\
& = \sum_{x\in \Span(r; \bv_{k-1})}\prod_{i\in \bV}p_i^{C_i(t)} \cdot
\prod_{j\in [n]\setminus \bV}p_j^{C_j(r)}\cdot p_x
\cdot \prod_{1\le i\le k}p_{v_i} \notag \\
& = \prod_{i\in \bV}p_i^{C_i(t)}  \cdot p_{v_k} \cdot \prod_{j\in [n]\setminus \bV}p_j^{C_j(r)} \cdot \bp(\Span(r; \bv_{k-1})) \cdot
\prod_{1\le i<k} p_{v_i}.
\end{align*}
By summing over $t$ and $r$ and applying Cayley's multinomial formula, we deduce that
conditional on $v(\tilde T_k)=\bV\neq \varnothing$,
$(\tilde T_k, V_k)$ is independent of $(\tilde R_k, V_1,\dots, V_{k-1})$ and distributed
according to $\pi|_\bV\otimes \bp|_\bV$, which establishes the first claim for $k$.

Now, conditional on the event $\{V_k\in S_{k-1}\}$, the vertex
$V_k$ is distributed according to the
restriction of $\bp$ to $S_{k-1}$. In this case,
$H_k=H_{k-1}$ so that by the induction hypothesis
\begin{equation}\label{eq:Vk-in}
\text{on~}\{V_k\in S_{k-1}\},\qquad
(H_k, V_1,\dots, V_k)\sim \pi \otimes \bp^{k-1} \otimes \bp|_{S_{k-1}}.
\end{equation}
On the other hand, if $V_k\not\in S_{k-1}$,
then $\fv(\tilde T_k)\neq \varnothing$ and conditional on $\fv(\tilde T_k)=\bV$,
we have $(\tilde T_k, V_k)\sim \pi|_\bV\otimes \bp|_\bV$. In that case,
$H_k$ is obtained from $H_{k-1}$ by replacing $\tilde T_k$ by $\cut(\tilde T_k, V_k)$.
We have already proved that, in this case, $(\tilde T_k,V_k)$ is independent of $\tilde R_k$,
and Lemma~\ref{lem:joint_dist-tree-node} ensures that the replacement does not alter
the distribution. In other words,
\begin{equation}\label{eq:Vk-out}
\text{on~}\{V_k\not\in S_{k-1}\},\qquad
(H_k, V_1,\dots, V_k)\sim \pi \otimes \bp^{k-1}\otimes \bp|_{[n]\setminus S_{k-1}}.
\end{equation}
Since $V_k\sim \bp$ is independent of everything else, conditional on $S_{k-1}$, 
the event $\{V_k\in S_{k-1}\}$ occurs precisely with probability $\bp(S_{k-1})$,
so that putting \eqref{eq:Vk-in} and \eqref{eq:Vk-out} together completes the proof of the
induction step.
\end{proof}

\begin{figure}[t]
\centering
\includegraphics[scale=.7]{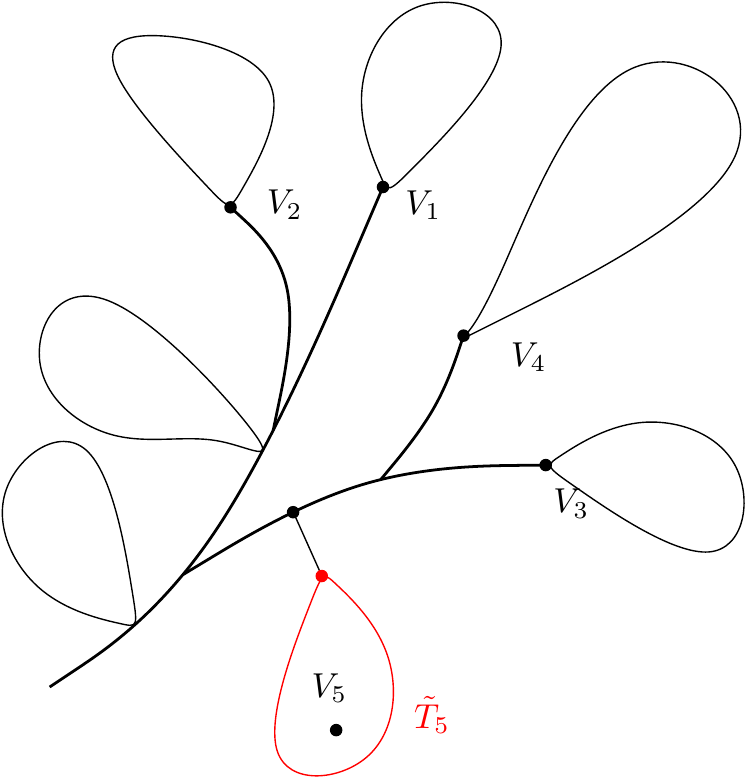}
\caption{\label{fig:induction} In order to obtain $\cut(T,V_1,\dots, V_k)$ from 
$\cut(T,V_1,\dots, V_{k-1})$, it suffices to transform the subtree $\tilde T_k$ of 
$\cut(T, V_1,\dots, V_{k-1})\setminus S_{k-1}$ which contains $V_k$.}
\end{figure}

\begin{cor}\label{cor:total_cuts}
Suppose that $T$ is a $\bp$-tree and that $V_1,\dots, V_k$ are $k\ge 1$ independent
$\bp$-nodes, also independent of $T$. Then,
$$S_k \eqd \Span(T; V_1,\dots, V_k).$$
In particular, the total number of cuts needed to isolate $V_1,\dots, V_k$ in $T$ is distributed as the number of vertices of $\Span(T; V_1,\dots, V_k)$.
\end{cor}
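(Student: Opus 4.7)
The corollary should follow quickly from Lemma~\ref{lem:multi-cut} together with the fact, built into the construction of the $k$-cut tree, that the backbone $S_k$ is nothing but $\Span(H_k; V_1, \dots, V_k)$. My plan is as follows.

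First, I would recall from the construction of $H_k = \cut(T, V_1, \dots, V_k)$ that $S_k$ is the subtree on $\cE^k_1 \cup \cdots \cup \cE^k_k$ whose path from $X_1$ to $V_\ell$ is exactly $\cE^k_\ell$; equivalently $S_k = \Span(H_k; V_1, \dots, V_k)$. The important point is that this identification depends only on the pair $(H_k, V_1, \dots, V_k)$ and not on the cut sequence $\bX^k$ used to build it, so $S_k$ is a deterministic, measurable function of that pair.

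Next, Lemma~\ref{lem:multi-cut}(ii) tells us that $(H_k, V_1, \dots, V_k) \sim \pi \otimes \bp^{\otimes k}$, which is exactly the joint law of $(T, V_1, \dots, V_k)$. Applying the $\Span$ map to both sides of this distributional equality therefore gives
$$S_k = \Span(H_k; V_1, \dots, V_k) \eqd \Span(T; V_1, \dots, V_k),$$
which is the claimed identity.

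For the second assertion, I would observe that by construction each cut $X_i$ is sampled from $\fv(\Gamma_{i-1})$, and every connected component of $\Gamma_{i-1}$ contains at least one of the $V_\ell$'s, so every cut is an effective cut, i.e.\ $X_i \in \cE^k_\ell$ for some $\ell$. Hence $L^k = |\cE^k_1 \cup \cdots \cup \cE^k_k| = \#\fv(S_k)$, and the first identity yields $L^k \eqd \#\fv(\Span(T; V_1, \dots, V_k))$. There is no real obstacle here: the genuine work has already been done in Lemma~\ref{lem:multi-cut}, and the corollary merely packages its consequences together with the observation that $S_k$ is a measurable functional of $H_k$ and the marked vertices.
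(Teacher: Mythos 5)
Your argument is correct and is exactly the route the paper intends: the identity $S_k=\Span(H_k;V_1,\dots,V_k)$ together with Lemma~\ref{lem:multi-cut}(ii) gives $S_k\eqd\Span(T;V_1,\dots,V_k)$, and since every cut lands in a component containing some $V_\ell$, the number of cuts equals $\#S_k$. Nothing is missing.
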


\medskip
\noi\textsc{Reverse $k$-cutting and duality.}\
As when we were isolating a single node $V$ in Section~\ref{sec:ptree-one-cutting},
the transformation that yields $H_k=\cut(T, V_1,\dots, V_k)$ is reversible.
To reverse the $1$-cutting procedure, we ``unknitted'' the path between $X_1$ and $V$.
Similarly, to reverse the $k$-cutting procedure, we ``unknit'' the backbone $S_k$ and by doing 
this obtain a collection of subtrees; then we re-attach these pendant subtrees  
at random nodes, which are chosen in suitable subtrees in order to obtain 
a tree distributed like the initial tree $T$.

\begin{figure}[t]
\centering
\includegraphics[scale=.7]{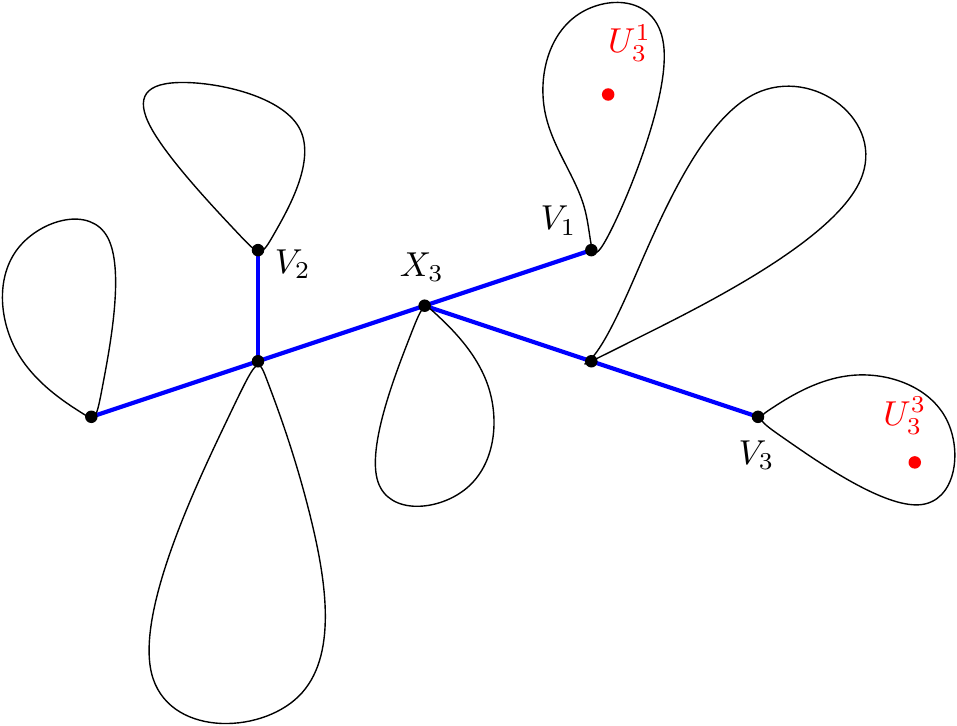}
\caption{\label{fig:cut-tree}The $3$-cut tree and the marked points $U_3^1$, $U_3^3$ corresponding to 
the cut node $X_3$. The backbone is represented by the subtree in thick blue.}
\end{figure}

For every $i$, the subtree $F_i$, rooted at $X_i$, was initially attached to
the set of nodes
$$\cU_i:=\{U_i^j: 1\le j\le k \text{ such that } T_{i-1}^j\ni X_i \}.$$
The corresponding edges have been replaced by some
edges which now lie in the backbone $S_k$.
So, to reverse the cutting procedure knowing the sets $\cU_i$,
it suffices to remove all the edges of $S_k$, and to re-attach $X_i$ to every node
in $\cU_i$. In other words, defining a reverse $k$-cutting transformation knowing
only the tree $H_k$ and the distinguished nodes $V_1,\dots, V_k$ reduces to
characterizing the distribution of the sets $\cU_i$.

Consider a tree $h\in \bbT_n$, and $k$ nodes $v_1,v_2,\dots, v_k$ not
necessarily distinct.
Removing the edges of $\Span(h; v_1,\dots, v_k)$ from $h$ disconnects it into
connected components $f_x$, each containing a single vertex $x$ of
$\Span(h; v_1,\dots, v_k)$.
For a given edge $\edge{x,w}$ of $\Span(h; v_1,\dots, v_k)$,
let $u_w$ be a node in $\Sub(h,w)$. Let $\bu$ be the vector of the
$u_w$, sorted according to the canonical order of $w$ on $\Span(h; v_1,\dots, v_k)$
(see p.\ \pageref{p:canon-order}).
For a given tree $h$ and $v_1,\dots, v_k$, we let $\bbU(h, v_1,\dots, v_k)$ be
the set of such vectors $\bu$.
For $\bu\in \bbU(h, v_1,\dots, v_k)$, define $\tau(h, v_1,\dots, v_k; \bu)$ as
the graph obtained from $h$ by
removing every edge $\edge{x,w}$ of $\Span(h; v_1,\dots, v_k)$
and replacing it by $\{x, u_w\}$. We regard $\tau(h, v_1,\dots, v_k; \bu)$
as rooted at the root of $h$.

\begin{lem}\label{lem:reverse-tree}Suppose that $h\in \bbT_n$, and that
$v_1,v_2,\dots, v_k$ are $k$ nodes of $[n]$, not necessarily distinct.
Then for every $\bu\in \bbU(h, v_1,\dots, v_k)$, $\tau(h, v_1,\dots, v_k; \bu)$ is a tree on $[n]$.
\end{lem}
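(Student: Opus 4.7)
The plan is to establish two things about $\tau:=\tau(h,v_1,\ldots,v_k;\bu)$: that it has exactly $n-1$ edges, and that it is connected; together these will imply $\tau\in \bbT_n$. Write $S:=\Span(h;v_1,\ldots,v_k)$. The construction of $\tau$ removes the $|S|-1$ edges of $S$ from $h$ and adds back one new edge $\{x,u_w\}$ for each edge $\edge{x,w}$ of $S$, so the edge count is immediate: $|E(\tau)|=(n-1)-(|S|-1)+(|S|-1)=n-1$. All the work is in showing that $\tau$ is connected.

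My strategy is to contract the deletion-components to points and analyze a quotient graph. Deleting $E(S)$ from $h$ yields a forest whose components $(f_x)_{x\in S}$ are indexed by $S$, with $x$ the unique element of $S$ lying in $f_x$. Writing $p_S(w)$ for the $S$-parent of a non-root $w\in S$ and observing that the constraint $u_w\in \Sub(h,w)$ forces the unique $S$-vertex $y(w)$ with $u_w\in f_{y(w)}$ to lie in $\Sub_S(w):=S\cap \Sub(h,w)$, the new edge $\{p_S(w),u_w\}$ projects to $\{p_S(w),y(w)\}$ in a simple graph $Q$ on vertex set $S$ with exactly $|S|-1$ edges. Since each $f_x$ is a connected subtree of $h$, connectedness of $\tau$ reduces to connectedness of $Q$.

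I will then prove, by induction on $|\Sub_S(x)|$, the stronger statement that for every $x\in S$ the induced subgraph $Q|_{\Sub_S(x)}$ is a tree. The base case $|\Sub_S(x)|=1$ is trivial. For the step, let $w_1,\ldots,w_\ell$ be the $S$-children of $x$, so that $\Sub_S(x)=\{x\}\sqcup\bigsqcup_{i=1}^\ell\Sub_S(w_i)$; the edges of $Q$ with both endpoints in $\Sub_S(x)$ split neatly into the $\ell$ edges $\{x,y(w_i)\}$ (corresponding to $w=w_i$) and, for each $i$, the edges of $Q|_{\Sub_S(w_i)}$ corresponding to non-root $w\in\Sub_S(w_i)$. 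The induction hypothesis makes each $Q|_{\Sub_S(w_i)}$ a tree, and attaching $x$ to each of these $\ell$ trees by a single edge yields a tree on $\Sub_S(x)$. Taking $x=r(h)$ gives that $Q=Q|_{\Sub_S(r(h))}$ is a tree, whence $\tau$ is connected, and, having $n-1$ edges, is a tree on $[n]$.

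The only subtle point I expect is the choice of induction parameter. The naive induction of ``removing an $S$-leaf $w^*$'' does not work: for $w$ a strict $S$-ancestor of $w^*$ one may perfectly well have $y(w)=w^*$, in which case $Q$ restricted to $S\setminus\{w^*\}$ is not of the same form and the recursion breaks. Inducting on the $S$-subtree size $|\Sub_S(x)|$ circumvents this, because the edges of $Q|_{\Sub_S(x)}$ decompose cleanly over the $S$-children of $x$.
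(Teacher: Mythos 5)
Your proof is correct, and it takes a somewhat different route from the paper's. The paper argues by induction on $n$, peeling off the root $x_1$ of $h$: since each $u_w$ lies in $\Sub(h,w)$, the transformation restricted to each root-subtree $\Sub(h,y)$, $y$ a neighbour of $x_1$, is again a map of the form $\tau(\Sub(h,y),\bv_y;\bu_y)$ (or the identity when $\Sub(h,y)$ contains no marked vertex), so by the induction hypothesis these pieces are trees, and $x_1$ is then joined to each piece by exactly one edge. You instead contract the components $f_x$ of $h$ minus the edges of $S=\Span(h;v_1,\dots,v_k)$, reduce connectivity of $\tau$ to connectivity of the projected graph $Q$ on $S$, and prove by induction on $|\Sub_S(x)|$ that each induced graph $Q|_{\Sub_S(x)}$ is a tree, finishing with an edge count. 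The key combinatorial fact is the same in both arguments -- because $u_w\in\Sub(h,w)$, the reattachment edge issued from a vertex lands in the subtree below the corresponding child, and distinct children's subtrees are disjoint, so every vertex is linked by exactly one edge to each child block -- but the scaffolding differs: the paper exploits the recursive self-similarity of the definition of $\tau$ and never needs counting, while your quotient isolates the purely combinatorial content on the reduced tree $S$ (only the projections $y(w)$ matter, the hanging subtrees are irrelevant to acyclicity), at the price of the separate connectivity-plus-count step; your observation about why the naive ``remove an $S$-leaf'' induction fails is also a good one. One small gloss: the claim that $\tau$ has \emph{exactly} $n-1$ edges presumes that the $|S|-1$ new edges are pairwise distinct and distinct from the retained edges of $h$; this does hold (again by disjointness of the subtrees $\Sub(h,w)$ over the children $w$ of a given vertex), and in any case it can be sidestepped, since $\tau$ has at most $n-1$ edges by construction and a connected graph on $n$ vertices with at most $n-1$ edges is a tree.
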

\begin{proof}
Write $t:=\tau(h, v_1,\dots, v_k; \bu)$.
We proceed by induction on $n\ge 1$. For $n=1$,
$t=h$ is reduced to a single node; so $t$ is a tree.

Suppose now that for any tree $t'$ of size at most $n-1$, any $k\ge 1$,
any nodes $v_1,v_2,\dots,v_k\in \fv(t')$, and any $\bu'\in \bbU(t', v_1,\dots, v_k)$,
the graph $\tau(t', v_1,\dots, v_k; \bu')$ is a tree.
Let $N$ be the set of neighbors of the root $x_1$ of $h$.
For $y\in N$, define $\bv_y$ the subset of $\{v_1,\dots, v_k\}$ containing the
vertices which lie in $\Sub(h,y)$.
If $\bv_y\ne \varnothing$, let also $\bu_y\in \bbU(\Sub(h,y), \bv_y)$ be obtained 
from $\bu$ by keeping only the vertices $u_w$ for $w\in \Span^*(\Sub(h,y), \bv_y)$, 
still in the canonical order. Then, by construction,
the subtrees $\Sub(h,y)$, with $y\in N$ such that $\bv_y\ne \varnothing$ are 
transformed regardless of one another, and the others, for which $\bv_y=\varnothing$, 
are left untouched. 
So the graph
$\tau(h, v_1,\dots, v_k; \bu)$ induced on $[n]\setminus \{x_1\}$ consists
precisely of $\tau(\Sub(h,y), \bv_y; \bu_y)$, $y\in N$. By the induction
hypothesis, these subgraphs are actually trees.
Then $\tau(h, v_1,\dots, v_k; \bu)$ is simply obtained by adding the node $x_1$
together with the edges $\{x_1, u_y\}$, for $y\in N$, where
$u_y\in \Sub(h,y)$. In other words, each such edge connects $x_1$ to a different
tree $\tau(\Sub(h,y), \bv_y; \bu_y)$ so that the resulting graph is also a tree.
\end{proof}

For a given tree $h$ and $v_1,\dots, v_k\in [n]$
let $\bU\in \bbU(h, v_1,\dots, v_k)$ be obtained by sampling  $U_w$
according to the restriction of $\bp$ to $\Sub(h,w)$, for every
$w\in \Span^*(h, v_1,\dots, v_k)$.
Finally, we define the $k$-shuffled tree $\shuff(h; v_1,\dots, v_k)$ to be the
tree $\tau(h, v_1,\dots, v_k; \bU)$ re-rooted at an independent $\bp$-node.

We have the following result, which expresses the fact that the
$k$-cutting and $k$-shuffling procedures are truly reverses of one another.

\begin{prop}[$k$-cutting duality]\label{pro:K-duality}
Let $T$ be a $\bp$-tree and let $V_1,\dots, V_k$ be $k$ independent
$\bp$-nodes, also independent of $T$. Then, we have the following duality
$$(\shuff(T, V_1,\dots, V_k), T, V_1,\dots, V_k)
\eqd (T, \cut(T, V_1,\dots, V_k), V_1,\dots,V_k).$$
In particular, 
$(\shuff(T,V_1,\dots, V_k), V_1,\dots, V_k) \sim \pi \otimes \bp^{\otimes k}$.
\end{prop}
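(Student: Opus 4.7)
The plan is to mirror the proof of Proposition~\ref{pro:one-duality} in the multi-marked setting: I construct an almost-sure coupling under which $\shuff$ reconstructs $T$ from $\cut(T, V_1, \ldots, V_k)$, and then conclude by combining with Lemma~\ref{lem:multi-cut}\ref{lmi}.

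Using the consistent coupling of page~\pageref{p:coupling} to produce $H_k := \cut(T, V_1, \ldots, V_k)$ with backbone $S_k$, the algorithm generates a canonical vertex $U_w \in \Sub(H_k, w)$ for each $w \in \Span^*(H_k; V_1, \ldots, V_k)$: if the parent of $w$ in $S_k$ is $X_i$ and $w$ is the backbone successor of $X_i$ toward some $V_\ell$, then $U_w$ is the node $U_i^\ell$ produced by the cutting procedure, i.e.\ the neighbor of $X_i$ in $T$ across the edge later replaced by the backbone edge $\edge{X_i, w}$. Setting $\bU := (U_w)_w$ and $\hat r := r(T)$, Lemma~\ref{lem:reverse-tree} yields
\[
T \;=\; \tau(H_k, V_1, \ldots, V_k; \bU)^{\hat r} \qquad \text{almost surely.}
\]

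The crux is the joint-law identity generalizing Lemma~\ref{lem:joint_dist-tree-node}: for any admissible $(h, v_1, \ldots, v_k, \bu, w')$,
\[
\begin{split}
&\p{H_k = h;\, V_i = v_i,\, 1 \le i \le k;\, U_w = u_w,\, w \in \Span^*(h; v_1, \ldots, v_k);\, r(T) = w'} \\
&\quad= \pi(h) \cdot \prod_{w \in \Span^*(h; v_1, \ldots, v_k)} \frac{p_{u_w}}{\bp(\Sub(h, w))} \cdot \prod_{i=1}^k p_{v_i} \cdot p_{w'}.
\end{split}
\]
Using the almost-sure reconstruction above together with $\pi(T) = \prod p_i^{C_i(T)}$, the in-degree shifts caused by the edge replacements $\edge{x, w} \mapsto \{x, u_w\}$ along $S_k$ and the re-rooting at $w'$ produce the ratio $\pi(\tau(h,v_1,\ldots,v_k;\bu)^{w'})/\pi(h) = \prod_w p_{u_w}/p_w \cdot p_{w'}/p_{r(h)}$; the cut-sequence probability $\prod_i p_{x_i}/\bp(\Gamma_{i-1}^k)$ then telescopes with $\prod_w 1/p_w$ (since the multiset $\{x_i\}$ equals the vertex set of $S_k$ minus the stopping cuts) to yield the denominators $\prod_w \bp(\Sub(h, w))$, leaving the factor $\prod_i p_{v_i}$ from the stopping picks at the marked vertices. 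Summing over $\bu \in \bbU(h, v_1,\ldots,v_k)$ and $w' \in [n]$ returns the marginal $\pi(h)\prod_i p_{v_i}$ predicted by Lemma~\ref{lem:multi-cut}\ref{lmi}, so that conditionally on $(H_k, V_1, \ldots, V_k)$ the coordinates of $\bU$ are independent with $U_w \sim \bp|_{\Sub(H_k, w)}$ and $\hat r \sim \bp$ independently. This is precisely the sampling prescribed by $\shuff$, so in the coupling $\shuff(H_k, V_1, \ldots, V_k) = T$ almost surely, which gives the joint distributional identity; the ``in particular'' statement is then immediate from Lemma~\ref{lem:multi-cut}\ref{lmi}.

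The main obstacle is the telescoping in the joint-law formula: unlike in the $k = 1$ case, $\bp(\Gamma_{i-1}^k)$ is a sum of the $\bp$-masses of several components, and moreover different cut sequences $\bX^k$ (those that differ only by reorderings of cuts lying in disjoint backbone branches) produce the same data $(H_k, V_1, \ldots, V_k, \bU, \hat r)$. I would resolve this either by restricting to the canonical total order $\preceq$ on $S_k$ introduced on page~\pageref{p:canon-order}---which fixes a unique cut sequence for each configuration and reduces the computation to single-path telescopings of the form in Lemma~\ref{lem:joint_dist-tree-node}, applied once per maximal $V_\ell$-branch---or by summing over all commuting reorderings and checking that the multinomial factors cancel branch by branch. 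An alternative, more modular route is induction on $k$: the base case $k = 1$ is Proposition~\ref{pro:one-duality}, and in the inductive step Lemma~\ref{lem:multi-cut}\ref{lmii} identifies the pendant subtree $\tilde T_k$ containing $V_k$ as an independent $\bp$-tree on its vertex set, to which the $k = 1$ duality applies, while the remainder of $H_k$ is handled by the induction hypothesis.
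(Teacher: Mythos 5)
Your ``alternative, more modular route'' is in fact the paper's proof: the argument there is an induction on $k$ whose base case is Proposition~\ref{pro:one-duality}, and whose inductive step isolates the pendant subtree of $\cut(T,V_1,\dots,V_k)$ above $S_{k-1}$ containing $V_k$ (on the event $V_k\notin S_{k-1}$), uses Lemma~\ref{lem:multi-cut}\ref{lmii} to say that conditionally on its vertex set this subtree is an independent $\bp$-tree so that the $k=1$ reconstruction applies to it, checks that replacing it by its reconstructed version yields exactly $\cut(T,V_1,\dots,V_{k-1})$ without disturbing the remaining marks $U_w$, and concludes by the induction hypothesis. Had you developed that one sentence, you would have reproduced the paper's argument; as written it is only a pointer.

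The route you actually develop has a genuine gap. The joint-law formula you display is correct, but it does not follow from the telescoping you describe. For $k\ge2$ the map from (initial tree, cut sequence) to the data $(H_k,V_1,\dots,V_k,\bU,\hat r)$ is many-to-one: every interleaving of cuts lying in different branches of $S_k$ (i.e.\ every linear extension of the tree order on $S_k$) produces the same data, and these interleavings do \emph{not} have equal probabilities, because $\bp(\Gamma_{i-1}^k)$ is the mass of a union of several retained components and depends on the interleaving. So the computation requires summing $\prod_i p_{x_i}/\bp(\Gamma_{i-1}^k)$ over all admissible orderings and proving an identity of the form
$$\sum_{\text{orderings}}\ \prod_{i}\frac{1}{\bp(\Gamma_{i-1}^k)}
=\prod_{w\in \Span^*(h;v_1,\dots,v_k)}\frac{1}{\bp(\Sub(h,w))},$$
which you only assert should ``cancel branch by branch''; this is a real combinatorial step, not a bookkeeping remark. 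Your first proposed fix, restricting to the canonical order $\preceq$, is not legitimate as stated: the cutting procedure does not produce its cuts in canonical order, and the event whose probability you compute aggregates all orderings; the paper's remark about exchanging two cuts only says the resulting tree is unchanged, not that the probability of the sequence is preserved (it is not, which is precisely why the paper prefers the coupling to any reordering). Two smaller slips in the same computation: the cut vertices are exactly the vertex set of $S_k$ (the picks at the marked vertices are themselves cuts and appear as leaves of $S_k$), and the factor $\prod_i p_{v_i}$ in the joint law comes from the independent sampling of $V_1,\dots,V_k$ --- the $v_i$ need not be distinct --- rather than from ``stopping picks''. None of this is fatal if you fall back on the induction, but the direct computation as proposed is incomplete.
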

\begin{proof}
We consider the coupling we have defined on page~\pageref{p:coupling}: 
We let $H_k=\cut(T, V_1,\dots, V_k)$ for a $\bp$-tree $T$ rooted at $\hat r=r(T)$, 
and for every edge $\edge{x,w}$ of $\Span(H_k;V_1,\dots, V_k)$ we let $U_w$ be the 
unique node of $\Sub(H_k,w)$ which used to be connected to $x$ in the initial tree $T$. 
This defines the vector $\bU=(U_w, w\in \Span^*(H_k;V_1,\dots, V_k))$.
We show by induction on $k\ge 1$ that $\tau(H_k, V_1,\dots, V_k; \bU)^{\hat r}=T$ 
and that the joint distribution of $(H_k, \hat r, V_1,\dots, V_k, \bU)$ is that 
required by the construction above, so that 
$$(\tau(H_k, V_1,\dots, V_k; \bU)^{\hat r}, H_k, V_1,\dots, V_k)
\eqd (\shuff(H_k, V_1,\dots, V_k), H_k, V_1,\dots, V_k).$$
Since $H_k\eqd T$, this would complete the proof.

For $k=1$, the statement corresponds precisely to the construction of the proof 
of  Proposition~\ref{pro:one-duality}. As before, for $\ell\le k$, we let 
$S_\ell=\Span(H_k; V_1,\dots, V_\ell)$. If $k\ge 2$, let $\tilde R_k$ be the 
connected component of $H_k\setminus S_{k-1}$ which contains 
$V_k$, or $\tilde R_k=\varnothing$ if $V_k\in S_{k-1}$. 
In the latter case, $T=\tau(H_k, V_1,\dots, V_{k-1}, \bU)^{\hat r}$ 
and the joint distribution of $(H_k, \hat r, V_1,\dots, V_{k-1}, \bU)$ is correct by 
the induction hypothesis. 
Otherwise, let $\bU_k$ denote the sub-vector of $\bU$ consisting of the components 
$U_w$ for $w\in \Span^*(\tilde R_k, V_k)$, and let 
$\bU_{1,k-1}=(U_w, w\in \Span^*(H_k; V_1,\dots, V_{k-1}))$. 
If $\theta\in S_k$ is the unique point such that $\tilde R_k=\Sub(H_k, \theta)$
(that is, $\theta$ is the root of $\tilde R_k$),
then removing $\tilde R_k$ from $H_k$ and replacing it by $\tau(\tilde R_k, V_k; \bU_k)^{U_\theta}$ 
yields precisely the tree 
$H_{k-1}:=\cut(T; V_1,\dots, V_{k-1})$. 
Also, the distribution of $(\tilde R_k, U_\theta, V_k, \bU_k)$ is correct, since conditional on
the vertex set $\tilde R_k$ is distributed as $\pi|_{\fv(\tilde R_k)}$ (Lemma~\ref{lem:multi-cut}\ref{lmii}). 
Note that this transformation does not modify the distribution of $\bU_{1,k-1}$. 
By the induction hypothesis, $T=\tau(H_{k-1}, V_1,\dots, V_{k-1}; \bU_{1,k-1})^{\hat r}$.
Since conditionally on $S_{k-1}=\Span(H_k; V_1,\dots, V_{k-1})$ we have $V_k\in S_{k-1}$ 
with probability $\bp(S_{k-1})$, the proof is complete. 
\end{proof}

\subsection{The complete cutting and the cut tree.}
\label{sec:complete_cut}

For $n$ a natural number, we may also easily apply the previous procedure until
all $n$ nodes have been chosen. In this case, the cutting procedure continues
recursively in \emph{all} the connected components. The number of cuts is now
completely irrelevant (it is a.s.\ equal to $n$), 
and we define the forward transform as follows.
Let $T$ be a $\bp$-tree and let $(X_i, i\ge 1)$ be a sequence of
elements of $[n]$ such that $X_i$ is sampled according to the restriction of $\bp$
to $[n]\setminus \{X_1,\dots, X_{i-1}\}$.
Let $\Gamma_i=T\setminus\{X_1,\dots, X_i\}$; we stop precisely at time $n$, when
$\{X_1,\dots, X_n\}=[n]$ and $\Gamma_n=\varnothing$.

For every $k\in [n]$, define $T_i^{\edge{k}}$ as the connected component of $\Gamma_i$
which contains the vertex $k$, or $T_i^{\edge{k}}=\varnothing$ if $k\in \{X_1,\dots, X_i\}$.
For each $i=1,\dots, n$, let $\cU_i$ denote the set of neighbors of $X_i$
in $\Gamma_{i-1}$. 
Then we can write $\cU_i=\{U_i^{\edge{k}}: 1\le k\le n \text{ such that } T_{i-1}^{\edge{k}} \ni X_i\}$ 
where $U_i^{\edge{k}}$ is the unique element of $\cU_i$ which lies in $T_i^{\edge{k}}$.
The cuts which affect the connected component containing $k$ are
$$\cE_{\edge{k}}:=\{x \in [n]: \exists i\ge 1, X_i=x \in T_{i-1}^{\edge{k}}\}.$$

We claim that there exists a tree $G$ such that
for every $k\in [n]$, the path $\llb X_1, k \rrb$ in $G$ is precisely made
of the nodes in $\cE_{\edge{k}}$, in the order in which they appear in the
permutation $(X_1,X_2,\dots, X_n)$.
In the following, we write $\cut(T):=G$. The following proposition justifies the claim.

\begin{prop}\label{pro:limit_k_cutting}
Let $T$ be a $\bp$-tree, and let $V_k$, $k\ge 1$, be i.i.d.\ $\bp$-nodes, independent of $T$. 
Then, as $k\to \infty$,
$$\cut(T, V_1,\dots, V_k)\cdist{} \cut(T).$$
\end{prop}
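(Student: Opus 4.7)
The approach is to use the coupling defined on page \pageref{p:coupling}, where all the $k$-cut sequences $\bX^k$ are extracted from a single i.i.d.\ $\bp$-sequence $(Y_j)_{j\ge 1}$. In the same fashion, one may build the complete cut sequence $(X_i)_{1\le i\le n}$ by letting $X_i$ be the first $Y_j$ avoiding $\{X_1,\dots,X_{i-1}\}$; this has the correct distribution for the permutation driving the complete cutting. The plan is to prove that, in this joint coupling, $\cut(T,V_1,\dots,V_k)=\cut(T)$ almost surely for all $k$ large enough, and deduce distributional convergence.

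The first step is the elementary observation that, since $[n]$ is finite and $\min_i p_i>0$, the coupon-collector bound gives the existence of an almost surely finite random $K$ with $\{V_1,\dots,V_k\}=[n]$ for every $k\ge K$. Next, I would prove by induction on $i\ge 1$ that, on $\{k\ge K\}$, the following three properties hold:
\begin{compactenum}[(a)]
\item $\fv(\Gamma^k_{i-1})=[n]\setminus\{X^k_1,\dots,X^k_{i-1}\}$, because every remaining vertex is some $V_j$ and hence every connected component of the ambient forest contains at least one (yet-unpicked) $V_j$;
\item consequently the restriction of $\bp$ used to sample $X^k_i$ agrees with the one used for $X_i$, so the coupling yields $X^k_i=X_i$; in particular $L^k=n$;
\item the discarded piece $F_i$ equals $\{X_i\}$ alone, because no component of $\Gamma^k_{i-1}\setminus\{X^k_i\}$ is pruned away (each still contains a $V_j$).
\end{compactenum}
Property (c) says that, for $k\ge K$, the tree $H_k=\cut(T,V_1,\dots,V_k)$ is reduced to its backbone $S_k$; property (b) says that the cut sequence driving this backbone is exactly the complete cut permutation $(X_1,\dots,X_n)$.

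From this it is immediate that $\cE^k_j=\cE_{\edge{V_j}}$ for every $j\le k$, and hence the path in $H_k$ from $X_1$ to any vertex $v\in[n]$ (pick $j\le k$ with $V_j=v$) is exactly $\cE_{\edge{v}}$ ordered by $\bX$. This is precisely the defining property of the tree $\cut(T)$ claimed in the statement, which in particular shows that such a tree is well defined and that $H_k=\cut(T)$ almost surely for all $k\ge K$. Almost sure convergence under the coupling yields convergence in distribution, as required.

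The only non-routine step is the inductive verification of (a)-(b)-(c); once one sees that "every vertex is a $V_j$" forces the $k$-cutting rule to collapse onto the complete cutting rule, the remaining identifications between $S_k$ and $\cut(T)$ are purely notational. There is no real analytic obstacle: the finiteness of $[n]$ and the strict positivity of $\bp$ do all the work through the coupon-collector argument.
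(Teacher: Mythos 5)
Your proof is correct and follows essentially the same route as the paper: the coupling of all cut sequences through a single i.i.d.\ $\bp$-sequence, the almost sure finiteness of $K=\inf\{k:\{V_1,\dots,V_k\}=[n]\}$, and the observation that for $k\ge K$ the $k$-cutting collapses onto the complete cutting so that $\cut(T,V_1,\dots,V_k)$ stabilizes almost surely, giving convergence in distribution. Your explicit induction (a)--(c) merely spells out in more detail the identification that the paper states directly, and in doing so also justifies the well-definedness of $\cut(T)$, which is exactly what the proposition is meant to do.
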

\begin{proof}
We rely on the coupling we introduced in Section~\ref{sec:isol_mult}.
Since, for $k\ge 1$, we have $V_1,\dots, V_k\in S_k$ and $S_k\subseteq S_{k+1}$,
the tree $S_k$ converges almost surely to a tree on $[n]$, so that
$\lim_{k\to\infty} \cut(T; V_1,\dots, V_k)$ indeed exists with probability one.
In particular, although $\cut(T; V_1,\dots, V_k)$ certainly depends on 
$V_1,\dots, V_k$, the limit only depends on the sequence $(X_i, i\ge 1)$. 
Indeed, $K:=\inf\{k\ge 1 : [n]=\{V_1,\dots, V_k\}\}$ is a.s.\ finite, 
and for every $k\ge K$, one has $\cut(T; V_1,\dots, V_k)=\cut(T; X_1,\dots, X_n)$. 
We then write $\cut(T):=\cut(T; X_1,\dots, X_n)$.
\end{proof}



\begin{thma}[Cut tree]
\label{thm:complete_forward}
Let $T$ be a $\bp$-tree on $[n]$. Then, we have $\cut(T)\sim \pi$.
\end{thma}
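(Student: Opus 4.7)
The plan is to deduce the theorem essentially for free from the two ingredients already assembled: the $k$-cut identity of Lemma~\ref{lem:multi-cut}(\ref{lmi}), which tells us that $H_k := \cut(T; V_1, \dots, V_k)$ has marginal law $\pi$ for every $k \geq 1$, and the almost sure stabilization of $H_k$ to $\cut(T)$ provided by Proposition~\ref{pro:limit_k_cutting} together with its underlying coupling.

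More precisely, I would work on the coupling of page~\pageref{p:coupling}, built from a single i.i.d.\ $\bp$-sequence $(Y_j, j \geq 1)$, and from it construct both the family $(H_k)_{k \geq 1}$ and the cut tree $\cut(T)$ on the same probability space. Let
$$
K := \inf\{k \geq 1 : \{V_1, \dots, V_k\} = [n]\},
$$
which is almost surely finite since $(V_k)_{k \geq 1}$ are i.i.d.\ $\bp$-nodes and $\bp$ has full support on $[n]$. For $k \geq K$, every connected component of $T \setminus \{X_1^k, \dots, X_{i-1}^k\}$ contains at least one of $V_1, \dots, V_k$, so the filtering used to define $\bX^k$ is vacuous and the cutting sequence $\bX^k$ coincides with the cutting sequence $(X_i, i \geq 1)$ used in the definition of $\cut(T)$. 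Consequently the sets of effective cuts $\cE_\ell^k$ and $\cE_{\edge{\ell}}$ agree for every $\ell \in [n]$, so $H_k = \cut(T)$ almost surely for all $k \geq K$.

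To conclude, observe that for each fixed $k \geq 1$ we have $H_k \sim \pi$ (marginalizing over $V_1, \dots, V_k$ in Lemma~\ref{lem:multi-cut}(\ref{lmi})). Since $H_k = \cut(T)$ eventually almost surely, the distribution of $\cut(T)$ must coincide with the common distribution $\pi$ of the $H_k$'s. This gives $\cut(T) \sim \pi$.

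There is no real obstacle here: the conceptual work has already been done in Section~\ref{sec:isol_mult}, where the consistent coupling of the $k$-cut trees and the $k$-cut identity in distribution were established. The only point worth spelling out carefully is that, under this coupling, the deterministic identity $H_k = \cut(T)$ (and not merely equality in distribution) holds as soon as $V_1, \dots, V_k$ exhausts $[n]$; this is what lets one pass from the family of laws $(H_k \sim \pi)$ to the law of $\cut(T)$ without any measurability or limiting subtleties.
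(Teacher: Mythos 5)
Your argument is correct. It rests on exactly the same two pillars as the paper's proof — the coupling of page~\pageref{p:coupling} and the almost sure stabilization $\cut(T;V_1,\dots,V_k)=\cut(T)$ for $k\ge K$, which is precisely what is established in the proof of Proposition~\ref{pro:limit_k_cutting} — so the overall structure is the same; the only difference is which previously proved identity you feed into the limit. You invoke Lemma~\ref{lem:multi-cut}(\ref{lmi}) to get $H_k\sim\pi$ for every fixed $k$ and then let $k\to\infty$, whereas the paper works with the backbones: it uses Corollary~\ref{cor:total_cuts} to get $S_k\eqd\Span(T;V_1,\dots,V_k)$, notes that under the coupling $S_k=\Span(\cut(T);V_1,\dots,V_k)\to\cut(T)$ almost surely while $\Span(T;V_1,\dots,V_k)\to T$, and identifies the limits. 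Your variant is marginally more direct (no need to discuss convergence of the spanning trees, since $H_k$ itself is eventually equal to $\cut(T)$ once $\{V_1,\dots,V_k\}=[n]$, at which point all the grafted subtrees $F_i$ are singletons and $H_k$ coincides with the complete cut tree), and it correctly isolates the one point that needs care, namely that the equality $H_k=\cut(T)$ holds pathwise under the coupling and not merely in law; the paper's route buys nothing extra here beyond staying consistent with the backbone viewpoint it reuses in the continuum limit.
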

\begin{proof}In the coupling defined in Section~\ref{sec:isol_mult}, we have
$$S_k = \Span(\cut(T); V_1, V_2,\dots, V_k)\to \cut(T)$$
almost surely as $k\to\infty$.
However, by Corollary~\ref{cor:total_cuts}, $S_k$ is distributed like
$\Span(T; V_1,\dots, V_k)$, so that $S_k\to T$ in distribution, as $k\to \infty$,
which completes the proof.
\end{proof}

\medskip
\noi\textsc{Shuffling trees and the reverse transformation.}\
Given a tree $g\in\bbT_n$ that we know is $\cut(t)$ for some tree $t\in \bbT_n$,
and the collections of sets $\cU_x$, $x\in [n]$,
we cannot recover the initial tree $t$ exactly, for the information about the
root has  been lost. 
However, the structure of $t$ as an unrooted tree is easily
(in this case, trivially) recovered by connecting every node $x$ to all the nodes
in $\cU_x$.
We now define the reverse operation, which samples the sets $\cU_x$ with the
correct distribution conditional on $g$, and produces a tree $\tilde T$ distributed as $T$
conditionally on $\cut(T)=g$.

Consider a tree $g\in \bbT_n$, rooted at $r\in [n]$. For each edge $\edge{x,w}$
of the tree $g$, let $U_w$ be a random element sampled according to the
restriction of $\bp$ to $\Sub(g,w)$.
Let $\bU\in \bbU(g):=\bbU(g, 1,2,\dots, n)$ be the vector of the $U_w$,
sorted using the canonical order on $g$ 
with distinguished nodes $1,2,\dots, n$.
Let $\tau(g, [n]; \bU)$ denote the graph on $[n]$ whose edges are
$\{x,U_w\}$, for $\edge{x, w}$ edges of $g$.
Then, $\tau(g, [n]; \bU)$ is a tree
(Lemma~\ref{lem:reverse-tree})
and we write $\shuff(g)$ for the random rerooting of $\tau(g, [n]; \bU)$ at
an independent $\bp$-node.

\begin{prop}\label{pro:limit-k-shuff}
Let $G$ be a $\bp$-tree, and $(V_k, k\ge 1)$ a sequence of i.i.d.\
$\bp$-nodes. Then, as $k\to\infty$,
$$\shuff(G; V_1,\dots, V_k) \cdist{} \shuff(G).$$
\end{prop}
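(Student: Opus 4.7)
The plan is to mimic the proof of Proposition~\ref{pro:limit_k_cutting}: instead of arguing directly in distribution, I would construct an explicit coupling of all the shuffled trees $\shuff(G;V_1,\dots,V_k)$, $k\ge 1$, together with $\shuff(G)$ on a single probability space, under which the sequence is eventually constant and equal to $\shuff(G)$ almost surely. Convergence in distribution is then immediate.

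First, I would fix once and for all the random ingredients needed for the complete shuffle: for every non-root $w\in[n]$, sample $U_w$ according to the restriction of $\bp$ to $\Sub(G,w)$, independently across $w$, and draw an independent $\bp$-node $R^*$ to serve as the re-rooting vertex. These determine $\shuff(G)=\tau(G,[n];\bU)^{R^*}$. To define $\shuff(G;V_1,\dots,V_k)$ from the same randomness, I would re-use: the subvector $\bU_k:=(U_w, w\in\Span^*(G;V_1,\dots,V_k))$ plays the role of the vector of attachment points in the $k$-shuffle, and the same $R^*$ serves as the re-rooting vertex. The edges of $G$ whose upper endpoint $w$ lies outside $\Span^*(G;V_1,\dots,V_k)$ are left untouched.

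The key observation is that $K:=\inf\{k\ge 1:\{V_1,\dots,V_k\}=[n]\}$ is a.s.\ finite, since $\bp$ has full support on $[n]$ (a coupon-collector argument). For every $k\ge K$, the spanning subtree $\Span(G;V_1,\dots,V_k)$ is a connected subgraph of $G$ containing all vertices, hence equals $G$ itself, and so $\Span^*(G;V_1,\dots,V_k)=[n]\setminus\{r(G)\}$. Under the coupling above, the collection of edges modified by the $k$-shuffle then coincides with that modified by the complete shuffle, the replacements are carried out with the same random variables $U_w$, and the final re-rooting is at the same vertex $R^*$. Therefore $\shuff(G;V_1,\dots,V_k)=\shuff(G)$ almost surely on $\{k\ge K\}$, which yields the desired almost-sure (and hence distributional) convergence.

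There is no real obstacle: the proposition is essentially a consistency statement, and the only point deserving care is verifying that $\Span(G;V_1,\dots,V_k)$ equals the whole tree $G$ (as a subtree, not merely as a vertex set) once $\{V_1,\dots,V_k\}=[n]$, which follows at once from its definition as the union of the geodesics $\llb r(G),V_i\rrb$. One also has to notice that the canonical order on $\Span^*(G;V_1,\dots,V_k)$ used to index the entries of $\bU_k$ is compatible across $k$, so that $\bU_k$ is really the restriction of the $\bU$ built for the complete shuffle; this is a direct check from the definition on page~\pageref{p:canon-order}.
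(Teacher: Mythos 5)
Your proposal is correct and follows essentially the same route as the paper: a coupling in which the $U_w$ (and the re-rooting $\bp$-node) are fixed once and for all, the $k$-shuffle uses the subvector $\bU_k$ indexed by $\Span^*(G;V_1,\dots,V_k)$, and almost-sure convergence follows because the spanning subtree eventually exhausts $G$. Your explicit use of the coupon-collector time $K$ just makes the paper's phrase ``the constrained edges increase until they are all constrained'' more concrete, so there is nothing to correct.
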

\begin{proof}
We prove the claim using a coupling which we build using the random
variables $U_w$, $w\neq r$. For $k\ge 1$, we let $\bU_k$ be the subset of
$\bU$ containing the $U_w$ for which $w\in \Span^*(G; V_1,\dots, V_k)$,
in the canonical order on $\Span^*(G; V_1,\dots, V_k)$.
Then for $k\ge1$, $\bU_k\in \bbU(G, V_1,\dots, V_k)$ and since
$\Span(G; V_1,\dots, V_k)$ increases to $T$, the number of
edges of $\tau(G; V_1,\dots, V_k; \bU_k)$ which are constrained by the
choices in $\bU_k$ increases until they are all constrained. It follows that
$$\tau(G; V_1,\dots, V_k; \bU_k)\to \tau(G; 1,2,\dots, n; \bU)$$
almost surely, as
$k\to \infty$. Re-rooting all the trees at the same random $\bp$-node proves the
claim.
\end{proof}


We can now state the duality for the complete cutting procedure.
It follows readily from
the distributional identity  in Proposition~\ref{pro:K-duality}
$$(T, \cut(T, V_1,\dots, V_k)) \eqd (\shuff(T, V_1,\dots, V_k), T).$$
and the fact that $\cut(T; V_1,\dots, V_k)\to \cut(T)$ and
$\shuff(T; V_1,\dots, V_k)\to \shuff(T)$ in distribution as $k\to\infty$
(Propositions~\ref{pro:limit_k_cutting} and \ref{pro:limit-k-shuff}).

\begin{prop}[Cutting duality]\label{pro:complete-duality}
Let $T$ be a $\bp$-tree. Then, we have the following duality in distribution
$$(T, \cut(T))\eqd (\shuff(T), T).$$
In particular, $\shuff(T)\sim \pi$.
\end{prop}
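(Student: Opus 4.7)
The plan is to derive the duality by passing to the limit $k\to\infty$ in the $k$-cutting duality of Proposition~\ref{pro:K-duality}. First, I would project the identity
$$(\shuff(T, V_1,\dots, V_k), T, V_1,\dots, V_k)\eqd (T, \cut(T, V_1,\dots, V_k), V_1,\dots, V_k)$$
onto its first two coordinates (dropping the $V_i$'s), obtaining the pair-wise identity in distribution
$$(\shuff(T, V_1,\dots, V_k), T)\eqd (T, \cut(T, V_1,\dots, V_k))$$
for every $k\ge 1$.

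Next, I would argue joint convergence of both sides using the couplings already set up earlier. On the right-hand side, $T$ is held fixed while $\cut(T, V_1,\dots, V_k)\to\cut(T)$ almost surely, by the coupling in the proof of Proposition~\ref{pro:limit_k_cutting} (which shows $S_k\uparrow T$ a.s., so the sequence is eventually constant and equal to $\cut(T)$). Thus $(T,\cut(T,V_1,\dots,V_k))\to (T,\cut(T))$ a.s., hence in distribution. On the left-hand side, the coupling introduced in the proof of Proposition~\ref{pro:limit-k-shuff} builds $\shuff(T,V_1,\dots,V_k)$ from the single vector $\bU$ so that $\tau(T;V_1,\dots,V_k;\bU_k)\to \tau(T;1,\dots,n;\bU)$ almost surely, and the final rerooting is done at the same independent $\bp$-node for every $k$; so $\shuff(T,V_1,\dots,V_k)\to \shuff(T)$ a.s.\ while $T$ stays fixed, giving $(\shuff(T,V_1,\dots, V_k), T)\to (\shuff(T), T)$ a.s., hence in distribution.

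Since $\bbT_n$ is finite, equality in distribution is stable under pointwise limits of probabilities, so both sides of the $k$-cutting identity converge to the distributions of $(\shuff(T),T)$ and $(T,\cut(T))$ respectively, yielding the first assertion
$$(T,\cut(T))\eqd (\shuff(T),T).$$
The final claim $\shuff(T)\sim\pi$ follows by reading off the first marginal on the right-hand side: it equals the first marginal on the left, namely the law of $T$, which is $\pi$.

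No step is a real obstacle here: the only thing to be careful about is that the two sequences must converge jointly with the other fixed coordinate, which is automatic under the a.s.\ couplings of Propositions~\ref{pro:limit_k_cutting} and \ref{pro:limit-k-shuff} since $T$ is the same random variable on both sides throughout.
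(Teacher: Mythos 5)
Your proposal is correct and follows essentially the same route as the paper: the paper likewise deduces the statement directly from the pairwise projection of the $k$-cutting duality (Proposition~\ref{pro:K-duality}) together with the limits $\cut(T,V_1,\dots,V_k)\to\cut(T)$ and $\shuff(T,V_1,\dots,V_k)\to\shuff(T)$ from Propositions~\ref{pro:limit_k_cutting} and~\ref{pro:limit-k-shuff}. Your extra remark that the almost sure couplings make the convergence joint with the fixed coordinate $T$ is a correct (and slightly more explicit) justification of what the paper treats as immediate.
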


\section{Cutting down an inhomogeneous continuum random tree}\label{sec:cont_cutting}

From now on, we fix some $\btheta=(\theta_0, \theta_1, \theta_2, \cdots)\in \bTheta$.
We denote by $I=\{i\ge 1: \theta_i>0\}$ the index set of those $\theta_i$ with nonzero values.
Let $\cT$ be the real tree obtained from the Poisson point process construction in Section~\ref{subset: icrt}.
We denote by $\mu$ and $\ell$ its respective mass and length measures.
Recall the measure $\cL$ defined by
$$
\cL(dx)=\theta_0^2 \ell(dx)+\sum_{i\in I} \theta_i\delta_{\beta_i}(dx),
$$
where $\beta_i$ is the branch point of local time $\theta_i$ for $i\in I$. 
The hypotheses on $\btheta$ entail that $\cL$ has infinite total mass. 
On the other hand, we have
\begin{lem}\label{lem: cL}
Almost surely, $\cL$ is a $\sigma$-finite measure concentrated on the skeleton of $\te$. 
More precisely, if $(V_i, i\ge 1)$ is a sequence of independent points sampled according to $\mu$, 
then for each $k\ge 1$, we have $\bbP$- almost surely
$$
\cL(\Span(\te; V_1, V_2, \cdots, V_k))<\infty.
$$
\end{lem}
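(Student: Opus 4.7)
The plan is to exploit the Poisson point process construction of $\cT$, since $R_k := \Span(\cT; V_1, \ldots, V_k)$ has the same unconditional distribution as the tree $R_k$ built from the first $k$ intervals $[0,\eta_1], [\eta_1,\eta_2], \ldots, [\eta_{k-1},\eta_k]$. The key geometric observation is that, under this construction, $R_k$ is precisely the image of $[0,\eta_k]$ under the identifications $\eta_j^* \sim \eta_j$; in particular, $\ell(R_k) = \eta_k$, and a branch point $\beta_i$ (corresponding to the joinpoint $\xi_{i,1}$) lies in $R_k$ if and only if $\xi_{i,1} \le \eta_k$.

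First I would dispose of the skeleton statement: by construction $\ell$ is supported on $\Sk(\cT)$, and each $\beta_i$ is a branch point, hence $\cL$ is concentrated on $\Sk(\cT)$. Next, for a deterministic $r > 0$, introduce
\[
M_r := \theta_0^2 \, r + \sum_{i \in I} \theta_i \, \mathbf{1}_{\{\xi_{i,1} \le r\}}.
\]
Since $\xi_{i,1}$ is the first atom of a rate-$\theta_i$ Poisson process, $\mathbb{P}(\xi_{i,1}\le r) = 1 - e^{-\theta_i r} \le \theta_i r$, so
\[
\mathbb{E}[M_r] \le \theta_0^2 \, r + r \sum_{i \in I} \theta_i^2 = r \sum_{i \ge 0}\theta_i^2 = r < \infty.
\]
Hence $M_r < \infty$ almost surely for every deterministic $r$.

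Applying the geometric observation, one has the pointwise bound $\cL(R_k) \le M_{\eta_k}$ on the event $\{\eta_k \le r\}$, namely $\cL(R_k) \le M_r$ there. Since $\eta_k$ is the sum of $k$ a.s.-finite spacings and is therefore a.s.\ finite, the event $\bigcap_{r\in\mathbb{Q}_+}\{M_r<\infty\}\cap\{\eta_k<\infty\}$ has full probability and on it $\cL(R_k)<\infty$. Finally, $\sigma$-finiteness of $\cL$ follows because $\cL$ is concentrated on $\Sk(\cT) \subseteq \overline{\bigcup_k R_k}$, the leaves carry no $\cL$-mass, and the $R_k$ form an increasing exhaustion with finite $\cL$-mass.

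I expect no serious obstacle: the whole argument is a second-moment type estimate making essential use of $\sum_{i\ge 0}\theta_i^2 = 1$ together with the exponential-tail bound $1-e^{-x}\le x$. The only point requiring mild care is the identification $\{\beta_i \in R_k\} = \{\xi_{i,1}\le \eta_k\}$, which needs to be read off cleanly from the grafting construction; once that is in hand, the rest is routine.
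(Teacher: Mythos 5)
Your proof is correct, but it takes a genuinely different route from the paper's. The paper works first with $k=1$ and computes $\mathbb{E}[\cL(\Span(\te;V_1))]$ exactly: since $\eta_1$ is itself built from the same Poisson processes as the joinpoints $\xi_{i,1}$, the authors must decompose over the index $j$ with $\eta_1\in\rP_j$ to exploit independence of the processes, and they additionally need $\mathbb{E}[\eta_1]<\infty$, which they extract from the explicit law \eqref{eq: distD}; the case of general $k$ then follows by exchangeability and subadditivity, and the skeleton statement by citing that $\Sk(\te)=\cup_i\,\rrb r(\te),V_i\llb$ almost surely. Your truncation trick --- bounding $\mathbb{E}[M_r]\le r$ for \emph{deterministic} $r$ using only $\sum_{i\ge 0}\theta_i^2=1$, then invoking pathwise monotonicity of $r\mapsto M_r$ together with $\eta_k<\infty$ a.s. --- neatly sidesteps both delicate points (the dependence between $\eta_k$ and the $\xi_{i,1}$, and any moment estimate on $\eta_k$), and it treats all $k$ at once; the price is that you only get a.s.\ finiteness, whereas the paper's computation yields the stronger $\mathbb{E}[\cL(\Span(\te;V_1,\dots,V_k))]<\infty$, though nothing later in the paper uses more than a.s.\ finiteness. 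Two small points of care: the transfer from the line-breaking tree $R_k$ (together with the indicators $\{\beta_i\in R_k\}=\{\xi_{i,1}\le\eta_k\}$ and $\ell(R_k)=\eta_k$) to $\Span(\te;V_1,\dots,V_k)$ is a \emph{joint} distributional identification, but this is exactly the step the paper itself performs for $k=1$, so it is at the same level of rigor; and in the $\sigma$-finiteness wrap-up you should use $\Sk(\te)\subseteq\cup_k R_k$ (or the paper's citation of Lemma 5 of Aldous) rather than the closure $\overline{\cup_k R_k}$ --- as written that inclusion is vacuous, though your remark that leaves carry no $\cL$-mass already repairs it.
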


\begin{proof}
We consider first the case $k=1$. Recall the Poisson processes $(\rP_j, j\ge 0)$ in the Section
\ref{subset: icrt} and the notations there. We have seen that 
$\Span(\te; V_1)$ and $R_1$ have the same distribution.
Then we have
$$
\cL(\Span(\te; V_1))
\overset{d}{=}\theta_0^2\,\eta_1+\sum_{i\ge 1}\theta_i\,\delta_{\xi_{i,1}}([0, \eta_1]).
$$
By construction, $\eta_1$ is either $\xi_{j, 2}$ for some $j\ge 1$ or $u_1$. 
This entails that on the event $\{\eta_1\in \rP_j\}$, we have $\eta_1<\xi_{i, 2}$ for all 
$i\in \bbN\setminus\{ j\}$. Then,
$$
\Egg{\sum_{i\ge 1} \theta_i\,\delta_{\xi_{i,1}}([0,\eta_1])}
=\sum_{j\ge 1}\Egg{\sum_{i\ge 1 }\theta_i\cdot\I{\xi_{i,1}\le \eta_1}\I{\eta_1=\xi_{j, 2}}}
+\Egg{\sum_{i\ge 1}\theta_i\cdot\I{\xi_{i,1}< \eta_1}\I{\eta_1=u_1}}.
$$
Note that the event $\{\xi_{j,1}\le \eta_1\}\cap\{\eta_1=\xi_{j, 2}\}$ always occurs. 
By breaking the first sum on $i$ into $\theta_j+\sum_{i\ne j}\theta_i\,\I{\xi_{i,1}< 
\eta_1<\xi_{i,2}}$ and re-summing over $j$, we obtain
\begin{align*}
\Egg{\sum_{i\ge 1} \theta_i\,\delta_{\xi_{i,1}}([0,\eta_1])}
&= \sum_{j\ge 1}\theta_j\,\p{\eta_1\in \rP_j}
+\sum_{j\ge 0}\Egg{\sum_{\substack{i\ge 1, i\ne j}}
\theta_i\cdot\I{\xi_{i,1}<\eta_1<\xi_{i,2}}\I{\eta_1\in \rP_j}}\\
&= \sum_{j\ge 1}\theta_j\,\p{\eta_1\in \rP_j}
+ \sum_{j\ge 0}\sum_{i\ne j}\Ec{\theta_i^2\eta_1e^{-\theta_i\eta_1} \I{\eta_1\in \rP_j}}\\
&\le 1+\sum_{i\ge 1} \theta_i^2\cdot\Ec{\eta_1},
\end{align*}
where we have used the independence of $(\mathrm{P}_j, j\ge 0)$ in the second equality.
The distribution of $\eta_1$ is given by \eqref{eq: distD}. 
If $\theta_0>0$, we have $\p{\eta_1>r}\le \exp(-\theta_0^2 r^2/2)$; 
otherwise, we have $\p{\eta_1>r}\le (1+\theta_1r)e^{-\theta_1 r}$. 
In either case, we are able to show that $\Ec{\eta_1}<\infty$. Therefore,
$$
\E{\cL(\Span(\te; V_1))}
=\theta_0^2\,\Ec{\eta_1}+\Egg{\sum_{i\ge 1}\theta_i\,\delta_{\xi_{i,1}}([0, \eta_1])}<\infty.
$$
In general, the variables $V_1, V_2, \cdots, V_k$ are exchangeable, therefore
$$
\E{\cL(\Span(\te; V_1, V_2, \cdots, V_k))}
\le k\E{\cL(\Span(\te; V_1))}<\infty,
$$
which proves that $\cL$ is almost surely finite on the trees spanning finitely many random leaves. 
Finally, with probability one, $(V_i, i\ge 1)$ is dense in $\te$, 
thus $\Sk(\te)=\cup_{k\ge 1}\rrb r(\cT), V_i\llb$ (see for example \cite[Lemma 5]{aldcrt3}). 
This concludes the proof.
\end{proof}

We recall the Poisson point process $\cP$ of intensity measure $dt\otimes \cL(dx)$, whose points we have used to define both the one-node-isolation procedure and the complete cutting procedure. As a direct consequence of Lemma \ref{lem: cL}, $\cP$ has finitely many atoms on $[0, t]\times \Span(\te; V_1, V_2, \cdots, V_k)$ for all $t>0$ and $k\ge 1$, almost surely. This fact will be implicitly used in the sequel.


\subsection{An overview of the proof}\label{sec:ICRT_overview}

Recall the hypothesis \eqref{H} on the sequence of the probability measures $(\bp_n, n\ge 1)$:
\begin{equation}\tag{H}
\sigma_n=\left(\sum_{i=1}^{n}p^2_{ni}\right)^{1/2}\overset{n\to\infty}{\longrightarrow} 0,
\qquad\text{and} \qquad
\lim_{n\to\infty}\frac{p_{ni}}{\sigma_n}=\theta_i, \quad \text{ for every }i\ge 1.
\end{equation}
Recall the notation $\bT^n$ for a $\bp_n$-tree, which, from now on, we consider 
as a measured metric space, equipped with the graph distance and the probability measure $\bp_n$.
Camarri--Pitman \cite{pit00} have proved that under hypothesis \eqref{H}, 
\begin{equation}\label{eq: G-P_ptree}
(\sigma_n \bT^n, \bp_n)\xrightarrow[d,\gp]{n\to\infty} (\te, \mu).
\end{equation}
This is equivalent to the convergence of the reduced subtrees: 
For each $n\ge 1$ and $k\ge 1$, write $R^n_k=\Span(\bT^n; \xi^n_1,\cdots, \xi^n_k)$ for 
the subtree of $\bT^n$ spanning the points $\{\xi^n_1, \cdots, \xi^n_k\}$, which are $k$ random 
points sampled independently with distribution $\bp_n$. 
Similarly, let $R_k=\Span(\te; \xi_1,\dots, \xi_k)$ be the subtree of
$\te$ spanning the points $\{\xi_1, \cdots, \xi_k\}$, where $(\xi_i, i\ge 1)$ is an i.i.d.\ 
sequence of common law $\mu$. 
Then \eqref{eq: G-P_ptree} holds if and only if for each $k\ge 1$,
\begin{equation}\label{eq: cvRnk}
\sigma_n R^n_k
\xrightarrow[d,\gh]{n\to\infty} R_k.
\end{equation}

However, even if the trees converge, one expects that for the cut trees to converge, 
one at least needs that the measures which are used to sample the cuts also converge 
in a reasonable sense.
Observe that $\cL$ has an atomic part, which, as we shall see, is the scaling limit of large $\bp_n$-weights.
Recall that $\bp_n$ is sorted: $p_{n1}\ge p_{n2}\ge \cdots p_{nn}$.  
For each $m\ge 1$, we denote by $\fB^n_m=(1,2, \cdots,m)$ the vector of the $m$ $\bp_n$-heaviest 
points of $\bT^n$, which is well-defined at least for $n\ge m$. 
Recall that for $i\ge 1$, $\beta_i$ denotes the branch point in $\te$ of local time $\theta_i$,
and write $\fB_m=(\beta_1, \beta_2, \cdots, \beta_m)$. Then \citet{pit00} also proved that
\begin{equation}\label{eq: G-P_ptree'}
\big(\sigma_n \bT^n, \bp_n, \fB^n_m\big)\carrow \big(\te, \mu, \fB_m\big)
\end{equation}
with respect to the $m$-pointed Gromov--Prokhorov topology, which will allow us 
to prove the following convergence of the cut-measures. 
Let
\begin{equation}\label{eq:def_Ln}
\cL_n=\sum_{i\in [n]}\frac{p_{ni}}{\sigma_n} \cdot \delta_{i}=\sigma_n^{-1}\bp_n.
\end{equation}

Recall the notation $m\!\!\upharpoonright_A$ for the (non-rescaled) restriction of a measure 
to a subset $A$.

\begin{prop}\label{prop: cv-Ln}
Under hypothesis \eqref{H}, we have
\begin{equation}\label{rr}
\big(\sigma_n R^n_{k},\cL_n\!\!\upharpoonright_{R^n_k}\big)
\carrow \big(R_{k},\cL\!\!\upharpoonright_{R_k}\big), \quad \forall k\ge 1,
\end{equation}
with respect to the Gromov--Hausdorff--Prokhorov topology. 
\end{prop}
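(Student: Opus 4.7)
The plan is to combine the joint $m$-pointed Gromov--Prokhorov convergence \eqref{eq: G-P_ptree'} with a careful separation of the atomic and continuous parts of $\cL$. The underlying Gromov--Hausdorff convergence $\sigma_n R^n_k \to R_k$ is already guaranteed by \eqref{eq: cvRnk}, so the new ingredient is convergence of the measures $\cL_n\!\!\upharpoonright_{R^n_k}$ to $\cL\!\!\upharpoonright_{R_k}$. By the Skorokhod representation theorem, I would fix a probability space on which both \eqref{eq: cvRnk} and \eqref{eq: G-P_ptree'} hold almost surely, and embed all trees isometrically into a common Polish space so that both the Hausdorff distances and the locations of the top-$m$ heavy points converge. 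It then suffices to establish weak convergence of the restricted measures within this embedding, with tightness of the total mass to be verified separately.

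First I would treat the heavy part: for each fixed $m\ge 1$, write $\cL_n = \cL_n^{\le m} + \cL_n^{> m}$ with $\cL_n^{\le m} = \sigma_n^{-1}\sum_{i=1}^m p_{ni}\,\delta_i$. Under the coupling, the pointed convergence of $(\sigma_n R^n_k, \fB^n_m)$ to $(R_k, \fB_m)$ together with $p_{ni}/\sigma_n \to \theta_i$ (from hypothesis \eqref{H}) immediately yields
$$
\cL_n^{\le m}\!\!\upharpoonright_{R^n_k} \;\longrightarrow\; \sum_{i=1}^m \theta_i\,\delta_{\beta_i}\,\Ic{\beta_i\in R_k}
$$
in the weak topology on the common space. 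Letting $m\to\infty$ recovers the full atomic part of $\cL\!\!\upharpoonright_{R_k}$, since Lemma \ref{lem: cL} guarantees that only finitely many $\beta_i$ lie in $R_k$ almost surely.

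The main obstacle is the residual light measure $\cL_n^{>m}\!\!\upharpoonright_{R^n_k}$, which should generate the continuous part $\theta_0^2\,\ell\!\!\upharpoonright_{R_k}$ plus a negligible tail as $m\to\infty$. My strategy is a first- and second-moment computation on the $\bp_n$-tree side. For a test function $f$ on $R^n_k$, one has
$$
\E{\sigma_n^{-1}\sum_{v>m} p_{nv}\,f(v)\,\Ic{v\in R^n_k}} = \sigma_n^{-1}\sum_{v>m} p_{nv}\,\E{f(v)\,\Ic{v\in R^n_k}},
$$
and the key computation is that for a light vertex $v$, the probability $\p{v\in R^n_k}$ is of order $p_{nv}$ times the rescaled length of $R^n_k$. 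Combined with $\sum_{v>m} p_{nv}^2 \to \theta_0^2$ as $n\to\infty$ and then $m\to\infty$, this is what produces the factor $\theta_0^2$ and the length measure in the limit. To prove the probability estimate I would use the Aldous--Broder/line-breaking representation underlying Camarri--Pitman's construction: a light vertex $v$ belongs to $R^n_k$ only if it appears as a ``new'' value in the first few steps of a $\bp_n$-i.i.d.\ walk intersecting the paths to $\xi^n_1,\dots,\xi^n_k$, and its contribution corresponds in the scaling limit to a cutpoint of the Poisson process $\rP_0$ of intensity $\theta_0^2\,dx\,dy$. A second-moment bound (again exploiting $\sum p_{nv}^2\to\theta_0^2$ and the concentration of small weights) then upgrades the expectation to weak convergence.

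Finally, to promote Gromov--Prokhorov to Gromov--Hausdorff--Prokhorov, I would combine the established weak convergence of measures in the common embedding with the already known Hausdorff convergence of the underlying spaces, using that $R^n_k$ and $R_k$ are compact. Tightness of the total masses $\cL_n(R^n_k)$ follows from a uniform bound $\sup_n \bbE[\cL_n(R^n_k)] < \infty$ derived by a discrete version of the moment argument in the proof of Lemma \ref{lem: cL}, together with $\bbE[\cL(R_k)]<\infty$. I expect the main technical challenge to lie in making rigorous the claim that $\sigma_n^{-1} p_{nv}\,\p{v\in R^n_k}$ is concentrated (for light $v$) along the skeleton of $R^n_k$ with density converging to $\theta_0^2$, which is where the line-breaking coupling must be handled carefully.
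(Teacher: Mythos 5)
Your overall architecture (atoms via the pointed convergence \eqref{eq: G-P_ptree'}, the light part compared with the length measure via Camarri--Pitman's sequential/birthday construction, a variance bound to get concentration, then a Prokhorov triangle inequality and a tail estimate to pass to GHP) is the same as the paper's. However, there is a genuine gap in the way you split the measure. You use a single \emph{fixed} cutoff $m$ and claim that the residual part $\cL_n^{>m}\!\!\upharpoonright_{R^n_k}$ is governed by the estimate ``$\p{v\in R^n_k}$ is of order $p_{nv}$ times the rescaled length of $R^n_k$'', with the factor $\theta_0^2$ coming from $\sigma_n^{-2}\sum_{v>m}p_{nv}^2\to\theta_0^2$ in the iterated limit. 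For fixed $m$ this is false at the level at which you need it: the block of ``semi-heavy'' vertices whose weights are still comparable to $\sigma_n$ (those with $p_{nv}/\sigma_n\to\theta_v>0$, $v>m$) is contained in your light part, and their contribution does not spread along the skeleton --- it converges to the atoms $\sum_{v>m}\theta_v\delta_{\beta_v}$. Concretely, your second-moment upgrade fails for fixed $m$: the relevant variance is of order $\sigma_n^{-3}\sum_{v>m}p_{nv}^3$, which is bounded below (up to constants) by $\theta_{m+1}\bigl(\theta_0^2+\sum_{v>m}\theta_v^2\bigr)>0$ and does not vanish as $n\to\infty$, so no concentration around $\theta_0^2$ times the length measure can hold at fixed $m$; the deviation is exactly the semi-heavy atomic part.

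The missing idea is an $n$-dependent threshold. The paper defines $m_n$ as the smallest $j$ with $\sum_{i\le j}(p_{ni}/\sigma_n)^2\ge\sum_{i\ge1}\theta_i^2$, which forces both $p_{nm_n}/\sigma_n\to 0$ (see \eqref{eq: pmn}) and $q_n^2/\sigma_n^2\to\theta_0^2$ with $q_n^2=\sum_{i>m_n}p_{ni}^2$ (see \eqref{eq: mn}); only for the truly light block $\{i>m_n\}$ does the Doob/maximal-inequality argument give $\dpr\bigl(\cL_n^*\!\!\upharpoonright_{R^n_k},\theta_0^2\sigma_n\ell_n\!\!\upharpoonright_{R^n_k}\bigr)\to 0$, using the exact identity of Corollary~3 of Camarri--Pitman for the cumulative weight along each branch up to the repeat times. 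The intermediate block $\{M<i\le m_n\}$ then requires a separate estimate, uniform in $n$, namely $\lim_{M\to\infty}\limsup_n\p{\Sigma(n,k,M)\ge\ep}\le\ep$ (part b) of Lemma~\ref{lem: Ln23}), obtained by a first-moment bound conditioned on $\{\fR^n_1\le N/\sigma_n\}$ together with the tightness \eqref{eq: tgrenk}. Your two-block scheme can in principle be repaired by introducing exactly this three-block decomposition (or an equivalent uniform-in-$n$ control of the intermediate regime), but as written the step ``second-moment bound upgrades the expectation to weak convergence'' for fixed $m$ would fail, and this is precisely the analytic heart of the proposition. The remaining ingredients you list (the heavy atoms, the continuity of the length measure along the coupling, tightness of total masses, and the GP-to-GHP upgrade on the compact reduced trees) are fine and match the paper.
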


The proof uses the techniques developed in \cite{pit00,ald04a} and is postponed until 
Section~\ref{sec:cv-Ln}. 
We prove in the following subsections that the convergence in Proposition \ref{prop: cv-Ln} is 
sufficient to entail convergence of the cut trees. To be more precise, we denote by $V^n$ 
a $\bp_n$-node independent of the $\bp$-tree $T^n$, and recall that in the construction of 
$H^n:=\cut(T^n,V^n)$, the node $V^n$ ends up at the extremity of the path 
upon which we graft the discarded subtrees. 
Recall from the construction of $\cH:=\cut(\cT,V)$ in Section \ref{sec:results} that 
there is a point $U$, which is at distance $L_\infty$ from the root. 
In Section~\ref{sec: pfcv-hatt}, we prove Theorem~\ref{thm:conv_cutv}, that is: if \eqref{H} holds, 
then
\begin{equation}\label{eq: cv-hatt}
(\sigma_n H^n, \bp_n,V^n) 
\mathop{\longrightarrow}^{n\to\infty}_{d,\gp}
(\cH,\hat \mu, U),
\end{equation}
jointly with the convergence in \eqref{rr}.
From there, the proof of Theorem~\ref{thm:id_cutv} is relatively short, and we provide it 
immediately (taking Theorem~\ref{thm:conv_cutv} or equivalently \eqref{eq: cv-hatt} for 
granted).

\begin{proof}[Proof of Theorem~\ref{thm:id_cutv}]
For each $n\ge 1$, let $(\xi^n_{i})_{i\ge 1}$ be a sequence of i.i.d.\ points of common law 
$\bp_n$, and let $\xi^n_{0}=V^n$. 
Let $(\xi_i)_{i\ge 1}$ be a sequence of i.i.d.\ points of common law $\hat\mu$,
and let $\xi_0=U$. 
We let 
$$\rho_n=(\sigma_nd_{H^n}(\xi^n_{i}, \xi^n_{j}))_{i, j\ge 0}
\qquad \text{and}\qquad
\rho^*_n=(\sigma_n d_{H^n}(\xi^n_{i}, \xi^n_{j}))_{i, j\ge 1}
$$ 
the distance matrices in $\sigma_n H^n=\sigma_n \cut(\bT^n,V_n)$ induced by the 
sequences $(\xi^n_i)_{i\ge 0}$ and $(\xi^n_i)_{i\ge 1}$, respectively.
According to Lemma \ref{lem:joint_dist-tree-node}, the distribution of 
$\xi^n_{0}=V^n$ is $\bp_n$, 
therefore $\rho_n$ is distributed as $\rho^*_n$. 
Writing similarly  
$$\rho=(d_{\cH}(\xi_{i}, \xi_{j}))_{i, j\ge 0}
\qquad \text{and}\qquad 
\rho^*=(d_{\cH}(\xi_{i}, \xi_{j}))_{i, j\ge 1},$$ 
where $d_{\cH}$ denotes the distance of $\cH=\cut(\cT,V)$, 
\eqref{eq: cv-hatt} entails that $\rho_n\to \rho$ in the sense of finite-dimensional distributions. 
Combined with the previous argument, we deduce that $\rho$ and $ \rho^*$ have the same distribution.
However, $\rho^*$ is the distance matrix of an i.i.d.\  sequence of 
law $\hat\mu$ on $H^n$. And the distribution of $\rho$ determines that of $V$. 
As a consequence, the law of $U$  
is $\hat{\mu}$.

For the unconditional distribution of $(\cH, \hat{\mu})$, it suffices to apply the 
second part of Lemma~\ref{lem:joint_dist-tree-node}, which says that 
$(H^n, \bp_n)$ is distributed like $(\bT^n, \bp_n)$.
Then comparing \eqref{eq: cv-hatt} with \eqref{eq: G-P_ptree} 
shows that the unconditional distribution of $(\cH, \hat{\mu})$ is that of 
$(\cT,\mu)$.
\end{proof}

In order to prove the similar statement for the sequence of complete cut trees $G^n=\cut(\bT^n)$ 
that is Theorem~\ref{thm:conv_cut}, the construction of the limit metric space $\cG=\cut(\cT)$ 
first needs to be justified by resorting to Aldous' theory of continuum random trees \cite{aldcrt3}. 
The first step consists in proving that the backbones of $\cut(\bT^n)$ converge. 
For each $n\ge 1$, let $(V^n_i, i\ge 1)$ be a sequence of i.i.d.\ points of law $\bp_n$. 
Recall that we defined $\cut(\cT)$ using an increasing family  
$(S_k)_{k\ge 1}$, defined in \eqref{eq: skd}.
We show in Section~\ref{sec: pfcv_tc} that
\begin{lem}\label{lem: cv_tc}
Suppose that \eqref{H} holds. Then, for each $k\ge 1$, we have
\begin{equation}\label{eq: cv_tc}
\sigma_n\Span(\cut(\bT^n); V^n_1, \cdots, V^n_{k})
\xrightarrow[d,\gh]{n\to\infty} 
S_k,
\end{equation}
jointly with the convergence in \eqref{rr}.
\end{lem}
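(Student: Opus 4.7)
The plan is to reduce the Gromov--Hausdorff convergence of the discrete backbones to the convergence of a finite matrix of graph distances. Writing $\cE^k_i$ for the set of cuts affecting $V^n_i$ as in Section~\ref{sec:isol_mult}, the tree $S^n_k := \Span(\cut(\bT^n); V^n_1, \dots, V^n_k)$ is determined up to isometry by its root $X^n_1$, its marked vertices $V^n_1, \dots, V^n_k$, and the pairwise graph distances $d(X^n_1, V^n_i) = |\cE^k_i| - 1$ and $d(V^n_i, V^n_j) = |\cE^k_i| + |\cE^k_j| - 2|\cE^k_i \cap \cE^k_j|$. The task thus reduces to proving
\begin{equation*}
\sigma_n|\cE^k_i| \longrightarrow L^i_\infty
\quad\text{and}\quad
\sigma_n|\cE^k_i \cap \cE^k_j| \longrightarrow L^i_{\tau(i,j)}
\end{equation*}
in distribution, jointly with \eqref{rr}, for all $1 \le i,j \le k$.

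First, I would represent the cut sequence by placing a Poisson point process $\cP_n$ on $\bbR_+ \times \bT^n$ of intensity $dt\otimes \cL_n$ and ordering the distinct spatial projections by time of first appearance; this reproduces the size-biased permutation used in the definition of $\cut(\bT^n)$. Using Skorokhod representation and Proposition~\ref{prop: cv-Ln} applied to the enlarged collection $(V^n_1, \dots, V^n_k, \xi^n_{k+1}, \dots, \xi^n_{k+m})$ of distinguished points together with auxiliary i.i.d.\ $\bp_n$-samples, I would pass to a coupling in which $(\sigma_n R^n_{k+m}, \cL_n\!\!\upharpoonright_{R^n_{k+m}})$ converges almost surely in GHP to its continuum analogue for every fixed $m$. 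In this coupling, the disconnection times $\tau^n(V^n_i, y) := \inf\{s : \cP_n \text{ has an atom on the open path } \llb V^n_i, y \rrb \text{ by time } s\}$ and $\tau^n_{ij} := \tau^n(V^n_i, V^n_j)$ converge almost surely to the ICRT times $\tau(V_i, y)$ and $\tau(i,j)$ whenever $y$ lives on one of the spanning subtrees to which the convergence applies.

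The key step is to express the cut counts in an integral form controllable by these disconnection times. Letting $\tilde T^{n,i}_s$ denote the component of $V^n_i$ at time $s$, the compensator of the counting process $s \mapsto \#\{X^n_\ell \in \cE^k_i : X^n_\ell \text{ chosen by time } s\}$ is $\sigma_n^{-1}\int_0^s \bp_n(\tilde T^{n,i}_u)\,du$, of order $\sigma_n^{-1}$. Standard Poisson concentration then gives $\sigma_n|\cE^k_i| = \int_0^\infty \bp_n(\tilde T^{n,i}_u)\,du + o_{\bbP}(1)$ with error of order $\sigma_n^{1/2}$, and Fubini rewrites the main term as $\int \tau^n(V^n_i, y)\,\bp_n(dy)$; analogous formulas hold for the intersections with $\min(\tau^n(V^n_i, y), \tau^n_{ij})$ and in the continuum, with $L^i_\infty = \int \tau(V_i, y)\,\mu(dy)$ and $L^i_{\tau(i,j)} = \int (\tau(V_i,y) \wedge \tau(i,j))\,\mu(dy)$. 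To evaluate these integrals asymptotically, I would approximate them by the empirical averages $\frac{1}{m}\sum_{j=k+1}^{k+m} \tau^n(V^n_i, \xi^n_j)$ and similarly for intersections; for each fixed $m$, the previous paragraph yields convergence of every summand to its continuum analogue, and a law of large numbers as $m \to \infty$ recovers the integrals in both worlds.

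The main technical obstacle is the interchange of the limits $n \to \infty$ and $m \to \infty$, which requires a uniform integrability estimate on $\tau^n(V^n_i, \xi^n)$ as $n$ varies. This can be obtained from exponential tail bounds on the $\cL_n$-mass of the random path $\llb V^n_i, \xi^n\rrb$ under $\bp_n \otimes \bp_n$, analogous to the computation underlying \eqref{eq: distD} and exploiting the Aldous--Broder description of $\bp_n$-trees together with hypothesis \eqref{H}. Once the joint distance-matrix convergence is established, upgrading it to the Gromov--Hausdorff convergence of $\sigma_n S^n_k$ to $S_k$, jointly with \eqref{rr}, is straightforward because both $S^n_k$ and $S_k$ are finite trees spanned by their marked vertices and hence determined isometrically by the distance matrix on $\{X^n_1, V^n_1, \dots, V^n_k\}$, respectively $\{\rho_*, U_1, \dots, U_k\}$.
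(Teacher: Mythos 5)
Your reduction of \eqref{eq: cv_tc} to the convergence of the rescaled cut counts $\sigma_n|\cE^k_i|$ and $\sigma_n|\cE^k_i\cap\cE^k_j|$, the Poissonization of the cut sequence, the compensator comparison, and the use of Proposition~\ref{prop: cv-Ln} to get convergence of the disconnection times all match the paper's route (which writes the backbone distances exactly as in \eqref{eq: sknd} and reuses the arguments of Lemmas~\ref{lem: mun} and~\ref{lem: sp2}). The genuine gap is in your treatment of the infinite time horizon, which is precisely the delicate point here. First, the claim that ``standard Poisson concentration'' yields $\sigma_n|\cE^k_i|=\int_0^\infty \bp_n(\tilde T^{n,i}_u)\,du+o_{\bbP}(1)$ is not standard over $[0,\infty)$: the martingale estimate of Lemma~\ref{lem: sp2} bounds the error by $4\sigma_n t$ on $[0,t]$ only, and pushing it to $t=\infty$ already requires $\sigma_n^2\,\bbE[L^{n,i}_\infty]=\sigma_n\,\bbE[\sigma_n\fR^n_1]\to 0$, a uniform-in-$n$ first-moment bound on the rescaled first repeat time that you do not establish. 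Second, and more seriously, your interchange of the limits $n\to\infty$ and $m\to\infty$ rests on uniform integrability of $\tau^n(V^n_i,\xi^n)$. Conditionally, these times are exponential with rate $\cL_n(\llb V^n_i,\xi^n\rrb)$, so what is needed is a uniform \emph{lower}-tail estimate for the rescaled cut-measure mass of the path between two independent $\bp_n$-nodes, roughly $\sup_n \bbE\big[\cL_n(\llb V^n_i,\xi^n\rrb)^{-1}\I{\cL_n(\llb V^n_i,\xi^n\rrb)\le\delta}\big]\to 0$ as $\delta\to 0$; the computation behind \eqref{eq: distD} and the Aldous--Broder/birthday estimates you invoke control \emph{upper} tails of path lengths, not the event that the path carries abnormally little $\cL_n$-mass, which is exactly what makes $\tau^n$ large. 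No such uniform estimate is supplied, and the remark following Proposition~\ref{prop: cutnb} shows that some model-specific input is unavoidable: finite-horizon convergence $\sigma_nL^n_t\to L_t$ can hold while $L^n_\infty$ fails to be tight (the record problem).

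The paper avoids all moment estimates at this step: since the $V^n_i$ are i.i.d.\ $\bp_n$-nodes, each $(L^{n,i}_t)_{t\ge0}$ has the same law as $(L^n_t)_{t\ge0}$, so the tightness at infinity \eqref{eq: tg} established in Lemma~\ref{lem: tg} applies verbatim; that lemma in turn hinges on the distributional identity \eqref{eq: idmuTn} (the time-changed mass of the retained component is, in law, the mass along the spine of the original $\bp_n$-tree) combined with a time-reversal argument and the strict monotonicity of the limiting reversed mass process. To complete your proof you would either have to prove the uniform lower-tail/uniform-integrability estimate described above under \eqref{H} alone, or, more economically, replace that step by the identity $L^{n,i}\eqd L^n$ and an appeal to Lemma~\ref{lem: tg}.
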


Combining this with the identities for the discrete trees in Section~\ref{sec:ptree-cutting}, 
we can now prove Theorems~\ref{thm:conv_cut} and~\ref{thm:id_cut}.
\begin{proof}[Proof of Theorem \ref{thm:conv_cut}]
By Theorem \ref{thm:complete_forward}, $(\cut(\bT^n), \bp_n)$ and $(\bT^n, \bp_n)$ have the same 
distribution for each $n\ge 1$. 
Recall the notation $R^n_k$ for the subtree of $\bT^n$ spanning $k$ i.i.d.\ $\bp_n$-points. 
Then  for each $k\ge 1$ we have 
$$S^n_k:=\Span(\cut(\bT^n), V_1^n,\dots, V_k^n)\eqd R^n_k.$$ 
Now comparing \eqref{eq: cv_tc} with \eqref{eq: cvRnk}, we deduce 
immediately that, for each $k\ge 1$,
$$S_k\eqd R_k.$$
In particular the family $(S_k)_{k\ge 1}$ is consistent and leaf-tight in the sense of 
\citet{aldcrt3}.
This even holds true almost surely conditional on $\te$.
According to Theorem 3 and Lemma 9 of \cite{aldcrt3}, this entails that conditionally on 
$\cut(\te)$, the empirical measure $\frac 1 k \sum_{i=1}^{k}\delta_{U_i}$ converges weakly to 
some probability measure $\nu$ on $\tc$ such that $(U_i, i\ge 1)$ has the distribution of a sequence 
of i.i.d.\ $\nu$-points. This proves the existence of $\nu$. Moreover, 
$$S_k\eqd \Span(\cut(\te), \xi_1, \cdots, \xi_k),$$ 
where $(\xi_i, i\ge 1)$ is an i.i.d.\ $\mu$-sequence. Therefore, \eqref{eq: cv_tc} entails 
that $(\sigma_n\cut(\bT^n), \bp_n)\to (\cut(\cT), \nu)$ in distribution with respect 
to the Gromov--Prokhorov topology.
\end{proof}

\begin{proof}[Proof of Theorem \ref{thm:id_cut}]
According to Theorem 3 of \cite{aldcrt3} the distribution 
of $(\tc, \nu)$ is characterized by the family $(S_k)_{k\ge 1}$. Since $S_k$ and 
$R_k$ have the 
same distribution for $k\ge 1$, it follows that $(\tc,\nu)$ is distributed like $(\te, \mu)$.
\end{proof}


\subsection{Convergence of the cut-trees $\cut(\bT^n,V^n)$: Proof of Theorem~\ref{thm:conv_cutv}}
\label{sec: pfcv-hatt}

In this part we prove Theorem \ref{thm:conv_cutv} taking Proposition~\ref{prop: cv-Ln} for granted. 
Let us first reformulate \eqref{eq: cv-hatt} in the terms of the distance matrices, which is what we 
actually show in the following. 
For each $n\in\bbN$, let $(\xi^n_i, i\ge 2)$ be a sequence of random points of $\bT^n$ 
sampled independently according to the mass measure~$\bp_n$. 
 
We set $\xi^n_1=V^n$ and let $\xi^n_0$ be the root of $H^n=\cut(\bT^n,V^n)$. 
Similarly, let $(\xi_i, i\ge 2)$ be a sequence of i.i.d.\ $\mu$-points and let $\xi_1=V$. 
Recall that the mass measure $\hat{\mu}$ of $\cH=\cut(\te, V)$ is defined to be the push-forward 
of $\mu$ by the canonical injection $\phi$. We set $\widehat{\xi}_i=\phi(\xi_i)$ for $i\ge 2$, 
$\widehat\xi_1=U$ and $\widehat\xi_0$ to be the root of $\cH$.

Then the convergence in \eqref{eq: cv-hatt} is 
equivalent to the following: 
\begin{equation}\label{eq: cvhW}
  \big(\sigma_n d_{H^n}(\xi^n_i, \xi^n_j), 0\le i<j<\infty\big)
  \mathop{\longrightarrow}^{n\to\infty}_d 
  \big(d_{\cH}(\widehat{\xi}_i, \widehat{\xi}_j), 0\le i<j<\infty\big),
\end{equation}
jointly with
\begin{equation}\label{fdd}
\big(\sigma_n d_{T^n}(\xi^n_i, \xi^n_j), 1\le i<j<\infty\big)
  \mathop{\longrightarrow}^{n\to\infty}_d 
 \big(d_{\te}(\xi_i, \xi_j), 1\le i<j<\infty\big),
\end{equation}
in the sense of finite-dimensional distributions.  Notice that \eqref{fdd}
is a direct consequence of \eqref{eq: G-P_ptree}.
In order to express the terms in \eqref{eq: cvhW} with functionals of the cutting process, we introduce the 
following notations. For $n\in\bbN$, let $\cP_n$ be a Poisson point process on $\R_+\times \bT^n$ with intensity measure
$dt\otimes \cL_n$, where $\cL_n=\bp_n/\sigma_n$. For $u, v\in\bT^n$, 
recall that $\llb u, v\rrb$ denotes the path between $u$ and $v$.
For $t\ge 0$, we denote by $\bT^n_t$ the set of nodes 
still connected to $V^n$ at time $t$: 
$$
\bT^n_t:=\{x\in \bT^n: [0, t]\times \llb V^n, x\rrb \cap \cP_n=\varnothing\}.
$$
Recall that the remaining part of $\cT$ at time $t$ is 
$\cT_t=\{x\in \cT: [0,t]\times \llb V, x\rrb \cap \cP = \varnothing\}$. 
We then define
\begin{equation}\label{eq: defLn}
L^{n}_t:=\Card\big\{s\le t: \bp_n(\bT^n_s)<\bp_n(\bT^n_{s-})\big\}
\overset{a.s.}{=}\Card\big\{(s, x)\in \cP_n: s\le t, x\in \bT^n_{s-}\big\}.
\end{equation}
This is the number of cuts that affect the connected component containing $V^n$ 
before time $t$. In particular, 
$L^n_\infty:=\lim_{t\to\infty}L^n_t$ has the same distribution as $L(\bT^n)$ in the notation of 
Section~\ref{sec:ptree-cutting}. Indeed, this follows from the coupling on page~\pageref{p:coupling} 
and the fact that if $\cP_n=\{(t_i, x_i): i\ge 1\}$ such that $t_1\le t_2\le \cdots$ then $(x_i)$ 
is an i.i.d.\ $\bp_n$-sequence.
Let us recall that $L_t$, the continuous analogue of $L^n_t$, 
is defined by $L_t=\int_0^t \mu(\cT_s)ds$ in Section \ref{sec:results}. 
For $n\in\bbN$ and $x\in \bT^n$, we define the pair $(\tau_n(x), \varsigma_n(x))$ to be 
the element of $\cP_n$ separating $x$ from $V^n$
$$
\tau_n(x):= \inf\{t>0: [0, t]\times \llb V^n, x\rrb\cap \cP_n\ne\varnothing\},
$$
with the convention that $\inf\varnothing=\infty$. 
In words, $\varsigma_n(x)$ is the first cut that appeared on $\llb V^n, x\rrb$. 
For $x\in \cT$, $(\tau(x),\varsigma(x))$ is defined similarly.
We notice that almost surely 
$\tau(\xi_j)<\infty$ for each $j\ge 2$, since $\tau(\xi_j)$ is exponential with
rate $\cL(\llb V, \xi_j\rrb)$, which is  positive almost surely.
Furthermore, it follows from our construction of $H^n=\cut(\bT^n,V^n)$ that for $n\in\bbN$ 
and $i, j\ge 2$,
\begin{align*}
&d_{H^n}(\xi^n_0, \xi^n_1)=L^n_\infty-1, \\
&d_{H^n}(\xi^n_0, \xi^n_j)
=L^n_{\tau_n(\xi^n_j)}-1+d_{\bT^n}\big(\xi^n_j, \varsigma_n(\xi^n_j)\big);\\ 
&d_{H^n}(\xi^n_1, \xi^n_j)
=L^n_\infty-L^n_{\tau_n(\xi^n_j)}+d_{\bT^n}\big(\xi^n_j, \varsigma_n(\xi^n_j)\big),
\end{align*}
while for $i, j\ge 2$,
\begin{align*}
&d_{\cH}(\widehat{\xi}_0, \widehat{\xi}_1)=L_\infty,\\
&d_{\cH}(\widehat{\xi}_0, \widehat{\xi}_j)
=L_{\tau(\xi_j)}+d_{\te}\big(\xi_j, \varsigma(\xi_j)\big);\\
&d_{\cH}(\widehat{\xi}_1, \widehat{\xi}_j)
=L_\infty-L_{\tau(\xi_j)}+d_{\te}\big(\xi_j, \varsigma(\xi_j)\big).
\end{align*}
For $n\in\bbN$ and $i, j\ge 2$, if we define the event
\begin{align}\label{eq:def_anij}
\cA_n(i, j)&:=\{\tau_n(\xi^n_i)=\tau_n(\xi^n_j)\}
\overset{a.s.}{=}\{\varsigma_n(\xi^n_i)=\varsigma_n(\xi^n_j)\},
\end{align}
and $\cA_n^c(i, j)$ its complement, then on the event $\cA_n(i, j)$, 
we have $d_{H^n}(\xi^n_i, \xi^n_j)=d_{\bT^n}(\xi^n_i, \xi^n_j)$. 
Similarly we define $\cA(i, j):=\{\tau(\xi_i)=\tau(\xi_j)\}$, and note that 
$\cA(i,j)=\{\varsigma(\xi_i)=\varsigma(\xi_j)\}$ almost surely. 
Recall that \eqref{eq: G-P_ptree} implies that 
$\sigma_n d_{\bT^n}(\xi^n_i, \xi^n_j)\to d_{\te}(\xi_i, \xi_j)$. 
Now, on the event $\cA^c_n(i, j)$, we have 
$$
d_{H^n}(\xi^n_i, \xi^n_j)=
\big|L^n_{\tau_n(\xi^n_j)}-L^n_{\tau_n(\xi^n_i)}\big|+d_{\bT^n}\big(\xi^n_j, \varsigma_n(\xi^n_j)\big)
+d_{\bT^n}\big(\xi^n_i, \varsigma_n(\xi^n_i)\big), 
$$
if $n\in\bbN$, and 
$$
d_{\cH}(\widehat{\xi}_i, \widehat{\xi}_j)
=\big|L_{\tau(\xi_j)}-L_{\tau(\xi_i)}\big|+d_{\te}\big(\xi_j, \varsigma(\xi_j)\big)
+d_{\te}\big(\xi_i, \varsigma(\xi_i)\big), 
$$
for the limit case.
Therefore in order to prove \eqref{eq: cvhW}, it suffices to show the joint convergence of 
the vector
$$
\Big(\Ic{\cA_n(i, j)}, \tau_n(\xi^n_i), \sigma_n d_{\bT^n} \big(\xi^n_j, 
\varsigma_n(\xi^n_j)\big), \big(\sigma_n L^n_t, t\in \bbR_+\cup\{\infty\}\big)\Big)
$$
to the corresponding quantities for $\te$, for each $i, j\ge 2$.
We begin with a lemma.
\begin{lem}\label{lem: mun}
Under \eqref{H},  we have the following joint convergences as $n\to\infty$:
 \begin{equation}\label{eq: mun}
 (\bp_n(\bT^{n}_t))_{t\ge 0}\overset{d}\to (\mu(\cT_t))_{t\ge 0},
 \end{equation}
in Skorokhod $J_1$-topology, along with
 \begin{align}\label{eq: an}
 \big(\Ic{\cA_n(i, j)}, 2\le i, j\le k \big)
 &\overset{d}{\to} 
 \big(\Ic{\cA(i, j)}, 2\le i, j\le k\big),\\
\label{eq: taun}
 \big(\tau_n(\xi^n_j),  2\le j\le k \big)
 &\overset{d}{\to} \big(\tau(\xi_j), 2\le j\le k\big),\qquad\text{and}\\
\label{eq: xn}
 \big(\sigma_n d_{\bT^n} \big(\xi^n_j, \varsigma(\xi^n_j)\big), 2\le j \le k \big)
 &\overset{d}{\to}
 \big(d_{\te}\big(\xi_j,\varsigma_n(\xi_j)\big), 2\le j\le k \big),
 \end{align}
for each $k\ge 2$, and jointly with the convergence in \eqref{rr}.
\end{lem}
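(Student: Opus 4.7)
My plan is to upgrade Proposition~\ref{prop: cv-Ln} to an almost-sure convergence via Skorokhod representation, realize everything on a common Polish space, and then read off each of the four displays as a functional of a coupled Poisson point process on the spanning subtrees.

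\textbf{Step 1 (common probability space).} By Proposition~\ref{prop: cv-Ln} and the consistency in $k$ of the spanning subtrees, I will invoke Skorokhod's representation theorem to assume that
$$(\sigma_n R^n_k, \cL_n\!\!\upharpoonright_{R^n_k}) \longrightarrow (R_k, \cL\!\!\upharpoonright_{R_k})$$
holds almost surely in the Gromov--Hausdorff--Prokhorov topology, simultaneously for every $k\in \bbN$. Choosing suitable isometric embeddings, I will realize all the rescaled trees $\sigma_n R^n_k$ and the limits $R_k$ inside a common Polish ambient space on which the measures converge weakly and the marked points $\xi^n_j$ (in particular $V^n$) converge to the corresponding $\xi_j$ (in particular $V$).

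\textbf{Step 2 (coupling the cuts).} I will couple $\cP_n$ with $\cP$ on each spanning subtree. Since $\cL\!\!\upharpoonright_{R_k}$ has finite mass (Lemma~\ref{lem: cL}), I can generate both Poisson point processes from a single auxiliary rate-one Poisson process on $\bbR_+$, whose marks are then pushed forward by measurable selections from $\cL_n\!\!\upharpoonright_{R^n_k}$ and $\cL\!\!\upharpoonright_{R_k}$ respectively. The GHP convergence of intensities, combined with the marked-branch-point convergence \eqref{eq: G-P_ptree'}, ensures that the atoms of $\cP_n$ in $[0,t]\times R^n_k$ converge in both time coordinate and ambient position to the atoms of $\cP$ in $[0,t]\times R_k$, uniformly in $t$ on compact sets, almost surely.

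\textbf{Step 3 (four convergences).} Once Step~2 is in place, \eqref{eq: an}, \eqref{eq: taun}, \eqref{eq: xn} follow by continuous mapping: for $j\le k$, $\tau_n(\xi^n_j)$ is the first time-coordinate of an atom of $\cP_n$ on the path $\llb V^n, \xi^n_j\rrb\subset R^n_k$ and $\varsigma_n(\xi^n_j)$ is its spatial position, while the event $\cA_n(i, j)$ is measurable with respect to the coupled marked process. Almost-sure non-degeneracy of the limits holds because $\cL$ puts no mass on the endpoints of the paths with positive probability. For the sample-path statement \eqref{eq: mun}, I combine Step~3 with a law-of-large-numbers argument: both $t\mapsto \bp_n(\bT^n_t)$ and $t\mapsto \mu(\cT_t)$ are non-increasing càdlàg, and
$$\mu(\cT_t) = \lim_{k\to\infty} \frac{1}{k}\sum_{j=2}^{k+1}\Ic{\tau(\xi_j)>t}\quad\text{a.s.,}$$
with the analogous identity at level $n$ (since the $\xi^n_j$ are i.i.d.\ $\bp_n$). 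Step~3 thus yields convergence in probability at every continuity point of the limit, and for monotone càdlàg functions pointwise convergence on a dense set of continuity points upgrades to convergence in the Skorokhod $J_1$ topology.

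\textbf{Main obstacle.} The delicate point is the coupling in Step~2: one must reconcile the purely atomic part of $\cL$ concentrated at the countably many branch points $\beta_i$ with the heaviest $\bp_n$-vertices, for which the strengthened convergence \eqref{eq: G-P_ptree'} is indispensable, while simultaneously handling the diffuse part coming from the length measure, for which GHP (rather than merely GP) convergence of the spanning subtrees is required. It is precisely in order to drive this coupling that Proposition~\ref{prop: cv-Ln} was stated in the GHP sense.
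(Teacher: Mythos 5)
Your proposal follows essentially the same route as the paper's proof: starting from Proposition~\ref{prop: cv-Ln}, transferring it to joint convergence of the Poisson cut processes restricted to the spanning subtrees (which the paper states distributionally ``by the properties of the Poisson point process'', while you realize it as an explicit almost-sure coupling after Skorokhod representation), reading off \eqref{eq: an}, \eqref{eq: taun}, \eqref{eq: xn} as first-atom functionals of these processes, and deducing \eqref{eq: mun} from a law-of-large-numbers identity over the sampled points $\xi^n_j$, $\xi_j$ combined with the monotonicity of $t\mapsto\mu(\cT_t)$ along a dense set of times, jointly with \eqref{rr}. The only cosmetic difference is your explicit appeal to \eqref{eq: G-P_ptree'} and the hands-on coupling, which the paper only needs inside the proof of Proposition~\ref{prop: cv-Ln} itself.
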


\begin{proof}
Recall Proposition~\ref{prop: cv-Ln}, which says that, for each $k\ge 2$
$$
(\sigma_n R^n_k, \cL_n\!\!\upharpoonright_{R^n_k})\mathop{\longrightarrow}^{n\to\infty}_d 
(R_k, \cL\!\!\upharpoonright_{R_k}),
$$
in Gromov--Hausdorff--Prokhorov topology. By the properties of the Poisson point process, 
this entails that for $t\ge 0$,
\begin{equation}\label{eq: hh}
(\sigma_n R_k^n, \cP_n\!\!\upharpoonright_{[0, t]\times R^n_k})
\overset{d}{\to}
( R_k, \cP\!\!\upharpoonright_{[0, t]\times R_k}),
\end{equation}
in Gromov--Hausdorff--Prokhorov topology, jointly with the convergence in \eqref{rr}. 
For each $n\in\bbN$, the pair $(\tau_n(\xi^n_i), \varsigma_n(\xi^n_i))$ corresponds to the first 
jump of the point process $\cP_n$ restricted to $\llb V^n_1, \xi^n_i\rrb$. 
We notice that for each pair $(i, j)$ such that 
$2\le i, j\le k$, the event $\cA_n(i, j)$ occurs if and only if 
$\tau_n(\xi^n_i\wedge \xi^n_j)\le \min\{\tau_n(\xi^n_i),\tau_n(\xi^n_j)\}$. 
Similarly, $(\tau(\xi_i),\varsigma(\xi_i))$ is the first point of $\cP$ on $\R\times 
\llb V_1, \xi_1\rrb$, 
and $\cA(i,j)$ occurs if and only if $\tau(\xi_i\wedge \xi_j)\le \min\{\tau(\xi_i),\tau(\xi_j)\}$.
Therefore, the joint convergences in \eqref{eq: an}, \eqref{eq: taun} 
and \eqref{eq: xn} follow from \eqref{eq: hh}. On the other hand, we have
$$
\I{\xi^n_i\in \bT^n_t}=\I{t<\tau_n(\xi^n_i)}, \quad t\ge 0, n\ge 1
$$
For each fixed $t\ge 0$, this sequence of random variables converge to 
$\I{t<\tau(\xi_i)}=\I{\xi_i\in \cT_t}$ by \eqref{eq: hh}.
By the law of large numbers, 
$k^{-1}\sum_{1\le i\le k}\I{t<\tau_n(\xi^n_j)}\to \bp_n(T^n_t)$ almost surely.
Then we can find a sequence $k_n\to\infty$ slowly enough such that (see also \cite[Section 2.3]{aldcoal})
$$
\frac{1}{k_n}\sum_{i=1}^k \I{t<\tau_n(\xi^n_j)}\overset{d}{\to}\mu(\cT_t).
$$
This entails that, as $n\to\infty$, 
\begin{equation}\label{eq:cv_mun}
\bp_n(\bT^n_t)\overset{d}{\to}\mu(\cT_t).
\end{equation}
Using \eqref{eq:cv_mun} for a sequence of times $(t_m, m\ge 1)$ dense in $\bbR_+$ and combining 
with the fact that $t\mapsto \mu(\cT_t)$ is decreasing, 
we obtain  the convergence in \eqref{eq: mun}, jointly with \eqref{eq: an}, \eqref{eq: taun}, 
\eqref{eq: xn} and \eqref{rr}.
\end{proof}

\begin{prop}\label{prop: cutnb}
Under \eqref{H}, we have
\begin{equation}\label{eq: cutnb}
(\sigma_n L^{n}_t, t\ge 0)\mathop{\longrightarrow}^{n\to\infty}_{d} (L_{t}, t\ge 0)
\end{equation}
with respect to the uniform topology, and jointly with the convergences in \eqref{eq: an}, \eqref{eq: taun} and \eqref{eq: xn}. In particular, this entails that
$L_\infty<\infty$ almost surely. Moreover we have
\begin{equation}\label{eq: idL}
L_\infty\overset{d}{=}d_{\cT}(r(\cT), V),
\end{equation}
where $V$ is a random point of distribution $\mu$. 
The distribution of $d_{\cT}(r(\cT), V)$ is given in \eqref{eq: distD}.
\end{prop}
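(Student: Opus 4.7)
\medskip\noindent\textbf{Proof plan.}\
I would view $L^n_t$ as a counting process with intensity $\cL_n(\bT^n_{s-})=\sigma_n^{-1}\bp_n(\bT^n_{s-})$, obtained from $\cP_n$ by Poisson thinning, so that
\begin{equation*}
\sigma_n L^n_t \;=\; \int_0^t \bp_n(\bT^n_s)\,ds \;+\; \sigma_n M^n_t,
\end{equation*}
where $M^n$ is a pure-jump martingale with predictable quadratic variation
$\langle M^n\rangle_t = \int_0^t \cL_n(\bT^n_s)\,ds$. In particular $\sigma_n M^n$ has
predictable quadratic variation bounded by $\sigma_n t$, so Doob's $L^2$-inequality gives
$\bbE\big[\sup_{t\le T}(\sigma_n M^n_t)^2\big]\le 4\sigma_n T\to 0$ for every $T<\infty$.
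Combined with the $J_1$-convergence $\bp_n(\bT^n_\cdot)\to \mu(\cT_\cdot)$ supplied by
Lemma~\ref{lem: mun} and the continuity of $t\mapsto L_t$, this yields
$\sup_{t\le T}|\sigma_n L^n_t - L_t|\to 0$ in probability for every $T<\infty$, jointly with
the convergences in \eqref{eq: an}, \eqref{eq: taun} and \eqref{eq: xn} (all being coupled
through the same family $\cP_n$).

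To upgrade from compact intervals to the uniform topology on $[0,\infty]$ and to obtain
$L_\infty<\infty$, I would use the discrete identity of Section~\ref{sec:ptree-cutting}.
By construction $L^n_\infty\eqd L(\bT^n)$, and Proposition~\ref{prop:number_cuts} together
with Lemma~\ref{lem:re-root}(i) (which lets one treat $r(\bT^n)$ and $V^n$ as two
independent $\bp_n$-nodes) gives
$\sigma_n L^n_\infty \eqd \sigma_n\,\#\Span(\bT^n;V^n)$.
The Gromov--Prokhorov convergence \eqref{eq: G-P_ptree} then implies
$\sigma_n L^n_\infty \to d_\cT(r(\cT),V)$ in distribution, which is a.s.\ finite with
the law written in \eqref{eq: distD}.

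Since $t\mapsto \sigma_n L^n_t$ and $t\mapsto L_t$ are non-decreasing with terminal values
$\sigma_n L^n_\infty$ and $L_\infty$ respectively, the tightness of $\sigma_n L^n_\infty$
forces $L_\infty<\infty$ almost surely; a Dini-type argument for monotone c\`adl\`ag
functions converging to a continuous non-decreasing limit then upgrades the compact-interval
convergence above to uniform convergence on $[0,\infty]$. Evaluating this uniform limit at
$t=\infty$ and comparing with the distributional identity of the previous paragraph yields
\eqref{eq: idL}. The main obstacle I anticipate is precisely this last step, the passage to
the infinite horizon: without the discrete coupling $L^n_\infty\eqd \#\Span(\bT^n;V^n)$ one
has no a priori control on the total number of effective cuts, so
Proposition~\ref{prop:number_cuts} is what makes the argument go through.
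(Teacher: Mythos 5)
Your treatment of the compact-interval part is essentially the paper's: the martingale decomposition of $\sigma_n L^n_t$ around $\int_0^t\bp_n(\bT^n_s)ds$ with the Doob bound $4\sigma_n T$ is exactly Lemma~\ref{lem: sp2}, and combining it with Lemma~\ref{lem: mun} gives the convergence on compacts as in Lemma~\ref{lem: sp1}. Your deduction that $L_\infty<\infty$ a.s.\ is also sound: since $\sigma_n L^n_t\le \sigma_n L^n_\infty$ and $\sigma_n L^n_\infty\to_d d_\cT(r(\cT),V)$ by Proposition~\ref{prop:number_cuts} and \eqref{eq: G-P_ptree}, each $L_t$ is stochastically dominated by a finite limit law, and $L_t\uparrow L_\infty$.

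The gap is in the last step. The convergence $\sigma_n L^n_\infty\to_d d_\cT(r(\cT),V)$ that you obtain from the identity $L^n_\infty\eqd\#\Span(\bT^n;V^n)$ is only a marginal statement; it is not joint with the convergence of the process $(\sigma_n L^n_t)_{t\le T}$, so you cannot ``evaluate the uniform limit at $t=\infty$''. A Dini-type argument on the compactified time axis needs either convergence at the point $t=\infty$ itself (i.e.\ $\sigma_n L^n_\infty\to L_\infty$ jointly, which is what is to be proved) or the uniform tail control $\lim_{t_0\to\infty}\limsup_n\p{\sigma_n(L^n_\infty-L^n_{t_0})\ge\delta}=0$, and you have established neither. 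Nothing in your argument excludes the scenario in which an order-$\sigma_n^{-1}$ number of effective cuts occurs at times tending to infinity with $n$: every compact window is untouched, $\sigma_n L^n_\infty$ stays tight, yet its limit strictly dominates $L_\infty$ and uniform convergence on $[0,\infty]$ fails (the paper's closing remark on the record problem illustrates how the infinite-horizon behaviour can decouple from fixed-$t$ behaviour). This is precisely why the paper isolates Lemma~\ref{lem: tg}, whose proof needs the discrete identity at the level of the whole process, not just the terminal value: the identity \eqref{eq: idmuTn} (the component masses $\bp_n(\bT^n_{\zeta^n(s)})$ along the cut sequence are distributed as the subtree masses along $\llb r(\bT^n),V^n\rrb$, a consequence of Lemma~\ref{lem:joint_dist-tree-node}), then Lemma~\ref{lem: sl}, a time reversal, the hitting-time continuity fact for the Skorokhod topology, and the tail estimate \eqref{eq: uu}. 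So your instinct that Proposition~\ref{prop:number_cuts} is the key structural input is right, but it must be upgraded to this process-level identity to control late-time increments; once \eqref{eq: tg} is available, $\sigma_n L^n_\infty\to L_\infty$ follows (Billingsley's theorem on approximation in distribution), and \eqref{eq: idL} comes out of Lemma~\ref{lem: sl} rather than from your circular evaluation at $t=\infty$.
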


The above proposition is a consequence of the following lemmas.
\begin{lem}\label{lem: sp1}
Jointly with \eqref{eq: an}, \eqref{eq: taun} and \eqref{eq: xn}, we have for any $m\ge 1$ and $(t_i, 1\le i\le m)\in \bbR^m_+$,
$$
\bigg(\int_0^{t_i} \bp_n(\bT^n_s)ds, \,1\le i\le m\bigg)
\carrow \bigg(\int_0^{t_i} \mu(\cT_s)ds,\, 1\le i\le m\bigg).
$$
\end{lem}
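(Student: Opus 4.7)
The plan is to deduce Lemma~\ref{lem: sp1} from Lemma~\ref{lem: mun} by a dominated convergence argument applied after invoking Skorokhod's representation theorem. The key observation is that the integrands are monotone and uniformly bounded, so the integral functional behaves nicely with respect to the Skorokhod $J_1$-topology.

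More precisely, first note that both $s\mapsto \bp_n(\bT^n_s)$ and $s\mapsto \mu(\cT_s)$ are nonincreasing càdlàg paths bounded above by $1$, since $\bT^n_s$ (resp.\ $\cT_s$) shrinks as $s$ grows: once a node or point is disconnected from $V^n$ (resp.\ $V$), it never reenters the component. In particular, each sample path of $s\mapsto \mu(\cT_s)$ has at most countably many discontinuities.

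Next, Lemma~\ref{lem: mun} delivers the joint convergence in distribution of the quantities appearing in \eqref{eq: an}, \eqref{eq: taun}, \eqref{eq: xn}, together with the Skorokhod $J_1$-convergence $(\bp_n(\bT^n_s))_{s\ge 0}\to(\mu(\cT_s))_{s\ge 0}$ from \eqref{eq: mun}. Since the target space is a product of Polish spaces, by the Skorokhod representation theorem we may transfer the whole bundle to a common probability space so that all four convergences hold almost surely. Under this coupling, almost sure $J_1$-convergence of càdlàg paths yields pointwise convergence $\bp_n(\bT^n_s)\to \mu(\cT_s)$ at every continuity point $s$ of the limit, hence for Lebesgue-a.e.\ $s\in\bbR_+$. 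Since the integrands are uniformly bounded by $1$, dominated convergence gives, for every $i=1,\dots,m$ simultaneously,
$$
\int_0^{t_i} \bp_n(\bT^n_s)\,ds \xrightarrow[n\to\infty]{\text{a.s.}} \int_0^{t_i} \mu(\cT_s)\,ds,
$$
which implies the claimed joint convergence in distribution.

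The only point that requires any care — not really an obstacle — is ensuring that the dominated convergence step is compatible with the simultaneous convergence of the other three families \eqref{eq: an}, \eqref{eq: taun}, \eqref{eq: xn}. This is automatic from the Skorokhod coupling, because all four convergences are realized on the same probability space and the conclusion above is a pathwise statement. Note also that we do not need $t_i$ to be a continuity point of the limit, since the values of the integrands at single times do not affect the Lebesgue integrals.
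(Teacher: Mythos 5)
Your proof is correct and takes essentially the same route as the paper, which simply states that Lemma~\ref{lem: sp1} is a direct consequence of Lemma~\ref{lem: mun}; your Skorokhod-representation plus bounded-convergence argument is exactly the standard way to fill in that step. The observations that the paths are nonincreasing, bounded by $1$, and that $J_1$-convergence gives pointwise convergence at the (co-countably many) continuity points of the limit are precisely what make the deduction immediate.
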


\begin{proof}
This is a direct consequence of Lemma~\ref{lem: mun}. 
\end{proof}

\begin{lem}\label{lem: sp2}
If we let 
$$M^n_t:=\sigma_n L^n_t-\int_0^t \bp_n(\bT^n_s)ds, \quad n\ge 1;$$
then under the hypothesis that $\sigma_n\to 0$ as $n\to\infty$, the sequence of variables 
$(\sigma_n M^n_t, n\ge 1)$ converges to $0$ in $L^2$ as $n\to\infty$. 
Moreover, this convergence is uniform on compacts.
\end{lem}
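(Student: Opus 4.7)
The plan is to recognise $M^n_t$ as a compensated point-process martingale with jumps of size $\sigma_n$, and then apply Doob's $L^2$ maximal inequality. Indeed, by its definition in \eqref{eq: defLn}, the process $L^n_t$ is precisely the counting process of those atoms $(s,x)$ of $\cP_n$ with $s\le t$ and $x\in\bT^n_{s-}$. Working in the natural filtration $(\cF_s)_{s\ge 0}$ generated by $\cP_n$ together with $V^n$, new atoms of $\cP_n$ fall in the still-connected region $\bT^n_{s-}$ at predictable intensity
$$\cL_n(\bT^n_{s-})\;=\;\sigma_n^{-1}\,\bp_n(\bT^n_{s-}),$$
so $L^n_t$ has predictable compensator $\sigma_n^{-1}\int_0^t \bp_n(\bT^n_s)\,ds$. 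Multiplying by $\sigma_n$ then shows that $M^n_t=\sigma_n L^n_t-\int_0^t\bp_n(\bT^n_s)\,ds$ is an $(\cF_s)$-martingale.

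The second step is to bound its predictable quadratic variation. Since the jumps of $M^n$ all have magnitude $\sigma_n$ and its compensator is absolutely continuous,
$$\langle M^n\rangle_t\;=\;\sigma_n^2\int_0^t \cL_n(\bT^n_s)\,ds\;=\;\sigma_n\int_0^t \bp_n(\bT^n_s)\,ds\;\le\;\sigma_n\, t,$$
using $\bp_n(\bT^n_s)\le 1$. Hence $\E{(M^n_t)^2}=\E{\langle M^n\rangle_t}\le\sigma_n t$.

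The third step is to upgrade the pointwise $L^2$ bound to uniformity on compacts via Doob's inequality: for every $t>0$,
$$\E{\sup_{s\le t}(M^n_s)^2}\;\le\;4\,\E{(M^n_t)^2}\;\le\;4\,\sigma_n\, t\;\xrightarrow[n\to\infty]{}\;0.$$
A fortiori $\E{\sup_{s\le t}(\sigma_n M^n_s)^2}\le 4\sigma_n^3 t\to 0$, which is the claim of the lemma (and in fact yields the stronger statement that $M^n$ itself, without the prefactor $\sigma_n$, converges uniformly to $0$ in $L^2$ on compact time intervals — this is what will be needed when combining with Lemma~\ref{lem: sp1} to conclude \eqref{eq: cutnb}).

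I do not anticipate any real obstacle here: the argument is a standard compensation of a Poisson-driven counting process, and the uniform bound $\bp_n(\bT^n_s)\le 1$ makes the variance estimate trivial. The only point to be a little careful with is the choice of filtration, so that the random mass $\bp_n(\bT^n_{s-})$ is genuinely predictable and the compensator identification is rigorous; this is automatic once we include $V^n$ in $\cF_0$ and take the right-continuous completion of the filtration generated by $\cP_n$.
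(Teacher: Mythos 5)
Your proof is correct, and it reaches the same quantitative bound as the paper, $\E{\sup_{s\le t}(M^n_s)^2}\le 4\sigma_n t$, by the same overall strategy (martingale structure plus Doob's $L^2$ maximal inequality); the only difference is how the martingale and its variance are identified. The paper does not invoke the stochastic intensity of the thinned counting process directly: it conditions on the Poisson clock $\mathrm N^n$ of $\cP_n$ and splits $M^n=\sigma_n\sM^n+\sN^n$, where $\sM^n_t=L^n_t-\int_{[0,t]}\bp_n(\bT^n_{s-})\,d\mathrm N^n_s$ compensates the Bernoulli marks given the jump times and $\sN^n$ compensates the Poisson integral itself; it then computes the two second moments separately and adds them. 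You instead observe that $L^n$ is a counting process with predictable intensity $\cL_n(\bT^n_{s-})=\sigma_n^{-1}\bp_n(\bT^n_{s-})$, so that $M^n$ is the compensated martingale with $\langle M^n\rangle_t=\sigma_n\int_0^t\bp_n(\bT^n_s)\,ds\le\sigma_n t$. This is a slightly slicker route to the same estimate, at the cost of needing the compensation formula for Poisson random measures with predictable integrands, whereas the paper's two-stage decomposition only uses the elementary conditional Bernoulli structure of the marks and standard Poisson-process martingales; your remark about enlarging the filtration by $V^n$ at time $0$ is exactly the point that makes the predictability argument rigorous. Note also that, like the paper's own argument, your proof in fact gives the stronger statement that $M^n$ itself (without the extra factor $\sigma_n$) tends to $0$ in $L^2$ uniformly on compacts, which is what is actually needed in combination with Lemma~\ref{lem: sp1}.
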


In particular, Lemma \ref{lem: sp1} and Lemma \ref{lem: sp2} combined entail that for any fixed $t\ge 0$, 
$\sigma_n L^n_t\to L_t$ in distribution. However, to obtain the convergence of 
$\sigma_n L^n_\infty$ to $L_\infty$ in distribution
we need the following tightness condition.
\begin{lem}\label{lem: tg}
Under \eqref{H}, 
for every $\delta>0$,
\begin{equation}\label{eq: tg}
\lim_{t\to\infty}\limsup_{n\to\infty}\p{\sigma_n\big(L^{n}_\infty-L^{n}_{t}\big)\ge \delta}=0,
\end{equation}
\end{lem}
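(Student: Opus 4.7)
The plan is to apply Markov's inequality, $\bbP[\sigma_n(L^n_\infty - L^n_t) \ge \delta] \le \delta^{-1}\sigma_n\bbE[L^n_\infty - L^n_t]$, and to derive a clean formula for the expectation via the compensation formula for the Poisson point process $\cP_n$. Since $L^n_t$ counts the points $(s, x) \in \cP_n$ with $s \le t$ and $x \in \bT^n_{s-}$,
\[
\sigma_n\bbE[L^n_\infty - L^n_t]
= \sigma_n\bbE\Big[\int_t^\infty \cL_n(\bT^n_s)\,ds\Big]
= \bbE\Big[\int_t^\infty \bp_n(\bT^n_s)\,ds\Big].
\]
Introducing a $\bp_n$-node $W$ independent of $(\bT^n, V^n, \cP_n)$ yields $\bbE[\bp_n(\bT^n_s)] = \bbP[W\in\bT^n_s] = \bbE[\exp(-sX_n)]$ with $X_n := \cL_n(\llb V^n, W\rrb) > 0$ a.s., so that Fubini gives the clean identity
\[
\sigma_n\bbE[L^n_\infty - L^n_t] = \bbE\Big[\frac{e^{-tX_n}}{X_n}\Big].
\]

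The next step is to pass to the limit in $n$. Applying Proposition~\ref{prop: cv-Ln} with $k=3$ (so that $\llb V^n, W\rrb \subseteq R^n_3$), one has $X_n \to X := \cL(\llb V_1, V_2\rrb)$ in distribution, with $V_1, V_2$ i.i.d.\ $\mu$-points of $\te$; the hypotheses on $\btheta$ imply $X > 0$ almost surely. Specialising the identity above at $t = 0$ gives $\sigma_n\bbE[L^n_\infty] = \bbE[1/X_n]$, and by Proposition~\ref{prop:number_cuts} this equals $\sigma_n\bbE[\#\Span(\bT^n, V^n)]$, which converges---via \eqref{eq: CP} together with uniform integrability of the rescaled path length---to $\bbE[d_\te(r(\te), V)]$, finite by the explicit law \eqref{eq: distD}. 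Convergence of means upgrades the distributional convergence of $1/X_n$ to uniform integrability of $\{1/X_n\}_n$; since $0 \le e^{-tX_n}/X_n \le 1/X_n$, one infers $\bbE[e^{-tX_n}/X_n] \to \bbE[e^{-tX}/X]$ for every fixed $t$. Finally, $\bbE[e^{-tX}/X] \downarrow 0$ as $t\to\infty$ by monotone convergence (using $\bbE[1/X] < \infty$), and Markov's inequality closes the argument.

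\textbf{Main obstacle.} The delicate point is the uniform integrability of $\sigma_n \#\Span(\bT^n, V^n)$, which is what upgrades the distributional convergence $\sigma_n L^n_\infty \to d_\te(r(\te), V)$ to convergence of expectations. I would establish it via a second-moment estimate: expanding $\bbE[(\#\Span(\bT^n, V^n))^2]$ as a double sum of $\bbP[u, v \in \llb r(\bT^n), V^n\rrb]$ and evaluating these probabilities by Lemma~\ref{lem:dist} together with Cayley's multinomial formula yields bounds in terms of $(p_{ni})$ sufficient to show $\sup_n \sigma_n^2 \bbE[(\#\Span(\bT^n, V^n))^2] < \infty$. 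Alternatively, one can derive the complementary lower-tail control on $X_n$ directly from Proposition~\ref{prop: cv-Ln}: the path $\llb V^n, W\rrb$ typically contains a vertex of $\bp_n$-mass of order $\sigma_n$, arising in the limit either from a heavy vertex $\beta_i$ with $\theta_i > 0$ or from skeleton accumulation when $\theta_0 > 0$.
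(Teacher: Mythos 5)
Your opening computation is correct and is a genuinely different route from the paper's: the compensation formula for $\cP_n$ does give $\sigma_n\bbE[L^n_\infty-L^n_t]=\bbE[e^{-tX_n}/X_n]$ with $X_n=\cL_n(\llb V^n,W\rrb)$, and in particular $\sigma_n\bbE[L^n_\infty]=\bbE[1/X_n]$. The paper avoids moments altogether: it uses the exact identity \eqref{eq: idmuTn}, reverses the mass process $s\mapsto \bp_n(\bT^n_{\zeta^n(s)})$ at $L^n_\infty$, and combines Lemma~\ref{lem: sl} with a hitting-time continuity fact in Skorokhod space and tightness of $(\bp_n(\bT^n_t))_t$. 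Your plan instead reduces everything to Markov's inequality plus a uniform control of $\bbE[e^{-tX_n}/X_n]$, which is where the difficulty now sits.

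That control is exactly what is missing, in two places. First, the uniform integrability of $\sigma_n\#\Span(\bT^n,V^n)=\sigma_n\fR^n_1$ is only promised; the proposed expansion of the second moment via Lemma~\ref{lem:dist} and Cayley's formula is not obviously sufficient under \eqref{H} alone (e.g.\ the natural Chernoff bound $\bbP[\fR^n_1>j]=j!\,e_j(\bp_n)\le \sqrt{2\pi j}\prod_i(1+jp_{ni})e^{-jp_{ni}}$ loses a factor of order $\sigma_n^{-1/2}$ at the relevant scale $j\asymp 1/\sigma_n$; a block argument giving $\bbP[\fR^n_1>kj]\le\bbP[\fR^n_1>j]^k$ together with tightness would do it, but that is not what you propose). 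Second, and more seriously, even granting $\bbE[1/X_n]=\sigma_n\bbE[\fR^n_1]\to\bbE[d_\te(r(\te),V)]$, the step "convergence of means upgrades convergence in law to uniform integrability of $1/X_n$" requires the limit of the means to equal $\bbE[1/X]$, i.e.\ the continuum identity $\bbE\big[1/\cL(\llb V_1,V_2\rrb)\big]=\bbE[d_\te(r(\te),V)]$; with only Fatou one gets $\bbE[1/X]\le\lim_n\bbE[1/X_n]$, and a strict inequality would mean mass of $1/X_n$ escapes (on the event $X_n$ small), which is precisely the failure mode the lemma must exclude. In the paper that identity is a first-moment consequence of $L_\infty\eqd d_\te(r(\te),V)$, i.e.\ of \eqref{eq: idL} in Proposition~\ref{prop: cutnb}, whose proof uses Lemma~\ref{lem: tg} itself — so as written your argument is circular at this point, and closing it requires an independent ICRT computation (a re-rooting invariance argument plus the Laplace transform of $\cL(\llb V_1,V_2\rrb)$), or else a direct uniform lower-tail bound for $\bp_n(\llb V^n,W\rrb)/\sigma_n$; your closing remark that the path "typically contains a vertex of mass of order $\sigma_n$" is a heuristic, not such a bound. (A smaller point: deducing $X_n\to X$ in law from Proposition~\ref{prop: cv-Ln} also needs a word of care since $\cL$ may have an atom at the branch point of the spanning tree; the device used in Lemma~\ref{lem: mun}, via the first cut time on the path, handles this.) So the plan could likely be completed, but the step carrying all of the difficulty of the lemma is not actually proved.
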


\begin{proof}[Proof of Lemma \ref{lem: sp2}]
Let $\mathrm{N}^n_t=\Card\{(s, x)\in \cP_n: s\le t\}$ be the counting process of $\cP_n$. 
Then $(\mathrm{N}^n_t, t\ge 0)$ is a Poisson process of rate $1/\sigma_n$. We write $d\mathrm{N}^n$ 
for the Stieltjes measure associated with $t\mapsto \mathrm{N}^n_t$. 
For $t\ge 0$, let 
$$
\sM_t\n:= L_t\n-\int_{[0,t]}\bp_n(T\n_{s-})d\mathrm{N}^n_s,  
\quad\text{and}\quad
\sN^n_t:=\sigma_n\int_{[0, t]} \bp_n(\bT^n_{s-})d\mathrm N^n_s-\int_0^t \bp_n(\bT^n_s)ds.
$$
We notice that, by the definition of $L^n_t$,
$$
\sM^n_t=\sum_{(s, x)\in \cP_n:\, s\le t}\Big(\I{x\,\in \bT^n_{s-}}-\bp_n(\bT^n_{s-})\Big).
$$
Since $\sigma_n^{-1}\bp_n=\cL_n$, conditionally on $\bT^n_{s-}$, 
$\I{x\,\in \bT^n_{s-}}$ is a Bernoulli random variable of mean 
$\bp_n(\bT^n_{s-})$. Therefore, we have 
\begin{equation}\label{eq: mnind}
\bbE[\mathscr M^n_t \,|\, (\mathrm N^n_s)_{s\le t}]=0.
\end{equation}
From this, we can readily show that $\sM^n$ is a martingale. On the other hand, classical results on 
the Poisson process entail that $\mathscr N^n$ is also a martingale. Once combined, we see that 
$\mathrm M^n= \sigma_n \sM^n+\mathscr N^n$ itself is a martingale.
Therefore, by Doob's maximal inequality for the $L^2$-norms of martingales, we obtain
for any $t\ge 0$,
$$
\bbE\bigg[\sup_{s\le t}(\mathrm{M}^{n}_{s})^2\bigg]
\le 4\bbE\big[(\mathrm{M}^{n}_{t})^2\big]
=4\bbE\big[(\sigma_n\mathscr{M}^{n}_{t})^2\big]+4\bbE\big[(\mathscr{N}^{n}_{t})^2\big],
$$
as a result of \eqref{eq: mnind}. Direct computation shows that
\begin{align*}
\bbE\big[(\mathscr{M}^{n}_{t})^2\big]
=\bbE\bigg[\frac{1}{\sigma_n}\int_0^t \Big(\bp_n(\bT^n_s)-\bp_n^2(\bT^n_s)\Big)ds\bigg],
\quad\text{and}\quad
\bbE\big[(\mathscr{N}^{n}_{t})^2\big]=\bbE\bigg[\sigma_n\int_0^t \bp_n^2(\bT^n_s)ds\bigg].
\end{align*}
As a consequence, for any fixed $t$,
$$\bbE\bigg[\sup_{s\le t}(\mathrm{M}^{n}_{s})^2\bigg]
\le 4\sigma_n\bbE\bigg[\int_0^t \bp_n(\bT^n_s)ds\bigg]
\le 4\sigma_n t\to 0,
$$
as $n\to\infty$.
\end{proof}

We need an additional argument to prove Lemma~\ref{lem: tg}. 
For each $n\in\bbN$ and $s\ge 0$, let $\zeta^n(s):=\inf\{t>0: L^{n}_t\ge \lfloor s\rfloor\}$ be the 
right-continuous inverse of $L^n_t$. Recall that from the construction of $H^n=\cut(\bT^n, V^n)$, there 
is a correspondence between the vertex sets of the remaining tree at step $\ell-1$ and the subtree in $H$ at $X_\ell$. Then it follows Lemma~\ref{lem:joint_dist-tree-node} that
$$
\big(\fv(\bT^n_{\zeta^n(s)}), 0\le s< L^n_\infty\big)
\eqd\big(\fv(\Sub(\bT^n,  x^n_s)), 0\le s<1 + d_{\bT^n}(r(\bT^n), V^n) \big),
$$
where $x^n_s$ is the point on the path $\llb r(\bT^n), V^n\rrb$ at distance $\lfloor s\rfloor$ 
from $r(\bT^n)$. 
In particular, this entails 
\begin{equation}\label{eq: idmuTn}
\big(\bp_n\big(\bT^n_{\zeta^n(s)}\big), 0\le s< L^n_\infty\big)
\eqd\big(\bp_n\big(\Sub(\bT^n, x^n_s)\big), 0\le s<1+ d_{\bT^n}(r(\bT^n), V^n)\big).
\end{equation}
The limit of the right-hand side is easily identified using the 
convergence of $\bp$-trees in \eqref{eq: G-P_ptree}.
Combined with \eqref{eq: idmuTn}, this will allow us to prove Lemma~\ref{lem: tg} 
by a time-change argument.

Let $V$ be a random point of $\te$ of distribution $\mu$. 
For $0\le s\le d_{\te}(r(\te), V)$, let $x_s$ be the point in 
$\llb r(\te), V\rrb$ at distance $s$ from $r(\te)$, or $x_s=V$ if $\ell> d(r(\te),V)$.  
Similarly, we set $x^n_s=V^n$ if $s\ge 1+d_{\bT^n}(r(\bT^n), V^n)$.
\begin{lem}\label{lem: sl}
Under \eqref{H}, we have 
$$
\Big(\sigma_n L^n_\infty, \big(\bp_n\big(\bT^n_{\zeta^n(s/\sigma_n)}\big)\big)_{s\ge 0}\Big)
\mathop{\longrightarrow}^{n\to\infty}_d 
\Big(d_{\te}(r(\te), V), \big(\mu(\Sub(\te, x_s))\big)_{s\ge 0}\Big),
$$
where the convergence of the second coordinates is with respect to the Skorokhod $J_1$-topology.
\end{lem}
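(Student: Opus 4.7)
\medskip\noindent\textbf{Proof plan.}
The strategy is to use the distributional identity \eqref{eq: idmuTn} to replace the process $(\bp_n(\bT^n_{\zeta^n(\cdot)}))$, which is defined via the cutting dynamics, by the spine-mass process $(\bp_n(\Sub(\bT^n, x^n_{\cdot})))$, which is a deterministic functional of the $\bp_n$-tree. Reading \eqref{eq: idmuTn} as an equality in law of processes \emph{jointly with} the lengths of their defining intervals, and applying the time change $s\mapsto s/\sigma_n$, the statement reduces to proving
\begin{equation*}
\Bigl(\sigma_n(1 + d_{\bT^n}(r(\bT^n),V^n)),\ \bigl(\bp_n(\Sub(\bT^n,x^n_{s/\sigma_n}))\bigr)_{s\ge 0}\Bigr)
\xrightarrow[d]{n\to\infty}
\Bigl(d_\te(r(\te),V),\ \bigl(\mu(\Sub(\te,x_s))\bigr)_{s\ge 0}\Bigr),
\end{equation*}
with the second coordinate in the Skorokhod $J_1$-topology. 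By Lemma~\ref{lem:dist} the root $r(\bT^n)$ is a $\bp_n$-node independent of $V^n$, so the convergence of the first coordinate is immediate from the Camarri--Pitman convergence \eqref{eq: G-P_ptree}.

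For the process convergence, the plan is to enrich the marked points in the pointed Gromov--Prokhorov convergence \eqref{eq: G-P_ptree'} with $(r(\bT^n), V^n)$ on the discrete side and $(r(\te), V)$ on the limit side. Combined with Proposition~\ref{prop: cv-Ln}, this yields, for each fixed $m\ge 1$, the joint convergence of the rescaled spanning tree $\sigma_n \Span(\bT^n; r(\bT^n), V^n, \fB^n_m)$ to $\Span(\te; r(\te), V, \fB_m)$, together with the $\bp_n$- (resp.\ $\mu$-) masses of the subtrees branching off at the marked points. In particular, the positions on the rescaled spine of those $\beta^n_i$ ($i\le m$) lying on $\llb r(\bT^n), V^n\rrb$ converge to those of the corresponding $\beta_i$'s on $\llb r(\te), V\rrb$, and the same holds for the associated subtree masses. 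This delivers the $J_1$-convergence of the step-process approximation of $s\mapsto \bp_n(\Sub(\bT^n, x^n_{s/\sigma_n}))$ obtained by keeping only its $m$ largest downward jumps.

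The main obstacle is upgrading this finite-jump convergence to full $J_1$-convergence via a uniform tail bound. Let $R^n_m$ denote the total $\bp_n$-mass of all subtrees branching off $\llb r(\bT^n), V^n\rrb$ at vertices outside $\fB^n_m$, with limit analog $R_m$ containing $\sum_{i>m}\mu(\Sub(\te, \beta_i))$ and, when $\theta_0>0$, the continuous contribution from the Poisson process $\rP_0$. The key estimate is
\begin{equation*}
\lim_{m\to\infty}\limsup_{n\to\infty}\p{R^n_m > \epsilon} = 0 \quad \text{for every } \epsilon > 0.
\end{equation*}
I expect this to follow from a second-moment estimate controlling the residual sum of squared weights along the spine by $\sum_{i > m}(p_{ni}/\sigma_n)^2$, which under hypothesis \eqref{H} tends to $\sum_{i>m}\theta_i^2$ and hence to $0$ as $m\to\infty$. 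Once this tail tightness is secured, the monotone non-increasing structure of the spine-mass process converts convergence at the top-$m$ jumps plus a uniformly small residual into full $J_1$-convergence; the joint convergence with $\sigma_n L^n_\infty$ in the first coordinate is then automatic, since all the pieces are extracted from the same enriched pointed GP convergence.
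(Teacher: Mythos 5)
Your first step (using \eqref{eq: idmuTn} to trade the cutting dynamics for the spine-mass process, and getting the first coordinate from the fact that the root behaves as a $\bp_n$-sample jointly with the tree) is exactly the paper's reduction, up to a mis-citation: the relevant fact is re-rooting invariance (Lemma~\ref{lem:re-root}), not Lemma~\ref{lem:dist}. The problem is the second half. Your key estimate is false: the total $\bp_n$-mass $R^n_m$ of the subtrees branching off $\llb r(\bT^n),V^n\rrb$ at vertices outside $\fB^n_m$ does \emph{not} become small. In the Brownian case ($\btheta=(1,0,\dots)$, e.g.\ $\bp_n$ uniform), the spine itself carries negligible mass, the hubs $1,\dots,m$ are just $m$ fixed labels that with high probability do not even lie on the spine, and essentially all of the unit mass hangs off the spine at ``light'' vertices; so $R^n_m\to 1$, not $0$, and your own description of the limit analog $R_m$ (which retains the whole $\rP_0$-contribution when $\theta_0>0$) already shows it cannot vanish as $m\to\infty$. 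Relatedly, identifying ``the $m$ largest downward jumps'' of $s\mapsto\bp_n(\Sub(\bT^n,x^n_{s/\sigma_n}))$ with the subtrees attached at the $m$ heaviest vertices is wrong: the size of a jump is the mass of the subtree hanging off the spine, which has nothing to do with the $\bp_n$-weight of its attachment point. The second-moment bound you invoke, controlled by $\sum_{i>m}(p_{ni}/\sigma_n)^2$, is the estimate that works for the rescaled \emph{cut measure} $\cL_n=\bp_n/\sigma_n$ of spine vertices (as in Lemma~\ref{lem: Ln23}); it says nothing about the masses of the pendant subtrees, so the decomposition ``top-$m$ hub jumps $+$ uniformly small residual'' collapses.

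The paper avoids jump control altogether: for fixed $s$, membership $\xi^n_i\in\Sub(\bT^n,x^n_{s/\sigma_n})$ is equivalent to $d_{\bT^n}(\xi^n_i\wedge V^n, r(\bT^n))\ge s/\sigma_n$, which is a functional of the distance matrix via $2d(r,u\wedge v)=d(r,u)+d(r,v)-d(u,v)$; hence the GP convergence \eqref{eq: G-P_ptree} gives convergence of these indicators for i.i.d.\ sampled points, a law-of-large-numbers argument (as for \eqref{eq:cv_mun}, with $k_n\to\infty$ slowly) yields finite-dimensional convergence of the masses $\bp_n(\Sub(\bT^n,x^n_{s/\sigma_n}))$, and monotonicity in $s$ upgrades convergence along a dense set of times to Skorokhod $J_1$. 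If you want to keep a ``finitely many marked subtrees plus remainder'' scheme, you would have to mark the \emph{largest-mass} pendant subtrees of the spine and prove tightness of the residual in a sup sense over $s$, which is a genuinely different (and harder) statement than the one you wrote; as it stands, the step from finite-jump convergence to full $J_1$ convergence does not go through.
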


\begin{proof}
Because of \eqref{eq: idmuTn} and the fact $\sigma_n\to 0$, it suffices to prove that 
$$
\Big(\bp_n\big(\Sub(\bT^n, x^n_{s/\sigma_n}), s\ge 0\Big)
\mathop{\longrightarrow}^{n\to\infty}_d 
\Big( \mu\big(\Sub(\te, x_{s}\big), s\ge 0\Big),
$$
with respect to the Skorokhod $J_1$-topology, jointly with 
$\sigma_n d_{\bT^n}(r(\bT^n), V^n)\to d_{\te}(r(\te), V)$ in distribution.
Recall that $(\xi^n_{i}, i\ge 2)$ is a sequence of 
i.i.d.\ points of common law $\bp_n$ and set $\xi^n_{0}=V^n$, $\xi^n_{1}=r(\bT^n)$, for $n\in\bbN$. 
Note that $(\xi^n_{i}, i\ge 0)$ is still an i.i.d.\  sequence. Then it follows from 
\eqref{eq: G-P_ptree} that 
$$(\sigma_n d_{\bT^n}(\xi^n_{i}, \xi^n_{j}), i, j\ge 0) 
\cdist{}
(d_\te(\xi_i,\xi_j), i,j\ge 0)
$$
in the sense of finite-dimensional distributions.
Taking $i=0$ and $j=1$, we get the convergence 
$$\sigma_n d_{\bT^n}(V^n, r(\bT^n))\overset{d}{\to} d_{\te}(V, r(\te)).$$
On the other hand, for $i\ge 1$, $\xi^n_i\in \Sub(\bT^n, x^n_s)$ if and only if 
$d_{\bT^n}(\xi^n_{i}\wedge V^n, r(\bT^n))\ge s$.
Since for any rooted tree $(\bT, d, r)$ and $u, v\in \bT$ we have
$2d(r, u\wedge v)=d(r, u)+d(r, v)-d(u, v)$, 
we deduce that for any $k, m\ge 1$ and $(s_j, 1\le j\le m)\in \bbR^m_+$, 
$$
\Big(\I{\xi^n_{i}\in \Sub(\bT^n, x^n_{s_j/\sigma_n})}, 1\le i\le k, 1\le j\le m\Big)
\overset{d}{\to} 
\Big(\I{\xi_{i}\in \Sub(\te, x_{s_j})}, 1\le i\le k, 1\le j\le m\Big),
$$
jointly with $\sigma_n d_{\bT^n}(V^n, r(\bT^n))\overset{d}{\to} d_{\te}(V, r(\te))$. 
Then the argument used to establish \eqref{eq:cv_mun} shows
the convergence of $(\bp_n(\Sub(\bT^n,x^n_{s/\sigma_n}), n\ge 1)$ in the sense of 
finite-dimensional distributions.
The convergence in the Skorokhod topology follows from the monotonicity of the function
$s\mapsto \bp_n(\Sub(\bT^n, x^n_s))$.
\end{proof}

\begin{proof}[Proof of Lemma \ref{lem: tg}]
Let us begin with a simple observation on the Skorokhod $J_1$-topology.
Let $\mathbb{D}^\uparrow$ be the set of those functions $x: \bbR_+\to [0, 1]$ which are nondecreasing and 
c\`adl\`ag. We endow $\mathbb D^\uparrow$ with the Skorokhod $J_1$-topology. Taking $\ep>0$ and 
$x\in \mathbb{D}^\uparrow$, we denote by $\kappa_\ep(x)=\inf\{t>0: x(t)> \ep\}$. The following is a well-known fact. 
A proof can be found in \cite[Ch.\ VI, p.\ 304, Lemma~2.10]{Jacodbook}

\medskip
\noindent\textbf{FACT }\space 
If $x_n\to x$ in 
$\mathbb{D}^\uparrow$, $n\to\infty$ and $t\mapsto x(t)$ is strictly increasing, 
then $\kappa_\ep(x_n)\to\kappa_\ep(x)$ as $n\to\infty$.

\medskip
If $x=(x(t), t\ge 0)$ is a process with c\`adl\`ag paths and $t_0\in \mathbb{R}_+$, we denote by 
$\mathrm{R}_{t_0}[x]$ the reversed process of $x$ at $t_0$:
$$
\rR_{t_0}[x](t)=x\big((t_0-t)-\big)
$$
if $t< t_0$ and $\rR_{t_0}[x](t)=x(0)$ otherwise. For each $n\ge 1$, let 
$x_n(t)=\bp_n(\bT^n_{\zeta^n(t)})$, $t\ge 0$ and denote by $\Lambda_n=\mathrm R_{L^n_\infty}[x_n]$ 
the reversed process at $L^n_\infty$. Similarly, let $y(t)=\mu(\Sub(\te, x_t))$, $t\ge 0$ and 
denote by $\Lambda=\rR_{D}[y]$ for $D=d_{\te}(V, r(\te))$.
Then almost surely $\Lambda_n\in \bbD^\uparrow$ for $n\in\bbN$ and $\Lambda\in \bbD^\uparrow$. 
Moreover, Lemma~\ref{lem: sl} 
says that
\begin{equation}
\big(\Lambda_n(t/\sigma_n), t\ge 0\big)
\mathop{\longrightarrow}^{n\to\infty}_d
\big(\Lambda(t), t\ge 0\big)
\end{equation}
in $\bbD^\uparrow$. 
From the construction of the ICRT in Section \ref{subset: icrt} it is not difficult to show that 
$t\mapsto \Lambda(t)$ is strictly increasing. 
Then by the above FACT, we have $\sigma_n\kappa_\ep(\Lambda_n)\to \kappa_\ep(\Lambda)$ in 
distribution, for each $\ep>0$. In particular, we have  
for any fixed $\delta>0$,
\begin{equation}\label{eq: vv}
\lim_{\ep\to 0}\limsup_{n\to\infty}\bbP\big(\sigma_n\kappa_\ep(\Lambda_n)\ge \delta\big)
\le\lim_{\ep\to 0}\bbP\big(\kappa_\ep(\Lambda)\ge \delta\big)=0,
\end{equation}
since almost surely $\Lambda(t)>0$ for any $t>0$.

By Lemma~\ref{lem: mun}, the sequence $((\bp_n(\bT^n_t))_{t\ge 0}, n\ge 1)$ is tight in the 
Skorokhod topology. Combined with the fact that, 
for each fixed $n$, $\bp_n(\bT^n_t)\searrow 0$ as $t\to\infty$ almost surely, 
this entails that for any fixed $\ep>0$
\begin{equation}\label{eq: uu}
\lim_{t_0\to\infty}\limsup_{n\to\infty}\p{\sup_{t\ge t_0}\bp_n(\bT^n_t)\ge\ep}=0.
\end{equation}
Now note that if $L^n_t=k\in \bbN$, then $\bT^n_t=\bT^n_{\zeta^n(k)}$ a.s.\
since no change occurs until the time of the next cut, in particular we have
$$
\bp_n(\bT^n_t)=\bp_n\big(\bT^n_{\zeta^n(L^n_t)}\big) \quad \text{a.s.},
$$
from which we deduce that
$$
\big\{\bp_n(\bT_{t_0}^n)<\ep\big\}\subseteq \big\{\kappa_\ep(\Lambda_n)\ge L^n_\infty-L^n_{t_0}\big\}
\quad \text{a.s.},
$$
Then we have
$$
\big\{\sigma_n\big(L^n_\infty-L^n_{t_0}\big)\ge \delta\big\}
\cap\big\{\sup_{t\ge t_0}\bp_n(\bT_t^n)<\ep\big\}
\subseteq \big\{\sigma_n\kappa_\ep(\Lambda_n)\ge \delta\big\}, \quad \text{a.s.}
$$
Therefore,
\begin{align*}
&\limsup_{n\to\infty}\bbP\big(\sigma_n (L^n_\infty-L^n_{t_0})\ge \delta\big)\\
&\le\,\limsup_{n\to\infty}\p{\sup_{t\ge t_0}\bp_n(\bT^n_t)\ge\ep }+
\limsup_{n\to\infty}\p{\sigma_n\big(L^n_\infty-L^n_{t_0}\big)\ge \delta 
\text{~and~} \sup_{t\ge t_0}\bp_n(\bT^n_t)<\ep }\\
&\le \limsup_{n\to\infty}\p{\sup_{t\ge t_0}\bp_n(\bT^n_t)\ge\ep}
+\limsup_{n\to\infty}\p{\sigma_n\kappa_\ep(\Lambda_n)\ge \delta}.
\end{align*}
In above, if we let first $t_0\to\infty$ and then $\ep\to 0$, we obtain \eqref{eq: tg} 
as a combined consequence of \eqref{eq: vv} and \eqref{eq: uu}. 
\end{proof}

\begin{proof}[Proof of Proposition \ref{prop: cutnb}]
We fix a sequence of $(t_m, m\ge 1)$, which is dense in $\bbR_+$. Combining Lemmas~\ref{lem: sp1} 
and~\ref{lem: sp2}, we obtain, for all $k\ge 1$,
\begin{equation}\label{eq: dense}
\big(\sigma_n L^n_{t_m}, 1\le m\le k\big)\carrow \big(L_{t_m}, 1\le m\le k\big),
\end{equation}
jointly with the convergences in \eqref{eq: an}, \eqref{eq: taun}, \eqref{eq: xn} and \eqref{rr}. 
We deduce from this and Lemma~\ref{lem: tg} that $L_\infty<\infty$ a.s.\ and
\begin{equation}\label{eq: linfty}
\sigma_n L^n_\infty \carrow L_\infty,
\end{equation}
jointly with \eqref{eq: an}, \eqref{eq: taun}, \eqref{eq: xn} and \eqref{rr}, by Theorem 4.2 of 
\cite[Chapter 1]{billingsley}. Combined with the fact that $t\mapsto L_t$ is continuous and increasing, 
this entails the uniform convergence in \eqref{eq: cutnb}. Finally, the distributional identity \eqref{eq: idL} 
is a direct consequence of Lemma \ref{lem: sl}.
\end{proof}

\begin{proof}[Proof of Theorem \ref{thm:conv_cutv}]
We have seen that $L_\infty<\infty$ almost surely. Therefore the cut tree 
$(\cut(\cT,V), \hat{\mu})$ is well 
defined almost surely. Comparing the expressions of $d_{H^n}(\xi^n_i, \xi^n_j)$ 
given at the beginning of this subsection with those of $d_{\cH}(\xi_i, \xi_j)$, we obtain from Lemma \ref{lem: mun} and 
Proposition~\ref{prop: cutnb} the convergence in \eqref{eq: cvhW}. This concludes the proof.
\end{proof}

\medskip
\noi\textbf{Remark.}\ 
Before concluding this section, let us say a few more words on the proof of 
Proposition~\ref{prop: cutnb}. The convergence of $(\sigma_nL^n_t, t\ge 0)$ to $(L_t, t\ge 0)$ on 
any finite interval follows mainly from the convergence in Proposition \ref{prop: cv-Ln}. 
The proof here can be easily adapted to the other models of random trees, see \cite{Bert12, LWV}. 
On the other hand, our proof of the tightness condition \eqref{eq: tg} depends on the specific 
cuttings on the birthday trees, which has allowed us to deduce the distributional identity 
\eqref{eq: idmuTn}. In general, the convergence of $L^n_\infty$ may indeed fail. 
An obvious example is the classical record problem (see Example 1.4 in~\cite{Janson06}), where we 
have $L^n_t\to L_t$ for any fixed $t$, while $L^n_\infty\sim \ln n$ and therefore is not tight 
in $\bbR$.


\subsection{Convergence of the cut-trees $\cut(\bT^n)$: Proof of Lemma~\ref{lem: cv_tc}}
\label{sec: pfcv_tc}

Let us recall the settings of the complete cutting down procedure for $\te$: $(V_i, i\ge 1)$ is an 
i.i.d.\ sequence of common law $\mu$; $\cT_{V_i}(t)$ is the equivalence class of $\sim_t$ 
containing $V_i$, whose mass is denoted by $\mu_i(t)$; and $L^i_t=\int^t_0\mu_i(s)ds$. 
The complete cut-tree $\cut(\te)$ is defined as the complete separable metric space $\overline{\cup_k S_k}$. 
We introduce some corresponding notations for the discrete cuttings on $\bT^n$. 
For each $n\ge 1$, we sample a sequence of  i.i.d.\  points $(V^n_i, i\ge 1)$ on $\bT^n$ of 
distribution $\bp_n$. Recall $\cP_n$ the Poisson point process on $\bbR_+\times T^n$ of intensity $dt\otimes \cL_n$.
We define
\begin{align*}
\mu_{n,i}(t)&:=\bp_n(\{u\in \bT^n: [0, t]\times \llb u, V^n_i\rrb\cap \cP_n=\varnothing\}), \\
L^{n,i}_t&:=\Card\{s\le t: \mu_{n, i}(s)<\mu_{n, i}(s-)\},
\qquad t\ge 0, i\ge 1;\\
\tau_n(i,j)&:=\inf\{t\ge 0: [0, t]\times \llb V^n_i, V^n_j\rrb\cap \cP_n\ne\varnothing\}, 
\qquad 1\le i,j<\infty.
\end{align*}
By the construction of $G^n=\cut(\bT^n)$, we have
\begin{align} \label{eq: sknd}
& d_{G^n}(V^n_i, r(G^n))=L^{n,i}_\infty-1, \\ \notag
& d_{G^n}(V^n_i, V^n_j)=L^{n,i}_\infty+L^{n,j}_\infty-2L^{n,i}_{\tau_n(i,j)}
, \quad 1\le i, j<\infty
\end{align}
where $L^{n,i}_\infty:=\lim_{t\to \infty}L^{n,i}_t$ is the number of  
cuts necessary to isolate $V^n_i$. 
The proof of Lemma \ref{lem: cv_tc} is quite similar to that of Theorem \ref{thm:conv_cutv}. 
We outline the main steps but leave out the details.

\begin{proof}[Sketch of proof of Lemma \ref{lem: cv_tc}]
First, we can show with essentially the same proof of Lemma~\ref{lem: mun} that we have the 
following joint convergences: for each $k\ge 1$,
\begin{equation}\label{eq: mun'}
\big(\big(\mu_{n,i}(t), 1\le i\le k\big), t\ge 0\big)
\mathop{\longrightarrow}^{n\to\infty}_d 
\big(\big(\mu_i(t), 1\le i\le k\big), t\ge 0\big),
\end{equation}
with respect to Skorokhod $J_1$-topology, jointly with
\begin{equation}\label{eq: taun'}
\big(\tau_n(i,j), 1\le i, j\le k\big)
\mathop{\longrightarrow}^{n\to\infty}_d \big(\tau(i,j), 1\le i, j\le k\big),
\end{equation}
jointly with the convergence in \eqref{rr}.
Then we can proceed, with the same argument as in the proof of Lemma~\ref{lem: sp2}, 
to showing that for any $k, m\ge 1$ and $(t_j, 1\le j\le m)\in \bbR^m_+$,
$$
\bigg(\int_0^{t_i} \mu_{n, i}(s)ds, \,1\le j\le m, 1\le i\le k\bigg)
\carrow 
\bigg(\int_0^{t_i} \mu_i(s)ds,\, 1\le j\le m, 1\le i\le k\bigg).
$$
Since the $V^n_i, i\ge 1$ are i.i.d.\ $\bp_n$-nodes on $\bT^n$,
each process $(L^{n, i}_t)_{t\ge 0}$ has the same distribution as $(L^n_t)_{t\ge 0}$ defined 
in \eqref{eq: defLn}. 
Then Lemmas~\ref{lem: sp2} and~\ref{lem: tg} hold true for each $L^{n, i}$, $i\ge 1$. 
We are able to show
\begin{equation}\label{eq: cutnb'}
\big(\sigma_n L^{n,i}_t, 1\le i\le k\big)_{t\ge 0}
\mathop{\longrightarrow}^{n\to\infty}_d 
\big(L^i_t, 1\le i\le k\big)_{t\ge 0},
\end{equation}
with respect to the uniform topology, jointly with the convergences 
\eqref{eq: taun'} and \eqref{rr}. Comparing \eqref{eq: sknd} with \eqref{eq: skd}, 
we can easily conclude.
\end{proof}

In general, the convergence in \eqref{eq: G-P_ptree} does not hold in the Gromov--Hausdorff 
topology. However, in the case where $\te$ is a.s.\ compact 
and the convergence \eqref{eq: G-P_ptree} does hold in the Gromov--Hausdorff sense, 
then we are able to show that one indeed has GHP convergence as claimed in Theorem~\ref{thm:cv_ghp}. 
In the following proof, we only deal with the case of convergence of $\cut(\bT^n)$.  
The result for $\cut(\bT^n,V^n)$ can be obtained using similar arguments and we omit the details.

\begin{proof}[Proof of Theorem~\ref{thm:cv_ghp}]
We have already shown in Lemma \ref{lem: cv_tc} the joint convergence of the spanning subtrees: 
for each $k\ge 1$,
\begin{equation}\label{eq: RnkSnk}
\big(\sigma_n R^n_k, \sigma_n S^n_k\big)
\mathop{\longrightarrow}^{n\to\infty}_{d,\gh} \big(R_k, S_k\big).
\end{equation}
We now show that for each $\ep>0$,
\begin{equation}\label{eq: tightRk*}
\lim_{k\to\infty}\lim_{n\to\infty}
\p{\max\big\{ \dgh(R^n_k, \bT^n), \dgh(S^n_k, \cut(\bT^{n}))\big\}\ge \ep/\sigma_n}=0.
\end{equation}
Since the couples $(S^n_k, \cut(\bT^n))$ and $(R^n_k, \bT^n)$ have the 
same distribution, it is enough to prove that for each $\ep>0$,
\begin{equation}\label{eq: tightRk}
\lim_{k\to\infty}\lim_{n\to\infty}\p{\sigma_n \dgh(R^n_k, \bT^n)\ge \ep}=0.
\end{equation}
Let us explain why this is true when $(\sigma_n T^n,\bp_n)\to (\cT,\mu)$ in distribution in the 
sense of GHP. Recall the space $\bbM^k_c$ 
of equivalence classes of $k$-pointed compact metric spaces, equipped with the $k$-pointed Gromov--Hausdorff metric.
For each $k\ge 1$ and $\ep>0$, we set
$$
A(k,\ep):=\big\{(T, d, \bx)\in \bbM^k_c: \dgh\big(T, \Span(T; \bx)\big)\ge \ep\big\}.
$$
It is not difficult to check that $A(k, \ep)$ is a closed set of $\mathbb M^k_c$.
Now according to the proof of Lemma 13 of \cite{miermont09}, the mapping from $\bbM_c$ to 
$\mathbb{M}^k_c$: $(T, \mu)\mapsto m_k(T, A(k, \ep))$ is upper-semicontinuous, where $\bbM_c$ is 
the set of equivalence classes of compact measured metric spaces, equipped with the Gromov--Hausdorff--Prokhorov 
metric and $m_k$ is defined by
$$
m_k(T, A(k,\ep)):=\int_{T^k}\mu^{\otimes k}(d\bx)\boldsymbol{1}_{\{[T, \bx]\in A(k, \ep)\}}.
$$
Applying the Portmanteau Theorem for upper-semicontinuous mappings 
\cite[][p.\ 17, Problem 7]{billingsley}, we obtain
$$
\limsup_{n\to\infty} \E{m_k\big( (\sigma_n \bT^n, \bp_n), A(k,\ep)\big)}
\le \E{m_k\big((\te, \mu), A(k,\ep)\big)},
$$
or, in other words,
$$
\limsup_{n\to\infty} 
\p{\sigma_n \dgh\big(\bT^n, R^n_{k}\big)\ge \ep} \le \p{\dgh\big(\te, R_{k}\big)\ge \ep}
\xrightarrow[k\to\infty]{} 0,
$$
since $\dgh(R_k, \te)\to 0$ almost surely for $\te$ is compact \cite{aldcrt3}. 
This proves \eqref{eq: tightRk} and thus \eqref{eq: tightRk*}. 
By \cite[Ch.\ 1, Theorem 4.5]{billingsley}, \eqref{eq: RnkSnk} 
combined with \eqref{eq: tightRk*} entails the joint convergence in distribution of 
$(\sigma_n \bT^n, \sigma_n \cut(\bT^n))$ to $(\te, \cut(\cT))$ in the Gromov--Hausdorff topology. 
To strengthen to the Gromov--Hausdorff--Prokhorov convergence, one can adopt the arguments 
in Section 4.4 of \cite{miermontBr} and we omit the details.
\end{proof}


\section{Reversing the one-cutting transformation}
\label{sec:reverse}

In this section, we justify the heuristic construction of $\shuff(\cH, U)$ given in 
Section~\ref{sec:results} for an ICRT $\cH$ and a uniform leaf $U$.
The objective is to define formally the shuffle operation in such a way that the identity 
\eqref{eq: idshuff} hold. 
In Section~\ref{sec:shuff-one-path}, we rely on weak convergence arguments to justify the construction 
of $\shuff(\cH, U)$ by showing it is the limit of the discrete construction 
in~Section \ref{sec:ptree-one-cutting}.
In Section~\ref{sec:dist_cuts}, we then determine from this result the distribution of the 
cuts in the cut-tree $\cut(\cT,V)$ and prove that with the right coupling, the 
shuffle can yield the initial tree back (or more precisely, a tree that is in the same 
GHP equivalence class, which is as good as it gets).


\subsection{Construction of the one-path reversal $\shuff(\cH, U)$}
\label{sec:shuff-one-path}


\begin{figure}[b]
\centering
\includegraphics[scale=.9]{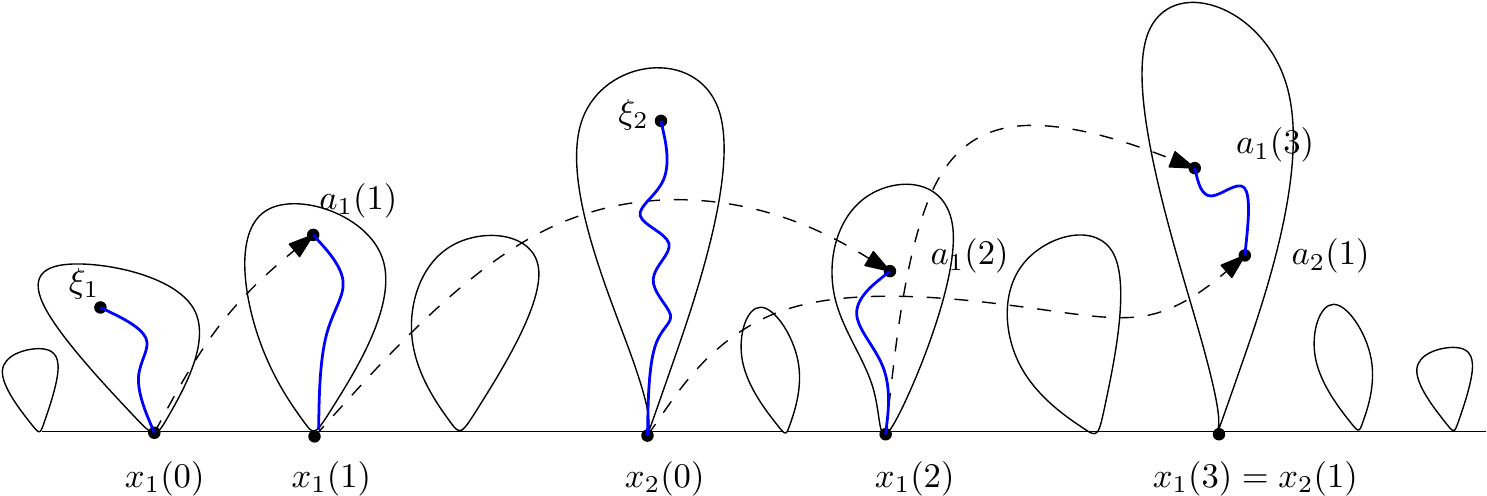}
\caption{\label{fig:shuffle}An example with $\sI(1, 2)=3$, $\sI(2,1)=1$ and $\me(1, 2)=4$. 
The dashed lines indicate the identifications where the root of the relevant subtrees 
are sent to. The blue lines represent the location of the path between $\xi_1$ and $\xi_2$ 
before the transformation.}
\end{figure}

Let $(\cH, d_{\cH}, \mu_{\cH})$ be an ICRT rooted at $r(\cH)$, and let $U$ be a random point in $\cH$ of distribution $\mu_{\cH}$. 
Then $\cH$ is the disjoint union of the following subsets:
$$
\cH=\bigcup_{x\in\llb r(\cH), U\rrb} F_x 
\quad \text{where}\quad F_x:=\{u\in \bT: \llb r(\cH), u\rrb\cap\llb r(\cH), U\rrb=\llb r, x\rrb\}.
$$
It is easy to see that $F_x$ is a subtree of $\bT$. It is nonempty ($x\in F_x$), 
but possibly trivial ($F_x=\{x\}$). 
Let $\bB:=\{x\in \llb r(\cH), U\rrb: \mu_{\cH}(F_x)>0\}\cup\{U\}$, and for $x\in \bB$, 
let $S_x:=\Sub(\bT, x)\setminus F_x$, which is the union of those $F_y$ such that 
$y\in \bB$ and $d_{\cH}(U, y)<d_{\cH}(U, x)$. 
Then for each $x\in \bB\setminus\{U\}$, we associate an attaching point $A_x$, 
which is independent and sampled according to $\mu_{\cH}|_{S_x}$, the restriction of $\mu_{\cH}$ to $S_x$. 
We also set $A_U=U$.

Now let $(\xi_i, i\ge 1)$ be a sequence of i.i.d.\ points of common law $\mu_{\cH}$. 
The set $\boldsymbol{\cF}:=\cup_{x\in \bB}F_x$ has full mass with probability one. 
Thus almost surely $\xi_i\in \boldsymbol{\cF}$ for each $i\ge 0$.
We will use $(\xi_i)_{i\ge 1}$ to span the tree $\shuff(\cH, U)$ and the point $\xi_1$ is 
the future root of $\shuff(\cH, U)$.
For each $\xi_i$, we define inductively two sequences $\bx_i:=(x_i(0), x_i(1), \cdots)\in \bB$ 
and $\ba_i:=(a_i(0), a_i(1), \cdots)$: we set $a_i(0)=\xi_i$,  and, for $j\ge 0$,
$$
x_i(j)=a_i(j)\wedge U, \qquad\text{and}\qquad a_i(j+1)=A_{x_i(j)}.
$$
By definition of $(A_x, x\in \bB)$, the distance $d_{\cH}(r(\cH), x_i(k))$ is increasing in $k\ge 1$.
For each $i, j\ge 1$, we define the merging time
$$
\me(i,j):=\inf\{k\ge 0: \exists l\le k \text{~and~} x_i(l)=x_j(k-l)\},
$$
with the convention $\inf\varnothing=\infty$. 
Another way to present $\me(i,j)$ is to consider the graph on $\bB$ with the edges 
$\{x,A_x\wedge U\}$, $x\in \bB$, then $\me(i,j)$ is the graph distance 
between $\xi_i\wedge U$ and $\xi_j \wedge U$.
On the event $\{\me(i, j)<\infty\}$, there is a path 
in this graph that has only finitely many edges, and the two walks $\bx_i$ and $\bx_j$ 
first meet at a point $y(i,j)\in \bB$ (where by first, we mean with minimum distance to the 
root $r(\cH)$). In particular, if we set $\sI(i, j), \sI(j, i)$ to be the respective 
indices of the element $y(i, j)$ appearing in $\bx_i$ and $\bx_j$, that is, 
$$
\sI(i, j)=\inf\{k\ge 0: x_i(k)=y(i, j)\} 
\quad\text{and}\quad
\sI(j, i)=\inf\{k\ge 0: x_j(k)=y(i, j)\},
$$
with the convention that $\sI(i, j)=\sI(j, i)=\infty$ if $\me(i ,j)=\infty$, then
$\me(i,j)=\sI(i,j)+\sI(j,i)$.  
Write $\Ht(u)=d(u,  u\wedge U)$ for the height of $u$ in the one of $F_x$, $x\in \bB)$, 
containing it. 
On the event $\{\me(i, j)<\infty\}$ we define $\gamma(i,j)$ which is meant to be 
the new distance between $\xi_i$ and $\xi_j$:
$$
\gamma(i,j):=
\sum_{k=0}^{\sI(i, j)-1}\Ht(a_i(k))
+\sum_{k=0}^{\sI(j, i)-1}\Ht(a_j(k))
+d_{\cH}(a_i(\sI(i, j)), a_j(\sI(j, i))), 
$$
with the convention if $k$ ranges from $0$ to $-1$, the sum equals zero.

The justification of the definition relies on weak convergence arguments:
Let $\bp_n$, $n\ge 1$, be a sequence of probability measures such that \eqref{H} holds 
with $\btheta$ the parameter of $\cH$. Let $H^n$ be a $\bp_n$-tree and $U^n$ a $\bp_n$-node. Let $(\xi^n_i)_{i\ge 1}$
be a sequence of i.i.d.\ $\bp_n$-points.
Then, the quantities $S_x^n$, $\bB^n$, $\bx^n$, $\ba^n$, and
$\me^n(i,j)$ are defined for $H^n$ in the same way as $S_x$, $\bB$, $\bx$, $\ba$,
and $\me(i,j)$ have been defined for $\cH$. Let $d_{H^n}$ denote the graph distance on $H^n$. 
There is only a slight difference in the 
definition of the distances
$$
\gamma^n(i,j):=
\!\!\!\sum_{k=0}^{\sI^n(i, j)-1}\!\!\!\Big(\Ht(a^n_i(k))+1\Big)
+\!\!\!\sum_{k=0}^{\sI^n(j, i)-1}\!\!\!\Big(\Ht(a^n_j(k))+1\Big)
+d_{H^n}\Big(a_i(\sI^n(i, j)), a^n_j(\sI^n(j, i))\Big),
$$
to take into account the length of the edges $\{x,A^n_x\}$, for $x\in \bB^n$. 
In that case, the sequence $\bx^n$ (resp. $\ba^n$) is eventually constant and equal to $U^n$ so that 
$\me^n(i,j)<\infty$ with probability one. Furthermore, the unique tree defined by the 
distance matrix $(\gamma^n(i,j):i,j\ge 1)$ is easily seen to have the same distribution as the one 
defined in Section~\ref{sec:ptree-one-cutting}, since the attaching 
points are sampled with the same distributions and $(\gamma^n(i, j): i, j\ge 1)$ coincides with the 
tree distance after attaching. 
Recall that we have re-rooted $\shuff(H^n, U^n)$ at a random point of law $\bp_n$. 
We may suppose this point is $\xi^n_1$.
Therefore we have (Proposition \ref{pro:one-duality})
\begin{equation}\label{eq: idshuffTn}
(\shuff(H^n,U^n), H^n)\overset{d}{=}(H^n, \cut(H^n,U^n)),
\end{equation}
by Lemma \ref{lem:joint_dist-tree-node}. 

In the case of the ICRT $\cH$, it is a priori not clear that $\bbP(\me(i,j)<\infty)=1$. 
We prove that  
\begin{thma}\label{thm: shuff}
For any ICRT $(\cH, \mu_{\cH})$ and a $\mu_{\cH}$-point $U$, we have the following assertions:
\begin{enumerate}[a)]
\item almost surely for each $i, j\ge 1$, we have $\me(i,j)<\infty$;
\item almost surely the distance matrix $(\gamma(i,j), 1\le i,j<\infty)$ defines a CRT, 
denoted by $\shuff(\cH,U)$;
\item $(\shuff(\cH, U), \cH)$ and $(\cH, \cut(\cH, V))$ have the same distribution.
\end{enumerate}
\end{thma}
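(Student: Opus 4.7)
The strategy is to realize $\shuff(\cH, U)$ as the scaling limit of the discrete shuffles constructed in Section~\ref{sec:ptree-one-cutting}, and then to transfer the discrete duality \eqref{eq: idshuffTn} to the limit. Fix a sequence $(\bp_n)_{n\ge 1}$ satisfying \eqref{H} with parameter $\btheta$ equal to that of $\cH$. For each $n$ let $H^n$ be a $\bp_n$-tree, $U^n$ an independent $\bp_n$-node, and $(\xi^n_i)_{i\ge 1}$ an i.i.d.\ sequence of $\bp_n$-points, independent of $(H^n, U^n)$. By construction the graph distance between $\xi^n_i$ and $\xi^n_j$ in $\shuff(H^n, U^n)$ equals $\gamma^n(i,j)$, and Proposition~\ref{pro:one-duality} promotes the identity \eqref{eq: idshuffTn} to
$$(\shuff(H^n, U^n), H^n, (\xi^n_i)_i) \eqd (H^n, \cut(H^n, U^n), (\xi^n_i)_i).$$

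Combining this with Theorem~\ref{thm:conv_cutv} applied to the right-hand side and the convergence of $\bp_n$-trees \eqref{eq: G-P_ptree}, I obtain the joint convergence in distribution (as finite-dimensional distance matrices)
$$\big(\sigma_n \gamma^n(i,j),\; \sigma_n d_{H^n}(\xi^n_i, \xi^n_j)\big)_{i,j\ge 1} \;\longrightarrow\; \big(d_{\cH}(\xi_i, \xi_j),\; d_{\cut(\cH, V)}(\xi_i, \xi_j)\big)_{i,j\ge 1},$$
where $(\xi_i)_{i\ge 1}$ is an i.i.d.\ $\mu_{\cH}$-sequence in $\cH$. In particular $(\sigma_n \gamma^n(i,j))_{i,j}$ converges to the finite distance matrix of a CRT distributed as $\cH$, which gives a candidate definition of $\shuff(\cH, U)$ via Aldous's reconstruction theorem \cite{aldcrt3}, and the joint statement already encodes the identity claimed in (c).

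The central step is to identify the limit of $\sigma_n \gamma^n(i,j)$ with the explicit $\gamma(i,j)$ from the continuous construction. I will work along a subsequence via Skorokhod representation, and sharpen the convergence in \eqref{eq: G-P_ptree'} and Proposition~\ref{prop: cv-Ln} to jointly track the rescaled spine $\sigma_n \llb r(H^n), U^n\rrb$, the countable set $\bB^n$ of spine points with positive $\bp_n$-mass, and the attaching points $A^n_x$. Conditionally on this data, each $A^n_x$ is sampled according to $\bp_n|_{S^n_x}$, which converges to $\mu_{\cH}|_{S_x}$; hence the discrete walk $(x^n_i(k), a^n_i(k))_k$ converges to the continuous walk $(x_i(k), a_i(k))_k$, and the formula for $\gamma^n$ converges term-by-term to that of $\gamma$, the unit edge-length contributions becoming negligible after rescaling by $\sigma_n$.

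The three assertions then follow together. Part (b) holds by Aldous's reconstruction theorem applied to the consistent, leaf-tight family $(\gamma(i,j))_{i,j}$, giving the CRT $\shuff(\cH, U)$. Part (a) is a consequence, since the almost sure finiteness of the limit forces $\me(i,j) < \infty$ almost surely, by the very definition of $\gamma$ on the event $\{\me(i,j)<\infty\}$. Part (c) follows by passing to the limit in the discrete identity. The main obstacle is the control of the walk-tail: showing that $\sI^n(i,j)$ and $\sI^n(j,i)$ stabilize in the limit, or equivalently that $\sigma_n \sum_{k\ge K}\bigl(\Ht(a^n_i(k))+1\bigr)$ can be made uniformly small by choosing $K$ large. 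This should be deduced from tightness of $\sigma_n \gamma^n(i,j)$ combined with uniform bounds on the $\bp_n$-mass of the small subtrees hanging from the spine near $U^n$, encoded in the pointed GHP convergence of the reduced subtrees.
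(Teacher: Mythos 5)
Your skeleton is in fact the paper's route: discrete shuffle plus the duality \eqref{eq: idshuffTn}, convergence of the attaching walks (this is the paper's Lemma~\ref{lem: cvpp}, proved by induction on the number of steps using the fact that $A^n_x\sim\bp_n|_{S^n_x}$ and Lemma~\ref{lem: sl}-type mass convergence), then Aldous' theory for (b) and a test-function passage to the limit for (c). The genuine gap is exactly at the point you call ``the main obstacle'', and the way you propose to close it does not work. Your claim that (a) ``is a consequence'' of the a.s.\ finiteness of the limiting distances is circular: $\gamma(i,j)$ is only defined on $\{\me(i,j)<\infty\}$, so you cannot identify the limit of $\sigma_n\gamma^n(i,j)$ with $\gamma(i,j)$ term by term and then read off $\me(i,j)<\infty$. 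Moreover, tightness of $\sigma_n\gamma^n(i,j)$ gives no control on the number of steps: each step of the discrete walk contributes at least the unit edge length, i.e.\ only $\sigma_n$ after rescaling, so e.g.\ $\me^n(i,j)$ of order $\sigma_n^{-1/2}$ would be perfectly compatible with a tight $\sigma_n\gamma^n(i,j)$. Hence ``tightness of $\sigma_n\gamma^n$ plus mass bounds'' cannot by itself make $\sigma_n\sum_{k\ge K}\bigl(\Ht(a^n_i(k))+1\bigr)$ uniformly small; what you need is tightness of $\me^n(i,j)$ itself, and that is not supplied anywhere in your outline.

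The missing idea, which is the heart of the paper's proof of (a), is a probabilistic reinterpretation of $\me^n(i,j)$: since each attachment undoes one cut, $\me^n(i,j)$ is distributed as the number of cuts that fell on $\llb\xi^n_i,\xi^n_j\rrb$ before the first cut on $\llb U^n,\xi^n_i\wedge\xi^n_j\rrb$, hence is stochastically dominated by a Poisson variable $N_n(i,j)$ with conditional mean $d_{H^n}(\xi^n_i,\xi^n_j)\cdot E_n(i,j)$, where $E_n(i,j)$ is an independent exponential of rate $d_{H^n}(U^n,\xi^n_i\wedge\xi^n_j)$. Under \eqref{eq: G-P_ptree} the dominating variables converge, giving tightness of $(\me^n(i,j))_{n\ge1}$; this is then combined with the convergence of the merge probabilities $\p{\sI^n(i,j)=k_1;\,\sI^n(j,i)=k_2}$, computed explicitly from the masses $\bp_n(F^n_y)$ and $\bp_n(S^n_x)$ via Lemma~\ref{lem: cvpp} and induction, to get $(\sI^n(i,j),\sI^n(j,i))\to(\sI(i,j),\sI(j,i))$ and $\me(i,j)\le_{st}N(i,j)<\infty$ a.s. Without this domination-plus-identification step, your argument for (a) does not close, and with it falls the identification of the limit matrix needed for (b) and (c); the remainder of your outline (duality to pin down the unconditional law of $(\gamma(i,j))$, Aldous' reconstruction for (b), bounded continuous test functions for (c)) coincides with the paper.
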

 
The main ingredient in the proof of Theorem~\ref{thm: shuff} is the following lemma:

\begin{lem}\label{lem: cvpp}
Under \eqref{H}, for each $k\ge 1$, we have the following convergences
\begin{align}\label{eq: cvgm}
\big(\sigma_nd_{H^n}(r(H^n), \bx_i^n(j)), 1\le i\le k, 0\le j\le k\big)
&\carrow \big(d_{\cH}(r(\cH), \bx_i(j)), 1\le i\le k, 0\le j\le k\big),\\ \label{eq: cvpp}
\big(\bp_n(S^n_{x^n_i(j)}), 1\le i\le k, 0\le j\le k\big) &\carrow \big(\mu_{\cH}(S_{x_i(j)}), 1\le i\le k, 0\le j\le k\big),
\end{align}
and 
\begin{equation}\label{eq: cvag}
\big(\sigma_n H^n, (a^n_i(j), 1\le i\le k, 0\le j\le k) \big)
\carrow \big(\cH, (a_i(j), 1\le i\le k, 0\le j\le k)\big), 
\end{equation}
in the weak convergence of the pointed Gromov--Prokhorov topology.
\end{lem}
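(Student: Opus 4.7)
The plan is to prove the three convergences \eqref{eq: cvgm}, \eqref{eq: cvpp} and \eqref{eq: cvag} simultaneously by induction on $j\ge 0$, using an auxiliary coupling that realizes the attaching points as deterministic functionals of an i.i.d.\ $\bp_n$-sequence. Concretely, I would enlarge the sampling by introducing a doubly-indexed array $(\eta^n_{i,\ell})_{i,\ell\ge 1}$ of i.i.d.\ $\bp_n$-points on $H^n$ (respectively $(\eta_{i,\ell})_{i,\ell\ge 1}$ of i.i.d.\ $\mu$-points on $\cH$), independent of the $\xi^n_i$ and $U^n$, and couple the attaching points by
$$A^n_{x^n_i(j)} \;:=\; \eta^n_{m(i,j),\kappa(i,j)}, \qquad \kappa(i,j) := \min\{\ell\ge 1 : \eta^n_{m(i,j),\ell} \in S^n_{x^n_i(j)}\},$$
for any fixed deterministic bijective indexing $(i,j) \mapsto m(i,j)$ (and analogously in the continuum). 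This coupling preserves the required distributions for $A^n_x$ and $A_x$, and it makes the convergence amenable to the pointed GP framework since the pointed Camarri--Pitman convergence \eqref{eq: G-P_ptree'} applied with all the points $r(H^n), U^n, \xi^n_i, \eta^n_{i,\ell}$ marked (all being i.i.d.\ $\bp_n$) delivers the full joint convergence of pairwise distance matrices.

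\emph{Base case $j=0$.} Here $a^n_i(0) = \xi^n_i$, so \eqref{eq: cvag} at level $0$ is an immediate consequence of the convergence of distance matrices. The wedge $x^n_i(0) = \xi^n_i \wedge U^n$ is recovered from pairwise distances by the continuous functional $d(r, u\wedge v) = \tfrac{1}{2}(d(r,u)+d(r,v)-d(u,v))$, giving \eqref{eq: cvgm} at $j=0$. For \eqref{eq: cvpp}, approximate $\bp_n(S^n_{x^n_i(0)})$ by the empirical fraction $\frac{1}{N}\#\{\ell \le N : \eta^n_{i,\ell} \in S^n_{x^n_i(0)}\}$; each indicator is a continuous functional of finitely many distances (it only depends on whether $d_{H^n}(r(H^n), \eta^n_{i,\ell}\wedge U^n) > d_{H^n}(r(H^n), x^n_i(0))$), and a diagonal argument with $N = N_n \to \infty$ slowly, analogous to the one used in the proof of Lemma~\ref{lem: mun}, yields the desired convergence to $\mu(S_{x_i(0)})$.

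\emph{Inductive step.} Assuming the three convergences through level $j$, I pass to level $j+1$ as follows. The inductive hypothesis \eqref{eq: cvpp} gives $\bp_n(S^n_{x^n_i(j)}) \to \mu(S_{x_i(j)}) > 0$ jointly in the pointed GP sense, so $\kappa(i,j)$ is a tight integer-valued variable that converges jointly in distribution to its continuum analogue; consequently the first-hitting point $a^n_i(j+1) = \eta^n_{m(i,j),\kappa(i,j)}$ converges jointly to $a_i(j+1)$ in the marked pointed GP topology, giving \eqref{eq: cvag} at level $j+1$. Applying the wedge identity once more yields \eqref{eq: cvgm}, and the LLN-with-subsequence argument repeated with $S^n_{x^n_i(j+1)}$ in place of $S^n_{x^n_i(0)}$ gives \eqref{eq: cvpp}.

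The main obstacle will be the continuity step in the inductive passage: one must ensure that the indicator $\I{\eta^n_{m,\ell} \in S^n_{x^n_i(j)}}$ has a well-defined limit in distribution, i.e.\ that the $\mu$-mass of the (topological) boundary of $S_{x_i(j)}$ in $\cH$ vanishes. This boundary is contained in the spine $\llb x_i(j), U\rrb$, and by Aldous's theorem recalled in Section~\ref{subset: icrt} the mass measure $\mu$ is concentrated on the leaves of $\cH$ and therefore charges no finite union of spines almost surely; hence the i.i.d.\ $\mu$-samples $(\eta_{m,\ell})$ avoid these spines a.s., and the joint convergence of distance matrices from \eqref{eq: G-P_ptree'} transfers continuously to the membership indicators. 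This is the delicate point that closes the induction.
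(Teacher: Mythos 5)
Your overall strategy coincides with the paper's: an induction on $j$ in which the base case follows from the Camarri--Pitman convergence plus the argument of Lemma~\ref{lem: sl}, and the inductive step uses that $a^n_i(j+1)$ is sampled from $\bp_n$ restricted to $S^n_{x^n_i(j)}$, together with the inductive mass convergence, to get \eqref{eq: cvag} and then \eqref{eq: cvgm} at level $j+1$; your explicit coupling via an auxiliary i.i.d.\ array is a legitimate way of realizing the attaching points. The problem lies in how you close the induction, i.e.\ in your resolution of the "delicate point". You argue that the membership indicator $\I{\eta\in S_{x_i(j)}}$ passes to the limit because the topological boundary of $S_{x_i(j)}$ is contained in the spine and is $\mu$-null, so the samples avoid it. But the discontinuity set of the functional you actually apply to the distance matrix, namely $\I{d(r,\eta\wedge U)>d(r,x_i(j))}$, is the tie event $\{d(r,\eta\wedge U)=d(r,x_i(j))\}=\{\eta\in F_{x_i(j)}\}$, and this has probability $\mu_{\cH}(F_{x_i(j)})>0$: positivity of this mass is exactly what makes $x_i(j)$ an element of $\bB$. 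A $\mu$-sample avoids the spine, but it lands in $F_{x_i(j)}$ with positive probability, and on that event the limiting distance matrix does not determine the sign of the integer-valued difference $d_{H^n}(r(H^n),\eta^n\wedge U^n)-d_{H^n}(r(H^n),x^n_i(j))$ in the discrete tree; the continuous-mapping transfer therefore fails on an event of positive probability. The same unresolved issue also undermines your inductive step for \eqref{eq: cvag}, since identifying the first auxiliary sample falling in $S^n_{x^n_i(j)}$ requires precisely these membership indicators.

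What is missing is a quantitative control of the mass hanging near, but strictly above, the wedge point: one must show that the $\bp_n$-mass of points $u$ with $0<\sigma_n\big(d_{H^n}(r(H^n),u\wedge U^n)-d_{H^n}(r(H^n),x^n_i(j))\big)<\ep$ is negligible uniformly in $n$ as $\ep\to0$, and that the pendant mass attached exactly at the hub converges to $\mu_{\cH}(F_{x_i(j)})$. The paper obtains this by running the argument of Lemma~\ref{lem: sl}: the monotone profile of subtree masses along the spine converges in the Skorokhod $J_1$ sense, so jump locations and jump sizes converge together, which in particular matches the jump at $x^n_i(j)$ with the jump at $x_i(j)$ and yields \eqref{eq: cvpp} (and the convergence of the restricted sampling measures) despite the tie. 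Alternatively one can use the hub-marked convergence \eqref{eq: G-P_ptree'} together with such a profile argument. Without an ingredient of this kind your induction does not close; with it, the rest of your proposal goes through essentially as in the paper.
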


\begin{proof}
Fix some $k\ge 1$. We argue by induction on $j$. For $j=0$, we note that $a^n_i(0)=\xi^n_i$ and 
$x^n_i(0)=\xi^n_i\wedge U^n$. Then the convergences in \eqref{eq: cvag} and \eqref{eq: cvgm} for $j=0$ 
follows easily from \eqref{eq: G-P_ptree}. On the other hand, we can prove \eqref{eq: cvpp} with the 
same proof as in Lemma~\ref{lem: sl}.
Suppose now \eqref{eq: cvgm}, \eqref{eq: cvpp} and \eqref{eq: cvag} hold true for some $j\ge 0$.
We Notice that $a^n_i(j+1)$ is independently sampled according to $\bp_n$ restricted to $S_{x^n_i(j)}$, 
we deduce \eqref{eq: cvag} for $j+1$ from \eqref{eq: G-P_ptree}. Then the convergence in \eqref{eq: cvgm} 
also follows for $j+1$, since $x^n_i(j+1)=a^n_i(j)\wedge U^n$. 
Finally,the very same arguments used in the proof of Lemma~\ref{lem: sl} show that \eqref{eq: cvpp} holds 
for $j+1$.
\end{proof}

\begin{proof}[Proof of Theorem \ref{thm: shuff}]
\noindent\emph{Proof of a)}\space  
By construction, $\shuff(H^n,U^n)$ is the reverse transformation of the one 
from $H^n$ to $\cut(H^n,U^n)$ in the sense that each attaching ``undoes'' a cut. 
In consequence, since $\me^n(i,j)$ is the number of cuts to undo in order to get 
$\xi^n_i$ and $\xi^n_j$ in the same connected component, $\me^n(i,j)$ has the same 
distribution as the number of the cuts that fell on the path $\llb \xi^n_i, \xi^n_j\rrb$. 
But the latter is stochastically bounded by a Poisson variable $N_n(i, j)$ of mean 
$d_{H^n}(\xi^n_i, \xi^n_j)\cdot E_n(i, j)$, where $E_n(i, j)$ is an independent exponential variable 
of rate $d_{H^n}(U^n, \xi^n_i\wedge \xi^n_j)$. Indeed, each cut is a point of the Poisson 
point process $\cP^n$ and no more cuts fall on $\llb \xi^n_i, \xi^n_j\rrb$ after the time of the
first cut on $\llb U^n, \xi^n_i\wedge \xi^n_j\rrb$. 
But the time of the first cut on $\llb U^n, \xi^n_i\wedge \xi^n_j\rrb$ has the same 
distribution as $E_n(i, j)$ and is independent of $\cP^n$ restricted on  $\llb \xi^n_i, \xi^n_j\rrb$. 
The above argument shows that
\begin{equation}\label{eq: j+j}
\me^n(i,j)=\sI^n(i, j)+\sI^n(j, i)\le_{st} N_n(i, j), \quad i, j\ge 1, n\ge 1,
\end{equation}
where $\le_{st}$ denotes the stochastic domination order. It follows from \eqref{eq: G-P_ptree} 
that, jointly with the convergence in \eqref{eq: G-P_ptree}, we 
have $N_n(i, j)\to N(i, j)$ in distribution, as $n\to\infty$, where $N(i, j)$ is a 
Poisson variable with parameter $d_{\cH}(\xi_i, \xi_j)\cdot E(i, j)$ with 
$E(i, j)$ an independent exponential variable of rate 
$d_{\cH}(U, \xi_i\wedge \xi_j)$, which is positive with probability one. 
Thus the sequence $(\me^n(i,j), n\ge 1)$ is tight in $\bbR_+$.

On the other hand, observe that for $x\in \bB$, $\bbP(A_x\in F_y)=\mu_{\cH}(F_y)/\mu_{\cH}(S_x)$ 
if $y\in \bB$ and $d_{\cH}(U, y)<d_{\cH}(U, x)$. In particular, for two distinct points $x, x'\in \bB$,
$$
\p{\exists \ y\in \bB \text{ such that } A_x\in F_y, A_{x'}\in F_y}
=\sum_{y}\frac{\mu_{\cH}^2(F_y)}{\mu_{\cH}(S_x)\mu_{\cH}(S_{x'})},
$$
where the sum is over those $y\in \bB$ such that $d_{\cH}(U, y)<\min\{d_{\cH}(U, x),d_{\cH}(U, x')\}$. 
Similarly, for $n\ge 1$, 
$$
\p{\exists \ y\in \bB^n \text{ such that } A^n_x\in F^n_y, A^n_{x'}\in F^n_y}
=\sum_{y}\frac{\bp_n^2(F^n_y)}{\bp_n(S^n_x)\bp_n(S^n_{x'})}.
$$
Then it follows from \eqref{eq: cvgm} and the convergence of the masses in Lemma~\ref{lem: sl} 
that
\begin{align*}
\p{\sI^n(i, j)=1; \sI^n(j, i)=1}&=\bbP\big(\exists \ y\in \bB^n \text{ such that } A^n_{x_i(0)}\in F^n_y, A^n_{x_j(0)}\in F^n_y\big) \\
&\overset{n\to\infty}{\longrightarrow} \p{\sI(i, j)=1; \sI(j, i)=1}.
\end{align*}
By induction and Lemma \ref{lem: cvpp}, this can be extended to the following: 
for any natural numbers $k_1,k_2\ge 0$, we have
$$
\p{\sI^n(i, j)=k_1; \sI^n(j, i)=k_2 } \overset{n\to\infty}{\longrightarrow} \p{\sI(i, j)=k_1; \sI(j, i)=k_2}.
$$
Combined with the  tightness of $(\me^n(i,j), n\ge 1)=(\sI^n(i, j)+\sI^n(j, i), n\ge 1)$, 
this entails that 
\begin{equation}\label{eq: cv_sI}
(\sI^n(i, j), \sI^n(j, i))\carrow (\sI(i, j), \sI(j, i)), \quad i, j\ge 1
\end{equation}
jointly with \eqref{eq: cvgm} and \eqref{eq: cvag}, using the usual subsequence arguments.
In particular, $\sI(i, j)+\sI(j, i)\le_{st}N(i,j)<\infty$ almost surely, 
which entails that $\me(i, j)<\infty$ almost surely, for each pair $(i, j)\in \bbN\times \bbN$.

\medskip
\noindent\emph{Proof of b)}\space 
It follows from \eqref{eq: cvag}, \eqref{eq: cv_sI} and the expression of $\gamma(i, j)$ that
\begin{equation}\label{eq: cvdelta}
\big(\sigma_n\gamma^n(i, j), i, j\ge 1\big)
\carrow \big(\gamma(i, j), i, j\ge 1 \big), 
\end{equation}
in the sense of finite-dimensional distributions, jointly with the Gromov--Prokhorov 
convergence of $\sigma_n H^n$ to $\cH$ in \eqref{eq: G-P_ptree}.  
However by \eqref{eq: idshuffTn}, the distribution of $\shuff(H^n,U^n)$ is identical to 
$H^n$. Hence, the unconditional distribution of $(\gamma(i,j), 1\le i,j<\infty)$ is that of 
the distance matrix of the ICRT $\cH$. We can apply Aldous' CRT theory \cite{aldcrt3} to conclude that 
for a.e.\ $\cH$, the distance matrix $(\gamma(i, j), i, j\ge 1)$ defines a CRT, denoted by 
$\shuff(\cH, U)$. Moreover, there exists a mass measure $\tilde{\mu}$, such that if 
$(\tilde{\xi}_i)_{i\ge 1}$ is an i.i.d.\ sequence of law $\tilde{\mu}$, then
$$
(d_{\shuff(\cH, U)}(\tilde{\xi}_i, \tilde{\xi}_j), 1\le i, j<\infty)
\eqd (\gamma(i, j), 1\le i, j<\infty).
$$
Therefore, we can rewrite \eqref{eq: cvdelta} as
\begin{equation}\label{eq: cvshuff_n}
\big(\sigma_n\shuff(H^n,U^n), \sigma_n H^n\big)
\carrow \big(\shuff(\cH,U), \cH\big),
\end{equation}
with respect to the Gromov--Prokhorov topology.

\medskip
\noindent\emph{Proof of c)}\space
This is an easy consequence of \eqref{eq: idshuffTn} and \eqref{eq: cvshuff_n}. 
Let $f, g$ be two arbitrary bounded functions continuous in the Gromov--Prokhorov topology. 
Then \eqref{eq: cvshuff_n} and the continuity of $f, g$ entail that
\begin{align*}
\E{f(\shuff(\cH,U))\cdot g(\cH)}
&=\lim_{n\to\infty}\E{f(\sigma_n\shuff(H^n,U^n)) \cdot g(\sigma_n H^n)}\\
&=\lim_{n\to\infty}\E{f(\sigma_n H^n) \cdot g(\sigma_n\cut(H^n,U^n)}\\
&=\E{f(\cH)\cdot g(\cut(\cH,U))},
\end{align*}
where we have used \eqref{eq: idshuffTn} in the second equality. 
Thus we obtain the identity in distribution in~c).
\end{proof}

\subsection{Distribution of the cuts}\label{sec:dist_cuts}

According to Proposition~\ref{pro:one-duality} and \eqref{eq: idshuffTn}, the attaching points 
$a^n_i(j)$ have the same distribution as the points where the cuts used to be connected to in 
the $\bp_n$ tree $H^n$. Then Theorem~\ref{thm: shuff} suggests that the weak limit $a_i(j)$ should 
play a similar role for the continuous tree. Indeed in this section, we show that $a_i(j)$ represent 
the ``holes" left by the cutting on $(\cT_t)_{t\ge 0}$. 

Let $(\cT, d_{\cT}, \mu)$ be the ICRT in Section \ref{sec:cont_cutting}. The $\mu$-point $V$ is isolated 
by successive cuts, which are elements of the Poisson point process $\cP$. Now let $\xi'_1, \xi'_2$ be 
two independent points sampled according to $\mu$. We plan to give a description of the image of the 
path $\llb \xi'_1, \xi'_2\rrb$ in the cut tree $\cut(\cT, V)$, which turns out to be dual to the 
construction of one path in $\shuff(\cH, U)$. 

During the cutting procedure which isolates $V$, the path $\llb \xi'_1, \xi'_2\rrb$ is pruned 
from the two ends into segments. See Figure~\ref{fig: 1-path_tc}.
Each segment is contained in a distinct portion $\Delta\cT_t:=\cT_{t-}\setminus \cT_t$, which is 
discarded at time $t$. Also recall that $\Delta\cT_t$ is grafted on the interval $[0, L_\infty]$ to 
construct $\cut(\cT, V)$. The following is just a formal reformulation: 
\begin{figure}[htp]
\centering
\includegraphics[scale=1]{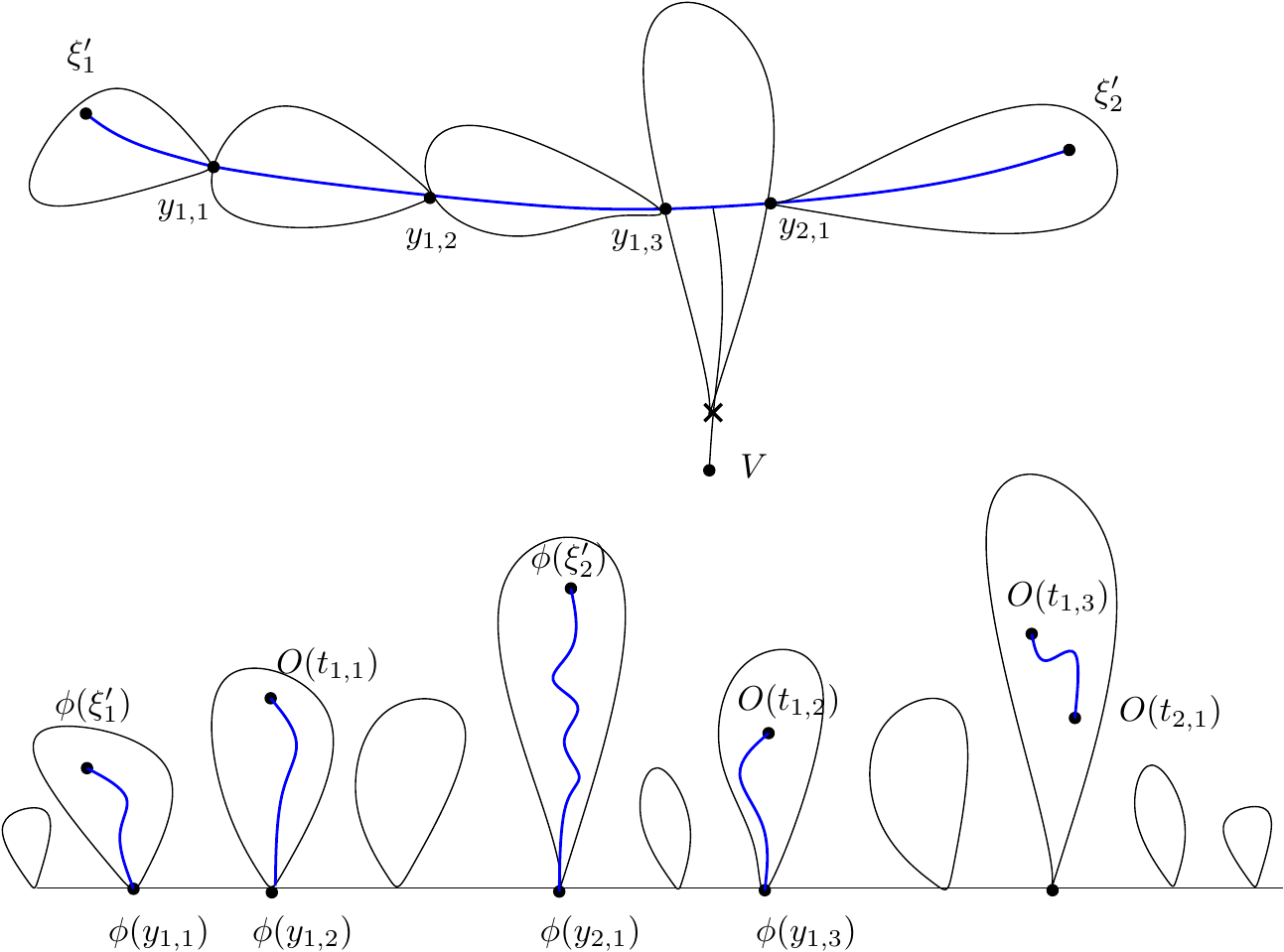}
\caption{\label{fig: 1-path_tc}An example with $M_1=3$ and $M_2=1$. 
Above, the cuts partition the path between $\xi'_1$ and $\xi'_2$ into segments. 
The cross represents the first cut on $\llb\xi'_1, V\rrb\cap\llb \xi'_2, V\rrb$. 
Below, the image of these segments in $\cut(\cT, V)$. }
\end{figure}

\begin{lem}\label{lem: 1-path_tc}
Let 
$$
(t_{1, 1}, y_{1, 1}), (t_{1, 2}, y_{1, 2}), \cdots, (t_{1, M_1}, y_{1, M_1})
\qquad \text{ and } \qquad 
(t_{2, 1}, y_{2, 1}), (t_{2, 2}, y_{2, 2}), \cdots, (t_{2, M_2}, y_{2, M_2}),
$$
be the respective (finite) sequences of cuts on $\llb \xi'_1, V\rrb\cap\llb\xi'_1, \xi'_2\rrb$ and 
$\llb \xi'_2, V\rrb\cap\llb\xi'_1, \xi'_2\rrb$ such that
$0<t_{i, 1}<t_{i, 2}<\cdots<t_{i, M_i}<\infty$ for $i=1, 2$.
Then the points $\{y_{i, j}: 1\le j\le M_i, i=1, 2\}$ partition of the path $\llb\xi'_1, \xi'_2\rrb$ into
segments and: 
\begin{itemize}
    \item for $i=1, 2$, $\llb \xi'_i, y_{i, 1}\rrb\subset\Delta\cT_{t_{i, 1}}$;
    \item for $j=1, 2, \cdots, M_i-2$, $\rrb y_{i, j}, y_{i, j+1}\rrb\subset \Delta\cT_{t_{i, j+1}}$.
\end{itemize}
Finally, writing
$$
t_{me}:=\inf\{t>0: \cP_t\cap \llb\xi'_1, V\rrb\cap\llb \xi'_2, V\rrb\ne\varnothing\}<\infty,
$$
$\rrb y_{1, M_1}, y_{2, M_2}\llb$ is contained in $\Delta\cT_{t_{me}}$.
\end{lem}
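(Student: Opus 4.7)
The plan is to exploit the tripod structure of $\{V, \xi'_1, \xi'_2\}$ in $\cT$ and track the dynamics of the cutting procedure along each of its three branches. Let $b$ denote the branch point of this tripod, i.e., the unique point at which the three paths $\llb V, \xi'_1\rrb$, $\llb V, \xi'_2\rrb$, and $\llb \xi'_1, \xi'_2\rrb$ pairwise intersect. Then
$$
\llb \xi'_1, V\rrb \cap \llb \xi'_1, \xi'_2\rrb = \llb \xi'_1, b\rrb,
\qquad
\llb \xi'_2, V\rrb \cap \llb \xi'_1, \xi'_2\rrb = \llb \xi'_2, b\rrb,
\qquad
\llb \xi'_1, V\rrb \cap \llb \xi'_2, V\rrb = \llb b, V\rrb,
$$
and these three sub-paths are pairwise disjoint except at $b$. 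By Lemma~\ref{lem: cL}, the spanning subtree of $V, \xi'_1, \xi'_2$ has a.s.\ finite $\cL$-mass, so classical properties of the Poisson process $\cP$ yield that $t_{me}$, the first time a cut of $\cP$ falls on $\llb b, V\rrb$, has exponential distribution with rate $\cL(\llb b, V\rrb) \in (0, \infty)$ and is therefore a.s.\ finite. Likewise, the number of points of $\cP$ on $\llb \xi'_1, b\rrb$ up to time $t_{me}$ is a.s.\ finite, which bounds $M_1$; an analogous statement holds for $M_2$.

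Next I would establish the spatial ordering of the effective cuts on $\llb \xi'_1, b\rrb$. A cut at $(s, y) \in \cP$ with $y \in \llb \xi'_1, b\rrb$ modifies $\cT_s$ only if $y \in \cT_{s-}$, which, via the decomposition $\llb V, y\rrb = \llb V, b\rrb \cup \llb b, y\rrb$, amounts to (i) $s < t_{me}$ and (ii) no earlier point of $\cP$ has fallen on $\llb b, y\rrb$. A short induction on the temporal order of the points of $\cP$ on $\llb \xi'_1, b\rrb$ then forces the effective cuts $y_{1,1}, y_{1,2}, \ldots, y_{1,M_1}$ to be arranged spatially from $\xi'_1$ towards $b$, with each $y_{1,j+1}$ lying strictly on $\rrb y_{1,j}, b\rrb$; indeed, any later cut located spatially between $\xi'_1$ and $y_{1,j}$ is ineffective because $y_{1,j}$ has already disconnected that side from $V$.

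Given this ordering the three bullets are transparent. At time $t_{1,1}$, removing $y_{1,1}$ from $\cT_{t_{1,1}-}$ severs $\llb \xi'_1, y_{1,1}\rrb$ from $V$ and places $y_{1,1}$ itself in the discarded component, yielding $\llb \xi'_1, y_{1,1}\rrb \subseteq \cT_{t_{1,1}-}\setminus\cT_{t_{1,1}}$. For $1 \le j < M_1$, between $t_{1,j}$ and $t_{1,j+1}$ no point of $\cP$ falls on $\rrb y_{1,j}, y_{1,j+1}\llb$ (else it would be an effective cut interleaved in the sequence), so this open segment stays connected to $V$ until $t_{1,j+1}$, whence the cut at $y_{1,j+1}$ puts the whole segment $\rrb y_{1,j}, y_{1,j+1}\rrb$ into $\cT_{t_{1,j+1}-}\setminus\cT_{t_{1,j+1}}$; the analogous statement holds on the $\xi'_2$-side. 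Finally, after all the $M_1 + M_2$ effective cuts on $\llb \xi'_1, b\rrb \cup \llb \xi'_2, b\rrb$ have been processed, the portion of $\llb \xi'_1, \xi'_2\rrb$ still connected to $V$ at time $t_{me}^-$ is exactly the open segment $\rrb y_{1,M_1}, y_{2,M_2}\llb$, which contains $b$; the cut at time $t_{me}$ lies on $\llb b, V\rrb$ and therefore severs this remaining portion from $V$ in one stroke, placing $\rrb y_{1,M_1}, y_{2,M_2}\llb$ in $\cT_{t_{me}-}\setminus\cT_{t_{me}}$. The main subtlety to verify carefully is that cuts occurring elsewhere in $\cT$ during the intervals $(t_{1,j}, t_{1,j+1})$ do not prematurely detach a portion of the relevant path from $V$: such cuts lie outside $\llb V, \xi'_1\rrb$ and hence cannot lie on the unique path from $V$ to any point of $\rrb y_{1,j}, y_{1,j+1}\rrb$, so the ``lateral'' dynamics of the cutting procedure does not interfere with the segment-wise accounting above.
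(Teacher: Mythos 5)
Your proposal is correct and follows essentially the same route as the paper: the only genuinely non-trivial point is the a.s.\ finiteness of $M_1,M_2$, which you obtain (as the paper does) by observing that effective cuts on $\llb \xi'_1,\xi'_2\rrb$ can only come from the a.s.\ finitely many Poisson atoms falling on the path before the time $t_{me}$ of the first cut on $\llb b,V\rrb$, whose rate $\cL(\llb b,V\rrb)$ is a.s.\ positive and finite by Lemma~\ref{lem: cL}. The remaining bullet points, which the paper dismisses as straightforward consequences of the cutting procedure, you verify explicitly via the record/monotonicity structure of the effective cuts along each branch of the tripod, and that verification is sound.
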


\begin{proof}
It suffices to prove that $M_1, M_2$ are finite with probability $1$. The other statements are 
straightforward from the cutting procedure. But an argument similar to the one used in the proof of a) of 
Theorem~\ref{thm: shuff} shows that $M_1+M_2$ is stochastically bounded by a Poisson variable with 
mean $d_{\cT}(\xi'_1, \xi'_2)\cdot t_{me}$, which entails that $M_1, M_2$ are finite almost surely.
\end{proof}
 
Recall that $\cut(\cT, V)$ is defined so as to be a complete metric space. Denote by $\phi$ the 
canonical injection from $ \cup_{t\in \cC}\Delta\cT_{t} $ to $\cut(\cT, V)$.  
For $1\le j\le M_i-2$ and $i=1, 2$, it is not difficult to see that there exists some 
point $O(t_{i, j})$ of $\cut(\cT, V)$ such that the closure of $\phi(\rrb y_{i, j}, y_{i, j+1}\rrb)$ is 
$\llb O(t_{i, j}), \phi(y_{i, j+1})\rrb$. Similarly, the closure of $\phi(\rrb y_{1, M_1}, y_{2, M_2}\llb)$ 
is equal to $\llb O(t_{1, M_1}), O(t_{2, M_2})\rrb$, with $O(t_{1, M_1}), O(t_{2, M_2})$ two leaves contained 
in the closure of $\phi(\Delta\cT_{t_{me}})$. Comparing this with Theorem~\ref{thm: shuff}, one 
may suspect that $\{O(t_{1, j}): 1\le j\le M_1\},\{O(t_{2, j}): 1\le j\le M_2\}$ should have the same 
distribution as $\{a_1(j): 1\le j\le \sI(1, 2)\}, \{a_2(j): 1\le j\le \sI(2, 1)\}$. 
This is indeed true. In the following, we show a slightly more general result about all the points. 
For each $t\in \cC=\{t>0: \mu(\Delta\cT_t)>0\}$, let $x(t)\in \cT$ be the point such that $(t, x(t))\in \cP$. Then we can define $O(t)$ to be the point of $\cut(\cT, V)$ which marks the ``hole" left by the cutting at $x(t)$. More precisely, 
let  $(t', x')$ be the first element after time $t$ of $\cP$ on $ \llb r(\cT), x(t)\llb$.
Then there exists some point $O(t)$ such that
the closure of $\phi(\rrb x(t), x'\rrb)$ in $\cut(\cT, V)$ is $\llb O(t), \phi(x')\rrb$.

\begin{prop}\label{prop: distcut}
Conditionally on $\cut(\cT, V)$, the collection $\{O(t), t\in \cC\}$ is independent, and each $O(t)$  has distribution $\hat{\mu}$ restricted to $\cup_{s>t}\phi(\cT_{s-}\setminus\cT_s)$.
\end{prop}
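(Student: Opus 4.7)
The plan is to transfer the explicit conditional distribution of the attaching points from the discrete setting to the continuous one, via the scaling limits already developed.

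First, I recall that for the discrete cut tree $H^n = \cut(\bT^n, V^n)$, Lemma~\ref{lem:joint_dist-tree-node} together with the construction in Section~\ref{sec:ptree-one-cutting} implies that, conditionally on $(H^n, V^n)$, the attaching points $(U_i^n)_{1 \le i < L^n}$ are mutually independent, with $U_i^n$ distributed as $\bp_n$ restricted to $\Sub(H^n, X_{i+1}^n)$. By its very construction, $U_i^n$ is the discrete analogue of the hole $O(t)$: it is the neighbour, in the tree just before the cut, of the $i$-th cut vertex $X_i^n$ lying on the path towards $V^n$.

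Next, I would establish that, jointly with the convergence $\sigma_n H^n \to \cut(\cT, V)$ of Theorem~\ref{thm:conv_cutv}, and for any finite collection of cut times $t_1, \ldots, t_k \in \cC$, one can choose indices $i_j(n)$ with $\sigma_n i_j(n) \to t_j$ such that the discrete pairs $(X_{i_j(n)}^n, U_{i_j(n)}^n)$ converge to $(x(t_j), O(t_j))$; moreover, the rescaled subtree $\sigma_n \Sub(H^n, X_{i_j(n)+1}^n)$ equipped with the renormalised mass $\bp_n$ converges in the Gromov--Hausdorff--Prokhorov sense to $\cup_{s > t_j}\phi(\cT_{s-} \setminus \cT_s)$ equipped with $\hat\mu$. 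The convergence of the cut vertices and the corresponding subtrees follows from Proposition~\ref{prop: cv-Ln} and the convergence of the Poisson processes $\cP_n$ used in Section~\ref{sec: pfcv-hatt}; the identification of $U_i^n$ with $O(t)$ in the limit uses the same type of arguments as in Lemma~\ref{lem: cvpp}.

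Finally, I would pass the discrete product structure to the limit: for any bounded continuous test function $F$ of the cut tree and $k$ holes, the discrete conditional law of $(U_{i_1(n)}^n, \ldots, U_{i_k(n)}^n)$ given $(\sigma_n H^n, V^n)$ is precisely the product measure $\bigotimes_{j=1}^k \bp_n|_{\Sub(H^n, X_{i_j(n)+1}^n)}$; combined with the joint convergence above, this yields the analogous product form for the limiting conditional law of $(O(t_1), \ldots, O(t_k))$ given $\cut(\cT, V)$, giving both the conditional independence of the $O(t)$'s and their marginal distribution $\hat\mu$ restricted to $\cup_{s > t_j} \phi(\cT_{s-} \setminus \cT_s)$. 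The main obstacle will be to rigorously justify the joint convergence of the restricted measures $\bp_n|_{\Sub(H^n, X_{i_j(n)+1}^n)}$ to $\hat\mu|_{\cup_{s > t_j} \phi(\cT_{s-} \setminus \cT_s)}$, since the limiting set is an open subtree whose boundary on the backbone must be shown to carry no mass in the limit; this can be handled using that $\hat\mu$ is concentrated on the leaves of $\cut(\cT, V)$ that lie strictly off the backbone.
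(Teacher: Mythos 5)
Your route is genuinely different from the paper's, but as written it has a gap at its central step. The whole weight of the argument rests on the claimed joint convergence of the discrete pairs $(X^n_{i_j(n)},U^n_{i_j(n)})$ to $(x(t_j),O(t_j))$, together with GHP convergence of $\sigma_n\Sub(H^n,X^n_{i_j(n)+1})$ with its restricted mass to $\cup_{s>t_j}\phi(\cT_{s-}\setminus\cT_s)$ with $\hat\mu$. This does not follow from Proposition~\ref{prop: cv-Ln}, the arguments of Section~\ref{sec: pfcv-hatt}, or Lemma~\ref{lem: cvpp} as you suggest: those results control either distances between \emph{i.i.d.\ sampled} leaves, or the attaching points $a^n_i(j)$ of the shuffle, which converge essentially by construction because they are \emph{freshly sampled} from measures that are shown to converge. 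The holes $U^n_i$ and $O(t)$ are not sampled; they are determined by the original tree and the cutting process, and $O(t)$ is moreover a completion point of $\cut(\cT,V)$ (the missing endpoint of $\phi(\rrb x(t),x'\rrb)$). Identifying the discrete holes with these points in the limit amounts to proving a marked (pointed) strengthening of Theorem~\ref{thm:conv_cutv} in which the holes are carried along as distinguished points — that is the real content of the proposition, and it is precisely what your sketch asserts rather than proves. (Two smaller issues: the indexing ``$\sigma_n i_j(n)\to t_j$'' conflates cut index with cut time — under the Poissonized coupling the times do not rescale, while the index of the cut at time $t_j$ rescales to $L_{t_j}$; and since $\cC$ is a random countable set, the finitely many $t_j$ must be chosen by a measurable scheme compatible with the convergence, e.g.\ $t_j=\tau(\xi_{i_j})$ as in Lemma~\ref{lem: 1-path_tc}. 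Your final limit-of-kernels step, testing products $\prod_j g_j$ against the explicit product kernel, is fine in principle once the marked convergence is available, modulo the boundary-mass point you already flag.)

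The paper avoids all of this by a soft identification argument that needs no new convergence. Taking $(\cH,U)=(\cut(\cT,V),L_\infty)$ and using the holes $\{O(t),t\in\cC\}$ in place of the i.i.d.\ attaching points $\{A_x,x\in\bB\}$, the shuffle reconstructs $\cT$ isometrically almost surely, because the two trees are completions of the same distance matrix; hence $(\shuff(\cH,U),\cH)\eqd(\cT,\cut(\cT,V))$. By Theorem~\ref{thm: shuff}~c) the i.i.d.\ attaching points produce exactly the same joint law, and this joint law pins down the conditional law of the attaching configuration: from the matrix $(\gamma(i,j))$, together with $\cH$ and $(\xi_i)$, one recovers $(a_i(1),i\ge 1)$ almost surely (via the density of $(\xi_k)$ in each $F_x$ and Gromov's reconstruction theorem), and this is a size-biased resampling of $(A_x,x\in\bB)$. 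So the conditional distribution compatible with \eqref{eq: id_shuff_pf} is unique, and the holes must be independent with $O(t)\sim\hat\mu$ restricted to $\cup_{s>t}\phi(\cT_{s-}\setminus\cT_s)$. To repair your proof you should either carry out the marked convergence honestly, or switch to this uniqueness argument, which reuses work already done.
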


\begin{proof}
It suffices to show that $\{O(t), t\in \cC\}$ has the same distribution as the collection of attaching 
points $\{A_x, x\in \bB\}$ introduced in the previous section. 
Observe that if we take $(\cH, U)=(\cut(\cT, V), L_\infty)$ and 
replace $\{A_x, x\in \bB\}$ with $\{O(t), t\in\cC\}$, then it follows that $\shuff(\cH, U)$ is isometric 
to $\cT$, since the two trees are metric completions of the same distance matrix with probability one.
In particular, we have
\begin{equation}\label{eq: id_shuff_pf}
(\shuff(\cH, U), \cH)\eqd (\cT, \cut(\cT, V)).
\end{equation}
Therefore, to determine the distribution of $\{O(t), t\in \cC\}$, we only need to argue that the 
distribution of $\{A_x, x\in \bB\}$ is the unique distribution for which \eqref{eq: id_shuff_pf} holds. 
To see this, we notice that \eqref{eq: id_shuff_pf} implies that the distribution of $(\gamma(i, j))_{i, j\ge 1}$ 
is unique. But from the distance matrix $(\gamma(i, j))_{i, j\ge 1}$ (also given $\cH$ and $(\xi_i, i\ge 1)$), 
we can recover $(a_i(1), i\ge 1)$, which is a size-biased resampling of $(A_x, x\in \bB)$.  
Indeed, the sequence $(\xi_k)_{k\ge 1}$ is everywhere dense in $\cH$. For $x\in \bB$, let $(\xi_{m_k}, k\ge 1)$ 
be the subsequence consisting of the $\xi_i$ contained in $F_x$. Then $a_i(1)\in F_x$ if and only if 
$\liminf_{k\to\infty}\gamma(i, m_k)-\Ht(\xi_i)=0$,  where $\Ht(\xi_i)=d_{\cH}(\xi_i, \xi_i\wedge U)$. 
Moreover, if the latter holds, we also have $d_{\cH}(a_i(1), \xi_{m_k})=\gamma(i, m_k)-\Ht(\xi_i)$. 
By Gromov's reconstruction theorem \cite[$3\frac{1}{2}$]{Gromov}, we can determine $a_i(1)$ for each $i\ge 1$. 
By the previous arguments, this concludes the proof. 
\end{proof}

The above proof also shows that if we use $(O(t), t\in \cC)$ to define the points 
$(A_x, x\in \bB)$ then the shuffle operation yields a tree that is undistinguishable from the 
original ICRT $\cT$. 

\section{Convergence of the cutting measures: Proof of Proposition~\ref{prop: cv-Ln}}
\label{sec:cv-Ln}

Recall the setting at the beginning of Section~\ref{sec:ICRT_overview}. 
Then proving Proposition~\ref{prop: cv-Ln} amounts to show that for each $k\ge 1$, we have
\begin{equation}
\big(\sigma_n R^n_{k},\cL_n\!\!\upharpoonright_{R^n_k}\big)
\carrow \big(R_{k}(\te),\cL\!\!\upharpoonright_{R_k}\big)
\end{equation}
in Gromov--Hausdorff--Prokhorov topology. 
Observe that the Gromov--Hausdorff convergence is clear from \eqref{eq: G-P_ptree}, 
so that it only remains to prove the convergence of the measures.

\medskip
\noindent\textbf{Case $1$} 
We first prove the claim assuming that $\theta_i>0$ for every $i\ge 0$.
In this case, define
$$m_n:=\min\bigg\{j: \sum_{i=1}^j \Big(\frac{p_{ni}}{\sigma_n}\Big)^2
\ge \sum_{i\ge 1} \theta_i^2\bigg\},$$
and observe that $m_n<\infty$ since $\sum_{i\le n} (p_{ni}/\sigma_n)^2=1\ge \sum_{i\ge 1} \theta_i^2$.
Note also that $m_n\to \infty$. Indeed, for every integer $k\ge 1$, 
since $p_{ni}/\sigma_n\to \theta_i$, for $i\ge 1$, and $\theta_{k+1}>0$,
we have, for all $n$ large enough,
$$\sum_{i=1}^k \Big(\frac{p_{ni}}{\sigma_n}\Big)^2 < \sum_{i\ge 1} \theta_i^2,$$
so that $m_n>k$ for all $n$ large enough. 
Furthermore $\lim_{j\to\infty} \theta_j=0$, and \eqref{H} implies that
\begin{equation}\label{eq: pmn}
\lim_{n\to\infty} \frac{p_{m_n}}{\sigma_n}=0.
\end{equation}
Combining this with the definition of $m_n$, it follows that, as $n\to\infty$,
\begin{equation}\label{eq: mn}
\sum_{i\le m_n}\Big(\frac{p_{ni}}{\sigma_n}\Big)^2\to\sum_{i\ge 1}\theta_i^2.
\end{equation}

If $n, k, M\ge 1$, we set
$$
\cL_n^*=\sum_{m_n < i\le n}\frac{p_{ni}}{\sigma_n}\delta_{i},  
\quad\text{ and }\quad
\Sigma(n,k,M)=\sum_{M < i\le m_n}\frac{p_{ni}}{\sigma_n}\I{i\in R^n_k}.
$$
Let $\ell_n$ denote the (discrete) length measure on $\bT^n$. 
Clearly, $\sigma_n\ell_n$ is the length measure of the rescaled tree $\sigma_n\bT^n$, seen as a real tree.
\begin{lem}\label{lem: Ln23}
Suppose that \eqref{H} holds. Then, for each $k\ge 1$, we have the following assertions:
\begin{enumerate}[a)]
\item
as $n\to\infty$, in probability
\begin{equation}\label{eq: Ln3}
\dpr\big(\cL_n^*\!\!\upharpoonright_{R^n_k}, 
\theta_0^2\sigma_n\ell_n\!\!\upharpoonright_{R^n_k}\big)\to 0;
\end{equation}
\item
for each $\ep>0$, there exists $M=M(k,\ep)\in \bbN$ such that
\begin{equation}\label{eq: Ln2}
\limsup_{n\to\infty}\bbP\big(\Sigma(n,k,M)\ge \ep\big)\le \ep;
\end{equation}
\end{enumerate}
\end{lem}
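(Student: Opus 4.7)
The plan is to derive both claims from explicit estimates on $\bbP(i \in R^n_k)$, obtained from the marked line-breaking construction of the reduced subtree $R^n_k$ (cf.\ \cite{pit00, ald04a}): the skeleton of $R^n_k$ is spanned by the $k$ i.i.d.\ $\bp_n$-leaves, while its internal vertices receive labels in a near-exchangeable manner governed by a size-biased version of $\bp_n$. Both parts then reduce to first-/second-moment computations combined with the right spatial localization.

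\textbf{Part (b).} By Markov's inequality, it suffices to show that
$$
\bbE[\Sigma(n,k,M)] \;=\; \sum_{M < i \le m_n} \frac{p_{ni}}{\sigma_n}\, \bbP(i \in R^n_k)
$$
can be made small uniformly in $n$ by taking $M$ large. The line-breaking construction yields a bound of the form $\bbP(i \in R^n_k) \le C_k\, p_{ni}/\sigma_n$, where $C_k$ depends only on $k$ (consistent with $\sum_i \bbP(i \in R^n_k) = \bbE[|R^n_k|] = O(1/\sigma_n)$). Substituting gives $\bbE[\Sigma(n,k,M)] \le C_k \sum_{M < i \le m_n}(p_{ni}/\sigma_n)^2$. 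Since $\sum_i(p_{ni}/\sigma_n)^2 = 1$ by definition of $\sigma_n$, using \eqref{H} for the finite initial sum up to $M$ and \eqref{eq: mn} for the sum up to $m_n$, one obtains
$$
\limsup_{n\to\infty} \sum_{M < i \le m_n}(p_{ni}/\sigma_n)^2 \;\le\; \sum_{i > M}\theta_i^2,
$$
which tends to $0$ as $M \to \infty$, completing~(b).

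\textbf{Part (a).} I would test the Prokhorov distance against bounded Lipschitz functions $f$ on the ambient space, reducing the claim to showing
$$
\int f\, d\cL_n^*\!\!\upharpoonright_{R^n_k} \;-\; \theta_0^2 \sigma_n \int f\, d\ell_n\!\!\upharpoonright_{R^n_k} \;\xrightarrow[n\to\infty]{\bbP}\; 0.
$$
The first-moment side is encouraging: the expected contribution of $\cL_n^*$ per edge of $\sigma_n R^n_k$ (of length $\sigma_n$) should be, up to lower-order terms, $\sigma_n \sum_{i > m_n}(p_{ni}/\sigma_n)^2$, which by \eqref{eq: mn} tends to $\theta_0^2 \sigma_n$, matching the density of the reference measure $\theta_0^2 \sigma_n \ell_n$. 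A second-moment computation, exploiting the near-independence of labels placed on distinct edges in the marked construction, then controls the fluctuations around the mean. The main obstacle is the \emph{spatial uniformity}: one must show that the mass $p_{ni}/\sigma_n$ coming from labels $i > m_n$ is genuinely spread along the skeleton at density $\theta_0^2 \sigma_n$ per unit length rather than concentrating at a few vertices. This is the delicate step and requires leveraging the (conditional) exchangeability of the internal labels of $R^n_k$ along its skeleton, the same structural feature of $\bp$-trees underlying the pointed convergence \eqref{eq: G-P_ptree'}.
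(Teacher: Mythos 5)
Your part (b) is close in spirit to what is needed, but it rests on an asserted estimate that you never prove: $\bbP(i\in R^n_k)\le C_k\,p_{ni}/\sigma_n$, which (as your own parenthesis shows) is equivalent to a uniform-in-$n$ moment bound $\E{\#R^n_k}=O(1/\sigma_n)$, i.e.\ $\E{\sigma_n\fR^n_k}=O(1)$ for the repeat times of the birthday construction. This does not follow from the distributional convergence \eqref{eq: G-P_ptree}, which only gives tightness of $\sigma_n\fR^n_k$, not boundedness of expectations; a genuine argument (e.g.\ a Poissonization bound on $\prod_i(1+p_{ni}t)e^{-p_{ni}t}$) is required, and you supply none. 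The paper avoids needing any uniform moment bound: it writes $\Sigma(n,1,M)\eqd\tilde Z_{n,M}=\sum_{j<\fR^n_1}\sum_{M<i\le m_n}\frac{p_{ni}}{\sigma_n}\I{X^n_j=i}$ via Corollary~3 of \cite{pit00}, truncates on the event $\{\fR^n_1\le N/\sigma_n\}$, bounds the truncated expectation by $N\sum_{M<i\le m_n}(p_{ni}/\sigma_n)^2$, and disposes of the complementary event by the tightness statement \eqref{eq: tgrenk}; Markov's inequality then gives \eqref{eq: Ln2}. Your computation of $\limsup_n\sum_{M<i\le m_n}(p_{ni}/\sigma_n)^2\le\sum_{i>M}\theta_i^2$ via \eqref{H} and \eqref{eq: mn} is correct and is exactly the ingredient used there, so (b) is repairable by inserting the truncation, but as written it has a hole.

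Part (a) is not a proof: you explicitly identify the ``spatial uniformity'' of the mass carried by labels $i>m_n$ along the skeleton as ``the delicate step'' and leave it as an obstacle, yet this step \emph{is} the content of \eqref{eq: Ln3}. The paper settles it as follows (first for $k=1$, then segment by segment using the branch points): by Corollary~3 of \cite{pit00}, the cumulative function $l\mapsto F^\cL_n(l)=\cL^*_n(\bB(r(\bT^n),l)\cap R^n_1)$ has, jointly with $D_n$, the same law as the stopped i.i.d.-increment process $F_n(l)=\sum_{j\le l\wedge(\fR^n_1-1)}\sum_{i>m_n}\frac{p_{ni}}{\sigma_n}\I{X^n_j=i}$; Doob's $L^2$ maximal inequality shows $\sup_l\big|F_n(l)-\frac{q_n^2}{\sigma_n}((l+1)\wedge\fR^n_1)\big|\to0$ in probability on $\{\fR^n_1\le N/\sigma_n\}$ (the variance per step being $O(p_{n m_n}q_n^2/\sigma_n^3)$, killed by \eqref{eq: pmn}), while $q_n^2/\sigma_n^2\to\theta_0^2$ by \eqref{eq: mn}; since the analogous cumulative function of $\sigma_n\ell_n$ is $l\mapsto l\wedge D_n$, this uniform estimate converts directly into the Prokhorov bound. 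Your proposed route via bounded Lipschitz test functions, a per-edge first moment and a second-moment bound ``exploiting near-independence of labels'' is pointing at the same mechanism, but without the exact Camarri--Pitman representation (or an equivalent quantitative exchangeability statement) the fluctuation control is precisely what is missing, so the argument as proposed does not close.
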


Before proving Lemma~\ref{lem: Ln23}, 
let us first explain why this entails Proposition~\ref{prop: cv-Ln}.
\begin{proof}[Proof of Proposition \ref{prop: cv-Ln} in the Case $1$]
By Skorokhod representation theorem and a diagonal argument, we can assume that
the convergence $(\sigma_n \bT^n, \mu_n, \fB^n_m)\to (\te, \mu, \fB_m)$,
holds almost surely in the $m$-pointed Gromov--Prokhorov topology for all $m\ge 1$. 
Since the length measure $\ell_n$ (resp.\ $\ell$) depends continuously on the metric of $\bT^n$ 
(resp.\ the metric of $\te$), according to Proposition 2.23 of \cite{LWV} this implies that,
for each $k\ge 1$,
\begin{equation}\label{eq: elln}
  \big(\sigma_n R^n_k, \theta_0^2\sigma_n\ell_n\!\!\upharpoonright_{R^n_k}\big)
  \to \big(R_k, \theta_0^2\ell\!\!\upharpoonright_{R_k}\big),
\end{equation}
almost surely in the Gromov--Hausdorff--Prokhorov topology.
On the other hand, we easily deduce from the convergence of the 
vector $\fB^n_m$ and \eqref{H} that, for each fixed $m\ge 1$,
\begin{equation}\label{eq: Ln1}
\Bigg(\sigma_n R^n_k, \sum_{i=1}^m\frac{p_{ni}}{\sigma_n}\delta_i\!\!\upharpoonright_{R^n_k}\Bigg)
\to \Bigg(R_k, \sum_{i=1}^m\theta_i\delta_{\cB_i}\!\!\upharpoonright_{R_k}\Bigg),
\end{equation}
almost surely in the Gromov--Hausdorff--Prokhorov topology. 
In the following, we write $\dprnk$ (resp.\ $\dprk$) for the Prokhorov distance on 
the finite measures on the set $R^n_k$ (resp.\ $R_k$). 
In particular, since the measures below are all restricted to either $R^n_k$ or $R_k$, 
we omit the notations $\upharpoonright_{R^n_k}$, $\upharpoonright_{R_k}$ 
when the meaning is clear from context. We write
$$
\Kt_m(\cL):=\theta_0^2\ell+\sum_{i=1}^m \theta_i \delta_{\cB_i}
$$
for the cut-off measure of $\cL$ at level $m$.
By Lemma~\ref{lem: cL}, the restriction of $\cL$ to $R^n_k$ is a finite measure. 
Therefore, $\Kt_m(\cL)\to \cL$ almost surely in $\dprk$ as $m\to\infty$.

Now fix some $\ep>0$. By Lemma~\ref{lem: Ln23} we can choose some $M=M(k, \ep)$ such that 
\eqref{eq: Ln2} holds, as well as
\begin{equation}\label{eq: Ktm}
\pc{\dprk(\Kt_M(\cL), \cL)\ge\ep}\le \ep.
\end{equation}
Define now the approximation 
$$
\vartheta_{n, M}:=\theta_0^2\sigma_n\ell_n+\sum_{i\le M}\frac{p_{ni}}{\sigma_n}\delta_i.
$$
Then recalling the definition of $\cL_n$ in \eqref{eq:def_Ln}, and using \eqref{eq: Ln2} and 
\eqref{eq: Ln3}, we obtain 
\begin{equation}\label{eq: vartheta}
\limsup_{n\to\infty}\pc{\dprnk(\vartheta_{n, M}, \cL_n)\ge \ep}\le \ep.
\end{equation}
We notice that
\begin{equation}\label{eq: cvvartheta}
 \big(\sigma_n R^n_k, \vartheta_{n, M}\big)\to \big(R_k, \Kt_M(\cL)\big)
\end{equation}
almost surely in the Gromov--Hausdorff--Prokhorov topology as a combined consequence 
of \eqref{eq: elln} and \eqref{eq: Ln1}. 
Finally, by the triangular inequality, we deduce from \eqref{eq: Ktm}, \eqref{eq: vartheta} and 
\eqref{eq: cvvartheta} that
$$
\limsup_{n\to\infty}\p{\dghp\big((\sigma_nR^n_k, \cL_n), (R_k, \cL)\big)\ge 2\ep}\le 2\ep,
$$
for any $\ep>0$, which concludes the proof.
\end{proof}

\begin{proof}[Proof of Lemma \ref{lem: Ln23}]
We first consider the case $k=1$. Define
\begin{equation}\label{eq:def_dist_fun}
D_n:=d_{\bT^n}\big(r(\bT^n), V^n_1\big), 
\qquad\text{and}\qquad
F^\cL_n(l):=\cL^*_n\big(\bB(r(\bT^n), l)\cap R^n_1\big),
\end{equation}
where $\bB(x, l)$ denotes the ball in $\bT^n$ centered at $x$ and with radius $l$. 
Then the function $F^\cL_n$ determines the measure $\cL^*_n\!\!\upharpoonright_{R^n_1}$ 
in the same way a distributional function determines a finite measure of $\bbR_+$.
Let $(X^n_{j}, j\ge 0)$ be a sequence of i.i.d.\ random variables of distribution $\bp_n$. 
We define $\fR^n_0=0$, and for $m\ge 1$,
$$
\fR^n_m=\inf\big\{j> \mathfrak R^n_{m-1}: X^n_{j}\in \{X^n_{1}, X^n_{2}, \cdots, X^n_{j-1}\}\big\}
$$
the $m$-th repeat time of the sequence. 
For $l\ge 0$, we set
$$
F_n(l):=\sum_{j=0}^{l\wedge\,(\fR^n_1-1\!)} \sum_{i>m_n}\frac{p_{ni}}{\sigma_n}\I{X^n_{j}=\,i}.
$$
According to the construction of the birthday tree in~\cite{pit00} and Corollary 3 there, we have 
\begin{equation}\label{eq: idfcln}
\big(D_n, F^\cL_n(\cdot)\big)\overset{d}{=}\big(\mathfrak R^n_1-1, F_n(\cdot)\big).
\end{equation}
Let $q_n\ge 0$ be defined by $q_n^2=\sum_{i>m_n}p_{ni}^2$. 
Then \eqref{eq: mn} entails $\lim_{n\to\infty}q_n/\sigma_n=\theta_0$. 
For $l\ge 0$, we set
$$
Z_n(l):=\left|F_n(l)-\frac{q_n^2}{\sigma_n}\big((l+1)\wedge \fR^n_1\big)\right|.
$$

We claim that $\sup_{l\ge 0}Z_n(l)\to 0$ in probability as $n\to\infty$.
To see this, observe first that
$$
Z_n(l)
=\left|\sum_{j=0}^{l\wedge\,(\fR^n_1-1\!)} \left(\sum_{i>m_n}
\frac{p_{ni}}{\sigma_n}\I{X^n_{j}=\,i}-\frac{q_n^2}{\sigma_n}\right)\right|,
$$
where the terms in the parenthesis are independent, centered, 
and of variance $\chi_n:=\sigma_n^{-2}\sum_{i>m_n} p_{ni}^3-\sigma_n^{-2}q_n^4$. 
Therefore, Doob's maximal inequality entails that for any fixed number $N>0$,
\begin{align*}
\E{\bigg(\sup_{l\ge 0}Z_n(l)\boldsymbol{1}_{\{\mathfrak R^n_1\le N/\sigma_n\}}\bigg)^2}
&\le \E{\left(\sup_{l< \lfloor N/\sigma_n\rfloor}\sum_{j=0}^l\left(\sum_{i>m_n}
\frac{p_{ni}}{\sigma_n}\I{X^n_{j}=\,i}-\frac{q_n^2}{\sigma_n}\right)\right)^2}\\
&\le 4N\sigma_n^{-1}\chi_n\\
&\le 4N\frac{q_n^2}{\sigma_n^2}\frac{p_{nm_n}+q_n^2}{\sigma_n} \to 0
\end{align*}
by \eqref{eq: pmn} and the fact that $q_n/\sigma_n\to \theta_0$.
In particular, it follows that
\begin{equation}\label{eq: Zn}
\sup_{l\ge 0} Z_n(l)\I{\mathfrak R^n_1\le N/\sigma_n}\to 0,
\end{equation}
in probability as $n\to \infty$. 
On the other hand, the convergence of the $\bp_n$-trees in \eqref{eq: G-P_ptree} implies that 
the family of distributions of $(\sigma_n D_n, n\ge 1)$ is tight. 
By \eqref{eq: idfcln}, this entails that
\begin{equation}\label{eq: tgrenk}
\lim_{N\to\infty}\limsup_{n\to\infty}\p{\fR^n_1>N/\sigma_n}=0.
\end{equation}
Combining this with \eqref{eq: Zn} proves the claim.

The generalized distribution function as in \eqref{eq:def_dist_fun} for the discrete length 
measure $\ell_n$ is $l\mapsto l\wedge D_n$. 
Thus, since $\sup_l Z_n(l)\to 0$ in probability, the identity in \eqref{eq: idfcln} and 
$q_n/\sigma_n\to \theta_0$ imply that 
$$
\dpr\big(\cL^*_n\!\!\upharpoonright_{R^n_1}, 
\theta_0^2\sigma_n\ell_n\!\!\upharpoonright_{R^n_1}\big)\to 0
$$
in probability as $n\to\infty$. This is exactly \eqref{eq: Ln3} for $k=1$.

In the general case where $k\ge 1$, we set
$$
D_{n, 1}:=D_n, \quad \quad D_{n, m}:=d_{\bT^n}\big(b_n(m), V^n_m\big), \quad m\ge 2,
$$
where $b_n(m)$ denotes the branch point of $\bT^n$ between $V^n_m$ and $R^n_{m-1}$, i.e, 
$b_n(m)\in R^n_{m-1}$ such that $\llb r(\bT^n), V^n_m\rrb\cap R^n_{m-1}=\llb r(\bT^n), b_n(m)\rrb$. 
We also define
$$
F^\cL_{n,1}(l):=F^\cL_n, \qquad \text{and}\qquad
F^\cL_{n, m}(l):=\cL ^*_n\big(\bB(b_n(m), l)\, \cap \, \rrb b_n(m), V^n_m\rrb\big), \quad m\ge 2.
$$
Then conditional on $\mathfrak R^n_k$, the vector 
$(F^\cL_{n, 1}(\cdot), \cdots, F^\cL_{n, k}(\cdot))$ determines the measure 
$\cL^*_n\!\!\upharpoonright_{R^n_k}$ for the same reason as before. If we set
$$
F_{n, 1}(l):=F_n(l), \qquad \text{and}\qquad
F_{n, m}(l):=\sum_{j=\fR^n_{m-1}\!+1}^{l\wedge\,(\fR^n_m-1\!)} \sum_{i>m_n}\frac{p_{ni}}{\sigma_n}\boldsymbol{1}_{\{X^n_{j}=\,i\}},\quad m\ge 2,
$$
then Corollary 3 of \cite{pit00} entails the equality in distribution
$$
\big(\big(D_{n, m}, F^\cL_{n, m}(\cdot)\big), 1\le m\le k\big)
\overset{d}{=}
\big(\big(\mathfrak R^n_m-\mathfrak R^n_{m-1}-1, F_{n, m}(\cdot)\big), 1\le m\le k\big)
$$
Then by the same arguments as before we can show that 
$$
\max_{1\le m\le k}\sup_{l\ge 0}\left|F_{n, m}(l)-
\frac{q_n^2}{\sigma_n}\Big(l\wedge (\fR^n_m-\fR^n_{m-1}-1)\Big)\right|\to 0
$$
in probability as $n\to \infty$. This then implies \eqref{eq: Ln3} by the same type of 
argument as before.

Now let us consider \eqref{eq: Ln2}. The idea is quite similar. For each $M\ge 1$, we set
$$
\tilde{Z}_{n, M}:=\sum_{j=0}^{\fR^n_1-1} \sum_{M<i\le m_n}\frac{p_{ni}}{\sigma_n}\I{X^n_{j}=\,i}.
$$
Then 
$$
\E{\tilde{Z}_{n, M}\I{\fR^n_1\le N/\sigma_n}}
\le N\bigg(\sum_{M<i\le m_n}\frac{p_{ni}^2}{\sigma_n^2}\bigg).
$$
Using \eqref{eq: pmn}, \eqref{H} and the fact that $\sum_i \theta_i^2<\infty$,
we can easily check that for any fixed $N$,
\begin{equation}\label{eq: tildeZ}
\lim_{M\to\infty}\limsup_{n\to\infty}\E{\tilde{Z}_{n, M}\I{\fR^n_1\le N/\sigma_n}}=0.
\end{equation}
By Markov's inequality, we have
$$
\pc{\tilde{Z}_{n, M}>\ep}
\le \ep^{-1}\Ec{\tilde{Z}_{n, M}\I{\fR^n_1\le N/\sigma_n}}+\pc{\fR^n_1>N/\sigma_n}.
$$
According to  \eqref{eq: tgrenk} and \eqref{eq: tildeZ}, we can first choose some $N=N(\ep)$ 
then some $M=M(N(\ep), \ep)=M(\ep)$ such that $\limsup_n \pc{\tilde{Z}_{n, M}>\ep}<\ep$. 
On the other hand, Corollary 3 of \cite{pit00} says that $\Sigma(n, 1, M)$ is distributed like 
$\tilde{Z}_{n, M}$. Then we have shown \eqref{eq: Ln2} for $k=1$. 
The general case can be treated in the same way, and we omit the details.
\end{proof}

So far we have completed the proof of Proposition \ref{prop: cv-Ln} in the case where 
$\btheta$ has all strictly positive entries. The other cases are even simpler:

\noindent\textbf{Case }$2$. Suppose that $\theta_0=0$, we take $m_n=n$ and the same argument follows.

\noindent\textbf{Case }$3$. Suppose that $\btheta$ has a finite length $I$, then it suffices to 
take $m_n=I$. We can proceed as before. 

{\small
\setlength{\bibsep}{.3em}
\bibliographystyle{abbrvnat}
\bibliography{refs}
}

\end{document}